\documentclass[11pt,reqno]{amsart}

\usepackage[T1]{fontenc}
\usepackage{lmodern}
\usepackage{microtype}
\usepackage{amsmath,amssymb,mathrsfs}
\usepackage{enumitem}
\usepackage{geometry}
\usepackage[colorlinks=true,linkcolor=blue,citecolor=blue,urlcolor=blue]{hyperref}

\hypersetup{
 pdftitle={Secular Instability and the Turning-Point Principle for Rigidly Rotating Viscous Stars},
 pdfauthor={Ming Cheng, Zhiwu Lin, and Yucong Wang},
 pdfsubject={Axisymmetric spectral stability for rotating Navier--Stokes--Poisson stars},
 pdfkeywords={rotating stars, Navier--Stokes--Poisson, turning-point principle, secular instability}
}
\allowdisplaybreaks

\newtheorem{theorem}{Theorem}[section]
\newtheorem{proposition}[theorem]{Proposition}
\newtheorem{lemma}[theorem]{Lemma}
\newtheorem{corollary}[theorem]{Corollary}
\newtheorem{example}[theorem]{Example}
\theoremstyle{remark}
\newtheorem{remark}[theorem]{Remark}
\newtheorem{hypothesis}{Hypothesis}[section]

\numberwithin{equation}{section}
\setlist[enumerate]{leftmargin=2.1em}

\newcommand{\R}{\mathbb R}
\newcommand{\C}{\mathbb C}
\newcommand{\cA}{\mathcal A}
\newcommand{\cC}{\mathcal C}
\newcommand{\Ran}{\operatorname{Ran}}
\newcommand{\Ker}{\operatorname{Ker}}
\newcommand{\Dom}{\operatorname{Dom}}
\newcommand{\Ree}{\operatorname{Re}}
\newcommand{\Imm}{\operatorname{Im}}
\newcommand{\Span}{\operatorname{span}}
\newcommand{\dev}{\operatorname{dev}}
\newcommand{\abs}[1]{\lvert #1\rvert}
\newcommand{\norm}[1]{\lVert #1\rVert}
\newcommand{\ip}[2]{\langle #1,#2\rangle}
\newcommand{\half}{\tfrac12}

\title[Secular instability of rotating viscous stars]
{Secular Instability and the Turning-Point Principle\\
for Rigidly Rotating Viscous Stars}

\author{Ming Cheng}
\address{College of Mathematics, Jilin University, Changchun 130012, China}
\email{mcheng314@jlu.edu.cn}

\author{Zhiwu Lin}
\address{School of Mathematical Sciences, Fudan University, Shanghai 200433, China}
\email{zwlin@fudan.edu.cn}

\author{Yucong Wang}
\address{School of Mathematics and Computational Science, Xiangtan University,
Xiangtan 411105, China}
\email{yucongwang666@163.com}

\subjclass[2020]{Primary 35B35; Secondary 35Q35, 35P15, 76N10, 85A30}
\keywords{Rotating stars, Navier--Stokes--Poisson system, turning-point
principle, dissipation-induced instability, gyroscopic operator pencil,
physical vacuum}

\begin{document}

\begin{abstract}
We study axisymmetric stability of rigidly rotating viscous stars
modeled by the free-boundary Navier--Stokes--Poisson (NSP) system.  The
unstable index of the linearized NSP generator, counted with
Riesz algebraic multiplicity, equals the Morse index of the augmented energy
at fixed mass and total angular momentum.  We prove an abstract
Kelvin--Tait--Chetaev theorem for damped gyroscopic equations with finite
negative stiffness index, requiring no compactness assumptions and no
spectral gap at zero; the stiffness kernel may be infinite-dimensional.
In the NSP application, the viscous dissipation is degenerate: its
axisymmetric kernel, generated by rigid axial translation and rigid rotation,
is projected out before applying the abstract theorem, while conservation laws
and a Routh reduction transfer the resulting instability index back to the
full NSP generator.  For slowly rotating branches with fixed total angular
momentum, viscous instability begins at the continuation of a first
nondegenerate spherical mass maximum.  By contrast, for every sufficiently
small nonzero angular momentum, the corresponding Euler--Poisson star remains
axisymmetrically spectrally stable on an interval beyond this maximum.  Thus
dissipation can destabilize a rotating star before the corresponding inviscid
star becomes unstable.  This separation of the stability thresholds results
from viscous redistribution of angular momentum, which removes the inviscid
constraints on its material distribution while preserving its total value.
\end{abstract}

\maketitle

\section{Introduction}
\label{sec:introduction}

We consider a self-gravitating viscous gas occupying
$\Omega(t)=\{x\in\mathbb R^3:\rho(t,x)>0\}$.  The density $\rho$, velocity $v$, and gravitational potential $V_\rho$
satisfy
\[
\left\{
\begin{aligned}
 \partial_t\rho+\nabla\cdot(\rho v)&=0,
       &&x\in\Omega(t),\\
 \rho(\partial_t v+v\cdot\nabla v)+\nabla P(\rho)
   &=\nabla\cdot\mathbb T(v)-\rho\nabla V_\rho,
       &&x\in\Omega(t),\\
 \Delta V_\rho&=4\pi\rho,\qquad V_\rho\to0\ (|x|\to\infty),
       &&x\in\mathbb R^3.
\end{aligned}
\right.
\]
where
\begin{equation}
 \mathbb T(v)=2\nu_s\operatorname{dev}e(v)
               +\nu_b(\nabla\cdot v)I,
 \qquad e(v)=\tfrac12(\nabla v+\nabla v^T).                  \label{eq:viscous-stress}
\end{equation}
Here $P$ is the pressure, $\operatorname{dev}A=A-\tfrac13(\operatorname{tr}A)I$, and
$\nu_s,\nu_b>0$ are the shear and bulk viscosity coefficients.  On
$\Gamma(t)=\partial\Omega(t)$,
\begin{equation}
 \rho=0,\qquad (P(\rho)I-\mathbb T(v))n=0,
 \qquad \mathcal V_\Gamma=v\cdot n,                         \label{eq:NSP-boundary}
\end{equation}
where $n$ is the outward normal and $\mathcal V_\Gamma$ the normal
velocity of the boundary.  The gravitational normalization is
$V_\rho(x)=-\int_{\mathbb R^3}\rho(y)/|x-y|\,dy$.

The corresponding Euler--Poisson system is obtained by setting
$\mathbb T=0$.  In cylindrical coordinates $(r,\theta,z)$, with
$e_r,e_\theta,e_z$ denoting the corresponding unit vectors, consider the
barotropic rotating class
\[
 v=r\omega(r)e_\theta.
\]
Then
\[
 \nabla\cdot v=0,\qquad
 e_{r\theta}(v)=e_{\theta r}(v)
 =\frac12\left(\partial_r v_\theta-\frac{v_\theta}{r}\right)
 =\frac12 r\omega'(r),
\]
where $e_{r\theta}(v)$ and $e_{\theta r}(v)$ denote the $(r,\theta)$ and
$(\theta,r)$ components of the symmetric strain tensor $e(v)$ in the
orthonormal cylindrical frame $(e_r,e_\theta,e_z)$.  Thus differential
rotation produces viscous shear.  For a smooth stationary star satisfying
\eqref{eq:NSP-boundary}, the NSP energy-dissipation identity
\eqref{eq:NSP-energy-dissipation} gives
\[
 \int_{\Omega}
 \left(2\nu_s|\dev e(v)|^2+\nu_b|\nabla\cdot v|^2\right)\,dx=0.
\]
Since $\nu_s,\nu_b>0$,
\[
 \dev e(v)=0,\qquad \nabla\cdot v=0,
\]
and hence $e(v)=0$.  In particular,
$e_{r\theta}(v)=\frac12r\omega'(r)=0$, so $\omega'(r)=0$ on each connected
component of the support.  We therefore consider rigidly rotating equilibria
$(\bar\rho(r,z),\omega r e_\theta)$, for which $\mathbb T(\bar v)=0$.
Define the enthalpy and internal-energy density by
\begin{equation}
 h(\rho)=\int_0^\rho\frac{P'(s)}s\,ds,
 \qquad \Phi(\rho)=\int_0^\rho h(s)\,ds.                    \label{eq:enthalpy}
\end{equation}
The equilibrium equation is
\begin{equation}
 h(\bar\rho)+V_{\bar\rho}-\tfrac12\omega^2r^2+c=0
 \quad\hbox{in }\Omega_{\bar\rho}:=\{\bar\rho>0\},         \label{eq:equilibrium}
\end{equation}
where $c$ is constant.  Hence a rigidly rotating star is a common
equilibrium of the viscous and inviscid systems, although the two stability
criteria need not agree.

For axisymmetric perturbations of compressible stars with a vacuum free
surface, two questions are particularly natural: is the stability of a
viscous rotating star determined by the mass curve along a family with fixed
total angular momentum, and how does dissipation affect the stability
threshold relative to the Euler--Poisson system?  We show that, for slowly
rotating fixed-angular-momentum branches, the viscous stability changes
precisely at the continuation of the first nondegenerate mass maximum.
Moreover, dissipation has a destabilizing effect: for every sufficiently
small nonzero angular momentum, there is an interval beyond the viscous
turning point on which the viscous star is unstable while the corresponding
Euler--Poisson star remains axisymmetrically spectrally stable.

The mechanism behind this separation is the different angular-momentum
constraints of the two dynamics.  In axisymmetric perfect-fluid motion, the
specific angular momentum $rv_\theta$ is transported with each particle, so
its material distribution is conserved.  Viscosity redistributes angular
momentum while preserving only its total value and dissipating energy,
thereby enlarging the class of dynamically accessible perturbations.  This
is consistent with the classical distinction between \emph{dynamical}
instability of a perfect fluid and \emph{secular} instability induced by
dissipation.  For Maclaurin spheroids, for example, viscosity shifts the
nonaxisymmetric bar-mode threshold to the earlier Jacobi bifurcation
\cite{Chandrasekhar1969,RobertsStewartson1963}.  Numerical studies of
compressible rotating stars \cite{James1964} and compressible ellipsoidal
models \cite{LaiRasioShapiro1993} likewise illustrate the dependence of the
stability threshold on the admissible perturbations and constraints; see also
\cite{Tassoul1978}.

We first prove an instability-index formula
\[
 N_+(\mathcal A_{\rm NSP})=n^-(Q_J|_{X_0}).
\]
Here $N_+$ counts the unstable eigenvalues of the unrestricted
axisymmetric generator with Riesz algebraic multiplicity, $X_0$ is the
space of mass-zero density perturbations, and $Q_J$ is the constrained
second variation of the energy after minimizing over velocities at fixed total
angular momentum $J$.  These objects are defined below.  For a slowly
rotating family with $J=\kappa$, parameterized by center density $\mu$,
the formula gives
\[
 N_+\bigl(\mathcal A_{\rm NSP}(\mu,\kappa)\bigr)
 =\begin{cases}
 0,&\partial_\mu M_\kappa(\mu)\ge0,\\
 1,&\partial_\mu M_\kappa(\mu)<0,
 \end{cases}
\]
on a fixed compact interval around the continuation of the first
nondegenerate spherical mass maximum.  Thus the viscous transition occurs
at the fixed-$J$ mass maximum.  The inviscid reduced density form satisfies
\begin{equation}
 \widetilde Q_{\rm EP}=Q_J+\Delta Q,\qquad\Delta Q\ge0,      \label{eq:EP-comparison-intro}
\end{equation}
where the additional term comes from preservation of the angular-momentum
distribution.  For every sufficiently small nonzero $\kappa$, we prove
that it postpones the inviscid transition: on a nontrivial interval just
beyond the viscous turning point, the viscous star has one unstable mode
and the inviscid star is axisymmetrically spectrally stable.  Spectral
stability here means absence of spectrum in the open right half-plane.

For rotating Euler--Poisson stars with Rayleigh-stable angular velocity,
Lin and Wang \cite{LinWang2023} obtained a stability criterion,
an unstable-index count, and an exponential trichotomy.  They proved
that fixing an angular-velocity profile need not give a turning-point
principle, whereas fixing the angular-momentum distribution gives the
appropriate inviscid turning-point law.  The viscous problem fixes only
total angular momentum.  The difference in \eqref{eq:EP-comparison-intro}
therefore compares the quadratic forms on different invariant constraint
spaces, even though the equilibrium itself is the same.  We prove
strict separation of the spectral thresholds by analyzing the equality
case of this comparison.

The situation is different without rotation.  Lin and Zeng
\cite{LinZeng2022} determined the Euler--Poisson unstable index from the
oriented mass--radius curve.  Cheng, Lin, and Wang
\cite{ChengLinWang2026} proved the corresponding viscous radial index
formula away from mass extrema, together with nonlinear stability and
instability results.  The viscous and inviscid indices then coincide.
In particular, the rotational correction in
\eqref{eq:EP-comparison-intro} vanishes.  Nonlinear instability of viscous
Lane--Emden stars with $6/5<\gamma<4/3$ was proved earlier by Jang and Tice
\cite{JangTice2013}.  Corollary~\ref{cor:nonrotating-all-mu} below extends
the nonrotating linear count to mass extrema as well.

To treat the rotating viscous problem, we prove a Kelvin--Tait--Chetaev
(KTC) theorem for damped gyroscopic equations with finite negative
stiffness index.  On a common form domain, the theorem assumes coercive
damping and bounded stiffness and gyroscopic forms.  It requires no
compact embedding or compact mass or gyroscopic preconditioner; the
stiffness may have an infinite-dimensional kernel and positive spectrum
accumulating at zero.  The comparison with earlier KTC results is given in
Section~\ref{sec:KTC}.

For the stellar equations, the dissipation itself is degenerate: its
axisymmetric kernel consists of rigid axial translation and rigid
rotation.  Conservation laws and a Routh reduction remove these two
velocities and give a coercive damping form.  The infinite-dimensional
kernel of the displacement stiffness remains and is handled by the
abstract theorem.  The reduction preserves nonzero generalized
eigenspaces, so the instability count applies to the full axisymmetric
generator.  For the turning-point and strict-comparison results we also
prove the required slow-branch regularity throughout
$4/3<\gamma_0<2$, including the physical exponent $\gamma_0=5/3$.

Relativistic turning-point principles of Sorkin and
Friedman--Ipser--Sorkin \cite{Sorkin1981,Sorkin1982,FriedmanIpserSorkin1988}
give secular criteria along sequences with fixed angular momentum.
Canonical-energy and thermodynamic formulations distinguish preservation
of an angular-momentum distribution from preservation of its integral,
with additional particle-number and entropy constraints
\cite{FriedmanSchutz1978,GreenSchiffrinWald2014,SchiffrinWald2014}.
Numerical studies distinguish turning lines from dynamical neutral lines
\cite{TakamiRezzollaYoshida2011,WeihMostRezzolla2018}.  Those criteria and
comparisons concern different models; the present result counts the
unstable spectrum of a specified Newtonian dissipative generator.
For the precise statements, define the energy, mass, and total axial
angular momentum by
\begin{align}
 E(\rho,v)&=\int_{\mathbb R^3}
 \left(\tfrac12\rho|v|^2+\Phi(\rho)+\tfrac12\rho V_\rho\right)dx,
                                                               \label{eq:energy}\\
 M(\rho)&=\int_{\mathbb R^3}\rho\,dx,
 \qquad J(\rho,v)=\int_{\mathbb R^3}\rho r v_\theta\,dx.     \label{eq:MJ}
\end{align}
Equation \eqref{eq:equilibrium} is the Euler--Lagrange equation for
$E+cM-\omega J$.  Along smooth viscous solutions, $M,J$ are conserved
and $E$ decreases.

The subscript ${\rm ax}$ denotes axisymmetric scalar functions or vector
fields: $f(R_\theta x)=f(x)$ and $u(R_\theta x)=R_\theta u(x)$,
respectively.  Put
\begin{align}
 X_{\bar\rho}&=L^2_{h'(\bar\rho),{\rm ax}}(\Omega_{\bar\rho};\mathbb C),
 &\|q\|_{X_{\bar\rho}}^2&=\int_{\Omega_{\bar\rho}}h'(\bar\rho)|q|^2dx,
 \notag\\
 X_0&=\left\{q\in X_{\bar\rho}:\int q\,dx=0\right\},
 & I(\bar\rho)&=\int r^2\bar\rho\,dx.                       \label{eq:intro-spaces}
\end{align}
Density perturbations are extended by zero outside the star; unlabelled
spatial integrals below are over $\mathbb R^3$.  Define the Hermitian
quadratic form
\begin{equation}
 Q_J[q]=\int h'(\bar\rho)|q|^2dx+\int V_q\overline q\,dx
       +\frac{\omega^2}{I(\bar\rho)}
          \left|\int r^2q\,dx\right|^2.                    \label{eq:QJ-intro}
\end{equation}
Here and below, the second variation is first computed on smooth real
perturbations whose density component is supported inside the star, and
then extended continuously as a Hermitian form to the energy space.
Its last term comes from minimizing the velocity part of the second
variation at fixed $J$.  For a Hermitian form $Q$, $n^-(Q|_Y)$ is the
maximal dimension of a subspace of $Y$ on which $Q$ is negative definite.

Let $\mathcal A_{\rm NSP}$ be the unrestricted axisymmetric generator on
$X_{\bar\rho}\times L^2_{\bar\rho,{\rm ax}}(\Omega_{\bar\rho};\mathbb C^3)$,
with domain and action defined in
\eqref{eq:full-generator-domain}--\eqref{eq:full-generator}.
For its isolated eigenvalues with finite-rank Riesz projections, denoted
by $\operatorname{spec}_{\rm iso}(\mathcal A_{\rm NSP})$, set
\begin{equation}
 N_+(\mathcal A_{\rm NSP})
 =\sum_{\substack{\lambda\in\operatorname{spec}_{\rm iso}(\mathcal A_{\rm NSP})\\
                   \Ree\lambda>0}}
    \operatorname{algmult}(\lambda;\mathcal A_{\rm NSP}),       \label{eq:unstable-count}
\end{equation}
where algebraic multiplicity is the rank of the Riesz projection.

\begin{hypothesis}
\label{hyp:EOS}
The pressure satisfies
\[
 P\in C^4((0,\infty))\cap C^1([0,\infty)),\qquad
 P(0)=0,\qquad P'(s)>0\quad(s>0),
\]
and the enthalpy \eqref{eq:enthalpy} is finite at zero.  Near vacuum,
there are $K_0,a_0>0$ and $\gamma_0\in(4/3,2)$ such that
\begin{equation}
 \frac{d^j}{ds^j}\bigl(P(s)-K_0s^{\gamma_0}\bigr)
 =O(s^{\gamma_0+a_0-j}),\qquad 0\le j\le3,\quad s\downarrow0.
                                                               \label{eq:EOS-asymptotic}
\end{equation}
\end{hypothesis}

We call $(\bar\rho,\omega r e_\theta)$ a \emph{regular rigidly rotating
equilibrium} if it satisfies \eqref{eq:equilibrium}, $\bar\rho$ is a
nonzero nonnegative axisymmetric function in $C_c(\mathbb R^3)$, and
$\Omega_{\bar\rho}$ is connected and Lipschitz.  All analytic properties
needed for the following theorem are derived from these conditions and
Hypothesis~\ref{hyp:EOS} in
Proposition~\ref{prop:automatic-admissibility}.

\begin{theorem}
\label{thm:main-index}
Let $(\bar\rho,\omega r e_\theta)$ be a regular rigidly rotating
equilibrium whose pressure law satisfies Hypothesis~\ref{hyp:EOS}.
The spectrum of $\mathcal A_{\rm NSP}$ in $\{\Ree\lambda>0\}$ consists
of finitely many eigenvalues of finite algebraic multiplicity, and
\begin{equation}
 N_+(\mathcal A_{\rm NSP})
 =n^-\!\left(Q_J\big|_{X_0}\right)
 =n^-\!\left(D^2(E+cM-\omega J)
       \big|_{\{\delta M=\delta J=0\}_{\rm ax}}\right).       \label{eq:main-index}
\end{equation}
Consequently the equilibrium has no exponentially growing axisymmetric
mode if and only if $Q_J\ge0$ on $X_0$.
\end{theorem}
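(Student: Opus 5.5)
We will prove Theorem~\ref{thm:main-index} by writing the linearized NSP system in Lagrangian displacement form as a damped gyroscopic equation and then applying the abstract Kelvin--Tait--Chetaev theorem of Section~\ref{sec:KTC}. Linearize about $(\bar\rho,\omega re_\theta)$ in the frame rotating with angular velocity $\omega$. With the displacement $\xi$, take $q=-\nabla\cdot(\bar\rho\xi)$ and $u=\partial_t\xi$. The axisymmetric linearized system then becomes
\[
 \bar\rho\,\ddot\xi+\mathcal D\dot\xi+\mathcal G\dot\xi+\mathcal L\xi=0 .
\]
Here the damping $\mathcal D$ is given by the form $\int 2\nu_s|\dev e(u)|^2+\nu_b|\nabla\cdot u|^2$, and the gyroscopic term is $\mathcal G\dot\xi=2\omega\bar\rho\, e_z\times\dot\xi$. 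The stiffness is the second variation of $E+cM-\omega J$ in displacement variables, namely $\langle\mathcal L\xi,\xi\rangle=\int h'(\bar\rho)|q|^2+\int V_q\bar q-\omega^2\int\bar\rho|\xi_r|^2$ plus rotational terms. The analytic inputs are taken from Proposition~\ref{prop:automatic-admissibility}: the weighted Hardy/trace inequalities near the physical vacuum boundary for $4/3<\gamma_0<2$, boundedness of the stiffness and gyroscopic forms on the energy space, and a Korn-type inequality for the dissipation. Those inputs give the common form domain needed by the abstract theorem.

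Next comes the Routh reduction and the removal of the degenerate damping. On axisymmetric fields, the kernel of $\mathcal D$ is spanned by the rigid translation $e_z$ and the rigid rotation $re_\theta$. The $e_z$ direction is governed by conservation of linear momentum: it decouples and contributes only eigenvalue $0$, and translation invariance kills its stiffness. For $re_\theta$, we use conservation of $J$: fix $\delta J=0$, minimize the kinetic and stiffness contributions over the azimuthal rigid-rotation component, and project. This minimization is exactly what produces the term $\frac{\omega^2}{I(\bar\rho)}|\int r^2q|^2$ in $Q_J$; we check this by direct computation with $\delta J=\int(\bar\rho r u_\theta+r^2\omega q)$. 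We also restrict to mass-zero perturbations. After these steps the damping is coercive on the reduced velocity space, so the KTC theorem applies. It gives that the spectrum in $\{\Ree\lambda>0\}$ consists of finitely many eigenvalues, with total algebraic multiplicity equal to the negative index of the reduced stiffness form. This holds even though the stiffness kernel, coming from the relabelling displacements with $\nabla\cdot(\bar\rho\xi)=0$, is infinite-dimensional.

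The remaining identification is the negative index of the reduced stiffness with $n^-(Q_J|_{X_0})$. The displacement stiffness depends on $\xi$ essentially through $q$, except for the centrifugal and rotational pieces, which the Routh minimization absorbs. We therefore minimize over displacements with prescribed $q\in X_0$; the fibres are affine spaces modulo the kernel. This shows that the negative indices coincide, and that the value equals the constrained Hessian index on $\{\delta M=\delta J=0\}$ by the standard Lagrange-multiplier argument. We then transfer the count back to $\mathcal A_{\rm NSP}$. Nonzero generalized eigenspaces of the full generator are contained in $\{\delta M=\delta J=0,\ \text{zero vertical momentum}\}$, because these functionals are conserved and hence annihilate $(\mathcal A-\lambda)$-ranges for $\lambda\ne0$. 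The reduction map is also injective on such eigenspaces and preserves Jordan structure. Hence the Riesz multiplicities agree, and the final equivalence follows since $N_+=0$ iff $n^-=0$.

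We expect the main obstacle to be the functional-analytic setup near the vacuum boundary. This means showing that the displacement formulation is equivalent, spectrally for $\Ree\lambda>0$, to the density--velocity generator on $X_{\bar\rho}\times L^2_{\bar\rho}$ with its domain. It also means verifying the Korn inequality modulo rigid motions with the degenerate weight $\bar\rho$. To get the coercive damping there, we would first try a weighted Korn inequality on Lipschitz domains combined with the $\gamma_0$-dependent Hardy inequality.
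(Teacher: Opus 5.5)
\textbf{Genuine gaps.} Your architecture matches the paper's: you remove the rigid kernel of the viscosity using conservation laws and a Routh step, apply the KTC theorem, then identify the stiffness index. The problems are in three places: the stiffness you feed into the KTC theorem, the index identification, and the transfer back to $\cA_{\rm NSP}$.

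\emph{The stiffness.} As written, your stiffness breaks the argument. In the rotating frame, differentiate $\nabla(h(\bar\rho)+V_{\bar\rho})+\omega e_z\times(\omega e_z\times x)=0$. This shows the material variation of the centrifugal force cancels \emph{exactly} against the Hessian terms of pressure and gravity (Appendix~\ref{app:lagrangian}). So the displacement stiffness is precisely $\cC^*L_0\cC$. It depends on $q=\cC\xi$ alone and has no $-\omega^2\int\bar\rho|\xi_r|^2$ term. If such a term survived, eliminating a single rigid direction could not absorb it. Take $\xi=\bar\rho^{-1}\nabla\times(\psi e_\theta)$ with $\psi$ smooth and compactly supported in the interior, away from the axis. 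Then $\cC\xi=0$, $(\xi,e_z)_H=(\xi,q_{\rm rot})_H=0$, and $\xi_r=-\bar\rho^{-1}\partial_z\psi\ne0$. Hence for $\omega\ne0$ your form would be negative definite on an infinite-dimensional subspace of $V_\diamond$, and the hypothesis $n^-(k)<\infty$ would fail. Also, ``minimizing over the rigid-rotation component'' is only the energetic identity of Lemma~\ref{lem:second-variation}; it is not the dynamical reduction the KTC theorem needs. Dynamically, on $\Ker\mathfrak p\cap\Ker\mathfrak j$ the rotational coefficient is slaved to the density: $u=v-(\omega/I)\mathfrak r(\sigma)q_{\rm rot}$. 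The Coriolis force $G_0q_{\rm rot}=-2\omega re_r$ on this component becomes the rank-one stiffness, $\frac{2\omega^2}{I}\mathfrak r(\sigma)re_r=\cC_\diamond^*[\frac{\omega^2}{I}\mathfrak r(\sigma)r^2]$. This yields $K=\cC_\diamond^*L_J\cC_\diamond$ together with the skew term $G=P_\diamond G_0|_{H_\diamond}$.

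\emph{The index identification.} Your fibre argument assumes $\cC$ maps the damping domain onto $X_0$. After removing $e_z$, however, $\int z\,\cC_\diamond v\,dx=(v,e_z)_H=0$, so $\Ran\cC_\diamond\subset X_{00}$. The range is only known to be dense there: its $X$-annihilator is spanned by $1/h'(\bar\rho)$ and $z/h'(\bar\rho)$. The correct step uses that $k$ depends on $v$ only through $\cC_\diamond v$. An $A$-negative subspace injects into an $L_J$-negative subspace of $X_{00}$. Conversely, a basis of any finite-dimensional $L_J$-negative subspace of $X_{00}$ can be approximated from the range while keeping its Gram matrix negative. The exact zero mode $\tau$, with $L_J\tau=0$ and $\int z\tau\,dx=M$, then upgrades $X_{00}$ to $X_0$.

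\emph{The transfer to $\cA_{\rm NSP}$.} Mapping generalized eigenspaces of $\cA_{\rm NSP}$ injectively into the reduced system is not enough. It proves neither that the spectrum in $\{\Ree\lambda>0\}$ is discrete (so that Riesz projections exist) nor that every reduced mode lifts. You need a resolvent-level equivalence. The paper builds it in three steps:
\begin{enumerate}
\item Use the invariant subspace $\mathscr Y=\Ker\mathfrak m\cap\Ker\mathfrak p\cap\Ker\mathfrak j\cap\Ker\mathfrak z$, a four-dimensional complement $F\subset\Dom(\cA_{\rm NSP})$, and the triangular factorization of $\lambda-\cA_{\rm NSP}$ with nilpotent corner $N_F$.
\item Apply the Routh similarity onto $\cA_{\rm red}$.
\item Use the analytic Gaussian elimination $\mathcal M(\lambda)\simeq\operatorname{diag}(\lambda,\lambda^{-1}P(\lambda))$.
\end{enumerate}
The last step also removes the obstacle you flag. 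The density never needs to be of the form $\cC\xi$: for $\lambda\ne0$ the resolvent equation gives $\sigma=\lambda^{-1}(\cC_\diamond v+f)$, and the pencil acts on the velocity $v$.

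\emph{Korn.} No weighted Korn or Hardy inequality is involved, because $d$ is unweighted. The standard Korn inequality on the bounded Lipschitz domain modulo $\Span\{e_z,q_{\rm rot}\}$, together with $\bar\rho\in L^\infty$, gives coercivity on $V_\diamond$ and the embedding into $H_\diamond$. Boundedness of $k$ comes from $h'(\bar\rho)|\nabla\bar\rho|^2\in L^\infty$.
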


We impose no mass, axial-momentum, total-angular-momentum, centering, or
$z$-parity restriction on the spectral problem.  The first four conditions
are automatic on every nonzero generalized eigenspace by the conservation
laws and the axial first-moment identity, and the Routh transformation
preserves its Jordan structure.

For the spherical family let $M_0(\mu)$ and $R_0(\mu)$ denote its mass
and radius at center density $\mu$.  Suppose its first critical point
$\mu_*$ satisfies
\begin{equation}
 \partial_\mu M_0(\mu)>0\ (0<\mu<\mu_*),\qquad
 \partial_\mu M_0(\mu_*)=0,\qquad
 \partial_\mu^2M_0(\mu_*)<0.                              \label{eq:nondegenerate-maximum}
\end{equation}
Set
\[
 \mu_e=\inf\left\{\mu>0:
   \partial_\mu\left(M_0(\mu)/R_0(\mu)\right)=0\right\},
\]
with $\inf\varnothing=+\infty$.  By
Lemma~\ref{lem:first-maximum-before-mutilde}, $\mu_*<\mu_e$.
Fix $I_\mu=[\mu_-,\mu_+]\Subset(0,\mu_e)$ containing $\mu_*$
in its interior and no other zero of $\partial_\mu M_0$.
The fixed-angular-momentum branch, with $J=\kappa$, is written
$(\rho_{\mu,\kappa},\omega(\mu,\kappa)r e_\theta)$, and
$M_\kappa(\mu)=\int\rho_{\mu,\kappa}\,dx$.

\begin{theorem}
\label{thm:turning-point}
Assume Hypothesis~\ref{hyp:EOS} and
\eqref{eq:nondegenerate-maximum}.  For every sufficiently small fixed
$\kappa$, the center-normalized slow branch of
Proposition~\ref{prop:fixed-J-branch} exists on $I_\mu$, and $M_\kappa$
has there a unique critical point $\mu_*^\kappa=\mu_*+O(\kappa^2)$,
which is a strict maximum.  For every $\mu\in I_\mu$,
\begin{equation}
 N_+\bigl(\mathcal A_{\rm NSP}(\mu,\kappa)\bigr)
 =\begin{cases}
 0,&\partial_\mu M_\kappa(\mu)\ge0,\\
 1,&\partial_\mu M_\kappa(\mu)<0.
 \end{cases}                                               \label{eq:turning-count}
\end{equation}
At $\mu=\mu_*^\kappa$,
\begin{equation}
 \Ker\!\left(Q_J(\mu_*^\kappa,\kappa)|_{X_0}\right)
 =\operatorname{span}\left\{
  \left.\partial_z\rho_{\mu,\kappa}\right|_{\mu=\mu_*^\kappa},
  \left.\partial_\mu\rho_{\mu,\kappa}\right|_{\mu=\mu_*^\kappa}
 \right\},                                                 \label{eq:main-turning-kernel}
\end{equation}
and there is no spectrum in the open right half-plane.  Thus the viscous
transition near the first spherical turning point occurs exactly at the
corresponding fixed-$J$ mass maximum.
\end{theorem}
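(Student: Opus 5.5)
By Theorem~\ref{thm:main-index}, the count \eqref{eq:turning-count} reduces to computing $n^-(Q_J(\mu,\kappa)|_{X_0})$ along the branch. The plan is perturbative: first settle the spherical case $\kappa=0$, then extend it to small $\kappa$ uniformly on $I_\mu$.

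\textbf{Step 1: spherical count.} At $\kappa=0$ we have $\omega=0$, so $Q_J$ is the nonrotating form $\int h'(\bar\rho)|q|^2+\int V_q\bar q$. I will use the Lin--Zeng analysis \cite{LinZeng2022} and the radial viscous index of \cite{ChengLinWang2026}, extended to mass extrema as in Corollary~\ref{cor:nonrotating-all-mu}. These give, for $\mu<\mu_e$:
\begin{itemize}[label={}]
\item $L_\mu=h'(\bar\rho)+V_{(\cdot)}$ has exactly one negative direction on $X_{\bar\rho}$;
\item $\Ker L_\mu=\Span\{\partial_{x_i}\bar\rho\}$, whose axisymmetric part is $\partial_z\bar\rho$ (nondegeneracy below $\mu_e$);
\item $L_\mu\partial_\mu\bar\rho=-\partial_\mu c$, a constant.
\end{itemize}
The standard constrained-index lemma then gives
\[
n^-(L_\mu|_{X_0})=1-n^-\bigl(-\partial_\mu c\,\partial_\mu M_0\bigr)+(\text{kernel correction}).
\]
Here $\partial_\mu c$ has a fixed sign, and this sign is linked to $\mu<\mu_e$ through $c=M_0/R_0$ up to sign. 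Hence the index is $0$ where $\partial_\mu M_0\ge0$ and $1$ where $\partial_\mu M_0<0$. At $\mu_*$ the kernel on $X_0$ is $\Span\{\partial_z\bar\rho,\partial_\mu\bar\rho\}$.

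\textbf{Step 2: the rotating branch.} Proposition~\ref{prop:fixed-J-branch} provides $\rho_{\mu,\kappa}$ with $\omega=O(\kappa)$, depending $C^2$ on $(\mu,\kappa^2)$ in the appropriate weighted norms. By implicit function arguments, $M_\kappa=M_0+O(\kappa^2)$ in $C^2(I_\mu)$. Since $\partial_\mu^2M_0(\mu_*)<0$ and $\partial_\mu M_0\neq0$ on $I_\mu\setminus\{\mu_*\}$, $M_\kappa$ has a unique critical point $\mu_*^\kappa=\mu_*+O(\kappa^2)$, and it is a strict maximum.

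For the index, I use the structure that persists for $\kappa\neq0$:
\begin{itemize}[label={}]
\item $Q_J$ has the kernel vector $\partial_z\rho_{\mu,\kappa}$, by axial translation invariance.
\item Differentiating \eqref{eq:equilibrium} in $\mu$ at fixed $J$ gives
\[
L^J_\mu\,\partial_\mu\rho=-\partial_\mu c,
\]
where $L^J$ is the operator of $Q_J$ and the $\omega$-variation is absorbed by the $r^2$ rank-one term. This works because $\delta J=0$ couples $\delta\omega$ to $\int r^2\delta\rho$.
\item $L^J_\mu$ is an $O(\kappa^2)$ relatively form-bounded perturbation of $L_\mu$, with the extra term of rank one. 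So $n^-(L^J_\mu)=1$, and its kernel stays exactly $\Span\{\partial_z\rho\}$ by uniform gap persistence near the isolated eigenvalue $0$ on the $z$-odd/even splitting.
\end{itemize}
The same constrained-index formula now yields $n^-(Q_J|_{X_0})=0$ or $1$ according to the sign of $\partial_\mu c\,\partial_\mu M_\kappa$. Since $\partial_\mu c\neq0$ uniformly for small $\kappa$, this gives \eqref{eq:turning-count}. At $\mu_*^\kappa$, $\partial_\mu\rho\in X_0$ lies in the kernel, which gives \eqref{eq:main-turning-kernel}.

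\textbf{Step 3: main obstacle.} The hard part is the uniformity of the perturbation in Step 2. The physical-vacuum boundary moves with $\kappa$, so the spaces $X_{\bar\rho}$ themselves vary. I would first transport everything to a fixed reference domain by the center-normalized diffeomorphism of Proposition~\ref{prop:fixed-J-branch}, checking that the weights $h'(\rho_{\mu,\kappa})$ remain uniformly comparable. Then I would argue that the spectral gap of $L_\mu$ around $0$ on the $z$-even sector persists, so that no eigenvalue crosses zero except through the mass condition. The final claim, no spectrum in the open right half-plane at $\mu_*^\kappa$, follows from $n^-=0$ there and the first assertion of Theorem~\ref{thm:main-index}.
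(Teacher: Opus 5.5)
Your proposal follows the paper's proof in all essentials: reduction through Theorem~\ref{thm:main-index}; persistence of the spherical density spectrum to small $\kappa$ (Proposition~\ref{prop:density-spectrum}); the branch-tangent identity $L_Jf=-(\partial_\mu c_{\mu,\kappa})\mathbf 1$ (Lemma~\ref{lem:branch-tangent}); positivity of $\partial_\mu c_{\mu,\kappa}$ (Lemma~\ref{lem:c-positive}); the one-constraint inertia count (Lemma~\ref{lem:one-constraint-inertia}); and $\mu_*^\kappa$ obtained from $C^2$ convergence of $M_\kappa$ (Proposition~\ref{prop:mass-maximum}). Two of your steps are handled more concretely in the paper. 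The moving vacuum is treated by the isometry $\sigma\mapsto h'(\rho)^{1/2}\sigma$ followed by zero extension to a fixed ball, so the singular weight becomes the identity and only the Hilbert--Schmidt kernel $-b(x)b(y)/|x-y|$, with $b=[\mathscr G'(q)]^{1/2}$, and the rank-one term depend on the parameters; uniform comparability of the weights alone would not give the norm continuity the Riesz projections need. The orientation of the count rests on the exact identity $c_{\mu,0}=M_0/R_0$, together with $\partial_\mu c_{\mu,0}>0$ on $(0,\mu_e)$, which follows from the low-density scaling and the definition of $\mu_e$.
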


\begin{theorem}
\label{thm:strict-separation}
Assume the hypotheses of Theorem~\ref{thm:turning-point}.  For every
sufficiently small $\kappa\ne0$, $\widetilde Q_{\rm EP}$ is nonnegative
at $\mu_*^\kappa$ and remains nonnegative, modulo axial translation, for
$\mu_*^\kappa\le\mu\le\mu_*^\kappa+\delta_\kappa$, where
$0<\delta_\kappa<\mu_+-\mu_*^\kappa$.  In particular, the local
inviscid threshold
\begin{equation}
 \widehat\mu_\kappa:=\inf\left\{\mu\in(\mu_*^\kappa,\mu_+]:
 n^-\!\left(\widetilde Q_{\rm EP}(\mu,\kappa)|_{X_0}\right)>0
 \right\}                                                  \label{eq:inviscid-threshold}
\end{equation}
satisfies $\widehat\mu_\kappa>\mu_*^\kappa$, with
$\inf\varnothing=+\infty$.  Thus, on a nontrivial parameter interval,
the viscous star has one unstable axisymmetric mode while the corresponding
inviscid star is axisymmetrically spectrally stable.
\end{theorem}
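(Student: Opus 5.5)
We argue by continuity from the viscous threshold, using the decomposition \eqref{eq:EP-comparison-intro}, $\widetilde Q_{\rm EP}=Q_J+\Delta Q$ with $\Delta Q\ge0$.

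\textbf{Step 1: nonnegativity at the threshold.} At $\mu=\mu_*^\kappa$, Theorem~\ref{thm:turning-point} gives $Q_J\ge0$ on $X_0$. By \eqref{eq:main-turning-kernel}, its kernel is spanned by $\partial_z\rho$ and $\partial_\mu\rho$. Hence $\widetilde Q_{\rm EP}\ge Q_J\ge0$ there.

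\textbf{Step 2: strict positivity on the $\mu$-direction.} The main point is that $\Delta Q$ is strictly positive on the remaining kernel direction $\partial_\mu\rho_{\mu,\kappa}$ when $\kappa\ne0$. The term $\Delta Q$ comes from preserving the material distribution of specific angular momentum. So $\Delta Q[q]=0$ should force the density perturbation $q$ to be compatible with a perturbation keeping $r v_\theta$ transported, for example a rigid rotation change $\delta\omega$ that matches only the total-$J$ constraint. Concretely, I expect $\Delta Q[q]$ to be the squared norm, weighted by $\omega^2$, of the projection of $\int r^2 q$-type moments onto the orthogonal complement of the rigid-rotation direction:
\[
\Delta Q[q]=\omega^2\Bigl(\|P q\|^2-\frac{|\int r^2 q\,dx|^2}{I(\bar\rho)}\Bigr),
\]
where $Pq$ is the Lagrangian angular-momentum defect. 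By Cauchy--Schwarz this vanishes only when the induced change of $r^2$-distribution along level sets of the mass coordinate is proportional to $\bar\rho r^2$.

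For $q=\partial_\mu\rho_{\mu,\kappa}$, I would expand in $\kappa$: $\omega=O(\kappa)$, and the leading profile of $\partial_\mu\rho$ is the spherical $\partial_\mu\rho_{\mu_*}$. One then checks that this profile is not of the degenerate rigid form. The reason is that the spherical family changes mass concentration nonhomologously; homology would require $\partial_\mu(M_0/R_0)$-type relations, and Lemma~\ref{lem:first-maximum-before-mutilde}, $\mu_*<\mu_e$, excludes this. This yields $\Delta Q[\partial_\mu\rho]\ge c\,\omega^2\ge c'\kappa^2>0$. Together with the fact that $\partial_z\rho$ is a translation mode with $\Delta Q[\partial_z\rho]=0$, we conclude that $\widetilde Q_{\rm EP}(\mu_*^\kappa)$ is nonnegative, with kernel equal to the translation mode only.

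\textbf{Step 3: persistence beyond $\mu_*^\kappa$.} We quotient out axial translation and let $Y$ be a complement in $X_0$. On $Y$, $Q_J$ has exactly one eigen-direction near zero, the perturbation of $\partial_\mu\rho$, whose eigenvalue is $\sim\partial_\mu M_\kappa$ and becomes negative past $\mu_*^\kappa$ at rate $O(\mu-\mu_*^\kappa)$. The rest of $Q_J$ is bounded below by a gap $\ge c_0>0$ uniformly in $\mu$ near $\mu_*^\kappa$; this follows from a spectral-gap argument from the spherical analysis and Proposition~\ref{prop:fixed-J-branch}. Adding $\Delta Q\ge0$, continuous in $\mu$, the form $\widetilde Q_{\rm EP}$ on the near-kernel direction is at least $c'\kappa^2-C(\mu-\mu_*^\kappa)$, and the cross terms are controlled by the gap through a Schur complement. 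So $\widetilde Q_{\rm EP}\ge0$ modulo translation for $\mu-\mu_*^\kappa\le\delta_\kappa\sim\kappa^2$, which gives $\widehat\mu_\kappa>\mu_*^\kappa$. Combined with Theorem~\ref{thm:turning-point} ($N_+=1$ there) and the inviscid index theorem of \cite{LinWang2023} (stability iff $\widetilde Q_{\rm EP}\ge0$), this proves the final claim.

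\textbf{Main obstacle.} The hard step is Step 2: the quantitative lower bound $\Delta Q[\partial_\mu\rho]\gtrsim\kappa^2$. It requires identifying $\Delta Q$ explicitly and excluding the equality case uniformly as $\kappa\to0$. I would first try to reduce it to a spherical nondegeneracy statement via the $\kappa^2$-expansion.
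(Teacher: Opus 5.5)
Your Steps 1 and 3 follow the paper's outline: nonnegativity at the turning point from $\widetilde Q_{\rm EP}=Q_J+\Delta Q$, then persistence of a gap modulo translation. Step 2, which carries the theorem, has a genuine gap.

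First, you never identify $\Delta Q$; your operator ``$P$'' is undefined. What is needed is the column-mass form obtained from the rigid-rotation specialization of the Lin--Wang reduced functional:
\[
 \Delta Q[\sigma]=8\pi\omega^2\Bigl[\int_0^R\frac{|m_\sigma|^2}{m_{\bar\rho}'}\,dr-\frac{|\int_0^R r\,m_\sigma\,dr|^2}{\int_0^R r^2m_{\bar\rho}'\,dr}\Bigr],\qquad m_\sigma(r)=\int_0^r s\,\Sigma_\sigma(s)\,ds .
\]
Here $\Sigma_\sigma$ is the vertical column density. Its Cauchy--Schwarz equality case is exactly $m_\sigma=C\,r\,m_{\bar\rho}'$.

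Second, and more seriously, your reason for excluding the equality case is wrong. The condition $\mu_*<\mu_e$, i.e.\ $\partial_\mu(M_0/R_0)\ne0$, does not rule out homology. For $P=K\rho^{4/3}$ one has $\rho_\mu(x)=a^3\rho_1(ax)$ with $a=\mu^{1/3}$. Hence $\partial_\mu\rho_\mu=(3\mu)^{-1}(3\rho_\mu+x\cdot\nabla\rho_\mu)$ is exactly homologous, while $\partial_\mu c_{\mu,0}=\partial_\mu(M_0/R_0)>0$ for every $\mu$. So no argument based on the orientation of $c$ can give $\Delta Q[f_\kappa]>0$.

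The missing idea is that the obstruction comes from the equation of state. The paper argues by contradiction.
\begin{enumerate}
\item Suppose $\Delta Q[f_{\kappa_n}]=0$ along a sequence $\kappa_n\to0$. Uniform convergence $f_n\to f_*$ and $\rho_n\to\rho_*$ on a fixed ball passes $m_{f_n}=C_n r m'_{\rho_n}$ to the limit $m_{f_*}=C_* r m'_{\rho_*}$.
\item The homology tangent $g=3\rho_*+x\cdot\nabla\rho_*$ satisfies $m_g=r m'_{\rho_*}$. The column density of a radial function is its Abel transform, which is injective (Lemma~\ref{lem:Abel-injectivity}). Hence $f_*=C_* g$, and $C_*\neq0$ because $f_*(0)=1$.
\item The tangent identity gives $L_*f_*=-\partial_\mu c_{\mu_*,0}\mathbf 1$, while scaling gives $L_*g=3P'(\rho_*)-h(\rho_*)-c_*$. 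Together these force $3P'(s)-h(s)$ to be constant on $(0,\mu_*]$, i.e.\ $P=K\rho^{4/3}$ there, contradicting $\gamma_0>4/3$.
\end{enumerate}
This is also why one passes to the spherical limit: for the rotating star, column-density data do not determine $f$.

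The argument yields only $\Delta Q[f_\kappa]>0$ for each small $\kappa\ne0$, and that is all the theorem needs. The paper's Step 3 is soft: on a fixed ambient space it writes the Riesz operator as $I+K_{\mu,\kappa}$ with $K$ compact and norm-continuous (Lemma~\ref{lem:form-continuity}), and does not quantify $\delta_\kappa$. Your rates $\Delta Q[f_\kappa]\gtrsim\kappa^2$ and $\delta_\kappa\sim\kappa^2$ are plausible. They would need a uniform, compactness version of the argument above and uniform continuity of the singular Rayleigh form across the moving supports, and neither is supplied by what you wrote.
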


The interval length $\delta_\kappa$ may tend to zero as $\kappa\to0$.
The assertions concern spectrum in the open right half-plane and do not
imply bounded center dynamics at the turning point.

The support geometry and joint parameter regularity used in these
theorems are derived in Propositions~\ref{prop:slow-branch-full-range}
and \ref{prop:slow-vacuum-tangents}, with the moving-interface estimates
in Appendix~\ref{app:slow-branch}.  This supplies the regularity needed
here in addition to the classical slow-rotation constructions
\cite{Heilig1994,JangMakino2017,JangMakino2019,StraussWu2017}; related
physical-vacuum evolution theories include
\cite{Jang2010,LuoXinZeng2014}.  Softened equations of state with
low-density exponent $5/3$ provide examples, subject to the scalar
condition \eqref{eq:nondegenerate-maximum}; see
Example~\ref{ex:asymptotic-polytropes}.

The inviscid limit is singular at the level of the unstable index.  Fix
$\kappa\ne0$ sufficiently small and
$\mu\in(\mu_*^\kappa,\mu_*^\kappa+\delta_\kappa]$.  If both viscosities
are multiplied by $\varepsilon>0$, then
\[
 N_+\bigl(\mathcal A_\varepsilon(\mu,\kappa)\bigr)=1
 \quad(\varepsilon>0),\qquad
 N_+\bigl(\mathcal A_0(\mu,\kappa)\bigr)=0,
\]
where $\mathcal A_0$ is the Euler--Poisson generator.  At the endpoint
$\mu=\mu_*^\kappa$, both systems have no spectrum in the open right
half-plane, although their zero-frequency structures need not coincide.
At zero viscosity, the angular-momentum-distribution constraints reappear.
This change in the interior conservation laws differs from the usual no-slip
inviscid limit, where mismatching viscous and Euler boundary conditions can
produce Prandtl-type boundary layers.  The index discontinuity alone does not
determine individual eigenvalue limits or rule out finite-time convergence of
solutions.

We describe the main arguments. We first prove an abstract
Kelvin--Tait--Chetaev (KTC) theorem that isolates the spectral mechanism
needed for the viscous stellar problem. The reduced displacement equation has the
weak form
\begin{equation}
 Ju_{tt}+\mathsf D u_t+\mathsf G u_t+\mathsf K u=0,
 \label{eq:abstract-intro}
\end{equation}
where $\mathsf D$ is the coercive damping form operator, $\mathsf G$ is
skew-Hermitian, $\mathsf K$ is the symmetric stiffness operator with finite
negative index, and $J$ is the mass operator induced by the embedding of the
damping form domain into the velocity space.  To study spectral instability,
we seek normal modes
\[
 u(t)=e^{\lambda t}\widehat u,\qquad \lambda\in\mathbb C,
\]
where $\lambda$ is the temporal spectral parameter; $\Re\lambda>0$
corresponds to exponential growth.  Substitution gives the quadratic
characteristic pencil
\[
 \mathsf K+\lambda(\mathsf D+\mathsf G)+\lambda^2J.
\]
Let $R_d$ be the Riesz isomorphism induced by the damping form $d$.
Preconditioning by $R_d^{-1}$ and setting
\[
 A=R_d^{-1}\mathsf K,\qquad
 \Gamma=R_d^{-1}\mathsf G,\qquad
 M=R_d^{-1}J,
\]
gives
\[
 \mathcal P(\lambda)
 =A+\lambda(I+\Gamma)+\lambda^2M,
\]
since $R_d^{-1}\mathsf D=I$ on the damping form space.  Decomposing $A$
into its nonnegative and negative spectral parts, the latter has finite
rank because the stiffness has finite negative index.  For
$\Re\lambda\ge0$, $\lambda\ne0$, multiplication of the nonnegative part
of the pencil by $\bar\lambda$ yields a coercive form, so the pencil is
Fredholm of index zero without any compactness assumption on $M$ or
$\Gamma$.  The main issue is the possible accumulation of characteristic
values at the origin, since the nonnegative stiffness may have no spectral
gap there.  Projecting a characteristic vector onto the finite-dimensional
negative spectral space gives
\[
 \|u_-\|=O(|\lambda|)
 \qquad (\lambda\to0,\ \Re\lambda\ge0),
\]
and substitution into the scalar energy identity excludes all nonzero
characteristic values sufficiently close to the origin.  One can therefore
choose a fixed contour in the right half-plane and deform the mass and
gyroscopic terms to zero while keeping the coercive damping fixed.
The Gohberg--Sigal argument principle then shows that the total algebraic
multiplicity of the unstable characteristic values equals the negative
stiffness index.  Finally, a domain-valued root-chain correspondence
identifies the partial multiplicities of the quadratic pencil with the
Jordan structure of the first-order generator, and hence transfers the
pencil count to its unstable spectral index.

We next apply the abstract KTC theorem to the linearized NSP system.
The viscous dissipation is not coercive on the full axisymmetric velocity
space: its kernel is generated by the rigid axial translation $e_z$ and
the rigid rotation $q_{\rm rot}=re_\theta$.  We project these two
directions out and work on the corresponding velocity space
$H_\diamond$ and form domain $V_\diamond$.  Korn's inequality then makes
the viscous form coercive on $V_\diamond$, so that the reduced displacement
equation falls within the framework of the abstract theorem.  Its
stiffness form is
\[
 k[v,w]
 =\bigl\langle L_J\cC_\diamond v,\cC_\diamond w\bigr\rangle,
 \qquad
 \cC_\diamond v=-\nabla\cdot(\bar\rho v),
\]
where $L_J$ is the density Hessian, namely the operator associated with
the density quadratic form obtained from the second variation of the
augmented energy after minimizing over velocity perturbations subject
to fixed total angular momentum.  The
stiffness still has an infinite-dimensional kernel, containing in
particular weighted-divergence-free displacements, but this is allowed by
the abstract KTC theorem.

The projection of the rigid viscous modes does not impose an artificial
restriction on the unstable spectral problem.  Indeed, mass, axial
momentum, and total angular momentum are conserved, while the axial first
moment satisfies an evolution identity whose derivative is the axial
momentum.  The corresponding four bounded covectors therefore vanish on
every nonzero generalized eigenspace of the unrestricted generator.
They determine the rigid velocity components, and a domain-preserving
Routh transformation reduces the dynamics to the coercive velocity
space.  For the density part, the range of $\cC_\diamond$ is dense in
the mass-zero, axially centered space $X_{00}$.  Since the negative
subspaces are finite-dimensional, approximation of the corresponding
Gram matrices gives
\[
 n^-(\cC_\diamond^*L_J\cC_\diamond)
 =n^-(L_J|_{X_{00}})
 =n^-(Q_J|_{X_0}).
\]
The last equality restores axial translation, which is an exact zero
direction of the density Hessian.  Analytic Gaussian elimination
identifies the partial multiplicities of the reduced first-order system
with those of its quadratic pencil, while a finite-dimensional resolvent
decomposition and the Routh similarity transfer the resulting count back
to the full, unrestricted axisymmetric NSP generator.  This proves the
instability-index formula of Theorem~\ref{thm:main-index}.

The turning-point theorem requires a further analysis of the slowly
rotating equilibrium branch.  We construct this branch in the enthalpy
variable on a fixed ambient domain, rather than differentiating the
moving vacuum boundary directly.  Although second parameter derivatives
of the zero-extended density may be singular at the free surface when
$\gamma_0>3/2$, these singularities remain integrable throughout
$4/3<\gamma_0<2$.  This gives the regularity needed to pass from fixed
angular velocity to fixed total angular momentum $J=\kappa$ and to
differentiate the equilibrium equation along the fixed-$J$ branch.
The branch tangent satisfies
\[
 L_J\partial_\mu\rho_{\mu,\kappa}
 =-(\partial_\mu c_{\mu,\kappa})\,1,
 \qquad
 Q_J[\partial_\mu\rho_{\mu,\kappa}]
 =-(\partial_\mu c_{\mu,\kappa})
   \partial_\mu M_\kappa.
\]
Before the first spherical radial degeneracy, continuation of the density
spectrum shows that $L_J$ has exactly one negative direction, with axial
translation as its only kernel direction, and
$\partial_\mu c_{\mu,\kappa}>0$ for sufficiently small $\kappa$.
A one-constraint inertia calculation therefore reduces the constrained
Morse index to the sign of $\partial_\mu M_\kappa$.  Hence the viscous
unstable index changes precisely when the fixed-$J$ mass curve passes
through the continuation of the first nondegenerate spherical mass
maximum.  At the turning point the branch tangent enters the constrained
kernel in addition to the persistent axial-translation mode.

Finally, we compare the viscous constrained Hessian with the reduced
Euler--Poisson form.  For rigid rotation,
\[
 \widetilde Q_{\rm EP}=Q_J+\Delta Q,\qquad \Delta Q\ge0,
\]
where $\Delta Q$ is the additional quadratic term arising from
preservation of the material distribution of specific angular momentum
in the inviscid dynamics.  Weighted Cauchy--Schwarz gives an explicit
characterization of its equality case: in terms of the cylindrical
column mass,
\[
 m_\sigma(r)=C\,r\,m'_{\bar\rho}(r).
\]
If equality held on the turning direction for a sequence of slowly
rotating stars with $\kappa\to0$, passage to the spherical limit and
Abel inversion would identify the limiting branch tangent with the
mass-preserving homology tangent
\[
 3\rho+x\cdot\nabla\rho.
\]
The differentiated equilibrium equation would then force
$3P'(\rho)-h(\rho)$ to be constant on the density range of the star and
therefore
\[
 P(\rho)=K\rho^{4/3}.
\]
This contradicts the assumed low-density exponent $\gamma_0>4/3$.
Consequently $\Delta Q$ is strictly positive in the turning direction
for every sufficiently small nonzero $\kappa$.  Compactness and
operator-norm continuity of the Rayleigh contribution then give a
positive inviscid gap modulo axial translation on an interval beyond
the viscous turning point.  Thus the viscous star already has one
unstable axisymmetric mode there, whereas the corresponding
Euler--Poisson star remains axisymmetrically spectrally stable, proving
Theorem~\ref{thm:strict-separation}.

Several questions remain open.  These include the asymptotic behavior of
the unstable eigenvalues as the viscosity tends to zero, nonlinear
secular evolution beyond the turning point, the center dynamics at the
critical parameter, and nonaxisymmetric perturbations.  Rapidly rotating
configurations \cite{StraussWu2019} and relativistic viscous bar modes
\cite{SaijoGourgoulhon2006} provide further settings for comparison.
Radiation-driven instabilities
\cite{Chandrasekhar1970,FriedmanSchutz1975,Andersson1998,
FriedmanMorsink1998}
also remove energy but, unlike viscosity in the present model, change
the total angular momentum.  A relativistic extension would require a
causal dissipative model
\cite{IsraelStewart1979,HiscockLindblom1983,Kovtun2019,
BemficaDisconziNoronha2022}
and a corresponding spectral analysis of its additional variables.

Section~\ref{sec:KTC} proves the abstract KTC theorem.
Sections~\ref{sec:linearization} and \ref{sec:index-proof} derive the
linearized NSP system and prove Theorem~\ref{thm:main-index}.
Section~\ref{sec:equilibria} constructs the fixed-$J$ branch and analyzes
its constrained Hessian.  Sections~\ref{sec:turning-proof} and
\ref{sec:comparison} prove Theorems~\ref{thm:turning-point} and
\ref{thm:strict-separation}, respectively.  The appendices contain the
rotating-frame Lagrangian derivation, the moving-vacuum regularity
estimates, the operator-pencil facts used in the KTC argument, and the
remaining functional-analytic estimates.

\section{An abstract Kelvin--Tait--Chetaev theorem}
\label{sec:KTC}

We consider second-order equations with symmetric damping and a gyroscopic
term.  Under a coercive damping estimate and finite negative stiffness
index, we determine the dimension of the unstable spectral subspace of
the first-order generator by counting the characteristic values of its
quadratic pencil.

\subsection{The second-order equation and its generator}

Let $V$ and $H$ be complex Hilbert spaces and let
$\jmath:V\hookrightarrow H$ be a dense continuous embedding.  Inner products
are taken to be linear in the first argument.  We use the continuous
antidual $V^*$, so $\langle f,v\rangle$ is linear in $f$ and conjugate-linear
in $v$.  Let $d,k,g$ be bounded sesquilinear forms on $V$ such that
\begin{align}
 d[u,v]&=\overline{d[v,u]},
 &d[u,u]&\ge d_0\|u\|_V^2,                                      \label{eq:KTC-d}\\
 k[u,v]&=\overline{k[v,u]},
 &|k[u,v]|&\le C_k\|u\|_V\|v\|_V,                              \label{eq:KTC-k}\\
 g[u,v]&=-\overline{g[v,u]},
 &|g[u,v]|&\le C_g\|u\|_V\|v\|_V.                             \label{eq:KTC-g}
\end{align}
The stellar application satisfies the stronger bound
\begin{equation}
 |g[u,v]|\le C_g\|u\|_H\|v\|_H.                                \label{eq:KTC-g-H}
\end{equation}
Only \eqref{eq:KTC-g} is needed for the abstract theorem.  We impose
no compactness assumption on the embedding or the gyroscopic form.
We use the equivalent norm
$\|u\|_V^2=d[u,u]$ from now on.

Let $\iota:H\to V^*$ be the canonical injection,
\[
 \langle\iota h,w\rangle=(h,\jmath w)_H,
\]
and set $J=\iota\jmath$.  Denote by
$\mathsf D,\mathsf K,\mathsf G\in\mathcal L(V,V^*)$ the form operators
defined by
\[
 \langle\mathsf Du,w\rangle=d[u,w],\qquad
 \langle\mathsf Ku,w\rangle=k[u,w],\qquad
 \langle\mathsf Gu,w\rangle=g[u,w].
\]
The weak second-order equation is
\begin{equation}
Ju_{tt}+\mathsf Du_t+\mathsf Gu_t+\mathsf Ku=0.                  \label{eq:abstract-second-order}
\end{equation}
Time derivatives here may be interpreted distributionally.  When
$u_{tt}$ is $H$-valued, its image in $V^*$ is understood through
$\iota:H\to V^*$; $J=\iota\jmath$ is the restriction to $V$.
On $\mathscr X=V\times H$, equipped with
\[
 \|(u,v)\|_{\mathscr X}^2=\|u\|_V^2+\|v\|_H^2,
\]
define
\begin{align}
 \operatorname{Dom}(\mathscr A)
  &=\left\{(u,v)\in V\times V:
       \mathsf Ku+\mathsf Dv+\mathsf Gv\in\iota(H)\right\},
                                                                    \label{eq:KTC-generator-domain}\\
 \mathscr A(u,v)
  &=\left(v,-\iota^{-1}
       (\mathsf Ku+\mathsf Dv+\mathsf Gv)\right).                  \label{eq:KTC-generator}
\end{align}

\begin{lemma}
\label{lem:KTC-wellposed}
Under \eqref{eq:KTC-d}--\eqref{eq:KTC-g}, the operator
$\mathscr A$ generates a strongly continuous semigroup on $\mathscr X$.
Consequently, \eqref{eq:abstract-second-order} is well posed on
$V\times H$ in the semigroup sense.
\end{lemma}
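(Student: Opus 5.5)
The plan is to show that $\mathscr A$ is a bounded perturbation of a quasi-m-dissipative operator and then apply Lumer--Phillips together with bounded perturbation theory. The negative stiffness index plays no role here, since only boundedness of $k$ is used. The inner product on $\mathscr X$ is not adapted to $k$, so I first split
\[
 \mathscr A=\mathscr A_0+\mathscr B,\qquad
 \mathscr B(u,v)=\bigl(0,\ -\iota^{-1}(\mathsf K-\mathsf D)u\bigr)\ \text{formally},
\]
and rearrange this so that $\mathscr A_0$ has a skew coupling in the inner product built from $d$.

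For the skew coupling I take the principal part to be
\[
 \mathscr A_0(u,v)=\bigl(v,\ -\iota^{-1}(\mathsf Du+\mathsf Dv+\mathsf Gv)\bigr),
\]
with domain $\{(u,v)\in V\times V:\ \mathsf D(u+v)+\mathsf Gv\in\iota(H)\}$. The difference $\mathscr A-\mathscr A_0$ is then $(u,v)\mapsto(0,-\iota^{-1}(\mathsf K-\mathsf D)u)$, which is not bounded into $H$, so this naive splitting fails. Instead I work with the generator as is, on the space $V\times H$ with norm $\|u\|_V^2+\|v\|_H^2$, where $\|u\|_V^2=d[u,u]$. For $(u,v)\in\Dom(\mathscr A)$,
\[
 \Ree(\mathscr A(u,v),(u,v))_{\mathscr X}
 =\Ree d[v,u]-\Ree\bigl(k[u,v]+d[v,v]+g[v,v]\bigr).
\]
Here $g[v,v]$ is purely imaginary, and $|d[v,u]|+|k[u,v]|\le(1+C_k)\|u\|_V\|v\|_V\le \tfrac12\|v\|_V^2+C\|u\|_V^2$. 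This gives
\[
 \Ree(\mathscr A x,x)\le C\|u\|_V^2-\tfrac12\|v\|_V^2\le C\|x\|_{\mathscr X}^2,
\]
so $\mathscr A-C$ is dissipative, with no bounded perturbation needed.

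For the range condition, given $(f,h)\in V\times H$ and large $\lambda>0$, I set $u=(v+f)/\lambda$. The resolvent equation $(\lambda-\mathscr A)(u,v)=(f,h)$ then reduces to the variational problem
\[
 \lambda(v,w)_H+d[v,w]+g[v,w]+\lambda^{-1}k[v,w]=(h,w)_H-\lambda^{-1}k[f,w]\quad\forall w\in V.
\]
The left-hand form is bounded on $V$. Its real part on the diagonal is at least $\lambda\|v\|_H^2+(1-C_k/\lambda)\|v\|_V^2$, because $\Ree g[v,v]=0$, so it is coercive for $\lambda>C_k$. The right-hand side lies in $V^*$. Lax--Milgram therefore gives a unique $v\in V$, and hence $u\in V$. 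The identity itself says exactly that $\mathsf Ku+\mathsf Dv+\mathsf Gv=\iota(h-\lambda v)\in\iota(H)$, so $(u,v)\in\Dom(\mathscr A)$.

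For density of the domain, I note that a quasi-dissipative operator with surjective $\lambda-\mathscr A$ on a Hilbert space is automatically densely defined, since reflexivity applies. Lumer--Phillips then yields a $C_0$ semigroup with $\|e^{t\mathscr A}\|\le e^{Ct}$. Well-posedness of \eqref{eq:abstract-second-order} in the semigroup sense follows by reading off the first component $u$ with $u_t=v$. The main point to check carefully is the domain and range step: the solution of the variational problem must land in $\Dom(\mathscr A)$, and this requires the correct identification of $H$-valued functionals through $\iota$. Here injectivity of $\iota$ uses density of $\jmath(V)$ in $H$.
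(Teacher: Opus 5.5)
Your final argument is correct and essentially matches the paper's proof. Both use the same quasi-dissipativity estimate $\Ree(\mathscr A x,x)\le C\|u\|_V^2-\tfrac12\|v\|_V^2$, a Lax--Milgram range step for large real $\lambda$, and Lumer--Phillips; in the range step you eliminate $u=(v+f)/\lambda$ and solve for $v$, whereas the paper eliminates $v=\lambda u-f$ and solves for $u$, which is the same problem after substitution. The other differences are cosmetic: you cite the reflexive-space fact that quasi-m-dissipative operators are densely defined where the paper gives the one-line orthogonality argument, and you can simply delete the bounded-perturbation plan you set out and then abandoned.
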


\begin{proof}
For $(u,v)\in\operatorname{Dom}(\mathscr A)$, the definition of the
domain and the form identities give
\begin{align*}
 \Re(\mathscr A(u,v),(u,v))_{\mathscr X}
 &=\Re\{d[v,u]-k[u,v]\}-d[v,v]\\
 &\le C\|u\|_V^2-\frac12\|v\|_V^2
 \le C\|(u,v)\|_{\mathscr X}^2.
\end{align*}
Here $\Re g[v,v]=0$ was used.  Thus $\mathscr A-CI$ is dissipative for
a sufficiently large $C$.

It remains to prove the range condition.  Fix a sufficiently large real
$\lambda$ and let $(f,h)\in V\times H$.  The equation
$(\lambda-\mathscr A)(u,v)=(f,h)$ gives $v=\lambda u-f$.  Substitution in
the second component leads to the variational equation
\begin{equation}
 \ell_\lambda[u,w]
 =(h,w)_H+\lambda(f,w)_H+d[f,w]+g[f,w],\qquad w\in V,              \label{eq:KTC-resolvent-form}
\end{equation}
where
\[
 \ell_\lambda[u,w]
 =\lambda^2(u,w)_H+\lambda d[u,w]+\lambda g[u,w]+k[u,w].
\]
Since $\lambda$ is real and $g$ is skew-Hermitian,
\[
 \Re\ell_\lambda[u,u]
 \ge (\lambda-C_k)\|u\|_V^2+\lambda^2\|u\|_H^2.
\]
For $\lambda>C_k$, Lax--Milgram yields a unique $u\in V$ satisfying
\eqref{eq:KTC-resolvent-form}.  With $v=\lambda u-f$, the same
identity shows
\[
 \mathsf Ku+\mathsf Dv+\mathsf Gv=\iota(h-\lambda v)\in\iota(H).
\]
Choose $\lambda$ larger than both $C_k$ and the quasi-dissipativity
constant $C$.  Then $(u,v)\in\operatorname{Dom}(\mathscr A)$ and
$\operatorname{Ran}(\lambda-\mathscr A)=\mathscr X$.  The same argument
also gives density of the domain.  Indeed, if
$y\perp\operatorname{Dom}(\mathscr A)$ and
$x=(\lambda-\mathscr A)^{-1}y$, then
$0=\Re(y,x)\ge(\lambda-C)\|x\|_{\mathscr X}^2$, so $x=0$ and $y=0$.
Thus $\mathscr A-CI$ is maximally dissipative: indeed, the preceding range
condition is exactly
$\operatorname{Ran}((\lambda-C)I-(\mathscr A-CI))=\mathscr X$.
The Lumer--Phillips theorem, applied to this scalar shift, proves the claim.
\end{proof}

Let $n^-(k)$ denote the maximal dimension of a subspace of $V$ on which
$k$ is negative definite, and let
$\ker k=\{u\in V:k[u,w]=0\text{ for every }w\in V\}$ be its form kernel.
For an isolated eigenvalue $\lambda$ of
$\mathscr A$, let $\operatorname{algmult}(\lambda;\mathscr A)$ denote the
rank of its Riesz projection.  We write
\[
 N_+(\mathscr A)
 =\sum_{\lambda\in\sigma(\mathscr A),\ \Re\lambda>0}
   \operatorname{algmult}(\lambda;\mathscr A),
\]
whenever the sum is finite.

\begin{theorem}
\label{thm:abstract-KTC}
Assume \eqref{eq:KTC-d}--\eqref{eq:KTC-g}, the dense continuous embedding
$V\hookrightarrow H$, and
\begin{equation}
 n^-(k)<\infty.                                                   \label{eq:KTC-finite-index}
\end{equation}
Then the spectrum of $\mathscr A$ in $\mathbb C_+$ consists of finitely
many isolated eigenvalues of finite algebraic multiplicity, and
\begin{equation}
 N_+(\mathscr A)=n^-(k).                                         \label{eq:KTC-generator-count}
\end{equation}
\end{theorem}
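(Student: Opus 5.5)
The plan follows the outline given in the introduction. We reduce the generator spectrum to the quadratic pencil
\[
 L(\lambda)=\lambda^2J+\lambda(\mathsf D+\mathsf G)+\mathsf K\in\mathcal L(V,V^*),
\]
and then precondition it by the Riesz isomorphism $R_d:V\to V^*$ of the damping form. This gives
\[
 \mathcal P(\lambda)=A+\lambda(I+\Gamma)+\lambda^2M,
\]
with $A=R_d^{-1}\mathsf K$, $\Gamma=R_d^{-1}\mathsf G$ and $M=R_d^{-1}J$. Here $A$ is selfadjoint in $(V,d)$, $\Gamma$ is skew, and $M\ge0$.

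\textbf{Step 1: generator versus pencil.} For $\lambda\ne0$, a direct computation on \eqref{eq:KTC-generator} shows that $(\lambda-\mathscr A)(u,v)=(f,h)$ is equivalent to $v=\lambda u-f$ together with $L(\lambda)u=\iota h+(\lambda J+\mathsf D+\mathsf G)f$. Hence $\lambda\in\rho(\mathscr A)$ if and only if $L(\lambda)$ is invertible, with an explicit resolvent formula. I would then establish the root-chain correspondence for this domain: Jordan chains of $\mathscr A$ at $\lambda$ are $(u_j,\lambda u_j+u_{j-1})$, where $(u_j)$ is a Keldysh chain of $L$. This yields $\operatorname{algmult}(\lambda;\mathscr A)$ equal to the sum of the partial multiplicities of $L$ at $\lambda$, which I take from the operator-pencil appendix facts.

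\textbf{Step 2: Fredholm property and localization.} Write $A=A_++A_-$ spectrally, where $A_-$ has rank $n^-(k)$ and $A_+\ge0$. For $\Re\lambda\ge0$, $\lambda\ne0$, we have
\[
 \Re\bigl(\bar\lambda(A_++\lambda(I+\Gamma)+\lambda^2M)u,u\bigr)_V
 =\Re\lambda\,(A_+u,u)+|\lambda|^2\|u\|_V^2+|\lambda|^2\Re\lambda\,(Mu,u)\ge|\lambda|^2\|u\|_V^2 .
\]
Therefore that part of the pencil is invertible, and $\mathcal P(\lambda)$ is an invertible operator plus a finite-rank one. It is thus Fredholm of index $0$ and analytic on $\mathbb C_+\cup(i\mathbb R\setminus\{0\})$, and by analytic Fredholm theory its characteristic values are isolated with finite multiplicity.

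Next take a characteristic vector $u$ of norm $1$ and the scalar identity
\[
 (Au,u)+\lambda\bigl(1+(\Gamma u,u)\bigr)+\lambda^2(Mu,u)=0 .
\]
Its imaginary and real parts give, for $\Re\lambda\ge0$:
\begin{itemize}[leftmargin=2em]
\item for large $|\lambda|$, $(Au,u)+\Re\lambda+\ldots\ge0$ cannot vanish, so the characteristic values are bounded (using boundedness of $A$, $\Gamma$, $M$);
\item no characteristic values lie on $i\mathbb R\setminus\{0\}$, since the real part of the $\bar\lambda$-multiplied identity equals $\Re\lambda(Au,u)+|\lambda|^2(1+\Re\lambda(Mu,u))$ and is strictly positive there.
\end{itemize}

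\textbf{Step 3: the main obstacle, accumulation at $0$.} I would project the equation $\mathcal P(\lambda)u=0$ onto $\Ran A_-$ to obtain $A_-u_-=-P_-(\lambda(I+\Gamma)+\lambda^2M)u$. Since $A_-$ is invertible on its finite-dimensional range, this gives $\|u_-\|=O(|\lambda|)$. Then
\[
 0=\Re\bar\lambda(\mathcal P(\lambda)u,u)\ge|\lambda|^2-\Re\lambda\,|(A_-u_-,u_-)|\ge|\lambda|^2-C\Re\lambda\,|\lambda|^2 ,
\]
which is impossible for $|\lambda|$ small. Combining with Step 2, all characteristic values in $\overline{\mathbb C_+}\setminus\{0\}$ lie in a compact subset of $\mathbb C_+$ enclosed by a fixed contour $\mathcal C$ in the right half-plane.

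\textbf{Step 4: homotopy and count.} Consider $\mathcal P_s(\lambda)=A+\lambda(I+s\Gamma)+s\lambda^2M$ for $s\in[0,1]$. The estimates of Steps 2--3 are uniform in $s$, with constants independent of $s$, so one contour $\mathcal C$ works for all $s$. By the Gohberg--Sigal argument principle, the total multiplicity inside $\mathcal C$ is constant in $s$. At $s=0$ the pencil is $A+\lambda I$, whose characteristic values in $\mathbb C_+$ are $-\sigma(A_-)$, with total multiplicity $\operatorname{rank}A_-=n^-(k)$.

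Step 1 then transfers this count to $N_+(\mathscr A)$. For $\lambda\in\mathbb C_+$ outside the characteristic set, the resolvent formula shows that $\lambda\in\rho(\mathscr A)$, so the spectrum of $\mathscr A$ in $\mathbb C_+$ consists of finitely many isolated eigenvalues of finite algebraic multiplicity, and $N_+(\mathscr A)=n^-(k)$.
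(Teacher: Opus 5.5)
Your proposal is correct and follows the paper's route: the generator--pencil equivalence via the root-chain map $u_\ell\mapsto(u_\ell,\lambda_0u_\ell+u_{\ell-1})$, Fredholmness from the coercive nonnegative part plus the finite-rank $A_-$, exclusion of small characteristic values by projecting onto the negative spectral space and inserting $\|u_-\|=O(|\lambda|)$ into the real part of the $\bar\lambda$-multiplied identity, and a Gohberg--Sigal homotopy that removes the mass and gyroscopic terms with the damping kept fixed. The differences are cosmetic: you use a single diagonal parameter instead of the paper's square $(r,s)\in[0,1]^2$, and your contour lies strictly inside $\mathbb C_+$, which needs one small step you did not spell out, namely a uniform positive distance of the characteristic values from $i\mathbb R$; this follows from joint norm continuity in $(s,\lambda)$ and invertibility on the compact set $[0,1]\times\{ib:\varepsilon\le|b|\le\|A\|\}$, whereas the paper uses a half-annulus touching the axis and extends the Fredholm property to a neighborhood of it by a Neumann series.
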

\begin{remark}
No Fredholm, closed-range, or finite-dimensional-kernel assumption is
imposed on the stiffness at zero.  In particular, $\ker k$ may be
infinite-dimensional and the nonnegative spectrum of the preconditioned
stiffness may accumulate at zero.  Neither the embedding $V\hookrightarrow H$
nor the preconditioned mass or gyroscopic operator is required to be compact.
\end{remark}

We prove the theorem after relating the generator to its quadratic pencil.

\subsection{The generator and the quadratic pencil}

Let $R_d:V\to V^*$ be the Riesz isomorphism induced by $d$ and define
\begin{equation}
 A=R_d^{-1}\mathsf K,\qquad
 M=R_d^{-1}J,\qquad
 \Gamma=R_d^{-1}\mathsf G.                                      \label{eq:KTC-preconditioned}
\end{equation}
Then $A=A^*$, $M=M^*\ge0$, and $\Gamma^*=-\Gamma$ on $(V,d)$.  The
operators $M$ and $\Gamma$ are bounded, and
$(Mu,u)_V=\|u\|_H^2>0$ for $u\ne0$.
Moreover,
\[
 n^-(A)=n^-(k)<\infty.
\]
The preconditioned pencil is
\begin{equation}
 \mathcal P(\lambda)
 =A+\lambda(I+\Gamma)+\lambda^2M,                              \label{eq:KTC-pencil}
\end{equation}
and the corresponding form pencil is
\begin{equation}
 \mathcal L(\lambda)
 =\mathsf K+\lambda(\mathsf D+\mathsf G)+\lambda^2J
 =R_d\mathcal P(\lambda).                                      \label{eq:KTC-form-pencil}
\end{equation}

\begin{lemma}
\label{lem:KTC-generator-pencil}
For $\lambda_0\in\mathbb C_+$,
$\lambda_0\in\sigma(\mathscr A)$ if and only if $\lambda_0$ is a
characteristic value of $\mathcal P$.  Such points are isolated, and
\begin{equation}
 \operatorname{algmult}(\lambda_0;\mathscr A)
 =\operatorname{algmult}(\lambda_0;\mathcal P).                  \label{eq:KTC-multiplicity-equivalence}
\end{equation}
Thus the Gohberg--Sigal multiplicity on the right equals the ordinary
Riesz algebraic multiplicity of the generator.
More precisely, for every $\lambda\in\mathbb C_+$,
\begin{equation}
 \lambda\in\rho(\mathscr A)
 \quad\Longleftrightarrow\quad
 \mathcal L(\lambda):V\to V^*\ \hbox{is invertible}
 \quad\Longleftrightarrow\quad
 \mathcal P(\lambda):V\to V\ \hbox{is invertible}.               \label{eq:KTC-invertibility-equivalence}
\end{equation}
\end{lemma}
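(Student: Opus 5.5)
The plan is to prove the chain \eqref{eq:KTC-invertibility-equivalence} directly, then use a Schur-complement (Gaussian elimination) factorization to get isolatedness and equality of multiplicities.

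\textbf{Step 1: the second equivalence in \eqref{eq:KTC-invertibility-equivalence}.} Since $\mathcal L(\lambda)=R_d\mathcal P(\lambda)$ and $R_d:V\to V^*$ is an isomorphism, $\mathcal L(\lambda)$ is invertible exactly when $\mathcal P(\lambda)$ is.

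\textbf{Step 2: invertibility of $\mathcal L(\lambda)$ implies $\lambda\in\rho(\mathscr A)$.} Fix $\lambda\in\mathbb C_+$ and $(f,h)\in\mathscr X$, and solve $(\lambda-\mathscr A)(u,v)=(f,h)$ as in the proof of Lemma~\ref{lem:KTC-wellposed}. The first component forces $v=\lambda u-f$. The second component then becomes
\[
 \mathcal L(\lambda)u=\iota h+\lambda Jf+(\mathsf D+\mathsf G)f\in V^*,
\]
which is the complex version of \eqref{eq:KTC-resolvent-form}. If $\mathcal L(\lambda)$ is invertible, this gives a unique $u\in V$, and $v=\lambda u-f\in V$. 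The identity $\mathsf Ku+(\mathsf D+\mathsf G)v=\iota(h-\lambda v)$ shows $(u,v)\in\Dom(\mathscr A)$, with a bound by $\|\mathcal L(\lambda)^{-1}\|$. Hence $\lambda-\mathscr A$ is bijective with bounded inverse.

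\textbf{Step 3: $\lambda\in\rho(\mathscr A)$ implies invertibility of $\mathcal L(\lambda)$.} Injectivity: if $\mathcal L(\lambda)u=0$, then $(u,\lambda u)\in\Dom(\mathscr A)$ is an eigenvector of $\mathscr A$, so $u=0$.

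Surjectivity: given $\phi\in V^*$, the resolvent only accepts data in $\iota(H)$, so $\phi$ cannot be fed in directly. Instead, represent $\phi=(\mathsf D+\mathsf G+\lambda J)f$ for some $f\in V$ and take $h=0$. The operator $\mathsf D+\mathsf G+\lambda J$ is invertible by Lax--Milgram, since
\[
 \Re\bigl(d[f,f]+g[f,f]+\lambda\|f\|_H^2\bigr)\ge\|f\|_V^2
 \quad\text{for }\Re\lambda\ge0.
\]
Then $(u,v)=(\lambda-\mathscr A)^{-1}(f,0)$ solves $\mathcal L(\lambda)u=\phi$. So \eqref{eq:KTC-invertibility-equivalence} holds on $\mathbb C_+$.

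\textbf{Step 4: isolatedness and equality of multiplicities.} Write the equivalence as a factorization. With $E(\lambda):=\mathsf D+\mathsf G+\lambda J$, invertible on $\overline{\mathbb C_+}$, Steps~2--3 give explicit resolvent formulas:
\[
 (\lambda-\mathscr A)^{-1}(f,h)=\bigl(u,\lambda u-f\bigr),\qquad
 u=\mathcal L(\lambda)^{-1}\bigl(\iota h+E(\lambda)f\bigr).
\]
These show that $(\lambda-\mathscr A)^{-1}$ and $\mathcal L(\lambda)^{-1}$ are obtained from each other by composition with operator functions holomorphic and invertible near $\lambda_0$. In the Gohberg--Sigal framework, this makes $\lambda-\mathscr A$ (suitably extended) and $\mathcal L(\lambda)\oplus I$ equivalent as holomorphic operator functions, so isolatedness transfers. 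That $\lambda_0$ is isolated in the spectrum follows once we know $\mathcal P$ is Fredholm of index zero near $\lambda_0$ and not identically singular, so that the analytic Fredholm theorem applies. This needs the coercivity input described in the introduction, namely multiplying the nonnegative part by $\bar\lambda$, plus the finite-rank negative part of $A$. If that is deferred, the lemma should be read as asserting equivalence of isolatedness for the two objects.

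The multiplicity equality \eqref{eq:KTC-multiplicity-equivalence} then follows because equivalent holomorphic operator functions have the same partial multiplicities. Concretely, Jordan chains of $\mathscr A$ at $\lambda_0$ correspond to root chains $u_0,\dots,u_{m-1}$ of $\mathcal L$ via $v_j=\lambda_0u_j+u_{j-1}$. The Riesz projection rank equals the sum of chain lengths, which is the Gohberg--Sigal multiplicity.

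\textbf{Main obstacle.} The domain-valued root-chain correspondence is the delicate point. The chain vectors must lie in $\Dom(\mathscr A)$, which requires $\mathsf Ku_j+(\mathsf D+\mathsf G)v_j\in\iota(H)$ at each step; this follows from the chain equations but needs to be checked. One must also ensure the Riesz projection has finite rank, which again uses the explicit resolvent formula and the finite pole order of $\mathcal L^{-1}$.
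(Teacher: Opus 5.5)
Your Steps 1--3 are correct, and Step 3 takes a different route from the paper. The paper gets surjectivity of $\mathcal L(\lambda)$ from injectivity plus Fredholm index zero. You instead write $\phi=E(\lambda)f$, with $E(\lambda)=\mathsf D+\mathsf G+\lambda J$ coercive, and take $(u,v)=(\lambda-\mathscr A)^{-1}(f,0)$. The second resolvent component gives $\lambda Jv+\mathsf Ku+(\mathsf D+\mathsf G)v=0$, and substituting $v=\lambda u-f$ yields $\mathcal L(\lambda)u=E(\lambda)f=\phi$. This proves \eqref{eq:KTC-invertibility-equivalence} for $\Re\lambda\ge0$ with no Fredholm input and no use of $n^-(k)<\infty$.

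\textbf{The gap.} The lemma asserts outright that spectral points in $\mathbb C_+$ are isolated, and \eqref{eq:KTC-multiplicity-equivalence} presupposes finite-rank Riesz projections. You defer both and propose weakening the statement to an equivalence of isolatedness, which is not what the lemma claims. This is exactly where the finite negative index is needed. Without it, the right-half-plane spectrum can be continuous: take $V=H$, $M=I$, $\Gamma=0$, and $A$ a multiplication operator with continuous negative spectrum. Two further points:
\begin{itemize}
\item A finite pole order of $\mathcal L^{-1}$ does not give a finite-rank Riesz projection. You need the finite-rank principal part supplied by the analytic Fredholm theorem.
\item That theorem must be applied on the whole connected half-plane with invertibility at one point, not merely ``near $\lambda_0$''.
\end{itemize}

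\textbf{How the paper closes it.} Split $A=A_++A_-$ spectrally, so $A_-$ has rank $n^-(A)<\infty$, and put $B(\lambda)=A_++\lambda(I+\Gamma)+\lambda^2M$. For $a=\Re\lambda\ge0$ and $\lambda\ne0$,
\[
 \Re(\bar\lambda B(\lambda)u,u)_V
 =a(A_+u,u)_V+|\lambda|^2\|u\|_V^2+a|\lambda|^2(Mu,u)_V
 \ge|\lambda|^2\|u\|_V^2,
\]
since $|\lambda|^2(\Gamma u,u)_V$ is purely imaginary. Lax--Milgram makes $B(\lambda)$ invertible, so $\mathcal P(\lambda)=B(\lambda)+A_-$ is Fredholm of index zero there. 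You also need invertibility for large real $\lambda$. This follows from $\Re(\mathcal P(\lambda)u,u)_V\ge(\lambda-\|A\|)\|u\|_V^2$, or from Lemma~\ref{lem:KTC-wellposed} combined with your Step 3. The analytic Fredholm theorem then gives discreteness and a finitely meromorphic $\mathcal P^{-1}$. Your Step 2 resolvent formula transfers the finite-rank principal parts to $(\lambda-\mathscr A)^{-1}$.

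\textbf{Multiplicities.} Rely on the explicit chain correspondence. Do not claim that the two inverses differ by invertible holomorphic factors, because $(f,h)\mapsto\iota h+E(\lambda)f$ is not injective. The domain point you flagged is immediate. With $\mathcal L'(\lambda_0)=\mathsf D+\mathsf G+2\lambda_0J$ and $\tfrac12\mathcal L''=J$, the chain equation gives
\[
 \mathsf Ku_\ell+(\mathsf D+\mathsf G)v_\ell=-\lambda_0^2Ju_\ell-2\lambda_0Ju_{\ell-1}-Ju_{\ell-2}\in\iota(H).
\]
Hence $(u_\ell,v_\ell)\in\Dom(\mathscr A)$ and $(\mathscr A-\lambda_0)(u_\ell,v_\ell)=(u_{\ell-1},v_{\ell-1})$, exactly as in the paper.
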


\begin{proof}
Let
\[
 A_+=A\mathbf1_{[0,\infty)}(A),\qquad
 A_-=A\mathbf1_{(-\infty,0)}(A).
\]
By the spectral theorem, the negative spectral space
$V_-:=\mathbf1_{(-\infty,0)}(A)V$ has dimension
$\dim V_-=n^-(A)<\infty$: the quadratic form is negative definite on
$V_-$ and nonnegative on its orthogonal complement.  Hence $A_-$ has
finite rank, $A_+\ge0$, and $A|_{V_-}$ is invertible if $V_-\ne\{0\}$.
For
$a=\Re\lambda\ge0$, $\lambda\ne0$, set
\[
 B(\lambda)=A_++\lambda(I+\Gamma)+\lambda^2M.
\]
Skew-adjointness of $\Gamma$ and nonnegativity of $M$ give
\[
 \Re(\bar\lambda B(\lambda)u,u)_V
 =a(A_+u,u)_V+|\lambda|^2\|u\|_V^2
    +a|\lambda|^2(Mu,u)_V
 \ge |\lambda|^2\|u\|_V^2.
\]
The bounded, possibly non-Hermitian form
$(u,v)\mapsto\bar\lambda(B(\lambda)u,v)_V$ therefore satisfies the
coercivity hypothesis of the complex Lax--Milgram theorem.  For each
$f\in V$, that theorem gives a unique $u\in V$ with
$\bar\lambda(B(\lambda)u,v)_V=\bar\lambda(f,v)_V$ for every $v\in V$.
Thus $B(\lambda)$ is both injective and surjective, and the estimate gives
$\|B(\lambda)^{-1}\|\le|\lambda|^{-1}$.  Consequently,
$\mathcal P(\lambda)=B(\lambda)+A_-$ is a finite-rank perturbation of an
invertible operator, hence Fredholm of index zero throughout
$\{\Re\lambda\ge0,\lambda\ne0\}$.  Finite rank of the negative
stiffness, rather than compactness of $M$ or $\Gamma$, supplies the
Fredholm property.  The pencil is
invertible for every sufficiently large positive real $\lambda$, because
\[
 \Re(\mathcal P(\lambda)u,u)_V
 \ge(\lambda-\|A\|)\|u\|_V^2.
\]
The analytic Fredholm theorem therefore implies that the characteristic
values in $\mathbb C_+$ are isolated and have finite Gohberg--Sigal
multiplicity.

If $\mathcal L(\lambda)$ is invertible and $(f,h)\in V\times H$, then
\begin{equation}
 (\lambda-\mathscr A)^{-1}(f,h)
 =\left(u,\lambda u-f\right),
 \quad
 u=\mathcal L(\lambda)^{-1}
 \bigl[\iota h+\lambda Jf+\mathsf Df+\mathsf Gf\bigr].           \label{eq:KTC-resolvent-formula}
\end{equation}
Conversely, if $\mathcal L(\lambda)u=0$ and $v=\lambda u$, then
$(u,v)\in\operatorname{Dom}(\mathscr A)$ and
$(\lambda-\mathscr A)(u,v)=0$.  Thus invertibility of
$\lambda-\mathscr A$ implies injectivity, and hence invertibility, of the
index-zero operator $\mathcal L(\lambda)$.  This proves the spectral
equivalence.  Formula \eqref{eq:KTC-resolvent-formula} also shows that
the generator resolvent is meromorphic with finite-rank principal part at
each characteristic value.

The Jordan structures can be identified directly.  Put
$u_{-1}=u_{-2}=0$.  Since
\[
 \mathcal L'(\lambda)=\mathsf D+\mathsf G+2\lambda J,
 \qquad \mathcal L''(\lambda)=2J,
\]
and all higher derivatives vanish, a root chain
$u_0,\ldots,u_{m-1}$ of $\mathcal L$ at $\lambda_0$ satisfies
\begin{equation}
 \mathcal L(\lambda_0)u_\ell
 +\mathcal L'(\lambda_0)u_{\ell-1}
 +Ju_{\ell-2}=0,\qquad 0\le\ell<m.                             \label{eq:KTC-root-chain-expanded}
\end{equation}
Define
\[
 v_\ell=\lambda_0u_\ell+u_{\ell-1}.
\]
Equation \eqref{eq:KTC-root-chain-expanded} implies
\[
 \mathsf Ku_\ell+(\mathsf D+\mathsf G)v_\ell
 =-\lambda_0^2Ju_\ell-2\lambda_0Ju_{\ell-1}-Ju_{\ell-2}
 \in\iota(H),
\]
so $(u_\ell,v_\ell)\in\operatorname{Dom}(\mathscr A)$.  A direct
calculation gives
\[
 (\mathscr A-\lambda_0)(u_\ell,v_\ell)
 =(u_{\ell-1},v_{\ell-1}).
\]
The converse follows from the first component of this identity.  Hence
root chains of the pencil and Jordan chains of the generator are in
one-to-one correspondence, with the same lengths.  The correspondence
commutes with truncation and preserves extendability at each level;
its map on leading vectors, $u_0\mapsto(u_0,\lambda_0u_0)$, is an
isomorphism.  It therefore preserves the complete list of partial
multiplicities, not just the maximal chain length.  Since multiplication
by the fixed isomorphism $R_d$ does not alter root chains, this proves
\eqref{eq:KTC-multiplicity-equivalence}.
\end{proof}

\subsection{The pencil index}

\begin{theorem}
\label{thm:KTC-pencil}
Under the hypotheses of Theorem~\ref{thm:abstract-KTC}, the
characteristic values of \eqref{eq:KTC-pencil} in $\mathbb C_+$ form
a finite set and
\begin{equation}
 \sum_{\lambda\in\mathbb C_+}
 \operatorname{algmult}(\lambda;\mathcal P)=n^-(A).              \label{eq:KTC-pencil-count}
\end{equation}
\end{theorem}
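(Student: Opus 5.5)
Following the strategy outlined in the introduction, we combine a homotopy with the Gohberg--Sigal argument principle.

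\textbf{Step 1: an a priori bound on a fixed region.} For $\Re\lambda\ge0$ with $\lambda\ne0$ and $\mathcal P(\lambda)u=0$, take the inner product with $u$. This gives the energy identity
\[
(Au,u)_V+\lambda\|u\|_V^2+\lambda(\Gamma u,u)_V+\lambda^2\|u\|_H^2=0.
\]
Since $(\Gamma u,u)_V$ is purely imaginary, the real part reads
\[
(Au,u)_V+a\|u\|_V^2+(a^2-b^2)\|u\|_H^2=0,\qquad \lambda=a+ib,
\]
and the imaginary part gives a companion relation.

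For large $|\lambda|$ we use the bound $\|B(\lambda)^{-1}\|\le|\lambda|^{-1}$ from Lemma~\ref{lem:KTC-generator-pencil}. The identity $\mathcal P=B(\lambda)(I+B(\lambda)^{-1}A_-)$ then shows that $\mathcal P(\lambda)$ is invertible once $|\lambda|>\|A_-\|$. Hence all characteristic values in $\overline{\mathbb C_+}\setminus\{0\}$ lie in the disc $|\lambda|\le\|A_-\|$.

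For small $|\lambda|$, write $u=u_++u_-$ with respect to $V_-$. Let $P_-$ be the orthogonal projection onto $V_-$, and apply it to $\mathcal P(\lambda)u=0$:
\[
A_-u_-=-\lambda P_-\bigl((I+\Gamma)u+\lambda Mu\bigr).
\]
Since $A|_{V_-}$ is invertible, this gives $\|u_-\|_V\le C|\lambda|\,\|u\|_V$. Normalize $\|u\|_V=1$ and insert this into the scalar identity. The positive part contributes $(A_+u,u)_V\ge0$, while $|(A_-u,u)_V|\le C|\lambda|^2$. The aim is a contradiction of the form $a+(a^2-b^2)\|u\|_H^2\lesssim|\lambda|^2$ together with an imaginary-part relation $b(1+\Im(\Gamma u,u)/i+2a\|u\|_H^2)=0$.

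I expect this small-$|\lambda|$ exclusion to be the main obstacle, especially near the imaginary axis. I would first try to combine the real and imaginary parts through $\Re(\bar\lambda\,\cdot)$ applied to the full identity:
\[
a(Au,u)_V+|\lambda|^2\|u\|_V^2+a|\lambda|^2\|u\|_H^2=0.
\]
Since $a(A_+u,u)\ge0$ and $|a(A_-u,u)|\le a\,C|\lambda|^2$, this yields $|\lambda|^2\le Ca|\lambda|^2$, which fails when $a$ is small. For $a$ bounded below the bound $\|u_-\|=O(|\lambda|)$ makes $(Au,u)\ge -C|\lambda|^2$ while $a\|u\|_V^2\ge a>0$, a contradiction for $|\lambda|$ small. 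Together these give $r_0>0$ with no characteristic values in $\{0<|\lambda|<r_0,\Re\lambda\ge0\}$. Both constants depend only on $\|A\|,\|\Gamma\|,\|M\|$ and $A_-$, so the bound stays uniform under the homotopy below.

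\textbf{Step 2: homotopy.} Let $\Omega=\{\lambda:\Re\lambda>0,\ r_0<|\lambda|<R\}$, whose boundary lies in $\overline{\mathbb C_+}\setminus\{0\}$. Consider $\mathcal P_t(\lambda)=A+\lambda(I+t\Gamma)+\lambda^2tM$ for $t\in[0,1]$. For each $t$, $t\Gamma$ is skew and $tM\ge0$, so Step 1 applies uniformly in $t$; on the imaginary axis it applies with $a=0$. Hence $\mathcal P_t$ is invertible on $\partial\Omega$ for all $t$, and the family is norm-continuous. The Gohberg--Sigal logarithmic-residue theorem (Rouché form) then shows that the total multiplicity inside $\Omega$ is constant in $t$.

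\textbf{Step 3: the endpoint $t=0$.} Here $\mathcal P_0(\lambda)=A+\lambda I$, whose characteristic values in $\mathbb C_+$ are $-\mu$ for the negative eigenvalues $\mu$ of $A$, counted with multiplicity. The total count is $\dim V_-=n^-(A)$, and all of these lie in $\Omega$ by Step 1. Finiteness of the characteristic set in $\mathbb C_+$ follows from the exclusion of $\{|\lambda|<r_0\}\cup\{|\lambda|>R\}$ together with isolatedness from Lemma~\ref{lem:KTC-generator-pencil}, which proves \eqref{eq:KTC-pencil-count}.
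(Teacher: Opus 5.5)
Your proposal is correct and follows essentially the paper's argument. You use the same fixed-damping homotopy deforming $\Gamma$ and $M$ to zero (your family is the diagonal $r=s=t$ of the paper's square), the same small-$|\lambda|$ exclusion via $\|u_-\|_V=O(|\lambda|)$ and the $\Re(\bar\lambda\,\cdot)$ identity, and the same endpoint count for $A+\lambda I$; your large-$|\lambda|$ bound via $\|B(\lambda)^{-1}\|\le|\lambda|^{-1}$ is a harmless variant of the paper's $|\lambda|\le\|A\|$.

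One correction: your un-multiplied real-part identity drops the term $\Re\bigl(\lambda(\Gamma u,u)_V\bigr)=-b\,\Im(\Gamma u,u)_V$, which is nonzero for non-real $\lambda$. The ``$a$ bounded below'' case that relies on it is vacuous anyway: since $a\le|\lambda|$, the inequality $|\lambda|^2\le Ca|\lambda|^2$ gives $1\le C|\lambda|$, which already excludes every small $\lambda\ne0$ with $\Re\lambda\ge0$, including $a=0$.
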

We prove this formula by a homotopy that keeps the damping term fixed.
\begin{lemma}
\label{lem:KTC-boundary}
For $(r,s)\in[0,1]^2$, set
\begin{equation}
 \mathcal P_{r,s}(\lambda)
 =A+\lambda(I+s\Gamma)+r\lambda^2M.                              \label{eq:KTC-homotopy}
\end{equation}
The following statements hold uniformly in $(r,s)$:
\begin{enumerate}
\item every characteristic value in $\mathbb C_+$ satisfies
      $|\lambda|\le\|A\|$;
\item there is no nonzero characteristic value on $i\mathbb R$;
\item there exists $\varepsilon>0$ such that there is no characteristic
      value with $\Re\lambda\ge0$ and
      $0<|\lambda|<\varepsilon$.
\end{enumerate}
\end{lemma}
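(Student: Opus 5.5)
All three statements follow from the scalar identity obtained by pairing a characteristic vector with itself. Suppose $\mathcal P_{r,s}(\lambda)u=0$ with $u\ne0$, normalized by $\|u\|_V=1$, and write $a=\Re\lambda$. Taking the inner product with $u$ in $(V,d)$ gives
\[
 (Au,u)_V+\lambda\bigl(1+s(\Gamma u,u)_V\bigr)+r\lambda^2(Mu,u)_V=0 .
\]
Here $(Au,u)_V$ is real, $(Mu,u)_V=\|u\|_H^2\ge0$, and $(\Gamma u,u)_V=i\beta$ is purely imaginary by skew-adjointness. Multiplying by $\bar\lambda$ and taking real parts, as in the proof of Lemma~\ref{lem:KTC-generator-pencil}, yields
\[
 a(Au,u)_V+|\lambda|^2+ra|\lambda|^2\|u\|_H^2=0 .
\]

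For (1), I use this identity with $a>0$. Dividing by $a$ gives $|\lambda|^2/a\le-(Au,u)_V\le\|A\|$. Since $a\le|\lambda|$, it follows that $|\lambda|\le|\lambda|^2/a\le\|A\|$. Points with $a=0$ are handled by (2) and (3), since characteristic values in $\mathbb C_+$ satisfy $\Re\lambda>0$ or $\lambda\ne0$ on the imaginary axis. For (2), the same identity with $a=0$ gives $|\lambda|^2=0$, which contradicts $\lambda\ne0$. These constants do not depend on $(r,s)$.

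For (3), which I expect to be the main obstacle, the difficulty is that $A_+$ may have spectrum accumulating at $0$, so no gap argument is available. Let $P_-$ be the orthogonal projection onto the finite-dimensional space $V_-$, and write $u=u_-+u_+$ with $u_-=P_-u$. Applying $P_-$ to the equation $\mathcal P_{r,s}(\lambda)u=0$ gives $A u_-=-\lambda P_-\bigl((I+s\Gamma)u+r\lambda Mu\bigr)$, because $A$ commutes with $P_-$. The operator $A|_{V_-}$ is invertible and $\|\Gamma\|,\|M\|$ are bounded, so with $\|u\|_V=1$ this gives $\|u_-\|_V\le C|\lambda|$ for $|\lambda|\le1$, uniformly in $(r,s)$.

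Next I substitute this into the real-part identity. The decomposition gives $(Au,u)_V=(A_+u_+,u_+)_V+(A_-u_-,u_-)_V$, and the second term is at least $-\|A\|C^2|\lambda|^2$. Since $A_+\ge0$, the identity becomes
\[
 |\lambda|^2\bigl(1+ra\|u\|_H^2\bigr)\le a\|A\|C^2|\lambda|^2 .
\]
This gives $1\le a\|A\|C^2\le|\lambda|\,\|A\|C^2$. Therefore no characteristic value with $\Re\lambda\ge0$ satisfies $0<|\lambda|<\varepsilon:=\min\{1,(\|A\|C^2)^{-1}\}$. This $\varepsilon$ is independent of $(r,s)\in[0,1]^2$, because the constant $C$ depends only on $\|(A|_{V_-})^{-1}\|$, $\|\Gamma\|$ and $\|M\|$. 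If $V_-=\{0\}$, then $u_-=0$ and the identity gives $|\lambda|^2\le0$ directly.
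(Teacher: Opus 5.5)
Your proof is correct and follows the paper's argument. The same $\bar\lambda$-multiplied energy identity gives (1) and (2), and for (3) the same projection onto the finite-dimensional negative space $V_-$ yields $\|u_-\|_V\le C|\lambda|$, hence $(Au,u)_V\ge -C|\lambda|^2$ and a contradiction in the energy identity; the only cosmetic differences are that you invert $A|_{V_-}$ directly (keeping $-\lambda u_-$ on the right) where the paper inverts $A|_{V_-}+\lambda I$, and that you obtain (2) from the real part of the identity rather than from the imaginary part of $(\mathcal P_{r,s}(ib)u,u)_V=0$.
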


\begin{proof}
Let $\mathcal P_{r,s}(\lambda)u=0$, normalize $\|u\|_V=1$, and write
$\lambda=a+ib$.  Set
\[
 (\Gamma u,u)_V=i\gamma,
 \qquad m=(Mu,u)_V.
\]
Multiplication of the scalar equation by $\bar\lambda$ and passage to
real parts gives
\begin{equation}
 a(Au,u)_V+|\lambda|^2+ra|\lambda|^2m=0.                         \label{eq:KTC-energy}
\end{equation}
If $a>0$, this yields $|\lambda|\le\|A\|$.  If
$\lambda=ib\ne0$, the imaginary part of
$(\mathcal P_{r,s}(ib)u,u)_V=0$ is $b\|u\|_V^2=0$, a contradiction.

It remains to prove the uniform exclusion near zero.  Let
$V_-=\mathbf1_{(-\infty,0)}(A)V$ and let $P_-$ be the orthogonal
projection onto $V_-$.  If $V_-=\{0\}$, the energy identity already
excludes every characteristic value in $\mathbb C_+$, so suppose
$V_-\ne\{0\}$.  As above, $A|_{V_-}$ is invertible on this
finite-dimensional space.  Suppose that $a>0$ and $|\lambda|$ is small.
Writing $u_-=P_-u$, projection of the root equation gives
\[
 (A|_{V_-}+\lambda I)u_-
 =-s\lambda P_-\Gamma u-r\lambda^2P_-Mu.
\]
The inverse of $A|_{V_-}+\lambda I$ is uniformly bounded for small
$|\lambda|$, and hence
\begin{equation}
 \|u_-\|_V\le C|\lambda|.                                      \label{eq:KTC-negative-small}
\end{equation}
Since $A\ge0$ on $V_-^\perp$,
\begin{equation}
 (Au,u)_V\ge-\|A\|\|u_-\|_V^2\ge-C|\lambda|^2.                 \label{eq:KTC-A-lower}
\end{equation}
Substitution into \eqref{eq:KTC-energy} gives
\[
 0=a(Au,u)_V+|\lambda|^2+ra|\lambda|^2m
   \ge |\lambda|^2(1-Ca)>0
\]
for sufficiently small $|\lambda|$, a contradiction.  The case $a=0$
was already excluded.  All constants above are uniform for
$(r,s)\in[0,1]^2$.  Only $A|_{V_-}$ was inverted: no spectral gap,
closed range, or Fredholm property of $A$ on $V_-^\perp$ at zero is used.
\end{proof}

\begin{proof}[Proof of Theorem~\ref{thm:KTC-pencil}]
The preceding Fredholm argument applies uniformly to
\[
 B_{r,s}(\lambda)=A_++\lambda(I+s\Gamma)+r\lambda^2M,
 \qquad \mathcal P_{r,s}(\lambda)=B_{r,s}(\lambda)+A_-.
\]
Indeed, for $a=\Re\lambda\ge0$ and $\lambda\ne0$,
\[
 \Re(\bar\lambda B_{r,s}(\lambda)u,u)_V
 =a(A_+u,u)_V+|\lambda|^2\|u\|_V^2
   +ra|\lambda|^2(Mu,u)_V
 \ge |\lambda|^2\|u\|_V^2.
\]
Thus $B_{r,s}(\lambda)$ is invertible and
$\mathcal P_{r,s}(\lambda)$ is Fredholm of index zero there.
Choose the
$\varepsilon$ from Lemma~\ref{lem:KTC-boundary}, fix
$0<\rho<\varepsilon$, take $R>\|A\|$, and set
\[
 \mathscr O_{\rho,R}
 =\{\lambda\in\mathbb C:\Re\lambda>0,
      \ \rho<|\lambda|<R\}.
\]
On the closure of this half-annulus,
$\|B_{r,s}(\lambda)^{-1}\|\le1/\rho$, uniformly in $(r,s)$.
On a fixed compact neighborhood $K$ of the closed half-annulus, the
polynomial coefficients give
\[
 \|B_{r,s}(\lambda')-B_{r,s}(\lambda)\|
 \le C_K|\lambda'-\lambda|
\]
uniformly in $(r,s)$.  For $\lambda$ in the closed half-annulus and
$|\lambda'-\lambda|<\rho/(2C_K)$, shrinking this radius to remain in
$K$, we have
\[
 \bigl\|B_{r,s}(\lambda)^{-1}
       [B_{r,s}(\lambda')-B_{r,s}(\lambda)]\bigr\|<\tfrac12.
\]
A Neumann series makes $B_{r,s}(\lambda')$ invertible.  This gives one
common open neighborhood of the closed half-annulus for all $(r,s)$.
Consequently every $\mathcal P_{r,s}$ is analytic Fredholm of index zero
on that neighborhood.
The same lemma, together with the index-zero property, shows that every
$\mathcal P_{r,s}$ is invertible on the piecewise smooth boundary of this
fixed half-annulus.  The Gohberg--Sigal homotopy theorem therefore makes
the total algebraic multiplicity inside the contour independent of
$(r,s)$.  Since the parameter square is connected,
\[
 \sum_{\lambda\in\mathbb C_+}
 \operatorname{algmult}(\lambda;\mathcal P_{1,1})
 =
 \sum_{\lambda\in\mathbb C_+}
 \operatorname{algmult}(\lambda;\mathcal P_{0,0}).
\]
Now $\mathcal P_{0,0}(\lambda)=A+\lambda I$.  Its right-half-plane
characteristic values are $\lambda=-a_j>0$, where $a_j<0$ runs through
the negative eigenvalues of $A$, and their multiplicities are unchanged.
The sum is therefore $n^-(A)$.  Crucially, the contour avoids the origin;
no Fredholm or closed-range property of the stiffness at zero enters
the argument.
\end{proof}

\begin{proof}[Proof of Theorem~\ref{thm:abstract-KTC}]
Theorem~\ref{thm:KTC-pencil} and
Lemma~\ref{lem:KTC-generator-pencil} give
\[
 N_+(\mathscr A)
 =\sum_{\lambda\in\mathbb C_+}
   \operatorname{algmult}(\lambda;\mathcal P)
 =n^-(A)=n^-(k).
\]
The same results show that the unstable generator spectrum is finite and
consists only of isolated eigenvalues of finite algebraic multiplicity.
\end{proof}

Classical KTC and dissipation-induced instability results are discussed in
\cite{BlochEtAl1994,MaddocksOverton1995,KrechetnikovMarsden2007}, with
infinite-dimensional developments in
\cite{KrechetnikovMarsden2009,Pivovarchik1992,Shkalikov1996}.
Pivovarchik \cite[Theorem~3.3]{Pivovarchik1992} proved a
compactness-free algebraic instability count for quadratic operator pencils
$\lambda^2I+\lambda(D+iG)+L$, allowing zero in the essential stiffness
spectrum.  His theorem is formulated with a self-adjoint stiffness operator,
uniformly positive $H$-space damping, operator-domain compatibility, and
stiffness-controlled bounds on the $H$-space damping and gyroscopic operators.
By contrast, Theorem~\ref{thm:abstract-KTC} is formulated on a common form
domain: the damping form defines $V$, while the stiffness and gyroscopic forms
need only be bounded on $V\times V$, with no estimate of the associated
$H$-space damping operator in terms of the stiffness.  Thus that damping
operator may be unbounded on an infinite-dimensional stiffness kernel, and the
preconditioned mass operator need not be the identity or boundedly invertible.
These distinctions are essential for NSP, where the stiffness vanishes on the
infinite-dimensional space $\nabla\cdot(\bar\rho u)=0$ while dissipation
controls the $H^1$ norm after the rigid modes are removed.  The present form
realization also constructs the first-order generator and identifies its Riesz
algebraic multiplicities with those of the pencil.  Shkalikov's operator
formulation \cite{Shkalikov1996}, by comparison, assumes bounded invertibility
of both mass and stiffness.

\begin{remark}
\label{rem:KTC-comparison}
Theorem~2.5 of \cite{ChengLinWang2026} treats
$u_{tt}+Du_t+Lu=0$ at the operator level.  Its assumptions include uniform
positivity of $D$, self-adjointness and finite negative index of $L$,
$\ker L=\{0\}$, a positive gap on the complementary stiffness space,
compatibility of the domains of $D$ and $L$, and compactness of a shifted
$L$-energy sublevel.  The condition $\ker L=\{0\}$ is used in the damping
homotopy to prevent characteristic values from reaching the origin.

On a common form realization, Theorem~\ref{thm:abstract-KTC} extends
this result by admitting a gyroscopic term and removing stiffness
nondegeneracy, the positive gap, and the compactness assumptions.
The proofs also differ: we keep the coercive damping fixed and deform the
mass and gyroscopic terms to zero, using the finite-dimensional negative
stiffness space to exclude small unstable characteristic values.
The operator-domain assumptions in \cite{ChengLinWang2026} and our form
hypotheses are arranged differently, so this comparison concerns their
common variational class.  The radial NSP system belongs to that class,
and the following corollary completes its instability count at mass
extrema.
\end{remark}

Let $\rho_\mu$, $\mu\in\mathcal I$, be any parameter interval of the
nonrotating family considered in \cite{ChengLinWang2026,LinZeng2022}.
For this corollary, the pressure assumptions are those of the cited
nonrotating theory: $P\in C^1((0,\infty))$, $P(0)=0$, $P'>0$, and
$s^{1-\gamma_1}P'(s)\to K_1>0$ as $s\downarrow0$, for some
$\gamma_1\in(6/5,2)$.  The interval $\mathcal I$ is contained in the
parameter range on which the spherical stars have finite support.
Let $R_\mu$ be the support radius and
$M(\mu):=\int_{\mathbb{R}^3}\rho_\mu\,dx$.
Denote by
$\cA^{\rm rad}_{\rm NSP}(\mu)$ and $\cA^{\rm rad}_{\rm EP}(\mu)$ the
radial linearized generators, and by $L^{\rm disp}_\mu$ the radial
displacement stiffness denoted by $L_\mu$ in
\cite[(3.10)]{ChengLinWang2026}.

\begin{corollary}
\label{cor:nonrotating-all-mu}
For every $\mu\in\mathcal I$, including
$\partial_\mu M(\mu)=0$,
\begin{equation}
 N_+\!\left(\cA^{\rm rad}_{\rm NSP}(\mu)\right)
 =n^-\!\left(L^{\rm disp}_\mu\right)
 =N_+\!\left(\cA^{\rm rad}_{\rm EP}(\mu)\right)
 =n^-(D^0_\mu)-\mathfrak i_\mu,                           \label{eq:nonrotating-all-mu}
\end{equation}
where $D^0_\mu$ is the radial Schr\"odinger operator in the
density--potential reduction, defined in
\cite[(3.27)]{LinZeng2022} and
\begin{equation}
 \mathfrak i_\mu=
 \begin{cases}
  1,& \partial_\mu M(\mu)\,
       \partial_\mu\!\left(M(\mu)/R_\mu\right)>0
       \quad\hbox{or}\quad \partial_\mu M(\mu)=0,\\
  0,& \partial_\mu M(\mu)\,
       \partial_\mu\!\left(M(\mu)/R_\mu\right)<0
       \quad\hbox{or}\quad
       \partial_\mu\!\left(M(\mu)/R_\mu\right)=0.
 \end{cases}                                                \label{eq:nonrotating-i-mu}
\end{equation}
The two alternatives are exhaustive because the two derivatives in
\eqref{eq:nonrotating-i-mu} cannot vanish simultaneously.  In particular,
at a mass extremum,
\begin{equation}
 N_+\!\left(\cA^{\rm rad}_{\rm NSP}(\mu)\right)
 =n^-(D^0_\mu)-1.                                          \label{eq:nonrotating-mass-extremum}
\end{equation}
\end{corollary}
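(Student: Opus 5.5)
We prove Corollary~\ref{cor:nonrotating-all-mu} by placing the radial NSP system in the form framework of Theorem~\ref{thm:abstract-KTC}, and then quoting the inviscid count of \cite{LinZeng2022}. The point is that the abstract theorem needs no condition $\ker L^{\rm disp}_\mu=\{0\}$, so the mass extrema excluded in \cite{ChengLinWang2026} are now covered.

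\textbf{Step 1: form realization.} Take $H=L^2_{\rho_\mu}$ radial vector fields $u=\phi(r)e_r$. Let $V$ be the completion of smooth radial fields under the viscous form
\[
 d[u,u]=\int\bigl(2\nu_s|\dev e(u)|^2+\nu_b|\nabla\cdot u|^2\bigr)dx .
\]
In the radial class there are no rigid modes: axial translation and rotation are not radial. Coercivity $d[u,u]\ge d_0\|u\|_V^2$ therefore holds as in \cite{ChengLinWang2026}, using Korn's inequality on the ball together with the weighted Hardy estimates near the vacuum boundary. The stiffness form is
\[
 k[u,w]=\langle L^{\rm disp}_\mu u,w\rangle
 =\int h'(\rho_\mu)\,\nabla\cdot(\rho_\mu u)\,\overline{\nabla\cdot(\rho_\mu w)}
 +\int V_{\nabla\cdot(\rho_\mu u)}\,\overline{\nabla\cdot(\rho_\mu w)} .
\]
There is no gyroscopic term, so $g=0$.

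We must check three things:
\begin{enumerate}
\item $k$ is bounded on $V\times V$, which follows from the Hardy-type bound $\|\nabla\cdot(\rho_\mu u)\|_{L^2_{h'(\rho_\mu)}}\lesssim\|u\|_V$;
\item $V\hookrightarrow H$ is a dense continuous embedding;
\item $n^-(k)=n^-(L^{\rm disp}_\mu)<\infty$, which is finite by \cite{ChengLinWang2026}.
\end{enumerate}
We also check that the generator $\mathscr A$ of \eqref{eq:KTC-generator} coincides with $\cA^{\rm rad}_{\rm NSP}(\mu)$ in displacement--velocity variables. Theorem~\ref{thm:abstract-KTC} then gives the first equality of \eqref{eq:nonrotating-all-mu} for every $\mu$, including critical points of $M$.

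\textbf{Step 2: the remaining equalities.} The equalities $N_+(\cA^{\rm rad}_{\rm EP})=n^-(L^{\rm disp}_\mu)=n^-(D^0_\mu)-\mathfrak i_\mu$ at nondegenerate $\mu$ are the Lin--Zeng count. At a mass extremum we compute directly. The density--potential reduction identifies $n^-(L^{\rm disp}_\mu)$ with the Morse index of the density Hessian restricted to mass-zero perturbations, since $\Ran\nabla\cdot(\rho_\mu\,\cdot)$ is dense in the mass-zero space. Finite-dimensional Gram approximation, as described in the introduction, handles this density argument.

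Next apply the one-constraint inertia formula to the branch tangent $\partial_\mu\rho_\mu$. It satisfies
\[
 L\partial_\mu\rho_\mu=-(\partial_\mu c_\mu)\,1,
 \qquad
 Q[\partial_\mu\rho_\mu]=-(\partial_\mu c_\mu)\,\partial_\mu M .
\]
When $\partial_\mu M=0$, the constraint vector $1$ lies in the range of $L$ and is paired to zero with the preimage $\partial_\mu\rho_\mu$. The standard formula then gives a constrained index of $n^-(D^0_\mu)-1$, with $\partial_\mu\rho_\mu$ entering the constrained kernel. This uses $\partial_\mu c_\mu\ne0$, which is equivalent to $\partial_\mu(M/R)\ne0$ because $c_\mu=M/R_\mu$ via matching with the exterior potential.

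That $\partial_\mu M$ and $\partial_\mu(M/R)$ never vanish simultaneously follows by the same identity. If both vanished, $\partial_\mu\rho_\mu$ would solve $L\partial_\mu\rho_\mu=0$, a nontrivial radial solution with zero mass. This contradicts the Lin--Zeng characterization of the radial kernel, which is nontrivial only when $\partial_\mu(M/R)=0$. In that situation the tangent has nonzero mass, hence $\partial_\mu M\ne0$.

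\textbf{Main obstacle.} The delicate part is Step 1: verifying that the radial viscous form dominates the stiffness uniformly up to the physical-vacuum boundary for $\gamma_1\in(6/5,2)$. Concretely, this is the bound $\|\nabla\cdot(\rho_\mu u)\|_{L^2_{h'(\rho_\mu)}}\lesssim\|u\|_{V}$, which requires a weighted Hardy inequality for $h'(\rho_\mu)\rho_\mu^2\sim\rho_\mu^{\gamma_1}$. We expect this to be extractable from the estimates of \cite{ChengLinWang2026}. The second difficulty is identifying the abstract generator domain with theirs, which we will do by showing both are maximal realizations with a common resolvent for large real $\lambda$.
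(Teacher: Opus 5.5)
Your architecture matches the paper's. You realize the radial problem on the damping form domain, apply Theorem~\ref{thm:abstract-KTC} with $g=0$ (which imposes no condition on $\ker L^{\rm disp}_\mu$), and quote \cite{LinZeng2022}. The step that fails as planned is the identification of the abstract generator with $\cA^{\rm rad}_{\rm NSP}(\mu)$.

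\textbf{Generator identification.} The operator $\mathscr A$ of \eqref{eq:KTC-generator} acts on displacement--velocity pairs in $V\times H$. By contrast, $\cA^{\rm rad}_{\rm NSP}(\mu)$, like $\cA_{\rm NSP}$, is the density--velocity generator on $X\times H$. The map $\xi\mapsto\cC_\mu\xi=-r^{-2}\partial_r(r^2\rho_\mu\xi)$ is injective from $V$ into the mass-zero density space, but its range is only dense, since the weight $h'(\rho_\mu)\rho_\mu^2$ degenerates at the vacuum. So the two phase spaces are not boundedly isomorphic, and the two operators cannot be compared through a ``common resolvent.''

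What is needed is the elimination the paper uses. For $\lambda\ne0$ the density equation $\lambda\sigma-\cC_\mu u=f$ gives $\sigma=\lambda^{-1}(\cC_\mu u+f)$. This analytic Gaussian elimination (as in \eqref{eq:Gaussian-factorization}), together with the domain-valued root-chain correspondence of Lemma~\ref{lem:KTC-generator-pencil}, shows that the density--velocity characteristic operator and the quadratic pencil have the same characteristic values in $\C_+$ with identical partial multiplicities. Only then is the count of Theorem~\ref{thm:KTC-pencil} a Riesz count for $\cA^{\rm rad}_{\rm NSP}(\mu)$.

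Relatedly, you assert $n^-(k|_V)=n^-(L^{\rm disp}_\mu)$ in Step~1 without proof. That is where the range-density and Gram-approximation argument belongs, applied to $\cC_\mu V$, because $V$ differs from the operator domain in \cite{ChengLinWang2026}.

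\textbf{Step~2.} You attribute only the nondegenerate case to \cite{LinZeng2022} and then compute $n^-(L^{\rm disp}_\mu)$ at a mass extremum. This leaves $N_+(\cA^{\rm rad}_{\rm EP}(\mu))=n^-(L^{\rm disp}_\mu)$ at the extremum unaddressed. The paper simply cites \cite[Theorem~1.2(ii) and Section~3.5]{LinZeng2022}, whose index formula holds for every $\mu$.

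Your one-constraint computation giving $n^-(D^0_\mu)-1$ is correct, given invertibility of the radial density Hessian there. However, your argument that the two derivatives never vanish together is circular. The condition $\partial_\mu(M/R)=0$ is consistent with a nontrivial radial kernel, and ``the tangent has nonzero mass'' is the claim $\partial_\mu M\ne0$ itself.

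The actual reason is ODE uniqueness, equivalently \cite[Lemma~3.10]{LinZeng2022}. Suppose $g$ is radial with $h'(\rho_\mu)g+V_g=0$ in the star and $\int g=0$. Then $V_g=-(\int g)/|x|=0$ outside the star, so $V_g(R_\mu)=V_g'(R_\mu)=0$. Since $V_g$ solves $\psi''+\frac2r\psi'+\frac{4\pi}{h'(\rho_\mu)}\psi=0$ on $(0,R_\mu)$, it vanishes identically. Hence $g=-V_g/h'(\rho_\mu)=0$, which contradicts $\partial_\mu\rho_\mu(0)=1$.

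\textbf{Main obstacle.} Your stated main obstacle needs no Hardy inequality. The pointwise bound \eqref{eq:C-pointwise}, with bounded coefficients $\rho_\mu P'(\rho_\mu)$ and $|\nabla h(\rho_\mu)|^2/h'(\rho_\mu)$, gives $\|\cC_\mu u\|_X\lesssim\|u\|_{H^1}$ directly.
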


\begin{proof}
Use $H_\mu=L^2((0,R_\mu),\rho_\mu r^2\,dr)$ and the radial space
$V_\mu=\{u:u(r)e_r\in H^1(B_{R_\mu};\C^3)\}$, with the damping form
$d_\mu$ in \cite[(3.13)]{ChengLinWang2026}.
The estimate \cite[(3.14)]{ChengLinWang2026} makes $d_\mu$ coercive,
and $V_\mu\hookrightarrow H_\mu$ is dense and continuous.  The proofs of
\cite[Lemmas~3.4--3.5]{ChengLinWang2026} give boundedness of the
stiffness form on $V_\mu$ and finiteness of its negative index.  These
estimates do not use $\partial_\mu M\ne0$; that condition is used
separately in \cite{ChengLinWang2026} to exclude the stiffness kernel.

We first check that restricting the radial displacement form to the damping
form domain $V_\mu$ does not change its Morse index.  Indeed, put
$\cC_\mu u=-r^{-2}\partial_r(r^2\rho_\mu u)$.  Its range on
$V_\mu$ is dense in the mass-zero radial density space: the annihilator
calculation in \cite[(3.57)]{LinZeng2022} uses only compactly supported
smooth velocities and gives the mass representer $1/h'(\rho_\mu)$.
Both the viscous and inviscid displacement forms are obtained by
composing the same continuous density Hessian with $\cC_\mu$.
Approximating a basis of each finite-dimensional negative density
subspace by elements of $\cC_\mu V_\mu$ therefore gives
$n^-(k_\mu|_{V_\mu})=n^-(L^{\rm disp}_\mu)$.
Theorem~\ref{thm:abstract-KTC}, with $g=0$, yields
\begin{equation}
 N_+\!\left(\cA^{\rm rad}_{\rm NSP}(\mu)\right)
 =n^-\!\left(L^{\rm disp}_\mu\right).
                                                        \label{eq:nonrotating-KTC-step}
\end{equation}
Here the radial reduction in \cite[Section~3]{ChengLinWang2026} eliminates
the density as $\sigma=\lambda^{-1}\cC_\mu u$ for $\lambda\ne0$.
This elimination is analytic near each $\lambda_0\in\C_+$ and preserves
all partial multiplicities, by the same Gaussian elimination and
root-chain calculation as in Lemma~\ref{lem:KTC-generator-pencil}.
Thus \eqref{eq:nonrotating-KTC-step} counts Riesz algebraic multiplicities
of the radial density--velocity generator.

Finally, \cite[Theorem~1.2(ii) and Section~3.5]{LinZeng2022} gives
\[
 n^-(L^{\rm disp}_\mu)
 =N_+\!\left(\cA^{\rm rad}_{\rm EP}(\mu)\right)
 =n^-(D^0_\mu)-\mathfrak i_\mu
\]
for every $\mu$.  This proves \eqref{eq:nonrotating-all-mu}, including
\eqref{eq:nonrotating-mass-extremum}, since $\mathfrak i_\mu=1$ when
$\partial_\mu M=0$.  Possible zero eigenvalues are outside the contour
used in Theorem~\ref{thm:KTC-pencil}.
\end{proof}

Corollary~\ref{cor:nonrotating-all-mu} completes the linear spectral count
at the extremal parameter.  It does not assert asymptotic or nonlinear
stability there: the differentiated equilibrium contributes to the
zero-frequency center structure and may generate a Jordan chain.

\section{Linearized NSP and symmetry reduction}
\label{sec:linearization}

Throughout Sections~\ref{sec:linearization} and \ref{sec:index-proof}, we
fix a regular rigidly rotating equilibrium whose pressure law satisfies
Hypothesis~\ref{hyp:EOS}.

\subsection{Rotating-frame linearization}

We work in the frame rotating with angular velocity $\omega e_z$.  If $u$
is the perturbation of the relative velocity and $\sigma$ the Eulerian
density perturbation, the linearized equations are
\begin{align}
 \sigma_t&=-\nabla\cdot(\bar\rho u),                                      \label{eq:linear-mass}\\
 u_t+2\omega e_z\times u+
 \nabla\bigl(h'(\bar\rho)\sigma+V_\sigma\bigr)
 &=\bar\rho^{-1}\nabla\cdot\mathbb T(u).                                  \label{eq:linear-momentum}
\end{align}
Appendix~\ref{app:lagrangian} derives these equations from the
free-boundary Lagrangian system. In rotating coordinates, the material
variation of the centrifugal force cancels with the differentiated
equilibrium force.

We call a scalar function $f$ axisymmetric if
$f(R_\vartheta x)=f(x)$ for every rotation $R_\vartheta$ about the
$z$-axis, and a vector field $u$ axisymmetric if
$u(R_\vartheta x)=R_\vartheta u(x)$.  The subscript ``${\rm ax}$'' denotes
the corresponding closed subspace.  In particular, $H^1_{\rm ax}$ is the
closure in $H^1$ of smooth Cartesian axisymmetric vector fields.  Let
\begin{equation}
 H=L^2_{\bar\rho,\mathrm{ax}}(\Omega_{\bar\rho};\C^3),\qquad
 (u,v)_H=\int_{\Omega_{\bar\rho}}\bar\rho\,u\cdot\overline v\,dx,          \label{eq:velocity-H}
\end{equation}
and
\begin{equation}
 X=X_{\bar\rho}:=L^2_{h'(\bar\rho),\mathrm{ax}}(\Omega_{\bar\rho};\C),
 \qquad \|\sigma\|_X^2=\int_{\Omega_{\bar\rho}}
             h'(\bar\rho)|\sigma|^2\,dx.                  \label{eq:density-space}
\end{equation}
We shall use the closed density subspaces
\begin{equation}
 X_0=\left\{\sigma\in X:\int_{\Omega_{\bar\rho}}\sigma\,dx=0\right\},\qquad
 X_{00}=\left\{\sigma\in X:\int_{\Omega_{\bar\rho}}\sigma\,dx=0,\quad
                         \int_{\Omega_{\bar\rho}} z\sigma\,dx=0\right\}.       \label{eq:X00}
\end{equation}
Define
\begin{equation}
 \cC u=-\nabla\cdot(\bar\rho u),\qquad
 L_0\sigma=h'(\bar\rho)\sigma+V_\sigma\in X^*.                            \label{eq:C-L0}
\end{equation}
We identify $H$ with a subspace of $(H^1_{\rm ax})^*$ by the embedding
$\langle\iota_H f,u\rangle=(f,u)_H$.
The form adjoint $\cC^*:X^*\to(H^1_{\rm ax})^*$ is defined by
\begin{equation}
 \ip{\cC^*\phi}{u}_{(H^1)^*,H^1}
 =\ip{\phi}{\cC u}_{X^*,X}.                           \label{eq:C-adjoint}
\end{equation}
For smooth $\phi$, this pairing equals
$\int\bar\rho\nabla\phi\cdot\overline u\,dx$.
Thus $\cC^*\phi$ is the distribution $\bar\rho\nabla\phi$;
its force representative under $\iota_H$ is $\nabla\phi$ whenever this
gradient belongs to $H$. In sums in $(H^1_{\rm ax})^*$, an $H$-valued
term is understood through $\iota_H$. The operator-domain conditions
below require only that the complete momentum residual be represented
by an element of $H$.  The sesquilinear form
associated with viscous dissipation is
\begin{equation}
 d[u,v]=\int_{\Omega_{\bar\rho}}
 \left(2\nu_s\dev e(u):\overline{\dev e(v)}
 +\nu_b(\nabla\cdot u)\overline{\nabla\cdot v}\right)dx.                  \label{eq:viscous-form}
\end{equation}
We abbreviate $d[u]:=d[u,u]$.
Let $\mathsf D:H^1_{\rm ax}\to(H^1_{\rm ax})^*$ be its form map,
$\langle\mathsf Du,v\rangle=d[u,v]$.  Its nonnegative self-adjoint
realization in $H$ is denoted by $D$:
\begin{equation}
 (Du,v)_H=d[u,v],\qquad
 Du=-\bar\rho^{-1}\nabla\cdot\mathbb T(u)                                \label{eq:D-realization}
\end{equation}
in the strong interior notation.  For a smooth nonlinear solution,
integration by parts and the zero-traction boundary condition give
\begin{equation}
 \frac{dE}{dt}
 =-\int_{\Omega(t)}\left(2\nu_s|\dev e(v)|^2
             +\nu_b|\nabla\cdot v|^2\right)dx\le0.          \label{eq:NSP-energy-dissipation}
\end{equation}
The corresponding quadratic dissipation for the linearized velocity is
$d[u,u]$.  This form is nonnegative on the full velocity space and becomes
coercive only after its rigid kernel is removed below.  Precisely,
\begin{equation}
 \Dom(D)=\{u\in H^1_{\rm ax}:\exists f\in H\text{ such that }
 d[u,v]=(f,v)_H\ \forall v\in H^1_{\rm ax}\},\qquad Du=f.   \label{eq:D-domain}
\end{equation}
The traction condition is the weak conormal condition encoded by this form;
an $H^{-1/2}$ or classical trace is asserted only when additional regularity
is available.  Finally
\begin{equation}
 G_0u=2\omega e_z\times u
 =-2\omega u_\theta e_r+2\omega u_r e_\theta,\qquad G_0^*=-G_0.           \label{eq:G0}
\end{equation}
Equations \eqref{eq:linear-mass}--\eqref{eq:linear-momentum} become
\begin{equation}
 \sigma_t=\cC u,\qquad
 u_t+Du+G_0u+\cC^*L_0\sigma=0.                                             \label{eq:first-order-full}
\end{equation}
The unrestricted energy and form spaces are
\begin{equation}
 \mathscr H_{\rm full}=X\times H,\qquad
 \mathscr V_{\rm full}=X\times H^1_{\rm ax}(\Omega_{\bar\rho};\C^3).
 \label{eq:full-phase-spaces}
\end{equation}
On $\mathscr V_{\rm full}$ define
\begin{align}
 \mathfrak a_{\rm full}[(\sigma,u),(\eta,w)]
 &:=- (\cC u,\eta)_X+d[u,w]+(G_0u,w)_H
       +\ip{L_0\sigma}{\cC w}.                             \label{eq:full-sectorial-form}
\end{align}
Lemma~\ref{lem:full-generation} proves that this form is closed after a
scalar shift and represents $-\cA_{\rm NSP}$, with domain
\begin{equation}
 \Dom(\cA_{\rm NSP})=
 \{(\sigma,u)\in X\times H^1_{\rm ax}:
   \mathsf Du+G_0u+\cC^*L_0\sigma\in H\},                 \label{eq:full-generator-domain}
\end{equation}
where the sum is first interpreted in $(H^1_{\rm ax})^*$.  Thus
\begin{equation}
 \cA_{\rm NSP}(\sigma,u)
 =(\cC u,-\mathsf Du-G_0u-\cC^*L_0\sigma).                   \label{eq:full-generator}
\end{equation}
All spectral statements below refer to this operator realization.

\subsection{The kernel of viscosity}

Put
\begin{equation}
 q_{\rm rot}=r e_\theta,\qquad
 M=\norm{e_z}_H^2=\int_{\Omega_{\bar\rho}}\bar\rho dx,\qquad
 I=\norm{q_{\rm rot}}_H^2=\int_{\Omega_{\bar\rho}}\bar\rho r^2dx.                     \label{eq:rigid-vectors}
\end{equation}

\begin{lemma}
\label{lem:Korn}
Let $\Omega_{\bar\rho}$ be connected, bounded, axisymmetric, and Lipschitz,
and suppose that $\bar\rho>0$ almost everywhere in $\Omega_{\bar\rho}$ and
$\bar\rho\in L^\infty$.  Then
\begin{equation}
 \Ker d=\Span\{e_z,q_{\rm rot}\}.                                                      \label{eq:d-kernel}
\end{equation}
On the full axisymmetric space one has the generalized Korn estimate
\begin{equation}
 d[v]+\|v\|_H^2\ge c_{K,0}\|v\|_{H^1}^2.                 \label{eq:Korn-shifted}
\end{equation}
If
\begin{equation}
 H_\diamond=\{v\in H:(v,e_z)_H=(v,q_{\rm rot})_H=0\},\qquad
 V_\diamond=H^1_{\rm ax}(\Omega_{\bar\rho};\C^3)\cap H_\diamond,           \label{eq:diamond-spaces}
\end{equation}
then
\begin{equation}
 d[v]\ge c_K\norm v_{H^1}^2\ge c_H\norm v_H^2,\qquad v\in V_\diamond.      \label{eq:Korn-coercive}
\end{equation}
\end{lemma}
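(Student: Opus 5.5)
The plan has three steps: compute the kernel of $d$ through rigid motions, obtain the shifted Korn estimate from the classical Korn inequality, and deduce coercivity on $V_\diamond$ by a compactness argument.

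\emph{Kernel.} Since $\nu_s,\nu_b>0$, the identity $d[v]=0$ is equivalent to $\dev e(v)=0$ and $\nabla\cdot v=0$, hence to $e(v)=0$ a.e.\ in $\Omega_{\bar\rho}$. On a connected Lipschitz (indeed any connected open) domain, an $H^1$ field with vanishing symmetric gradient is an infinitesimal rigid motion, $v=a+b\times x$. This follows from the distributional identity $\partial_j\partial_k v_i=\partial_j e_{ik}+\partial_k e_{ij}-\partial_i e_{jk}$ together with connectedness.

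Next I impose axisymmetry, $v(R_\vartheta x)=R_\vartheta v(x)$. Rotation-equivariance forces $R_\vartheta a=a$ and $R_\vartheta b=b$ for all $\vartheta$, so $a,b\in\Span\{e_z\}$. Then $v=\alpha e_z+\beta\, e_z\times x=\alpha e_z+\beta\,r e_\theta$. Conversely, both fields are axisymmetric and strain-free, which gives \eqref{eq:d-kernel}. The complex case follows by splitting into real and imaginary parts.

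\emph{Shifted Korn inequality.} Because $\dev e$ and $\nabla\cdot v$ together control $e(v)$ pointwise, $d[v]\ge c\|e(v)\|_{L^2}^2$. The classical second Korn inequality on bounded Lipschitz domains gives
\[
\|v\|_{H^1}^2\le C\bigl(\|e(v)\|_{L^2}^2+\|v\|_{L^2}^2\bigr).
\]
The obstacle is that the lemma's estimate \eqref{eq:Korn-shifted} has the weighted norm $\|v\|_H$ in place of $\|v\|_{L^2}$. Since $\bar\rho$ vanishes at the boundary, these norms are not equivalent.

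To close this gap, I will argue by contradiction and compactness, using that $\bar\rho>0$ a.e. Suppose $\|v_n\|_{H^1}=1$ while $d[v_n]+\|v_n\|_H^2\to0$. By Rellich compactness, $v_n\to v$ in $L^2$ along a subsequence. Korn applied to the differences $v_n-v_m$ shows the sequence is Cauchy in $H^1$, so $v_n\to v$ in $H^1$ with $\|v\|_{H^1}=1$. In the limit $e(v)=0$, and $\int\bar\rho|v|^2=0$ because $\bar\rho\in L^\infty$ and $v_n\to v$ in $L^2$. Since $\bar\rho>0$ a.e., this gives $v=0$, a contradiction. This proves \eqref{eq:Korn-shifted}.

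\emph{Coercivity on $V_\diamond$.} I run the same compactness argument with $\|v_n\|_{H^1}=1$, $v_n\in V_\diamond$, and $d[v_n]\to0$. The limit again satisfies $v\in\Ker d=\Span\{e_z,q_{\rm rot}\}$ with $\|v\|_{H^1}=1$. The orthogonality conditions $(v_n,e_z)_H=(v_n,q_{\rm rot})_H=0$ pass to the limit because $L^2$ convergence and $\bar\rho\in L^\infty$ give $H$-convergence. So $v$ is $H$-orthogonal to $\Ker d$ while lying in it, and hence $\|v\|_H=0$ and $v=0$, a contradiction. This yields $d[v]\ge c_K\|v\|_{H^1}^2$ on $V_\diamond$. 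The final inequality $\|v\|_{H^1}^2\ge c\|v\|_H^2$ is immediate from $\bar\rho\in L^\infty$.

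The main delicate step is the shifted Korn estimate with a degenerate weight. It is handled by this compactness route rather than by a direct estimate. The argument requires only boundedness, the Lipschitz property (for Rellich compactness and the classical Korn inequality), and positivity of $\bar\rho$ a.e.
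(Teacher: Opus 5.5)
Your proof is correct. It differs from the paper's in how the weighted estimates are closed.

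\emph{The paper's route.} The paper uses compactness only once, to derive the weight-free Korn inequality modulo rigid motions, $\inf_{\xi\in\mathcal R}\|w-\xi\|_{H^1}\le C\|e(w)\|_{L^2}$. It then applies the rotational average $\mathcal Q$, an $H^1$-contraction fixing axisymmetric fields, to choose the approximant in $\mathcal R_{\rm ax}=\Span\{e_z,q_{\rm rot}\}$. Both \eqref{eq:Korn-shifted} and \eqref{eq:Korn-coercive} then follow explicitly from finite-dimensional norm equivalence on $\mathcal R_{\rm ax}$. This uses $\|ae_z+bq_{\rm rot}\|_H^2=M|a|^2+I|b|^2$ and, on $V_\diamond$, the orthogonality identity $\|\xi\|_H^2=(\xi-u,\xi)_H$.

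\emph{Your route.} You instead run two separate contradiction arguments directly on the weighted inequalities. You also obtain the kernel from equivariance of $a+b\times x$ rather than from the fixed space of $\mathcal Q$.

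\emph{What each buys.} Your argument is shorter. It also shows that \eqref{eq:Korn-shifted} holds on all of $H^1$, not only on axisymmetric fields, provided $\bar\rho>0$ a.e. The paper's argument confines the compactness to a statement depending only on $\Omega_{\bar\rho}$. The weight then enters only through $M$, $I$, and $\|\bar\rho\|_{L^\infty}$, which makes the dependence of the constants on $\bar\rho$ transparent. For axisymmetric fields it needs only $M,I>0$ rather than a.e.\ positivity.

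\emph{A point to state explicitly.} In your $V_\diamond$ step, the $H^1$-limit lies in $H^1_{\rm ax}$ because that space is closed. This is what places the limit in $\Span\{e_z,q_{\rm rot}\}$. Without it, $H$-orthogonality to these two vectors would not rule out the remaining rigid motions.
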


\begin{proof}
Let
\begin{equation}
 \mathcal R=\{x\mapsto a+Bx: a\in\C^3, B\in\C^{3\times3},\ B^T=-B\}
                                                               \label{eq:rigid-motion-space}
\end{equation}
be the space of infinitesimal rigid motions.  We first recall the two Korn
facts used below.  On a bounded connected Lipschitz domain,
\begin{align}
 \|w\|_{H^1}
 &\le C_\Omega\bigl(\|e(w)\|_{L^2}+\|w\|_{L^2}\bigr),
                                                               \label{eq:Korn-second}\\
 \inf_{\xi\in\mathcal R}\|w-\xi\|_{H^1}
 &\le C_\Omega\|e(w)\|_{L^2}.                             \label{eq:Korn-modulo-rigid}
\end{align}
Here $e(w)$ is the symmetric gradient defined in \eqref{eq:viscous-stress}. The kernel and the second estimate follow from
the first one as follows.  The distributional identity
\begin{equation}
 \partial_j\partial_k w_i
 =\partial_j e_{ik}(w)+\partial_k e_{ij}(w)-\partial_i e_{jk}(w)
                                                               \label{eq:rigidity-identity}
\end{equation}
shows that $e(w)=0$ implies that all second derivatives of $w$ vanish.
Connectedness gives $w(x)=a+Bx$, and $e(w)=0$ gives $B^T=-B$.
The argument applies separately to the real and imaginary parts.

To derive \eqref{eq:Korn-modulo-rigid}, let $\Pi_{\mathcal R}$ be the
$H^1$-orthogonal projection onto the finite-dimensional space $\mathcal R$.
If the estimate failed, there would be
$w_n\in\mathcal R^{\perp_{H^1}}$ such that
\begin{equation}
 \|w_n\|_{H^1}=1,\qquad \|e(w_n)\|_{L^2}\longrightarrow0.
\end{equation}
After passage to a subsequence, $w_n\rightharpoonup w$ in $H^1$ and
$w_n\to w$ in $L^2$.  Then $e(w)=0$, so $w\in\mathcal R$, while the weak
limit remains in $\mathcal R^{\perp_{H^1}}$.  Hence $w=0$.
Applying \eqref{eq:Korn-second} to $w_n-w$ now gives $w_n\to0$ in $H^1$,
a contradiction.

For an axisymmetric field, the approximating rigid motion in
\eqref{eq:Korn-modulo-rigid} may be chosen axisymmetric.  If $R_\vartheta$
is rotation about the $z$-axis, define
\begin{equation}
 (\mathcal Q\xi)(x)=\frac1{2\pi}\int_0^{2\pi}
       R_{-\vartheta}\xi(R_\vartheta x)\,d\vartheta .        \label{eq:rigid-axial-average}
\end{equation}
The averaging operator $\mathcal Q$ is a contraction on $H^1$, preserves
$\mathcal R$, and fixes every axisymmetric vector field.  Thus
$\|u-\mathcal Q\xi\|_{H^1}\le\|u-\xi\|_{H^1}$.  Its fixed subspace in
$\mathcal R$ is
\begin{equation}
 \mathcal R_{\rm ax}
 =\Span\{e_z,e_z\times x\}=\Span\{e_z,q_{\rm rot}\}.        \label{eq:axisymmetric-rigid-space}
\end{equation}
This also proves directly that every axisymmetric field with $e(u)=0$ has
the form $u=a e_z+bq_{\rm rot}$.

Since
\begin{equation}
 |e(u)|^2=|\dev e(u)|^2+\frac13|\nabla\cdot u|^2
\end{equation}
and $\nu_s,\nu_b>0$, there are $c_\nu,C_\nu>0$ such that
\begin{equation}
 c_\nu\|e(u)\|_{L^2}^2\le d[u]
 \le C_\nu\|e(u)\|_{L^2}^2.                              \label{eq:d-e-equivalence}
\end{equation}
This proves \eqref{eq:d-kernel}.

We next prove \eqref{eq:Korn-shifted}.  For $u\in H^1_{\rm ax}$, choose
$\xi=a e_z+bq_{\rm rot}\in\mathcal R_{\rm ax}$ with
\begin{equation}
 \|u-\xi\|_{H^1}\le C d[u]^{1/2}.                         \label{eq:rigid-approximant}
\end{equation}
The two rigid vectors are $H$-orthogonal and
\begin{equation}
 \|a e_z+bq_{\rm rot}\|_H^2=M|a|^2+I|b|^2.               \label{eq:weighted-rigid-norm}
\end{equation}
Thus the weighted $H$ norm is a norm on $\mathcal R_{\rm ax}$, and
finite-dimensional norm equivalence gives
\begin{align}
 \|\xi\|_{H^1}
 &\le C\|\xi\|_H
 \le C\bigl(\|u\|_H+\|u-\xi\|_H\bigr)\notag\\
 &\le C\bigl(\|u\|_H+d[u]^{1/2}\bigr).                   \label{eq:rigid-weight-control}
\end{align}
Combining this with \eqref{eq:rigid-approximant} and then squaring proves
\eqref{eq:Korn-shifted}.

Finally let $u\in V_\diamond$.  Since
$u\perp_H\mathcal R_{\rm ax}$, the same $\xi$ satisfies
\begin{equation}
 \|\xi\|_H^2=(\xi-u,\xi)_H
 \le\|\xi-u\|_H\|\xi\|_H.
\end{equation}
Hence $\|\xi\|_H\le\|u-\xi\|_H\le C d[u]^{1/2}$ and, by norm
equivalence on $\mathcal R_{\rm ax}$,
$\|\xi\|_{H^1}\le C d[u]^{1/2}$.  Together with
\eqref{eq:rigid-approximant}, this yields
$\|u\|_{H^1}^2\le C d[u]$.  The bound $\bar\rho\in L^\infty$ gives
\begin{equation}
 \|u\|_H^2\le\|\bar\rho\|_{L^\infty}\|u\|_{L^2}^2
 \le C\|u\|_{H^1}^2,
\end{equation}
and proves \eqref{eq:Korn-coercive}.

Here $H^1_{\rm ax}$ is the closure of smooth Cartesian axisymmetric vector
fields.  The relations $u_r=u_\theta=0$ at $r=0$ hold for smooth
representatives; no codimension-two $H^1$ trace on the axis is used.
\end{proof}

Removing both $e_z$ and $q_{\rm rot}$ therefore gives the closed,
coercive form required in Section~\ref{sec:KTC}.

\subsection{Conservation laws}

On the full phase space define
\begin{align}
 \mathfrak m(\sigma,u)&=\int\sigma dx,                                    \label{eq:mass-covector}\\
 \mathfrak p(\sigma,u)&=(u,e_z)_H=\int\bar\rho u_zdx,                        \label{eq:Pz-covector}\\
 \mathfrak z(\sigma,u)&=\int z\sigma dx,                                  \label{eq:Z-covector}\\
 \mathfrak j(\sigma,u)&=\omega\int r^2\sigma dx+(u,q_{\rm rot})_H.                  \label{eq:J-covector}
\end{align}
Here $\mathfrak j$ is the variation of total angular momentum and
$\mathfrak p$ is the variation of axial linear momentum.

\begin{proposition}
\label{prop:automatic-constraints}
Along every solution of \eqref{eq:first-order-full},
\begin{equation}
 \mathfrak m_t=\mathfrak p_t=\mathfrak j_t=0,\qquad
 \mathfrak z_t=\mathfrak p.                                               \label{eq:conservation-linear}
\end{equation}
If $E_\lambda$ is a generalized eigenspace of the full generator at
$\lambda\ne0$, then
\begin{equation}
 \mathfrak m=\mathfrak p=\mathfrak j=\mathfrak z=0
 \quad\hbox{on }E_\lambda.                                                \label{eq:automatic-all}
\end{equation}
In particular \eqref{eq:automatic-all} holds on the entire Riesz unstable
space, including all generalized eigenvectors.
\end{proposition}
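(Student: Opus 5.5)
The proof has two parts: derive the evolution identities \eqref{eq:conservation-linear} by pairing \eqref{eq:first-order-full} with explicit test functions, then turn them into covector relations $\ell\circ\cA_{\rm NSP}=0$ (or $=\mathfrak p$) and apply them to generalized eigenvectors.

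\emph{Step 1: the evolution identities.} Let $(\sigma,u)\in\Dom(\cA_{\rm NSP})$. Then $\cA_{\rm NSP}(\sigma,u)=(\cC u,\,f)$ with $f\in H$, and for every $w\in H^1_{\rm ax}$
\[
(f,w)_H=-d[u,w]-(G_0u,w)_H-\ip{L_0\sigma}{\cC w}.
\]
Because $\cC u$ is a divergence of a field in $W^{1,1}$ supported in $\overline\Omega_{\bar\rho}$ (interpreted weakly), we have
\[
\int\cC u\,\phi\,dx=\int\bar\rho\,u\cdot\nabla\phi\,dx
\]
for smooth $\phi$. We use four choices of $\phi$ and $w$.

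With $\phi=1$ we get $\mathfrak m(\cA(\sigma,u))=0$.

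With $\phi=z$ we get $\mathfrak z(\cA(\sigma,u))=(u,e_z)_H=\mathfrak p(\sigma,u)$.

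For $\mathfrak p$, test with $w=e_z$. Here $d[u,e_z]=0$ because $e_z\in\Ker d$ by Lemma~\ref{lem:Korn}. Next, $(G_0u,e_z)_H=0$ since $G_0u$ is horizontal. Finally $\cC e_z=-\partial_z\bar\rho$, so $\ip{L_0\sigma}{\cC e_z}=-\int(h'(\bar\rho)\sigma+V_\sigma)\partial_z\bar\rho$. To see that this vanishes, note that $L_0\partial_z\bar\rho=0$, which follows from differentiating \eqref{eq:equilibrium} in $z$; the term $\tfrac12\omega^2r^2$ does not depend on $z$. Symmetry of $L_0$ then gives
\[
\ip{L_0\sigma}{\partial_z\bar\rho}=\ip{\sigma}{L_0\partial_z\bar\rho}=0.
\]
The justification needed is that $\partial_z\bar\rho\in X$, or at least that $L_0\sigma$ paired with it is defined by continuity. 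That regularity is supplied by Proposition~\ref{prop:automatic-admissibility}.

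For $\mathfrak j$, test with $w=q_{\rm rot}$. Again $d[u,q_{\rm rot}]=0$. Next, $(G_0u,q_{\rm rot})_H=2\omega\int\bar\rho\,u_r r\,dx$. Finally $\cC q_{\rm rot}=-\nabla\cdot(\bar\rho r e_\theta)=0$ by axisymmetry. Hence the velocity part of $\mathfrak j$ evolves by $-2\omega\int\bar\rho r u_r$. The density part evolves by
\[
\omega\int r^2\cC u=\omega\int\bar\rho\,u\cdot\nabla r^2=2\omega\int\bar\rho r u_r,
\]
and the two contributions cancel, so $\mathfrak j$ is conserved.

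Each covector is bounded on $X\times H$, since $\bar\rho$ and $\Omega_{\bar\rho}$ are bounded and $h'(\bar\rho)^{-1}$ is integrable there. The identities therefore pass from domain elements to mild solutions by density. This gives
\[
\mathfrak m\circ\cA=\mathfrak p\circ\cA=\mathfrak j\circ\cA=0,\qquad \mathfrak z\circ\cA=\mathfrak p
\]
on $\Dom(\cA_{\rm NSP})$.

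\emph{Step 2: generalized eigenspaces.} Let $\ell$ be $\mathfrak m$, $\mathfrak p$ or $\mathfrak j$, and let $x\in E_\lambda$ with $(\cA-\lambda)^kx=0$. Since $\ell\circ(\cA-\lambda)=-\lambda\ell$ on the domain,
\[
0=\ell\bigl((\cA-\lambda)^kx\bigr)=(-\lambda)^k\ell(x),
\]
so $\ell(x)=0$ because $\lambda\ne0$. For $\mathfrak z$ we have $\mathfrak z\circ(\cA-\lambda)=\mathfrak p-\lambda\mathfrak z$. The space $E_\lambda$ is invariant under $\cA$, and $\mathfrak p$ already vanishes on it, so $\mathfrak z\circ(\cA-\lambda)=-\lambda\mathfrak z$ on $E_\lambda$; iterating as before gives $\mathfrak z=0$.

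The Riesz unstable space is the finite direct sum of the spaces $E_\lambda$ with $\Ree\lambda>0$; this finiteness is stated in Theorem~\ref{thm:main-index}, but the argument above applies to each $E_\lambda$ regardless. Hence all four covectors vanish on the whole Riesz unstable space.

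\emph{Main obstacle.} The delicate point is justifying the $\mathfrak p$-identity rigorously at the vacuum boundary. One must show that $\partial_z\bar\rho$ is an admissible element with $L_0\partial_z\bar\rho=0$ in the weak sense, even though $\bar\rho$ is only Hölder continuous near the physical-vacuum surface. My first approach avoids differentiating $\bar\rho$ altogether. I would write
\[
\ip{L_0\sigma}{\cC e_z}=-\int(h'(\bar\rho)\sigma+V_\sigma)\,\partial_z\bar\rho ,
\]
note that $h'(\bar\rho)\partial_z\bar\rho=\partial_z h(\bar\rho)=-\partial_z V_{\bar\rho}$, and then integrate by parts using the Poisson equation. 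In the same spirit, for $\sigma=\cC v$ the term $\int\sigma\,\partial_z(h(\bar\rho)+V_{\bar\rho})$ vanishes, since $\nabla(h(\bar\rho)+V_{\bar\rho})=\tfrac12\omega^2\nabla r^2$ has no $z$-component. I would first prove the identity for smooth compactly supported $\sigma$ and then extend it by continuity in $X$.
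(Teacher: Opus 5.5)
Your proposal is correct and follows essentially the same route as the paper: the same four tests ($1$, $z$, $e_z$, $q_{\rm rot}$), together with $L_0\tau=0$ and the cancellation between the Coriolis term and the radial moment, give the covector identities on $\Dom(\cA_{\rm NSP})$. Your iteration of $\ell\circ(\cA_{\rm NSP}-\lambda)=-\lambda\ell$ on $E_\lambda$ is equivalent to the paper's inversion of $\cA_{\rm NSP}|_{E_\lambda}$. One small correction to your ``main obstacle'': $\bar\rho$ is not merely H\"older at the vacuum boundary, because $\gamma_0<2$ makes $1/h'(\bar\rho)$ bounded, so the zero extension of $\bar\rho=\mathscr G(h(\bar\rho))$ is Lipschitz. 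The paper uses exactly this both for $\tau\in X$ with $L_0\tau=0$ and for integration by parts without boundary terms.
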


\begin{proof}
Mass conservation follows from \eqref{eq:linear-mass}.  Let
\begin{equation}
 \tau:=\cC e_z=-\partial_z\bar\rho.
\end{equation}
Differentiating \eqref{eq:equilibrium} in $z$ gives
\begin{equation}
 L_0\tau=0.                                                              \label{eq:translation-density-kernel}
\end{equation}
Since $D e_z=0$ and $(G_0u,e_z)_H=0$,
\begin{equation}
 \langle \cC^*L_0\sigma,e_z\rangle
 =\langle L_0\sigma,\cC e_z\rangle
 =\overline{\langle L_0\tau,\sigma\rangle}=0,
\end{equation}
where the brackets are the corresponding dual pairings,
which proves $\mathfrak p_t=0$.  Integration by parts gives
$\mathfrak z_t=(u,e_z)_H=\mathfrak p$.

For angular momentum set $R(u)=\int\bar\rho r u_rdx$.  Then
\begin{equation}
 \int r^2\cC u\,dx=2R(u),\qquad
 (G_0u,q_{\rm rot})_H=2\omega R(u),\qquad Dq_{\rm rot}=\cC q_{\rm rot}=0.
\end{equation}
The two contributions cancel in $\mathfrak j_t$.

We verify directly that these identities hold on the full operator domain.
Let $U=(\sigma,u)\in\Dom(\cA_{\rm NSP})$.  Extend $\bar\rho u$ by zero.
By the equilibrium identity, $h(\bar\rho)\in W^{1,\infty}$; since
$1/h'(\bar\rho)$ is bounded, the chain rule gives
$\bar\rho\in W^{1,\infty}(\Omega_{\bar\rho})$ with zero trace.  Thus the
zero extension of $\bar\rho u$ belongs to $W^{1,1}(\mathbb R^3)$ and has
no boundary measure in its distributional divergence.  Distributional
integration by parts is therefore legitimate.
Using compactly supported smooth cutoffs equal to $1,z,r^2$ on a
neighborhood of $\overline\Omega_{\bar\rho}$ gives
\begin{equation}
 \int \cC u\,dx=0,\qquad
 \int z\cC u\,dx=(u,e_z)_H,\qquad
 \int r^2\cC u\,dx=2\int\bar\rho r u_r\,dx.                 \label{eq:domain-conservation-ibp}
\end{equation}
The momentum residual
\begin{equation}
 F=\mathsf Du+G_0u+\cC^*L_0\sigma\in H
\end{equation}
is first defined in $(H^1_{\rm ax})^*$.  Testing it against the form-domain
vectors $e_z$ and $q_{\rm rot}$ is legitimate.  The identities
$d[u,e_z]=d[u,q_{\rm rot}]=0$, $\cC e_z=\tau$, $\cC q_{\rm rot}=0$,
$L_0\tau=0$, and the two Coriolis identities above
give, together with \eqref{eq:domain-conservation-ibp},
\begin{equation}
 \mathfrak m\cA_{\rm NSP}=0,\qquad
 \mathfrak p\cA_{\rm NSP}=0,\qquad
 \mathfrak j\cA_{\rm NSP}=0,\qquad
 \mathfrak z\cA_{\rm NSP}=\mathfrak p                 \label{eq:adjoint-covector-identities}
\end{equation}
on $\Dom(\cA_{\rm NSP})$.  Thus
$\mathfrak m,\mathfrak p,\mathfrak j\in\Dom(\cA_{\rm NSP}^*)$ with
adjoint image zero, while
$\mathfrak z\in\Dom(\cA_{\rm NSP}^*)$ with adjoint image $\mathfrak p$.

A conserved bounded functional $\ell$ satisfies
$\ell\cA_{\rm NSP}=0$.  On $E_\lambda$, the restriction of
$\cA_{\rm NSP}$ is invertible, so
\begin{equation}
 \ell|_{E_\lambda}
 =(\ell\cA_{\rm NSP}|_{E_\lambda})
  (\cA_{\rm NSP}|_{E_\lambda})^{-1}=0.                                  \label{eq:covector-invertibility}
\end{equation}
This proves the first three vanishings.  Since $E_\lambda$ is invariant and
$\mathfrak z\cA_{\rm NSP}=\mathfrak p$, the last follows either from the
same root-chain induction or from
$\mathfrak z(\cA_{\rm NSP}-\lambda)=-\lambda\mathfrak z$ on $E_\lambda$.
\end{proof}

These constraints express biorthogonality to the adjoint conserved
covectors. The translation center chain is
\begin{equation}
 y_T=(\tau,0),\qquad y_B=(0,e_z),\qquad
 \cA_{\rm NSP}y_T=0,\quad\cA_{\rm NSP}y_B=y_T,                          \label{eq:translation-chain}
\end{equation}
Moreover $\int z\tau dx=M$, so $\mathfrak p=\mathfrak z=0$ is precisely the
dual separation from this center block.

\subsection{Routh reduction}

Let $P_\diamond$ be the $H$-orthogonal projection onto $H_\diamond$:
\begin{equation}
 P_\diamond u=u-\frac{(u,e_z)_H}{M}e_z
                 -\frac{(u,q_{\rm rot})_H}{I}q_{\rm rot}.              \label{eq:Pdiamond}
\end{equation}
Put
\begin{equation}
 \mathfrak r(\sigma)=\int r^2\sigma dx.                                  \label{eq:radial-moment-functional}
\end{equation}
On the invariant subspace $\Ker\mathfrak p\cap\Ker\mathfrak j$, define
\begin{equation}
 v=P_\diamond u,\qquad
 u=v-\frac{\omega}{I}\mathfrak r(\sigma)\,q_{\rm rot}.                 \label{eq:Routh-transform}
\end{equation}
This is a bounded isomorphism onto $X\times H_\diamond$.  Define
\begin{equation}
 L_J=L_0+\frac{\omega^2}{I}\mathfrak r^*\mathfrak r,\qquad
 G=P_\diamond G_0|_{H_\diamond},\qquad
 \cC_\diamond=\cC|_{V_\diamond}.                                             \label{eq:reduced-operators}
\end{equation}
Here $\cC_\diamond:V_\diamond\to X$ is a form-domain operator, and
$\langle\mathfrak r^*z,\eta\rangle=z\overline{\mathfrak r(\eta)}$.
Thus the rank-one form is
$\mathfrak r(\sigma)\overline{\mathfrak r(\eta)}$.
Let $R_X:X\to X^*$ be the Riesz isomorphism for the weighted density
inner product.  We write
\begin{equation}
 \widehat L_J:=R_X^{-1}L_J:X\to X                              \label{eq:LJ-Riesz}
\end{equation}
for the bounded self-adjoint Riesz realization.  Thus $L_J$ denotes the
form map, while spectral projections of the density Hessian refer to
$\widehat L_J$.

\begin{proposition}
\label{prop:Routh-reduction}
The transformation \eqref{eq:Routh-transform} intertwines the full system
on $\Ker\mathfrak p\cap\Ker\mathfrak j$ with
\begin{equation}
 \sigma_t=\cC_\diamond v,\qquad
 v_t+D_\diamond v+Gv+\cC_\diamond^*L_J\sigma=0,                            \label{eq:first-order-reduced}
\end{equation}
where $D_\diamond$ is the $H_\diamond$-realization of $d$ and
$\mathsf D_\diamond:V_\diamond\to V_\diamond^*$ is its form map.  In cylindrical
components,
\begin{equation}
 G
 \begin{pmatrix}v_r\\v_z\\v_\theta\end{pmatrix}
 =
 \begin{pmatrix}
 -2\omega v_\theta\\[2mm]
 0\\[1mm]
 2\omega\left(v_r-\dfrac rI\int\bar\rho r v_rdx\right)
 \end{pmatrix}.                                                          \label{eq:reduced-G-components}
\end{equation}
For every $\lambda\ne0$, the generalized eigenspace of the full generator,
which lies in $\Ker\mathfrak p\cap\Ker\mathfrak j$, is carried to the
corresponding generalized eigenspace of \eqref{eq:first-order-reduced}, with
identical Jordan block sizes.  The additional mass and centering reduction
to $X_{00}$ is made below.
\end{proposition}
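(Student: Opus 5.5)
The plan is to verify the intertwining identity by substitution, and then to transfer root chains through the bounded, $\lambda$-independent similarity. Work on the invariant subspace $\Ker\mathfrak p\cap\Ker\mathfrak j$; invariance is Proposition~\ref{prop:automatic-constraints}. There $(u,e_z)_H=0$, and
\[
(u,q_{\rm rot})_H=-\omega\,\mathfrak r(\sigma),
\]
so the decomposition is
\[
u=P_\diamond u-\tfrac{\omega}{I}\,\mathfrak r(\sigma)\,q_{\rm rot}.
\]
This shows that \eqref{eq:Routh-transform} is a bounded bijection with bounded inverse $(\sigma,v)\mapsto(\sigma,v-\tfrac{\omega}{I}\mathfrak r(\sigma)q_{\rm rot})$. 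Since $q_{\rm rot}\in H^1_{\rm ax}$, it also maps $H^1$ regularity of $u$ to that of $v$.

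For the density equation, $\cC q_{\rm rot}=0$ gives $\cC u=\cC v$, hence $\sigma_t=\cC_\diamond v$. For the momentum equation, I would test \eqref{eq:first-order-full} against $w\in V_\diamond$, in the weak form on $(H^1_{\rm ax})^*$, and treat the terms as follows.
\begin{itemize}
\item The time derivative gives $(u_t,w)_H=(v_t,w)_H$, because $q_{\rm rot}\perp_H w$.
\item The dissipation gives $d[u,w]=d[v,w]$, because $q_{\rm rot}\in\Ker d$ by Lemma~\ref{lem:Korn}.
\item The Coriolis term gives
\[
(G_0u,w)_H=(G_0v,w)_H-\tfrac{\omega}{I}\mathfrak r(\sigma)(G_0q_{\rm rot},w)_H .
\]
Here $G_0q_{\rm rot}=-2\omega r e_r$, and one checks $(r e_r,w)_H=\tfrac12\int r^2\,\overline{\cC w}\,dx$ using \eqref{eq:domain-conservation-ibp}. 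So this contribution equals $\tfrac{\omega^2}{I}\mathfrak r(\sigma)\overline{\mathfrak r(\cC w)}$, which is exactly the rank-one term $\langle\tfrac{\omega^2}{I}\mathfrak r^*\mathfrak r\sigma,\cC w\rangle$.
\item The remaining pieces are $(G_0v,w)_H=(Gv,w)_H$ for $w\in H_\diamond$, and $\langle L_0\sigma,\cC w\rangle$.
\end{itemize}
Together these give \eqref{eq:first-order-reduced} with $L_J$ from \eqref{eq:reduced-operators}. The component formula \eqref{eq:reduced-G-components} then follows from \eqref{eq:G0} and \eqref{eq:Pdiamond}:
\begin{itemize}
\item $(G_0v,e_z)_H=0$;
\item $(G_0v,q_{\rm rot})_H=2\omega\int\bar\rho r v_r\,dx$, which produces the stated correction in the $\theta$ component.
\end{itemize}

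The part I expect to be the main obstacle is matching operator domains, not the formal computation. Given $(\sigma,u)\in\Dom(\cA_{\rm NSP})\cap\Ker\mathfrak p\cap\Ker\mathfrak j$, the residual
\[
F=\mathsf Du+G_0u+\cC^*L_0\sigma
\]
lies in $H$. The reduced residual, as an element of $V_\diamond^*$, is $F$ restricted to $V_\diamond$ plus the identified rank-one term. This is represented by $P_\diamond F\in H_\diamond$, since $q_{\rm rot},e_z\in H^1_{\rm ax}$ make the projection compatible with the pairing. Conversely, if the reduced residual is represented by $f\in H_\diamond$, I would reconstruct the full residual by adding back its pairings with $e_z$ and $q_{\rm rot}$. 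These are finite numbers because both vectors lie in the form domain, and each is represented by a multiple of $e_z$ or $q_{\rm rot}$ in $H$. This gives a bijection of domains under which
\[
T\cA_{\rm NSP}T^{-1}=\cA_\diamond,
\]
where $T$ is the transformation and $\cA_\diamond$ is the reduced generator.

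Finally, for $\lambda\ne0$, Proposition~\ref{prop:automatic-constraints} places the full generalized eigenspace $E_\lambda$ inside $\Ker\mathfrak p\cap\Ker\mathfrak j$. Because $T$ is a $\lambda$-independent bounded isomorphism intertwining the generators on their domains, it maps Jordan chains $(\cA-\lambda)y_\ell=y_{\ell-1}$ to Jordan chains. Hence it maps $E_\lambda$ onto the corresponding generalized eigenspace of the reduced system with the same partial multiplicities. It also conjugates Riesz projections, since the reduced generator's resolvent on this invariant subspace is $T(\lambda-\cA)^{-1}T^{-1}$. To see that no reduced eigenvectors are missed, I would observe that the reduced system lifts back through $T^{-1}$ into $\Ker\mathfrak p\cap\Ker\mathfrak j$.
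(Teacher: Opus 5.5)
Your proposal is correct and follows essentially the paper's argument. Both use the same substitution ($\cC q_{\rm rot}=0$, $q_{\rm rot}\in\Ker d$, the Coriolis identity $-\tfrac{\omega}{I}\mathfrak r(\sigma)(G_0q_{\rm rot},w)_H=\tfrac{\omega^2}{I}\mathfrak r(\sigma)\overline{\mathfrak r(\cC w)}$ producing the rank-one part of $L_J$, and removal of the differentiated $q_{\rm rot}$ component by $H$-orthogonality), followed by transport of Jordan chains through the bounded, $\lambda$-independent intertwiner. The domain matching you add is what the paper does in Proposition~\ref{prop:full-spectral-transfer}. One small point of phrasing: the reduced residual is exactly $F|_{V_\diamond}$, because the rank-one term is already contained in the Coriolis part of $F$. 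This is consistent with your choice of $P_\diamond F$ as its representative.
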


\begin{proof}
Since $\cC q_{\rm rot}=0$ and $Dq_{\rm rot}=0$, the continuity equation is
$\sigma_t=\cC_\diamond v$.  Moreover,
\begin{equation}
 G_0q_{\rm rot}=2\omega e_z\times(re_\theta)=-2\omega r e_r,
 \qquad r e_r\in H_\diamond.                              \label{eq:G-rigid-rotation}
\end{equation}
If $a(\sigma)=(\omega/I)\mathfrak r(\sigma)$, then projection of the
momentum equation onto $H_\diamond$ produces the force
\begin{align}
 -a(\sigma)P_\diamond G_0q_{\rm rot}
 &=\frac{2\omega^2}{I}\mathfrak r(\sigma)r e_r =\cC_\diamond^*\left[
     \frac{\omega^2}{I}\mathfrak r(\sigma)r^2\right].       \label{eq:positive-rank-one}
\end{align}
This is exactly the rank-one part of $\cC_\diamond^*L_J\sigma$, with the
positive sign in \eqref{eq:reduced-operators}.  The term obtained by
differentiating $a(\sigma)q_{\rm rot}$ is parallel to $q_{\rm rot}$ and
is killed by $P_\diamond$.  The displayed force identity is understood
through the weighted embedding into $V_\diamond^*$.  Hence the projected equation is
\eqref{eq:first-order-reduced}.  Finally, direct evaluation of
$P_\diamond G_0v$ gives \eqref{eq:reduced-G-components}.

Every nonzero generalized eigenspace of the full generator lies in the
constraint subspace by Proposition~\ref{prop:automatic-constraints}; the
bounded intertwining map and its inverse therefore preserve its complete
Jordan structure.
\end{proof}

Differentiating the second equation in \eqref{eq:first-order-reduced} and
using the first gives
\begin{equation}
 v_{tt}+D_\diamond v_t+Gv_t+Kv=0,\qquad
 K=\cC_\diamond^*L_J\cC_\diamond.                                           \label{eq:reduced-second-order}
\end{equation}
Its form pencil $P(\lambda):V_\diamond\to V_\diamond^*$ is defined by
\begin{equation}
 \langle P(\lambda)v,w\rangle
 =\lambda^2(v,w)_H+\lambda d[v,w]+\lambda(Gv,w)_H
  +\ip{L_J\cC_\diamond v}{\cC_\diamond w}.                                  \label{eq:NSP-form-pencil}
\end{equation}
For this pencil we write
\begin{equation}
 N_+(P):=\sum_{\lambda\in\mathbb C_+}
          \operatorname{algmult}(\lambda;P),              \label{eq:NSP-pencil-count-definition}
\end{equation}
where the multiplicity is the Gohberg--Sigal multiplicity recalled in
Appendix~\ref{app:Gohberg-Sigal}.

For the spectral reduction we use the centered density phase space
\begin{equation}
 \mathscr H_{\rm red}=X_{00}\times H_\diamond,\qquad
 \mathscr V_{\rm red}=X_{00}\times V_\diamond.            \label{eq:reduced-phase-spaces}
\end{equation}
The reduced generator is the realization
\begin{align}
 \cA_{\rm red}(\sigma,v)
 &=\bigl(\cC_\diamond v,-\mathsf D_\diamond v-Gv-\cC_\diamond^*L_J\sigma\bigr),
                                                               \label{eq:reduced-generator}\\
 \Dom(\cA_{\rm red})
 &=\{(\sigma,v)\in X_{00}\times V_\diamond:
 \mathsf D_\diamond v+Gv+\cC_\diamond^*L_J\sigma\in H_\diamond\}. \label{eq:reduced-generator-domain}
\end{align}
Lemma~\ref{lem:full-generation}, with $L_0,\cC,G_0$ replaced by
$L_J,\cC_\diamond,G$, proves that this is a closed analytic-semigroup
generator.

\begin{proposition}
\label{prop:root-chain-equivalence}
For every $\lambda\in\C_+$, the reduced first-order characteristic operator
\begin{equation}
 \mathcal M(\lambda)=
 \begin{pmatrix}
 \lambda&-\cC_\diamond\\
 \cC_\diamond^*L_J&\lambda+\mathsf D_\diamond+G
 \end{pmatrix}                                                          \label{eq:first-order-characteristic}
\end{equation}
is the bounded map
\begin{equation}
 \mathcal M(\lambda):X_{00}\times V_\diamond
       \longrightarrow X_{00}\times V_\diamond^*.        \label{eq:M-spaces}
\end{equation}
It is Fredholm (respectively invertible) if and only if the form pencil
$P(\lambda)$ is Fredholm (respectively
invertible), with the same Fredholm index.  Moreover,
\begin{equation}
 \lambda\in\rho(\cA_{\rm red})\quad\Longleftrightarrow\quad
 P(\lambda)\text{ is invertible},                         \label{eq:resolvent-equivalence}
\end{equation}
and the generator and pencil have the same partial multiplicities at every
isolated point in $\C_+$.
\end{proposition}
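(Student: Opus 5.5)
Everything reduces to Gaussian elimination of the density variable, which is legitimate because $\lambda\ne0$ on $\C_+$.

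\emph{Step 1: factorization.} For $\lambda\in\C_+$ I would write the explicit factorization
\[
 \mathcal M(\lambda)=
 \begin{pmatrix} 1&0\\ \lambda^{-1}\cC_\diamond^*L_J&1\end{pmatrix}
 \begin{pmatrix} \lambda&0\\ 0&\lambda^{-1}P(\lambda)\end{pmatrix}
 \begin{pmatrix} 1&-\lambda^{-1}\cC_\diamond\\ 0&1\end{pmatrix}.
\]
Here $\lambda^{-1}P(\lambda)=\lambda+\mathsf D_\diamond+G+\lambda^{-1}\cC_\diamond^*L_J\cC_\diamond$, as read off from \eqref{eq:NSP-form-pencil}.

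Two points must be checked for the factorization to be a statement about bounded maps.
\begin{enumerate}
\item The outer factors are bounded and boundedly invertible between the stated spaces. This requires $\cC_\diamond:V_\diamond\to X_{00}$. The target is correct by the integration-by-parts identities \eqref{eq:domain-conservation-ibp}: $\int\cC u=0$, and $\int z\cC v=(v,e_z)_H=0$ on $V_\diamond$. It also requires $\cC_\diamond^*L_J:X_{00}\to V_\diamond^*$, which holds because $L_J$ is a bounded form map.
\item The outer factors are analytic in $\lambda$ on $\C_+$. Their inverses are the same triangular matrices with the sign of $\lambda^{-1}$ flipped.
\end{enumerate}
Since the outer factors are invertible, $\mathcal M(\lambda)$ is Fredholm or invertible exactly when the block diagonal middle factor is. Because $\lambda\ne0$, this happens exactly when $P(\lambda)$ is Fredholm or invertible, with the same index.

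\emph{Step 2: resolvent equivalence.} I would mirror \eqref{eq:KTC-resolvent-formula}. Solving $(\lambda-\cA_{\rm red})(\sigma,v)=(f,h)$ with $(f,h)\in X_{00}\times H_\diamond$ amounts to $\mathcal M(\lambda)(\sigma,v)=(f,\iota h)$.
\begin{itemize}
\item If $P(\lambda)$ is invertible, Step 1 gives a unique $(\sigma,v)\in X_{00}\times V_\diamond$. The second equation then shows $\mathsf D_\diamond v+Gv+\cC_\diamond^*L_J\sigma=\iota(h-\lambda v)\in H_\diamond$, so $(\sigma,v)\in\Dom(\cA_{\rm red})$. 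Boundedness into $X_{00}\times H_\diamond$ follows from $V_\diamond\hookrightarrow H_\diamond$.
\item Conversely, if $\lambda\in\rho(\cA_{\rm red})$, then $\ker P(\lambda)=0$. Indeed, if $P(\lambda)v=0$, set $\sigma=\lambda^{-1}\cC_\diamond v$. Then $(\sigma,v)$ lies in the generator's kernel, since its momentum residual equals $-\lambda\iota v\in H_\diamond$.
\item Surjectivity of $P(\lambda)$ onto $V_\diamond^*$ needs more than the resolvent, which only inverts data in $H_\diamond$. I would obtain it from Fredholmness of index zero on $\C_+$. That property is supplied by the KTC argument of Lemma~\ref{lem:KTC-generator-pencil}, once $d$ is coercive on $V_\diamond$ (Lemma~\ref{lem:Korn}), $K=\cC_\diamond^*L_J\cC_\diamond$ is bounded on $V_\diamond$, and $n^-(K)<\infty$.
\end{itemize}
The last point is a mild logical dependency. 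If finiteness of $n^-$ is only established later, I would instead prove directly that the range of $P(\lambda)$ is closed and dense. Density follows because $H_\diamond$ is dense in $V_\diamond^*$ and the resolvent inverts all such data. Closedness follows from the a priori estimate given by the coercive part of the pencil.

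\emph{Step 3: partial multiplicities.} This is the main obstacle, and I would handle it with root chains rather than via equivalence of analytic functions alone. Given a root chain $v_0,\dots,v_{m-1}$ of $P$ at $\lambda_0$, I would set
\[
 \sigma_\ell=\sum_{j\le\ell}(-1)^{j}\lambda_0^{-1-j}\cC_\diamond v_{\ell-j},
\]
which is the Taylor expansion of $\lambda^{-1}\cC_\diamond v(\lambda)$. I would then verify, exactly as in the proof of Lemma~\ref{lem:KTC-generator-pencil}, that
\[
 (\cA_{\rm red}-\lambda_0)(\sigma_\ell,v_\ell)=(\sigma_{\ell-1},v_{\ell-1}),
\]
with each $(\sigma_\ell,v_\ell)$ in the domain because the residual is $H_\diamond$-valued. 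Conversely, the second components of a Jordan chain form a root chain. The key facts making this a bijection of chain structures are:
\begin{itemize}
\item the map on leading vectors, $v_0\mapsto(\lambda_0^{-1}\cC_\diamond v_0,v_0)$, is injective with inverse given by projection onto the second component;
\item the correspondence commutes with truncation.
\end{itemize}
Together these preserve the full list of partial multiplicities. Equivalently, the factorization in Step 1 shows that $\mathcal M$ and $\operatorname{diag}(\lambda,\lambda^{-1}P)$ are analytically equivalent near $\lambda_0$, and $\lambda$ is invertible there. Gohberg--Sigal multiplicities (Appendix~\ref{app:Gohberg-Sigal}) are invariant under such equivalences, and multiplying $P$ by the scalar $\lambda^{-1}$ does not change them. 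Finally, the Riesz multiplicity of $\cA_{\rm red}$ agrees with that of the linear pencil $\lambda-\cA_{\rm red}$, which is $\mathcal M(\lambda)$ restricted to the domain. The delicate point is that $\mathcal M$ acts into $V_\diamond^*$ rather than $H_\diamond$. I would resolve it, as in the KTC lemma, by observing that every chain vector automatically has an $H_\diamond$-valued residual.
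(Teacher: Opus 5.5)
Your proposal is correct and follows essentially the same route as the paper. Both use the analytic Gaussian elimination reducing $\mathcal M(\lambda)$ to $\operatorname{diag}(\lambda,\lambda^{-1}P(\lambda))$; both prove the resolvent equivalence by noting that variational solutions lie in $\Dom(\cA_{\rm red})$ because their residual is $H_\diamond$-valued, getting injectivity from the resolvent and surjectivity from the Fredholm-index-zero property of the KTC pencil theorem; and both use the fact that chain vectors automatically have $H_\diamond$-valued residuals. The only difference is cosmetic: you write the Jordan chains explicitly as the Taylor coefficients of $\lambda^{-1}\cC_\diamond v(\lambda)$, whereas the paper uses $\mathcal M'(\lambda)=\mathcal J_{\rm red}$ to match root chains of $\mathcal M$ with Jordan chains of $\cA_{\rm red}$ and then transfers them to $P$ through the factorization.
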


\begin{proof}
On the spaces in \eqref{eq:M-spaces}, analytic Gaussian elimination gives
\begin{equation}
 \begin{pmatrix}I&0\\-\lambda^{-1}\cC_\diamond^*L_J&I\end{pmatrix}
 \mathcal M(\lambda)
 \begin{pmatrix}I&\lambda^{-1}\cC_\diamond\\0&I\end{pmatrix}
 =
 \begin{pmatrix}\lambda I&0\\0&\lambda^{-1}P(\lambda)\end{pmatrix},       \label{eq:Gaussian-factorization}
\end{equation}
where $P(\lambda)$ is defined by \eqref{eq:NSP-form-pencil}.  The outer and
scalar diagonal factors are analytic and invertible near every
$\lambda\in\C_+$.  Analytic equivalence proves the Fredholm, index, and
invertibility assertions and preserves all partial multiplicities.  In
$\C_+$ the pencil has Fredholm index zero by
Theorem~\ref{thm:KTC-pencil}.  The variational resolvent equation
$(\lambda-\cA_{\rm red})(\sigma,v)=(f,g)$ is precisely
$\mathcal M(\lambda)(\sigma,v)=(f,g)$; the representation theorem and
\eqref{eq:reduced-generator-domain} therefore give
\eqref{eq:resolvent-equivalence}.  More explicitly, a variational solution
belongs to $\Dom(\cA_{\rm red})$ because its second residual is the
prescribed $H_\diamond$ datum.  Conversely, injectivity on the operator
domain gives injectivity of the form map, and Fredholm index zero gives
surjectivity.  We spell out the domain point in the multiplicity assertion.
Let $\mathcal J_{\rm red}:\mathscr H_{\rm red}\to
\mathscr V_{\rm red}^*$ be the canonical embedding,
\begin{equation}
 \langle\mathcal J_{\rm red}(\sigma,v),(\eta,w)\rangle
 = (\sigma,\eta)_X+(v,w)_H.                              \label{eq:reduced-canonical-embedding}
\end{equation}
Identifying $X_{00}$ with its anti-dual by the density Riesz map, we have
$\mathcal M'(\lambda)=\mathcal J_{\rm red}$, and all higher derivatives
vanish. Thus a root chain $Y_0,\ldots,Y_{m-1}$, with $Y_{-1}=0$,
satisfies
\begin{equation}
 \mathcal M(\lambda_0)Y_\ell
 +\mathcal J_{\rm red}Y_{\ell-1}=0.                       \label{eq:M-root-chain-relations}
\end{equation}
The right-hand side is represented by an element of
$\mathscr H_{\rm red}$.  Starting with $\ell=0$,
\eqref{eq:reduced-generator-domain} therefore places each $Y_\ell$ in
$\Dom(\cA_{\rm red})$ inductively, and
\eqref{eq:M-root-chain-relations} is equivalent to
\begin{equation}
 (\cA_{\rm red}-\lambda_0)Y_\ell=Y_{\ell-1}.
\end{equation}
Conversely, every generator Jordan chain satisfies these variational
identities.  Thus root chains of $\mathcal M$ and Jordan chains of
$\cA_{\rm red}$ correspond bijectively with the same lengths.  The
analytic equivalence \eqref{eq:Gaussian-factorization} transfers the entire
chain to $P(\lambda)$.
\end{proof}

\begin{proposition}
\label{prop:full-spectral-transfer}
Let
\begin{equation}
 \mathscr Y=\Ker\mathfrak m\cap\Ker\mathfrak p
             \cap\Ker\mathfrak j\cap\Ker\mathfrak z
       \subset\mathscr H_{\rm full}.                       \label{eq:Y-constraints}
\end{equation}
Then $\mathscr Y$ is a closed invariant subspace.  For every
$\lambda\ne0$,
\begin{equation}
 \lambda-\cA_{\rm NSP}\text{ is Fredholm/invertible}
 \Longleftrightarrow
 \lambda-\cA_{\rm NSP}|_{\mathscr Y}\text{ is Fredholm/invertible},
                                                               \label{eq:full-Y-resolvent}
\end{equation}
with equal Fredholm index and equal algebraic multiplicities at isolated
spectral points.  The Routh map
\begin{equation}
 (\sigma,u)\longmapsto(\sigma,P_\diamond u)                \label{eq:Y-Routh-map}
\end{equation}
is a bounded, domain-preserving similarity from
$\cA_{\rm NSP}|_{\mathscr Y}$ to $\cA_{\rm red}$.
\end{proposition}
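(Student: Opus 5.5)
The plan is to build a finite-dimensional complement of $\mathscr Y$ that is invariant under $\cA_{\rm NSP}$, so that for $\lambda\ne0$ the resolvent problem splits into a block on $\mathscr Y$ and an explicit $4\times4$ block whose spectrum is $\{0\}$. Closedness of $\mathscr Y$ is immediate, since it is the intersection of kernels of four bounded covectors. Invariance follows from \eqref{eq:adjoint-covector-identities}. For $Y\in\Dom(\cA_{\rm NSP})\cap\mathscr Y$ we have $\mathfrak m\cA Y=\mathfrak p\cA Y=\mathfrak j\cA Y=0$ and $\mathfrak z\cA Y=\mathfrak p Y=0$, so $\cA Y\in\mathscr Y$.

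For the complement, I would use the span $\mathscr Z$ of the translation chain $y_T=(\tau,0)$, $y_B=(0,e_z)$ from \eqref{eq:translation-chain}. To it I would add a mass direction and a rotation direction: the differentiated equilibrium $y_M=(\partial_c\bar\rho\text{-type density},0)$ and $y_R=(0,q_{\rm rot})$. A cleaner alternative for these two is to use the density $1/h'(\bar\rho)$, the representer of $\mathfrak m$, together with $r^2$-type densities. The essential requirement is that the $4\times4$ pairing matrix between $\{\mathfrak m,\mathfrak p,\mathfrak j,\mathfrak z\}$ and the chosen vectors be invertible, and that $\mathscr Z$ be $\cA$-invariant modulo $\mathscr Y$.

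A softer route avoids needing exact invariance of $\mathscr Z$. Since the four covectors lie in $\Dom(\cA^*)$ and span an $\cA^*$-invariant subspace $W$ on which $\cA^*$ acts nilpotently ($\mathfrak z\mapsto\mathfrak p\mapsto0$, $\mathfrak m,\mathfrak j\mapsto0$), the quotient $\mathscr H_{\rm full}/\mathscr Y\cong W^*$ carries the induced operator $N$, and $N$ is nilpotent. For $\lambda\ne0$ we get the exact sequence
\[
0\to\mathscr Y\to\mathscr H_{\rm full}\to\mathscr H_{\rm full}/\mathscr Y\to0,
\]
intertwined by $\lambda-\cA$, $\lambda-\cA|_{\mathscr Y}$ and $\lambda-N$. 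Because $\lambda-N$ is invertible on the 4-dimensional quotient, the snake lemma gives that $\lambda-\cA$ and $\lambda-\cA|_{\mathscr Y}$ have isomorphic kernels and cokernels. This proves \eqref{eq:full-Y-resolvent} with equal indices. To make this rigorous, I need the quotient map to send $\Dom(\cA)$ onto the quotient. This holds because $y_T,y_B\in\Dom$ and the densities $(\eta,0)$ with smooth $\eta$ lie in $\Dom$ whenever $\cC^*L_0\eta$ is represented in $H$, which I check using $\bar\rho\in W^{1,\infty}$.

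Equality of algebraic multiplicities then follows by writing the Riesz projection of $\cA$ at an isolated $\lambda_0\ne0$ as a contour integral of the resolvent. Modulo $\mathscr Y$ the resolvent equals $(\lambda-N)^{-1}$, which is analytic near $\lambda_0$, so the Riesz projection has range in $\mathscr Y$. By Proposition~\ref{prop:automatic-constraints} it coincides with the Riesz projection of the restriction. For the Routh map on $\mathscr Y$ the inverse is $v\mapsto u=v-(\omega/I)\mathfrak r(\sigma)q_{\rm rot}$ from \eqref{eq:Routh-transform}, with the $e_z$-component zero because $\mathfrak p=0$. Since $\mathfrak m=\mathfrak z=0$, we have $\sigma\in X_{00}$, so the map lands in $\mathscr H_{\rm red}$ and is a bijection onto it. The intertwining identity is Proposition~\ref{prop:Routh-reduction}.

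The main obstacle is domain preservation: showing $(\sigma,u)\in\Dom(\cA_{\rm NSP})$ if and only if $(\sigma,P_\diamond u)\in\Dom(\cA_{\rm red})$. I would compute the residual $\mathsf Du+G_0u+\cC^*L_0\sigma$ and substitute $u=v-a(\sigma)q_{\rm rot}$, using $\mathsf Dq_{\rm rot}=0$ and \eqref{eq:positive-rank-one}. This turns the residual into $\mathsf D_\diamond v+Gv+\cC_\diamond^*L_J\sigma$ plus multiples of $e_z,q_{\rm rot}$, with these tested against $V_\diamond$ versus all of $H^1_{\rm ax}$. The components along $e_z$ and $q_{\rm rot}$ are finite-dimensional $H$-valued data, so membership in $H$ is equivalent on the two sides, because testing a functional on $H^1_{\rm ax}=V_\diamond\oplus\mathcal R_{\rm ax}$ against the rigid vectors only adds scalars times $e_z,q_{\rm rot}\in H$.
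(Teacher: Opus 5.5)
Your proof is correct. It uses the same mechanism as the paper: $\mathscr Y=\Ker T$ with $T=(\mathfrak m,\mathfrak p,\mathfrak j,\mathfrak z)$, the intertwining $T\cA_{\rm NSP}=NT$ with $N$ nilpotent, and $T(\Dom(\cA_{\rm NSP}))=\C^4$. The difference is in packaging.

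\textbf{How the paper does it.} The paper picks a four-dimensional $F\subset\Dom(\cA_{\rm NSP})$ on which $T$ is an isomorphism. This $F$ is not invariant, so your first idea of an invariant complement is unnecessary; it would also fail as sketched, since $\cA_{\rm NSP}(0,q_{\rm rot})=(0,2\omega re_r)$. The paper then writes $\cA_{\rm NSP}$ in upper-triangular block form with $N_F^2=0$ and factors $\lambda-\cA_{\rm NSP}$ into bounded analytic invertible pieces. That factorization gives closed range, Fredholm index, invertibility and all partial multiplicities at once.

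\textbf{Closed range in your snake-lemma route.} Your version avoids choosing $F$, but the snake lemma only gives algebraic isomorphisms of kernels and cokernels. For the Fredholm claim, add that $\Ran(\lambda-\cA_{\rm NSP}|_{\mathscr Y})=\Ran(\lambda-\cA_{\rm NSP})\cap\mathscr Y$. Then note that $\Ran(\lambda-\cA_{\rm NSP})$ is this space plus the image of a four-dimensional complement inside the domain. Closedness then transfers in both directions.

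\textbf{Your multiplicity argument.} The Riesz-projection step is a genuine alternative, and a clean one. The identity $TP_{\lambda_0}=\frac1{2\pi i}\oint(\lambda-N)^{-1}T\,d\lambda=0$ shows that the generalized eigenspaces of $\cA_{\rm NSP}$ and of its restriction are literally the same finite-dimensional space. This gives equality of the whole Jordan structure directly, and reproves Proposition~\ref{prop:automatic-constraints} as a by-product. The same identity $T(\lambda-\cA_{\rm NSP})^{-1}=(\lambda-N)^{-1}T$ also upgrades your operator-level invariance to the semigroup invariance the paper states.

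\textbf{Routh map.} Your Routh-map and domain-preservation step matches the paper's. In both, the residual restricted to $V_\diamond$ is the reduced residual, and testing against $e_z,q_{\rm rot}$ only adds $H$-valued rigid components.

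\textbf{One slip to fix.} Your explicit check that $T$ maps the domain onto $\C^4$ uses only $y_T$, $y_B$ and densities $(\eta,0)$. When $\omega=0$ all of these have $\mathfrak j=0$, and nonrotating equilibria are allowed (e.g.\ $\kappa=0$ in Theorem~\ref{thm:turning-point}). There are two easy fixes:
\begin{enumerate}
\item Add $(0,q_{\rm rot})$, which lies in $\Dom(\cA_{\rm NSP})$ because $\mathsf Dq_{\rm rot}=0$ and $G_0q_{\rm rot}\in H$. Also take $\eta$ compactly supported in the interior, so that $\nabla(h'(\bar\rho)\eta)\in H$.
\item Alternatively, use the paper's one-line argument: $T$ is onto from $\mathscr H_{\rm full}$, the domain is dense, and a dense subspace of $\C^4$ is all of $\C^4$.
\end{enumerate}
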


\begin{proof}
Let $T=(\mathfrak m,\mathfrak p,\mathfrak j,\mathfrak z)$.  The identities
\eqref{eq:conservation-linear} say, on $\Dom(\cA_{\rm NSP})$,
\begin{equation}
 T\cA_{\rm NSP}=NT,\qquad
 N(a,b,c,d)=(0,0,0,b).                                  \label{eq:quotient-nilpotent}
\end{equation}
For domain data, this identity gives
$T e^{t\cA_{\rm NSP}}=e^{tN}T$ by uniqueness of the finite-dimensional
evolution equation; density extends the identity to all energy data.
Thus $\mathscr Y=\Ker T$ is semigroup invariant.
The four covectors are independent: the mass and axial first-moment
functionals are independent on the density component, while the axial and
rotational velocity moments are independent on the velocity component.
Hence $T:\mathscr H_{\rm full}\to\C^4$ is onto.  Since
$\Dom(\cA_{\rm NSP})$ is dense, $T(\Dom(\cA_{\rm NSP}))$ is dense in
$\C^4$ and, being a linear subspace of a finite-dimensional space, equals
$\C^4$.  Choose a four-dimensional subspace
$F\subset\Dom(\cA_{\rm NSP})$ on which $T$ is an isomorphism.  Relative to
$\mathscr H_{\rm full}=\mathscr Y\oplus F$, one has
\begin{equation}
 \Dom(\cA_{\rm NSP})=
 (\Dom(\cA_{\rm NSP})\cap\mathscr Y)\oplus F.             \label{eq:full-domain-splitting}
\end{equation}
Indeed, subtracting $(T|_F)^{-1}Tx$ from any
$x\in\Dom(\cA_{\rm NSP})$ gives the required element of
$\Dom(\cA_{\rm NSP})\cap\mathscr Y$.  The domain operator has the
triangular form
\begin{equation}
 \cA_{\rm NSP}=
 \begin{pmatrix}\cA_{\rm NSP}|_{\mathscr Y}&B\\0&N_F\end{pmatrix},
 \quad N_F=(T|_F)^{-1}N(T|_F),                             \label{eq:full-block-form}
\end{equation}
where $B$ is the $\mathscr Y$-component of $\cA_{\rm NSP}|_F$ and is
bounded because $F$ is finite dimensional.  Since $N_F^2=0$,
$\lambda-N_F$ is invertible for $\lambda\ne0$.  The triangular Gaussian
elimination is the explicit factorization
\begin{equation}
 \lambda-\cA_{\rm NSP}
 =\begin{pmatrix}I&-B(\lambda-N_F)^{-1}\\0&I\end{pmatrix}
  \begin{pmatrix}
   \lambda-\cA_{\rm NSP}|_{\mathscr Y}&0\\0&\lambda-N_F
  \end{pmatrix}.                                           \label{eq:full-block-resolvent-factorization}
\end{equation}
This proves \eqref{eq:full-Y-resolvent}, including equality of Fredholm
indices.  Near an isolated $\lambda\ne0$ the factors are analytic and
invertible, so they preserve partial multiplicities and the Riesz algebraic
multiplicity.

On $\mathscr Y$, $\sigma\in X_{00}$, $(u,e_z)_H=0$, and
$(u,q_{\rm rot})_H=-\omega\mathfrak r(\sigma)$. Hence
\eqref{eq:Y-Routh-map} has the bounded inverse
\[
 (\sigma,v)\longmapsto
 \left(\sigma,v-\frac\omega I\mathfrak r(\sigma)q_{\rm rot}\right).
\]
We verify its effect on operator domains. Put
$a(\sigma)=(\omega/I)\mathfrak r(\sigma)$,
$u=v-a(\sigma)q_{\rm rot}$, and define the form residuals
\[
 F_{\rm full}=\mathsf Du+G_0u+\cC^*L_0\sigma,\qquad
 F_{\rm red}=\mathsf D_\diamond v+Gv+\cC_\diamond^*L_J\sigma.
\]
Proposition~\ref{prop:Routh-reduction} gives
$F_{\rm full}|_{V_\diamond}=F_{\rm red}$. Testing against the two rigid
vectors gives
\[
 \langle F_{\rm full},e_z\rangle=0,\qquad
 \langle F_{\rm full},q_{\rm rot}\rangle=2\omega R(v),
 \qquad R(v)=\int\bar\rho r v_r\,dx.
\]
Consequently, if $F_{\rm red}$ is represented by
$f_{\rm red}\in H_\diamond$, then
\[
 F_{\rm full}
 =\iota_H\left(f_{\rm red}
       +\frac{2\omega R(v)}I q_{\rm rot}\right).
\]
Conversely, an $H$-valued full residual restricts to an
$H_\diamond$-valued reduced residual. Thus both transformations preserve
the operator domains. Finally,
$a(\cC_\diamond v)=2\omega R(v)/I$, which is exactly the rotational
component obtained by differentiating the inverse transformation.
The intertwining therefore holds for the full domain operators and proves
the asserted similarity.
\end{proof}

\section{The viscous instability index}
\label{sec:index-proof}

We verify the form bounds, finite Morse index, density of the divergence
range, and semigroup properties needed for the index formula. These
properties follow from Hypothesis~\ref{hyp:EOS} and the regular-equilibrium
conditions; no equilibrium branch is needed in this section.

\begin{proposition}
\label{prop:automatic-admissibility}
Every regular rigidly rotating equilibrium whose pressure law satisfies
Hypothesis~\ref{hyp:EOS} has the following properties.
\begin{enumerate}
\item The zero extensions of \(h(\bar\rho)\) and \(\bar\rho\) belong to
\(W^{1,\infty}(\mathbb R^3)\), \(1/h'(\bar\rho)\in L^\infty\), and
\(\tau=-\partial_z\bar\rho\in X\).  Moreover,
\(h'(\bar\rho)|\nabla\bar\rho|^2\in L^\infty\) and
\begin{equation}
 L_0\tau=L_J\tau=0,\qquad
 \int\tau\,dx=\int r^2\tau\,dx=0,\qquad
 \int z\tau\,dx=M.                                      \label{eq:automatic-translation}
\end{equation}
\item The functionals \(\mathfrak m,\mathfrak p,\mathfrak z,\mathfrak j\)
defined in \eqref{eq:mass-covector}--\eqref{eq:J-covector} are bounded on
the full energy space.
\item The Riesz realization of the gravitational potential form is compact
and self-adjoint in the density energy space, and \(L_J\) has finite Morse
index.
\item The dissipation form is closed and coercive after removal of axial
translation and rigid rotation; \(V_\diamond\hookrightarrow H_\diamond\)
is compact; and \(\cC_\diamond:V_\diamond\to X_{00}\) is bounded with dense
range.
\item The full and reduced operators generate analytic semigroups on their
respective energy spaces.
\end{enumerate}
Thus all hypotheses needed to apply the abstract KTC theorem follow from
Hypothesis~\ref{hyp:EOS} and the elementary regular-equilibrium conditions.
\end{proposition}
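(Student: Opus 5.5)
I will verify the five items in order, each from the equilibrium equation \eqref{eq:equilibrium} and Hypothesis~\ref{hyp:EOS}.

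\emph{Item (1).} Set $w=h(\bar\rho)$. In $\Omega_{\bar\rho}$ we have $w=-V_{\bar\rho}+\tfrac12\omega^2r^2-c$. Since $\bar\rho\in C_c\subset L^\infty$ with compact support, $V_{\bar\rho}\in C^{1,\alpha}$; hence $w$ is Lipschitz on $\overline\Omega_{\bar\rho}$. Continuity of $\bar\rho$ and $h(0)=0$ give $w=0$ on $\partial\Omega_{\bar\rho}$, so the zero extension lies in $W^{1,\infty}(\mathbb R^3)$.

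Near vacuum, \eqref{eq:EOS-asymptotic} gives $h(s)\sim K_0\gamma_0 s^{\gamma_0-1}/(\gamma_0-1)$. Thus $h'(s)=P'(s)/s\sim K_0\gamma_0 s^{\gamma_0-2}\to\infty$ because $\gamma_0<2$. Hence $1/h'(\bar\rho)$ is bounded, and $\nabla\bar\rho=\nabla w/h'(\bar\rho)\in L^\infty$. Moreover $h'(\bar\rho)|\nabla\bar\rho|^2=|\nabla w|^2/h'(\bar\rho)\in L^\infty$, which is exactly $\tau\in X$.

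Differentiating \eqref{eq:equilibrium} in $z$ gives $h'(\bar\rho)\partial_z\bar\rho+V_{\partial_z\bar\rho}=0$ in $\Omega_{\bar\rho}$, i.e.\ $L_0\tau=0$ as a functional on $X$; the $r^2$ term drops because it is independent of $z$. The identities $\int\tau=0$ and $\int r^2\tau=0$ follow by integrating $\partial_z$ against $1$ and $r^2$, using the zero trace. Integration by parts gives $\int z\tau=\int\bar\rho=M$. Then $L_J\tau=0$ because $\mathfrak r(\tau)=0$.

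\emph{Item (2).} This is Cauchy--Schwarz with the weights. For instance $|\int\sigma|\le\|\sigma\|_X\,\|1/h'(\bar\rho)\|_{L^1}^{1/2}$, which is finite by item (1) and bounded support. The same argument handles $z$ and $r^2$, and the velocity moments are $H$-inner products.

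\emph{Item (3).} The Riesz realization of $\sigma\mapsto\int V_\sigma\bar\eta$ is $K\sigma=h'(\bar\rho)^{-1}V_\sigma|_{\Omega_{\bar\rho}}$. It is self-adjoint by symmetry of the Newtonian kernel. For compactness, note that $X\hookrightarrow L^2$ continuously, since $h'(\bar\rho)\ge c>0$ on the bounded support (as $h'\to\infty$ at vacuum). The map $\sigma\mapsto V_\sigma$ is $L^2\to H^2_{\rm loc}$, which embeds compactly in $L^2(\Omega_{\bar\rho})$. Multiplication by the bounded weight $1/h'(\bar\rho)$ keeps the image compact in $X$, because $\int h'|V/h'|^2=\int|V|^2/h'$.

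Hence $\widehat L_0=I+K$ is a compact perturbation of the identity, so its negative spectrum is finite. The rank-one term in $L_J$ is nonnegative and bounded, so $L_J$ is also a compact perturbation of $I$ with finite Morse index.

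\emph{Item (4).} Closedness and coercivity are Lemma~\ref{lem:Korn}, using item (1) for $\bar\rho\in L^\infty$. Compactness of $V_\diamond\hookrightarrow H_\diamond$ follows from Rellich on the bounded Lipschitz domain together with $\bar\rho\in L^\infty$.

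For boundedness, $\cC v=-\bar\rho\nabla\cdot v-\nabla\bar\rho\cdot v$, and
\[
\int h'(\bar\rho)|\cC v|^2\le C\int\bigl(h'(\bar\rho)\bar\rho^2|\nabla v|^2+h'(\bar\rho)|\nabla\bar\rho|^2|v|^2\bigr).
\]
Here $h'(s)s^2=sP'(s)$ is bounded, and $h'|\nabla\bar\rho|^2\in L^\infty$ by item (1). The range lies in $X_{00}$ by \eqref{eq:domain-conservation-ibp}, after subtracting the $(v,e_z)_H=0$ component.

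For density of the range, let $\phi\in X_{00}$ be $X$-orthogonal to $\cC C_c^\infty$. Then $\int\bar\rho\nabla(h'(\bar\rho)\phi)\cdot v=0$ for all axisymmetric test fields $v$ orthogonal to the rigid vectors. Hence $h'(\bar\rho)\phi=a+bz$ in the distributional sense on the connected set $\Omega_{\bar\rho}$; the rotational multiplier yields a gradient of a function of $\theta$, which is not single-valued, so it contributes nothing. Thus $\phi=(a+bz)/h'(\bar\rho)$, which is exactly the span of the two annihilated covectors of $X_{00}$, so $\phi=0$.

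\emph{Item (5).} Apply the form method. The full form \eqref{eq:full-sectorial-form} is a bounded perturbation, relative to $\|\sigma\|_X+\|u\|_{H^1}$, of the coercive $d$. By \eqref{eq:Korn-shifted} plus a shift, it is sectorial and closed. Indeed, the off-diagonal terms $-(\cC u,\eta)_X+\langle L_0\sigma,\cC w\rangle$ nearly cancel in the real part after using the energy inner product weighted by $L_0$, and in any case are bounded by $\epsilon d[u]+C_\epsilon\|\sigma\|^2$. Kato's representation theorem then gives an m-sectorial operator, hence an analytic semigroup; the reduced version is identical.

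\emph{Main obstacle.} The delicate step is the density-of-range annihilator argument in item (4): the weighted divergence and the possibly rough support near vacuum require justifying that $\nabla(h'(\bar\rho)\phi)$ vanishing in the weighted sense forces $h'(\bar\rho)\phi$ to be affine. I would first prove it locally in the interior, where $\bar\rho>0$, and then use connectedness.
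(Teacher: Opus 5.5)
Your proposal is correct and follows essentially the paper's route. Item~1 comes directly from the equilibrium equation together with $h'(s)\to\infty$ at vacuum, and items~2--5 use weighted Cauchy--Schwarz, Korn plus Rellich, the pointwise bound on $h'(\bar\rho)|\nabla\cdot(\bar\rho u)|^2$, the Lagrange-multiplier annihilator argument for dense range, and the sectorial-form method; your only variation is obtaining Newtonian compactness from $L^2\to H^2_{\rm loc}$ and Rellich instead of the Hilbert--Schmidt kernel. One small correction in the annihilator step: $q_{\rm rot}=re_\theta$ is not the gradient of any function, so the clean way to remove its multiplier is that $\nabla\times(ae_z+bq_{\rm rot})=2be_z$ must vanish, or equivalently that the gradient of the axisymmetric scalar $h'(\bar\rho)\phi$ has no $e_\theta$-component.
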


\begin{proof}
The equilibrium equation gives
\[
 h(\bar\rho)=-V_{\bar\rho}+\tfrac12\omega^2r^2-c
 \quad\hbox{on }\Omega_{\bar\rho}.
\]
A bounded compactly supported density has
\(\nabla V_{\bar\rho}\in L^\infty\).  Since \(h(\bar\rho)=0\) on the
boundary, its zero extension, denoted by \(W\), is therefore Lipschitz.
The pressure
asymptotics and \(\gamma_0<2\) imply
\[
 m_*:=\inf_{0<s\le\|\bar\rho\|_\infty}h'(s)>0.
\]
Indeed, \(h'(s)\to+\infty\) as \(s\downarrow0\), while \(h'\) is positive
and continuous away from zero.  Let \(\mathscr G=h^{-1}\) on the relevant compact
range and set \(\mathscr G(0)=0\).  Since
\(\mathscr G'=1/h'\le m_*^{-1}\), \(\mathscr G\) is
Lipschitz there; hence the zero extension
\(\widetilde{\bar\rho}=\mathscr G(W)\) belongs to
\(W^{1,\infty}(\mathbb R^3)\).  The Sobolev chain rule gives
\begin{equation}
 h'(\bar\rho)|\nabla\bar\rho|^2
 =\frac{|\nabla W|^2}{h'(\bar\rho)}\in L^\infty.           \label{eq:automatic-gradient}
\end{equation}
Hence \(\tau\in X\).  Distributional differentiation commutes with the
Newtonian potential:
\(\partial_zV_{\widetilde{\bar\rho}}
 =V_{\partial_z\widetilde{\bar\rho}}\).  Differentiating the equilibrium
equation in the stellar interior and using
\(\tau=-\partial_z\widetilde{\bar\rho}\) therefore gives \(L_0\tau=0\).
The two rank-one moments of \(\tau\) vanish, so also \(L_J\tau=0\).
Since \(\widetilde{\bar\rho}\in W^{1,\infty}_c\), distributional
integration by parts has no boundary measure and gives
\[
 \int\tau\,dx=0,\qquad \int r^2\tau\,dx=0,\qquad
 \int z\tau\,dx=\int\bar\rho\,dx=M.
\]
This proves item 1.  Lemma~\ref{lem:bounded-covectors} proves item 2, and
Lemma~\ref{lem:gravity-compact} proves item 3.  Lemma~\ref{lem:Korn},
Lemma~\ref{lem:C-form-bound}, and
Proposition~\ref{prop:helmholtz-inertia} prove item 4, while
Lemma~\ref{lem:full-generation} proves item 5. The kernel of the density
Hessian along the slow branch is described in
Proposition~\ref{prop:density-spectrum}.
\end{proof}

\subsection{The density--velocity form estimate}

The next estimate is the form bound required by the abstract pencil theorem.

\begin{lemma}
\label{lem:C-form-bound}
For every regular rigidly rotating equilibrium whose pressure law satisfies
Hypothesis~\ref{hyp:EOS}, and every $u\in H^1_{\rm ax}$,
\begin{equation}
 \norm{\cC u}_X^2
 \le C\left(\norm{\nabla u}_{L^2}^2+\norm u_{L^2}^2\right).               \label{eq:C-H1-bound}
\end{equation}
Consequently, for $u,v\in V_\diamond$,
\begin{equation}
 \abs{\ip{L_J\cC_\diamond u}{\cC_\diamond v}}
 \le C\,d[u]^{1/2}d[v]^{1/2}.                                           \label{eq:potential-form-bound}
\end{equation}
\end{lemma}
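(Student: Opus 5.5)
We expand the divergence, $\cC u=-\bar\rho\,\nabla\cdot u-\nabla\bar\rho\cdot u$, and estimate the two terms separately in the weighted norm of $X$:
\[
 \norm{\cC u}_X^2\le 2\int_{\Omega_{\bar\rho}}h'(\bar\rho)\bar\rho^2|\nabla\cdot u|^2dx
 +2\int_{\Omega_{\bar\rho}}h'(\bar\rho)|\nabla\bar\rho|^2|u|^2dx .
\]
For the second term we use Proposition~\ref{prop:automatic-admissibility}(1), which gives $h'(\bar\rho)|\nabla\bar\rho|^2\in L^\infty$ via \eqref{eq:automatic-gradient}. That bounds the term by $C\norm u_{L^2}^2$. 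For the first term it suffices that $s\mapsto h'(s)s^2=sP'(s)$ is bounded on $[0,\norm{\bar\rho}_\infty]$. This follows from Hypothesis~\ref{hyp:EOS}: $P\in C^1([0,\infty))$, so $sP'(s)$ is continuous and vanishes at $0$. The first term is therefore at most $C\norm{\nabla u}_{L^2}^2$, which proves \eqref{eq:C-H1-bound}.

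Here we work with $u\in H^1_{\rm ax}$ and the weak divergence on $\Omega_{\bar\rho}$. The product rule $\nabla\cdot(\bar\rho u)=\bar\rho\nabla\cdot u+\nabla\bar\rho\cdot u$ is justified because $\bar\rho\in W^{1,\infty}$ by Proposition~\ref{prop:automatic-admissibility}(1). We first verify it for smooth axisymmetric fields and then extend it by density in $H^1$. Both sides are continuous in $u$ into the weighted $L^2$ space, by the two bounds just proved.

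For \eqref{eq:potential-form-bound} we split $L_J$ into three parts: the local term $h'(\bar\rho)$, the gravity term $V_\sigma$, and the rank-one rotational term. Each is bounded as a form on $X$. The local part is the $X$ inner product itself. The gravitational part is bounded, and in fact compact, by Proposition~\ref{prop:automatic-admissibility}(3). The functional $\mathfrak r$ is bounded on $X$ by Cauchy--Schwarz, since $r^2/h'(\bar\rho)$ is integrable: $1/h'(\bar\rho)\in L^\infty$ and the support is compact. This is the same reasoning as for the covectors in item 2. Consequently
\[
 |\ip{L_J\sigma}{\eta}|\le C\norm\sigma_X\norm\eta_X .
\]
Taking $\sigma=\cC_\diamond u$ and $\eta=\cC_\diamond v$, we apply \eqref{eq:C-H1-bound} and then the Korn coercivity \eqref{eq:Korn-coercive} on $V_\diamond$, namely $\norm u_{H^1}^2\le c_K^{-1}d[u]$. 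This gives the claimed bound.

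The main point to check is the boundedness of $h'(\bar\rho)\bar\rho^2$ and $h'(\bar\rho)|\nabla\bar\rho|^2$ up to the vacuum boundary, where $h'$ blows up. The first needs only continuity of $P'$ at $0$. The second is exactly the content of \eqref{eq:automatic-gradient}, obtained through the enthalpy $W$ being Lipschitz. We therefore expect no further obstacle beyond invoking Proposition~\ref{prop:automatic-admissibility}.
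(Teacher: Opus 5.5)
Your proposal is correct and follows essentially the same route as the paper. The paper also splits $h'(\bar\rho)\abs{\nabla\cdot(\bar\rho u)}^2$ into two terms with the bounded coefficients $\bar\rho P'(\bar\rho)$ and $h'(\bar\rho)\abs{\nabla\bar\rho}^2=\abs{\nabla h(\bar\rho)}^2/h'(\bar\rho)$, then bounds the local, Newtonian, and rank-one parts of $L_J$ on $X$ separately and concludes with the Korn coercivity \eqref{eq:Korn-coercive} on $V_\diamond$.
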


\begin{proof}
Since $h'(\bar\rho)=P'(\bar\rho)/\bar\rho$,
\begin{align}
 h'(\bar\rho)\abs{\nabla\cdot(\bar\rho u)}^2
 &\le 2\bar\rho P'(\bar\rho)\abs{\nabla\cdot u}^2
   +2h'(\bar\rho)\abs{\nabla\bar\rho}^2\abs u^2.                         \label{eq:C-pointwise}
\end{align}
Both coefficients on the right are bounded.  Indeed,
$\bar\rho P'(\bar\rho)$ extends continuously to zero and is bounded on
$[0,\|\bar\rho\|_\infty]$.  For the second coefficient, differentiate
the equilibrium equation:
\begin{equation}
 \nabla h(\bar\rho)=-\nabla V_{\bar\rho}+\omega^2r e_r,
 \qquad
 h'(\bar\rho)|\nabla\bar\rho|^2
 =\frac{|\nabla h(\bar\rho)|^2}{h'(\bar\rho)}.             \label{eq:vacuum-gradient-control}
\end{equation}
The compactly supported bounded density gives
$\nabla V_{\bar\rho}\in L^\infty$, while $1/h'(\bar\rho)$ is bounded and
vanishes at the vacuum boundary.  Hence the last expression is bounded.
Integration proves
\eqref{eq:C-H1-bound}.

The multiplication part of the form $L_J:X\to X^*$ is the $X$ inner
product.  The Riesz realization of the Newtonian map is bounded and compact
on $X$, while
\begin{equation}
 \abs{\int r^2\sigma dx}
 \le\left(\int\frac{r^4}{h'(\bar\rho)}dx\right)^{1/2}\norm\sigma_X.        \label{eq:Q-bounded}
\end{equation}
Thus $L_J:X\to X^*$ and $\widehat L_J:X\to X$ are bounded.  Combine this
fact with \eqref{eq:C-H1-bound} and Korn's inequality
\eqref{eq:Korn-coercive}.
\end{proof}

\subsection{Weighted Helmholtz decomposition and inertia}

Recall the centered mass-zero space $X_{00}$ from \eqref{eq:X00}.

\begin{proposition}
\label{prop:helmholtz-inertia}
The following statements hold for every regular rigidly rotating equilibrium
whose pressure law satisfies Hypothesis~\ref{hyp:EOS}.
\begin{enumerate}
\item $\cC_\diamond:V_\diamond\to X_{00}$ is bounded and has dense range.
With
\begin{equation}
 \mathcal N=\Ker \cC_\diamond,\qquad
 V_1=\mathcal N^{\perp_d},                                                 \label{eq:weighted-Helmholtz}
\end{equation}
we have the $d$-orthogonal decomposition
$V_\diamond=\mathcal N\oplus V_1$.
\item The embedding $V_\diamond\hookrightarrow H_\diamond$ is compact.
\item The Riesz operator $A$ defined by
\begin{equation}
 (Av,w)_{V_\diamond}=\ip{L_J\cC_\diamond v}{\cC_\diamond w},\qquad
 (v,w)_{V_\diamond}=d[v,w],                                               \label{eq:NSP-A}
\end{equation}
 has finite negative index.  Moreover
\begin{equation}
 n^-(A)=n^-\!\left(L_J|_{X_{00}}\right).                                 \label{eq:inertia-X00}
\end{equation}
\end{enumerate}
\end{proposition}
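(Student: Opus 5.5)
Three items need proof: bounded dense range of $\cC_\diamond$ with the $d$-orthogonal splitting, compactness of $V_\diamond\hookrightarrow H_\diamond$, and the inertia identity for $A$. We take them in that order.

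\emph{Item 1: range and splitting.} Boundedness of $\cC_\diamond:V_\diamond\to X$ is Lemma~\ref{lem:C-form-bound} combined with \eqref{eq:Korn-coercive}. To see that the range lies in $X_{00}$, we use the integration-by-parts identities \eqref{eq:domain-conservation-ibp}. They give $\int\cC u\,dx=0$ and $\int z\,\cC u\,dx=(u,e_z)_H$, and the latter vanishes on $V_\diamond$.

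For density we argue by annihilators. Suppose $\eta\in X_{00}$ satisfies $(\cC v,\eta)_X=0$ for all $v\in V_\diamond$. Set $\phi=h'(\bar\rho)\eta$. Testing first with $v\in C_c^\infty(\Omega_{\bar\rho})$ axisymmetric, then correcting by its rigid components, gives
\[
 \int\bar\rho\nabla\phi\cdot\overline v\,dx=0
\]
modulo $\Span\{e_z,q_{\rm rot}\}$. Hence $\bar\rho\nabla\phi=\bar\rho(\alpha e_z+\beta q_{\rm rot})$ in the distributional sense on the connected set $\Omega_{\bar\rho}$. Since $\phi$ is axisymmetric, the $q_{\rm rot}$ component must vanish, so $\phi=\alpha z+c$. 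This is the same annihilator calculation as in \cite[(3.57)]{LinZeng2022}.

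Now $\eta=(\alpha z+c)/h'(\bar\rho)\in X_0$. The pair of conditions $\int\eta=0$ and $\int z\eta=0$ is a $2\times2$ Gram system in the weighted inner product for the functions $1$ and $z$. That Gram matrix is nonsingular by Cauchy--Schwarz, since $1$ and $z$ are independent. Hence $\alpha=c=0$ and $\eta=0$.

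The splitting $V_\diamond=\mathcal N\oplus V_1$ is immediate: $\mathcal N$ is closed because $\cC_\diamond$ is bounded, and $V_\diamond$ is a Hilbert space under the norm from $d$.

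\emph{Item 2: compactness.} By Korn \eqref{eq:Korn-coercive}, $V_\diamond$ embeds continuously into $H^1(\Omega_{\bar\rho})$. Rellich on the bounded Lipschitz domain gives a compact embedding of $H^1$ into $L^2$. Finally $\bar\rho\in L^\infty$ makes $L^2\to H$ bounded, so the composition $V_\diamond\hookrightarrow H_\diamond$ is compact.

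\emph{Item 3: inertia.} By Lemma~\ref{lem:gravity-compact}, $\widehat L_J$ equals the identity plus a compact self-adjoint operator plus a rank-one term, so $n^-(L_J|_{X_{00}})<\infty$. We prove $n^-(A)=n^-(L_J|_{X_{00}})$ by two inequalities.

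For $\le$: let $W\subset V_\diamond$ be a subspace on which $k$ is negative definite. Then $\cC_\diamond$ is injective on $W$, because any $w\in W$ with $\cC_\diamond w=0$ has $k[w]=0$. The image $\cC_\diamond W$ is therefore a subspace of $X_{00}$ of the same dimension on which $L_J$ is negative definite.

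For $\ge$: take a basis $\sigma_1,\dots,\sigma_n$ of a negative subspace of $L_J|_{X_{00}}$. By density of the range, choose $v_j\in V_\diamond$ with $\cC_\diamond v_j$ close to $\sigma_j$ in $X$. By continuity of $L_J$ on $X$, the Gram matrix $\bigl(\ip{L_J\cC_\diamond v_i}{\cC_\diamond v_j}\bigr)$ is close to the negative definite matrix $\bigl(\ip{L_J\sigma_i}{\sigma_j}\bigr)$, hence itself negative definite. Therefore $n^-(A)\ge n$.

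The main obstacle is the annihilator step in item 1. Passing from $\eta\in X$, which may be singular near the vacuum, to the identity $\nabla(h'(\bar\rho)\eta)\in\Span\{e_z,q_{\rm rot}\}$ must be done distributionally with interior test fields. We then need $\phi$ to be constant-plus-linear across the connected support, and the projection $P_\diamond$ must not introduce extra terms. We would handle this by testing against $P_\diamond v$ for compactly supported axisymmetric $v$ and absorbing the rigid corrections into the constants $\alpha,\beta$.
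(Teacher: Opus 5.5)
Your proposal is correct and follows the paper's proof essentially step for step. The three ingredients match: the annihilator computation forcing $\nabla(h'(\bar\rho)\eta)\in\Span\{e_z,q_{\rm rot}\}$ with the rotational coefficient vanishing, Rellich plus $\bar\rho\in L^\infty$ for compactness, and injectivity of $\cC_\diamond$ on negative subspaces combined with Gram-matrix approximation for the inertia identity. The differences are cosmetic: you correct test fields by $P_\diamond$ instead of factoring the functional through the moment map $S$, you use axisymmetry of $\phi$ instead of the curl to remove $q_{\rm rot}$, and you test $\eta\in X_{00}$ directly instead of invoking the double annihilator. One detail you leave implicit: passing from axisymmetric test fields to a full distributional identity uses rotational averaging, as the paper notes.
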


\begin{proof}
Boundedness was proved in Lemma~\ref{lem:C-form-bound}.  The identities
\begin{equation}
 \int \cC_\diamond vdx=0,\qquad
 \int z\cC_\diamond vdx=(v,e_z)_H=0                                         \label{eq:range-moments}
\end{equation}
give $\Ran \cC_\diamond\subset X_{00}$.  To identify its closure, we use the
annihilator.  If $\zeta\in X$ is $X$-orthogonal to
$\cC_\diamond V_\diamond$, put $\phi=h'(\bar\rho)\zeta$ and define
\begin{equation}
 \Lambda(v):=(\cC v,\zeta)_X,\qquad v\in H^1_{\rm ax}.                      \label{eq:annihilator}
\end{equation}
This is a continuous functional, by Lemma~\ref{lem:C-form-bound}, and it
vanishes on $V_\diamond$.
Define
\begin{equation}
 S:H^1_{\rm ax}\longrightarrow\C^2,\qquad
 S(v)=\bigl((v,e_z)_H,(v,q_{\rm rot})_H\bigr).
\end{equation}
Since $S(e_z)=(M,0)$ and $S(q_{\rm rot})=(0,I)$, the map $S$ is onto and
$\Ker S=V_\diamond$.  Every continuous functional on $H^1_{\rm ax}$ that
vanishes on $\Ker S$ therefore factors through $S$.  Hence there are
constants $a,b$ such that
\begin{equation}
 \Lambda(v)=(v,a e_z+bq_{\rm rot})_H,
 \qquad v\in H^1_{\rm ax}.                                                \label{eq:annihilator-multipliers}
\end{equation}
We identify this equality distributionally. By
Proposition~\ref{prop:automatic-admissibility}, the zero extension of
$\bar\rho$ belongs to $W^{1,\infty}(\mathbb R^3)$.
If $K\Subset\Omega_{\bar\rho}$ and
$\psi\in C_c^\infty(K;\mathbb C^3)$ is axisymmetric, then
$\inf_K\bar\rho>0$, $1/\bar\rho\in W^{1,\infty}(K)$, and
$v=\psi/\bar\rho\in H^1_{\rm ax}$ is admissible.  Since
$\cC v=-\nabla\cdot\psi$, \eqref{eq:annihilator-multipliers} yields
\begin{equation}
 \langle\nabla\bar\phi,\psi\rangle
 =\int_{\Omega_{\bar\rho}}
       \psi\cdot\overline{(a e_z+bq_{\rm rot})}\,dx .                   \label{eq:annihilator-distribution}
\end{equation}
The distribution in \eqref{eq:annihilator-distribution} is axisymmetric.
Rotational averaging therefore shows that testing against axisymmetric
fields is equivalent to testing against arbitrary compactly supported
fields.  Exhausting $\Omega_{\bar\rho}$ by compact subsets gives
$\nabla\phi=a e_z+bq_{\rm rot}$ in distributions.  Taking the
distributional curl gives
$b\,\nabla\times q_{\rm rot}=0$, and
$\nabla\times q_{\rm rot}=2e_z$, so $b=0$.
Consequently $\phi=a_0+a_1z$ and
\begin{equation}
 \zeta=\frac{a_0+a_1z}{h'(\bar\rho)}.
\end{equation}
These are precisely the two Riesz representers of the mass and axial
first-moment functionals.  The double-annihilator theorem proves
$\overline{\Ran \cC_\diamond}=X_{00}$.  The integrability needed here is
verified in Lemma~\ref{lem:bounded-covectors}.

We next prove the second assertion.  Let $(v_n)$ be bounded in
$V_\diamond$ with respect to $d[v]^{1/2}$.  By
\eqref{eq:Korn-coercive}, it is bounded in
$H^1(\Omega_{\bar\rho};\C^3)$.  Passing to a subsequence, the
Rellich--Kondrachov theorem gives
\begin{equation}
 v_n\rightharpoonup v\quad\hbox{in }H^1,
 \qquad v_n\longrightarrow v\quad\hbox{in }L^2.           \label{eq:Rellich-subsequence}
\end{equation}
The axisymmetric $H^1$ space is closed, so $v$ is axisymmetric.  Since
$\bar\rho\in L^\infty$,
\begin{equation}
 \|v_n-v\|_H^2
 \le \|\bar\rho\|_{L^\infty}\|v_n-v\|_{L^2}^2\longrightarrow0.
                                                               \label{eq:weighted-Rellich}
\end{equation}
The two weighted orthogonality conditions pass to this limit:
$(v,e_z)_H=(v,q_{\rm rot})_H=0$.  Thus $v\in H_\diamond$, and the selected
subsequence converges strongly in $H_\diamond$.  This proves the compact
embedding.  Only the upper bound on $\bar\rho$ is used to pass from $L^2$ to $H$.

The stiffness kernel remains infinite-dimensional.  For example,
$v=\psi(r,z)e_\theta$ belongs to $\mathcal N$ whenever $\psi$ is smooth,
supported away from the axis and the boundary, and
$\int\bar\rho r\psi\,dx=0$.  These fields form an infinite-dimensional
space and are orthogonal to both rigid vectors.

The equality of indices follows from density of the range.  If $E\subset V_\diamond$ is $A$-negative, then
$\cC_\diamond|_E$ is injective and $\cC_\diamond E$ is $L_J$-negative.  Thus
$\dim E\le n^-(L_J|_{X_{00}})$.  Conversely, approximate a basis of any
finite-dimensional $L_J$-negative subspace of $X_{00}$ by elements of
$\Ran \cC_\diamond$.  Boundedness of $L_J$ preserves negative definiteness of
the finite Gram matrix, giving the reverse inequality.
\end{proof}

\subsection{Restoring axial translation}

Recall $\tau=-\partial_z\bar\rho$.  We have
\begin{equation}
 \int\tau dx=0,\qquad \int z\tau dx=M,\qquad
 L_J\tau=0,\qquad \mathfrak r(\tau)=0.                                   \label{eq:tau-properties}
\end{equation}
Therefore every $\sigma\in X_0$ has a unique decomposition
\begin{equation}
 \sigma=\sigma_{00}+\frac1M\left(\int z\sigma dx\right)\tau,\qquad
 \sigma_{00}\in X_{00},                                                   \label{eq:X0-decomposition}
\end{equation}
and
\begin{equation}
 Q_J[\sigma]=Q_J[\sigma_{00}].                                           \label{eq:translation-zero-form}
\end{equation}
It follows that
\begin{equation}
 n^-(L_J|_{X_{00}})=n^-(L_J|_{X_0}).                                    \label{eq:X00-X0-index}
\end{equation}
Thus centering preserves the constrained Morse index.

\subsection{Proof of the main index theorem}

\begin{proof}[Proof of Theorem~\ref{thm:main-index}]
On $V_\diamond$ take $d$ as the Hilbert inner product and set
\begin{equation}
 k[v,w]=\ip{L_J\cC_\diamond v}{\cC_\diamond w},\qquad
 g[v,w]=(Gv,w)_H.                                                        \label{eq:NSP-dkg}
\end{equation}
Lemma~\ref{lem:Korn} gives coercivity of $d$; the continuous embedding
$V_\diamond\hookrightarrow H_\diamond$ and boundedness of $G$ give
bounded $M\ge0$ and skew-adjoint $\Gamma$ in
\eqref{eq:KTC-preconditioned}.  Lemma~\ref{lem:C-form-bound} gives
boundedness of $k$, and Proposition~\ref{prop:helmholtz-inertia} gives the
finite negative index of its Riesz operator.
Thus the quadratic-pencil part of
Theorem~\ref{thm:abstract-KTC}, stated separately as
Theorem~\ref{thm:KTC-pencil}, applies to
\eqref{eq:NSP-form-pencil} and yields
\begin{equation}
 N_+(P)=n^-(\cC_\diamond^*L_J\cC_\diamond)
       =n^-(L_J|_{X_{00}}).                                              \label{eq:KTC-applied}
\end{equation}

Proposition~\ref{prop:root-chain-equivalence} transfers the Fredholm
property, the absence of continuous or residual right-half-plane spectrum,
and the algebraic count to the reduced first-order generator.  The
resolvent-level reduction and similarity in
Proposition~\ref{prop:full-spectral-transfer} transfer the complete nonzero
spectrum to the full unrestricted generator.  Finally
\eqref{eq:X00-X0-index} and
Lemma~\ref{lem:second-variation} give
\begin{align}
 N_+(\cA_{\rm NSP})
 &=n^-(L_J|_{X_{00}})
  =n^-(L_J|_{X_0}) \notag\\
 &=n^-\!\left(D^2(E+cM-\omega J)
       \big|_{\{\delta M=\delta J=0\}_{\rm ax}}\right),                  \label{eq:main-proof-chain}
\end{align}
which is \eqref{eq:main-index}.
\end{proof}

\begin{corollary}
\label{cor:no-imposed-constraints}
The four linear conditions
$\mathfrak m=\mathfrak p=\mathfrak j=\mathfrak z=0$ are automatic on
every generalized unstable eigenspace.  The axial rigid velocity is
removed by $\mathfrak p=0$.  The rotational component need not be
$H$-orthogonal to $q_{\rm rot}$; rather,
$\mathfrak j=0$ determines it uniquely from the density perturbation, and the
bounded Routh transformation removes that component without changing the
unstable spectrum.  Consequently the left side of
\eqref{eq:main-proof-chain} is the unstable count on the full
axisymmetric phase space.
\end{corollary}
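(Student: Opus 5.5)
The corollary follows from results already established; the plan is to assemble them in order. First, I would invoke Proposition~\ref{prop:automatic-constraints}. Mass, axial momentum and total angular momentum satisfy $\ell\cA_{\rm NSP}=0$ on $\Dom(\cA_{\rm NSP})$, while the axial first moment satisfies $\mathfrak z\cA_{\rm NSP}=\mathfrak p$. On a generalized eigenspace $E_\lambda$ with $\Ree\lambda>0$, the restriction $\cA_{\rm NSP}|_{E_\lambda}$ is invertible. Hence the identity \eqref{eq:covector-invertibility} kills $\mathfrak m,\mathfrak p,\mathfrak j$. Then $\mathfrak z(\cA_{\rm NSP}-\lambda)=-\lambda\mathfrak z$ on the invariant space $E_\lambda$ kills $\mathfrak z$, by induction along a Jordan chain. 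Summing over the finitely many unstable eigenvalues supplied by Theorem~\ref{thm:main-index} extends the vanishing to the whole Riesz unstable space. Nothing is imposed: these are consequences of the dynamics.

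Second, I would explain the velocity components. Since $(u,e_z)_H=\mathfrak p(\sigma,u)=0$, the axial rigid component is exactly zero. For the rotational component, $\mathfrak j=0$ reads $(u,q_{\rm rot})_H=-\omega\,\mathfrak r(\sigma)$. This is generally nonzero when $\omega\neq0$ and $\mathfrak r(\sigma)\neq0$, so $u$ need not be $H$-orthogonal to $q_{\rm rot}$. It is, however, a bounded linear function of $\sigma$, namely $-(\omega/I)\mathfrak r(\sigma)q_{\rm rot}$ after projection. This is precisely the inverse Routh map $(\sigma,v)\mapsto(\sigma,v-(\omega/I)\mathfrak r(\sigma)q_{\rm rot})$ of Proposition~\ref{prop:full-spectral-transfer}.

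Third, Proposition~\ref{prop:full-spectral-transfer} supplies the rest. The factorization \eqref{eq:full-block-resolvent-factorization} shows that $\lambda-\cA_{\rm NSP}$ and $\lambda-\cA_{\rm NSP}|_{\mathscr Y}$ have the same Fredholm and invertibility properties for $\lambda\neq0$, with the same algebraic multiplicities. The Routh map is a bounded, domain-preserving similarity onto $\cA_{\rm red}$. Together these show that the count on the left of \eqref{eq:main-proof-chain} is the Riesz count for the unrestricted generator on $X\times H$, not a count on a constrained subspace.

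The only delicate point is making sure no unstable generalized eigenvectors are lost outside $\mathscr Y$. This is settled by the triangular form \eqref{eq:full-block-form}: the complementary block $N_F$ is nilpotent, so its spectrum is $\{0\}$ and contributes nothing in $\{\Ree\lambda>0\}$.
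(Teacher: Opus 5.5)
Your proposal is correct and follows essentially the route the paper implicitly takes. The corollary assembles three ingredients: Proposition~\ref{prop:automatic-constraints} gives the vanishing of $\mathfrak m,\mathfrak p,\mathfrak j,\mathfrak z$ on each $E_\lambda$ with $\lambda\neq0$; the identity $(u,q_{\rm rot})_H=-\omega\,\mathfrak r(\sigma)$ underlies the inverse Routh map; and Proposition~\ref{prop:full-spectral-transfer} supplies the triangular reduction together with the domain-preserving Routh similarity. Your observation that the nilpotent block $N_F$ contributes no spectrum in $\{\Ree\lambda>0\}$ is precisely what makes the count one on the full phase space rather than on $\mathscr Y$.
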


\section{Fixed-\texorpdfstring{$J$}{J} equilibria and constrained Hessians}
\label{sec:equilibria}

\subsection{From fixed angular velocity to fixed total angular momentum}
Let $\rho_\mu$ be the
spherical equilibrium with center density $\rho_\mu(0)=\mu$.  We normalize
each nearby rotating density by the same center density.

Throughout this section and Appendix~\ref{app:slow-branch},
\begin{equation}
 \mathscr G(y)=\begin{cases}h^{-1}(y),&y>0,\\0,&y\le0,
       \end{cases}                                         \label{eq:positive-inverse-enthalpy}
\end{equation}
on the enthalpy range reached by the branch.

Under Hypothesis~\ref{hyp:EOS}, \(q_{\mu,0}=h(\rho_\mu)\) is the unique
Lane--Emden solution
\begin{equation}
 q_{rr}+\frac2r q_r=-4\pi\mathscr G(q),\qquad
 q(0)=h(\mu),\qquad q_r(0)=0,                            \label{eq:spherical-enthalpy-ODE}
\end{equation}
up to its first zero \(R_0(\mu)\).  The low-density exponent
\(\gamma_0>4/3>6/5\) gives \(R_0(\mu)<\infty\), and
\[
 \partial_rq_{\mu,0}(R_0(\mu))
 =-\frac{M_0(\mu)}{R_0(\mu)^2}<0.
\]
Classical ODE dependence, including the differentiated remainder bounds in
\eqref{eq:EOS-asymptotic}, gives the parameter regularity and uniform
transversality on compact center-density intervals used below; see
\cite[Section~3, especially Lemmas~3.1--3.2]{LinZeng2022}.

The exterior potential equals $-M_0(\mu)/|x|$.  Evaluating
\eqref{eq:equilibrium} on the boundary therefore gives
\[
 c_{\mu,0}=\frac{M_0(\mu)}{R_0(\mu)}.
\]
We set
\begin{equation}
 \mu_e:=\inf\left\{\mu>0:
   \partial_\mu\!\left(\frac{M_0(\mu)}{R_0(\mu)}\right)=0\right\},
                                                               \label{eq:mutilde-definition}
\end{equation}
with $\mu_e=+\infty$ if the set is empty.  By
\cite[Lemma~3.7]{LinZeng2022}, $\mu_e$ is the first parameter at
which the radial block of the spherical density Hessian can acquire a
kernel.  It is not a loss-of-invertibility parameter for the
center-normalized equilibrium equation used below.  To record
the orientation at small density, rescale the spherical enthalpy equation
by its central value and the length
\(\ell_\mu=(h(\mu)/(4\pi\mu))^{1/2}\).  The normalized equation tends
to the Lane--Emden equation of exponent \(\alpha=1/(\gamma_0-1)\).
The differentiated remainders in \eqref{eq:EOS-asymptotic} imply that both
the error in its nonlinearity and its logarithmic parameter derivative
\(\mu\partial_\mu\) are \(O(\mu^{a_0})\), uniformly on bounded
enthalpy intervals.  The integral form of the radial ODE, differentiated
by \(\mu\partial_\mu\), and transversality at the limiting first zero
therefore give
\[
 R_0(\mu)=C_R\mu^{(\gamma_0-2)/2}(1+\varepsilon_R(\mu)),\qquad
 M_0(\mu)=C_M\mu^{(3\gamma_0-4)/2}(1+\varepsilon_M(\mu)),
\]
where
\[
 |\varepsilon_R|+|\varepsilon_M|
 +|\mu\partial_\mu\varepsilon_R|
 +|\mu\partial_\mu\varepsilon_M|=O(\mu^{a_0}).
\]
Thus differentiation of the ratio is justified and gives
\begin{equation}
 \frac{M_0(\mu)}{R_0(\mu)}\sim C\mu^{\gamma_0-1},\qquad
 \partial_\mu c_{\mu,0}
 \sim C(\gamma_0-1)\mu^{\gamma_0-2}>0
 \quad(\mu\downarrow0).                                  \label{eq:low-density-c-scaling}
\end{equation}
This is the standard low-density rescaling used in
\cite[Section~3]{LinZeng2022}.
Hence $\partial_\mu c_{\mu,0}>0$ on
$(0,\mu_e)$.  This is the interval used in the spherical turning
point theorem \cite{LinZeng2022}.

\begin{lemma}
\label{lem:spherical-density-spectrum}
For every $\mu\in(0,\mu_e)$, the Riesz realization
$\widehat L_0(\mu):=\widehat L_J(\mu,0)$ of the spherical density Hessian has one simple
negative eigenvalue and
\begin{equation}
 \Ker\widehat L_0(\mu)=\Span\{\partial_z\rho_\mu\}.         \label{eq:spherical-density-spectrum}
\end{equation}
The negative eigenfunction is radial, whereas the kernel is odd in $z$.
On every $I_\mu\Subset(0,\mu_e)$, the negative eigenvalue and the
positive spectrum on the orthogonal complement of these two spaces are
separated uniformly from zero.  Moreover, let $I\Subset(0,\infty)$ and let
$D$ be a ball centered at the origin containing the supports of the
corresponding spherical stars,
and let $\mathscr G$ be as in \eqref{eq:positive-inverse-enthalpy}.  Then
\begin{equation}
 \mathcal L_\mu v
 =v+V_{\mathscr G'(q_{\mu,0})v}-V_{\mathscr G'(q_{\mu,0})v}(0),              \label{eq:normalized-enthalpy-linearization}
\end{equation}
where $q_{\mu,0}=h(\mu)+V_{\rho_\mu}(0)-V_{\rho_\mu}$, is an isomorphism
on the axisymmetric, $z$-even subspace of $C^1(\overline D)$ consisting of
functions vanishing at the origin.  Its inverse is uniformly bounded on
such compact intervals $I$.
\end{lemma}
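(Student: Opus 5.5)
Since $\omega=0$ for the spherical star, the Riesz realization is $\widehat L_0(\mu)=I+h'(\rho_\mu)^{-1}V_{(\cdot)}$ on the axisymmetric part of $X$. We will decompose axisymmetric densities into spherical harmonics $Y_\ell^0$; each $\widehat L_0(\mu)$ commutes with this decomposition because $\rho_\mu$ is radial. The proof then reduces to sector-by-sector statements.

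\textbf{The sectors $\ell\ge2$.} Here we will show positivity by the classical argument. The Newtonian potential of $\sigma_\ell=f(r)Y_\ell^0$ is more "spread out" than in the $\ell=1$ sector. Concretely, we compare with $\partial_z\rho_\mu$, which lies in the $\ell=1$ sector and satisfies $L_0\partial_z\rho_\mu=0$. The radial operators $\mathcal D_\ell$, obtained by writing $L_0$ in the potential variable $\phi=V_\sigma$, satisfy $-\Delta_\ell\phi-4\pi\mathscr G'(q_{\mu,0})\phi=\cdots$. They increase strictly in $\ell$ through the term $\ell(\ell+1)/r^2$.

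In the $\ell=1$ sector, $\partial_r\rho_\mu$ corresponds to the nodeless zero mode $q_{\mu,0}'$. A Sturm comparison argument therefore shows the following: the $\ell=1$ block is nonnegative, with kernel spanned by $\partial_z\rho_\mu$, and all blocks with $\ell\ge2$ are uniformly positive.

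\textbf{The radial sector $\ell=0$.} For the radial block we invoke the Lin--Zeng analysis (\cite[Lemma~3.7 and Section~3]{LinZeng2022}). On $(0,\mu_e)$ the radial density Hessian has exactly one negative eigenvalue and no kernel. Its kernel can appear only where $\partial_\mu(M_0/R_0)=0$, which by definition of $\mu_e$ does not happen before $\mu_e$. Near $\mu\downarrow0$, the Lane--Emden limit with $\gamma_0>4/3$ gives exactly one negative direction, and continuity in $\mu$ preserves this count up to $\mu_e$. The negative direction is then radial and the kernel is odd in $z$, as claimed.

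\textbf{Uniform gaps on compact $I_\mu$.} Uniformity on $I_\mu\Subset(0,\mu_e)$ follows from two facts. First, $\mu\mapsto\widehat L_0(\mu)$ is continuous in operator norm, after transporting to a fixed reference space by scaling. Second, the essential spectrum is $\{1\}$, because the gravity part is compact (Lemma~\ref{lem:gravity-compact}). Isolated eigenvalues depend continuously on $\mu$, and none crosses zero on $I_\mu$, so compactness of $I_\mu$ yields uniform separation.

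\textbf{The enthalpy isomorphism.} For $\mathcal L_\mu$ we note it is identity plus a compact operator on $C^1(\overline D)$: $V$ maps $\mathscr G'(q)v\in L^p$ into $C^{1,\alpha}$, and $\mathscr G'$ is integrable near vacuum since $\gamma_0<2$. By Fredholm theory it suffices to prove injectivity. If $\mathcal L_\mu v=0$, then $\sigma=\mathscr G'(q)v$ satisfies $h'(\rho_\mu)\sigma+V_\sigma=\text{const}$, so $L_0\sigma\in\Span\{1\}$.

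\begin{enumerate}
\item If the constant is zero, $\sigma\in\Ker L_0$, which is $z$-odd; since $v$ is $z$-even, $\sigma=0$.
\item If the constant is nonzero, $\sigma$ is a multiple of $\partial_\mu\rho_\mu$ plus a kernel element, because $L_0\partial_\mu\rho_\mu=-\partial_\mu c_{\mu,0}\ne0$. Then $v$ is a multiple of $\partial_\mu q_{\mu,0}$, and $\partial_\mu q_{\mu,0}(0)=h'(\mu)\neq0$ contradicts $v(0)=0$.
\end{enumerate}

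Uniform bounds on compact $I$ follow by continuity and compactness.

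The main obstacle is the $\ell=1$ and $\ell\ge2$ Sturm argument in weighted spaces that degenerate at the vacuum boundary.
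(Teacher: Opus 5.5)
Your spectral part follows the paper's route: the density--potential reduction, the nodeless $\ell=1$ zero mode $\partial_rV_{\rho_\mu}=-q_{\mu,0}'$ as ground state, $D^\ell_\mu=D^1_\mu+(\ell(\ell+1)-2)/r^2$ for $\ell\ge2$, the Lin--Zeng radial count, and norm continuity on a fixed space for the gaps. (For the gaps, replace ``none crosses zero'' by ``the Riesz projection at $0$ keeps rank one'', since $0$ is always an eigenvalue.)

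\textbf{The gap is in the enthalpy isomorphism.} The statement asserts invertibility of $\mathcal L_\mu$ on every compact $I\Subset(0,\infty)$, not only in $(0,\mu_e)$. The paper stresses that $\mu_e$ is not a loss-of-invertibility parameter for the center-normalized equation, and Proposition~\ref{prop:slow-branch-full-range} uses the lemma on $I_\mu\Subset(0,\infty)$.

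Your injectivity argument rests on two facts: $\Ker L_0$ is $z$-odd, and $\partial_\mu c_{\mu,0}\ne0$. Both fail at every zero of $\partial_\mu(M_0/R_0)$; the first such zero is $\mu_e$ whenever it is finite. There $L_0\partial_\mu\rho_\mu=0$. So $\Ker L_0$ contains the $z$-even radial vector $\partial_\mu\rho_\mu$, and your case~1 breaks down. Case~2 also fails, because you can no longer write a solution of $L_0\sigma=c\,\mathbf 1$ as a multiple of $\partial_\mu\rho_\mu$ plus a kernel element. Even at noncritical $\mu>\mu_e$, you established the kernel description only on $(0,\mu_e)$. Hence ``uniform bounds by continuity and compactness'' has no pointwise invertibility to rest on once $I$ reaches such a point.

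\textbf{How the paper avoids this.} Its argument needs no spectral input from the radial block. From $\mathcal L_\mu v=0$ one gets $v=-V_{a_\mu v}+V_{a_\mu v}(0)$ on all of $\mathbb R^3$, hence $(-\Delta-4\pi a_\mu)v=0$ with $a_\mu=\mathscr G'(q_{\mu,0})$. Then:
\begin{enumerate}
\item The $\ell\ge2$ components decay and vanish, because $D^\ell_\mu>0$ for \emph{every} $\mu$.
\item The $\ell=1$ components are excluded by $z$-parity.
\item The radial component solves $v_0''+\frac2rv_0'+4\pi a_\mu v_0=0$ with $v_0(0)=0$ and $v_0'(0)=0$, so it vanishes by initial-value uniqueness.
\end{enumerate}
The normalization $v(0)=0$ thus kills the whole one-dimensional space of regular radial solutions, whether or not that space contains a kernel vector of $L_0$.

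\textbf{Alternative patch.} You could instead repair your two cases at a critical point. First, the even kernel is spanned by $\partial_\mu\rho_\mu$, which is excluded by $v(0)=\beta h'(\mu)\ne0$. Second, $L_0\sigma=c\,\mathbf 1$ with $c\ne0$ is impossible: pairing with $\partial_\mu\rho_\mu$ gives $c\,\partial_\mu M_0(\mu)=0$, while $\partial_\mu M_0$ and $\partial_\mu(M_0/R_0)$ never vanish together \cite[Lemma~3.10]{LinZeng2022}. However, one-dimensionality of the radial kernel already uses the same ODE uniqueness, so the direct argument is shorter.

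Finally, take the identity $L_0\partial_\mu\rho_\mu=-(\partial_\mu c_{\mu,0})\mathbf 1$ from the spherical ODE theory. Do not take it from Lemma~\ref{lem:branch-tangent}, whose branch is constructed using the present lemma.
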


\begin{proof}
Here $\dot H^1_{\rm ax}(\mathbb R^3)$ is the homogeneous Sobolev space of
axisymmetric scalar functions, with norm
$\|\nabla\phi\|_{L^2(\mathbb R^3)}$.
On $\dot H^1_{\rm ax}(\mathbb R^3)$ consider
\[
 D_\mu=-\Delta-\frac{4\pi}{h'(\rho_\mu)},
\]
where the coefficient is extended by zero outside the star.  The
density--potential reduction gives
$n^-(\widehat L_0)=n^-(D_\mu)$ and identifies their kernels
\cite[Lemmas~3.3--3.4]{LinZeng2022}.  In spherical harmonics,
\[
 D_\mu^\ell=-\partial_r^2-\frac2r\partial_r
       +\frac{\ell(\ell+1)}{r^2}-\frac{4\pi}{h'(\rho_\mu(r))}.
\]
Differentiating the equilibrium equation gives
$D_\mu^1\partial_rV_{\rho_\mu}=0$, with
$\partial_rV_{\rho_\mu}>0$ for $r>0$.  This no-node solution satisfies
the regular $\ell=1$ boundary conditions and is therefore the ground state
of $D_\mu^1$; hence $D_\mu^1\ge0$ and its kernel is simple.  For
$\ell\ge2$,
\[
 D_\mu^\ell=D_\mu^1+\frac{\ell(\ell+1)-2}{r^2}>0
\]
in the quadratic-form sense.  Thus the only nonradial axisymmetric
kernel is axial translation, and every negative direction is radial.

For the radial operator, the normalized tangent equation yields
\begin{equation}
 \Ker D_\mu^0\ne\{0\}
 \quad\Longleftrightarrow\quad
 \partial_\mu\!\left(\frac{M_0(\mu)}{R_0(\mu)}\right)=0.  \label{eq:radial-kernel-cprime}
\end{equation}
The radial Morse index is one at small center density and is constant away
from the parameters in \eqref{eq:radial-kernel-cprime}; see
\cite[Lemmas~3.7--3.9]{LinZeng2022}.  This proves the pointwise assertions.
For the uniform statement, choose a ball $D$ containing the supports for
$\mu\in I_\mu$ and set
$b_\mu=[\mathscr G'(q_{\mu,0})]^{1/2}$, extended by zero.  Under the
isometry $\sigma\mapsto h'(\rho_\mu)^{1/2}\sigma$, followed by zero
extension, the Hessian is
\[
 I+\mathbb G_\mu,\qquad
 \mathbb G_\mu(x,y)=-\frac{b_\mu(x)b_\mu(y)}{|x-y|},
\]
with the identity on the exterior complement.  The fixed-domain spherical
dependence gives $b_\mu$ continuous in $C(\overline D)$, so
$\mathbb G_\mu$ is Hilbert--Schmidt and operator-norm continuous.  If the
positive gap collapsed along a sequence in $I_\mu$, norm convergence at a
subsequential limit would produce an additional zero mode, contrary to the
pointwise kernel description.  Compactness of $I_\mu$ therefore gives the
uniform gaps.

It remains to treat the center-normalized operator $\mathcal L_\mu$.  Put
$\alpha=1/(\gamma_0-1)>1$ and
$a_\mu=\mathscr G'(q_{\mu,0})$, extended by zero outside the star.  For every
$0<\eta<\min\{1,\alpha-1\}$, $a_\mu\in C_c^{0,\eta}$; hence
$v\mapsto V_{a_\mu v}-V_{a_\mu v}(0)$ maps bounded subsets of $C^1$ into
bounded subsets of $C^{2,\eta}$ and is compact on $C^1$.  Thus
$\mathcal L_\mu$ is Fredholm of index zero.  Suppose
$\mathcal L_\mu v=0$.  Then
\[
 v=-V_{a_\mu v}+V_{a_\mu v}(0),
\]
so the global right-hand side solves
$(-\Delta-4\pi a_\mu)v=0$.  Decompose $v$ into axisymmetric spherical
harmonics.  Reflection symmetry leaves only even degrees.  For every
$\ell\ge2$, the corresponding component is a decaying $\dot H^1$ solution
and vanishes because $D_\mu^\ell>0$ by
\cite[Lemma~3.5(ii)]{LinZeng2022}.  The radial component satisfies
$v_0(0)=0$ and, by regularity at the origin, $v_0'(0)=0$; uniqueness for
\[
 v_0''+\frac2r v_0'+4\pi a_\mu(r)v_0=0
\]
gives $v_0\equiv0$.  Indeed,
\[
 r^2v_0'(r)=-4\pi\int_0^r s^2a_\mu(s)v_0(s)\,ds,
\]
so \(|v_0'(r)|\le Cr\sup_{0\le s\le r}|v_0(s)|\).  After integration,
the supremum on the right is at most \(Cr^2\) times itself; it vanishes
for small \(r\), and ordinary ODE uniqueness continues the zero solution.
The $\ell=1$ translation modes are $z$-odd in the axisymmetric class and
are excluded.  The radial initial-value argument does not require the
global radial component to decay at infinity.  Thus $\mathcal L_\mu$ is injective and
therefore invertible.  Operator-norm continuity in $\mu$ gives a uniform
inverse bound on compact parameter intervals.
\end{proof}

\begin{lemma}
\label{lem:first-maximum-before-mutilde}
If \eqref{eq:nondegenerate-maximum} holds, then
\begin{equation}
 \left.\partial_\mu\!\left(\frac{M_0(\mu)}{R_0(\mu)}\right)
 \right|_{\mu=\mu_*}>0,
 \qquad \mu_*<\mu_e.                              \label{eq:first-max-before-mutilde}
\end{equation}
\end{lemma}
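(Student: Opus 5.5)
The plan is to read off the sign of $\partial_\mu(M_0/R_0)$ on $(0,\mu_*]$ from the nonrotating index formula of Corollary~\ref{cor:nonrotating-all-mu}, combined with the classical turning-point stability below the first mass maximum. Throughout I write $c_{\mu,0}=M_0(\mu)/R_0(\mu)$.

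First I would record two facts about the spherical family for $0<\mu<\mu_*$.
\begin{enumerate}
\item Since $\partial_\mu M_0>0$ on $(0,\mu_*)$, the Euler--Poisson turning-point theorem of \cite{LinZeng2022} gives $N_+(\cA^{\rm rad}_{\rm EP}(\mu))=0$ there. Indeed, the unstable index is $0$ at small density and can change only at zeros of $\partial_\mu M_0$.
\item $n^-(D^0_\mu)\ge1$ for every $\mu$. A radial negative direction always exists: it equals $1$ at small center density, and the radial Morse index is nondecreasing as one passes the parameters in \eqref{eq:radial-kernel-cprime}; see \cite[Lemmas~3.7--3.9]{LinZeng2022}. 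Alternatively, one tests $D_\mu^0$ against a suitable radial function built from $\rho_\mu$, or from the dilation generator $3\rho+x\cdot\nabla\rho$, to exhibit negativity directly.
\end{enumerate}

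Next, \eqref{eq:nonrotating-all-mu} gives $0=N_+(\cA^{\rm rad}_{\rm EP}(\mu))=n^-(D^0_\mu)-\mathfrak i_\mu$ on $(0,\mu_*)$. Since $\mathfrak i_\mu\le 1\le n^-(D^0_\mu)$, this forces $\mathfrak i_\mu=1$. Now suppose $\partial_\mu c_{\mu_1,0}=0$ at some $\mu_1\in(0,\mu_*)$. Then the second alternative in \eqref{eq:nonrotating-i-mu} gives $\mathfrak i_{\mu_1}=0$, which is a contradiction. Hence $\partial_\mu c_{\mu,0}$ has no zeros on $(0,\mu_*)$. It is positive near $0$ by \eqref{eq:low-density-c-scaling}, so by continuity $\partial_\mu c_{\mu,0}>0$ on all of $(0,\mu_*)$. (Equivalently, $\mathfrak i_\mu=1$ with $\partial_\mu M_0>0$ requires $\partial_\mu c_{\mu,0}>0$.)

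At $\mu=\mu_*$, continuity gives $\partial_\mu c_{\mu_*,0}\ge0$. Corollary~\ref{cor:nonrotating-all-mu} states that $\partial_\mu M_0$ and $\partial_\mu(M_0/R_0)$ cannot vanish simultaneously. Since $\partial_\mu M_0(\mu_*)=0$, the inequality is strict, which proves the first claim in \eqref{eq:first-max-before-mutilde}. By continuity, $\partial_\mu c_{\mu,0}>0$ on a neighborhood of $[\delta,\mu_*]$ for each $\delta>0$, and also near $0$. So the zero set of $\partial_\mu c_{\mu,0}$ has infimum strictly greater than $\mu_*$, i.e.\ $\mu_*<\mu_e$.

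The main obstacle is item 1, specifically invoking the Lin--Zeng stability statement for all $\mu<\mu_*$ without circularity. Their turning-point count is phrased through $n^-(D^0_\mu)-\mathfrak i_\mu$, so one must make sure that the ``index changes only at mass extrema'' conclusion does not itself presuppose $\mu<\mu_e$. If it does, I would instead argue by contradiction using the first zero $\mu_1=\mu_e<\mu_*$. On $(0,\mu_1)$ we have $n^-(D^0_\mu)=1=\mathfrak i_\mu$, so the index there is $0$. At $\mu_1$ the index jumps to $n^-(D^0_{\mu_1})-0\ge1$. This jump contradicts the continuity of the Euler--Poisson unstable count away from mass extrema, which follows from the Riesz-projection stability of isolated right-half-plane eigenvalues together with the absence of imaginary-axis crossings at nonzero $\lambda$ (the energy-identity argument of Lemma~\ref{lem:KTC-boundary} adapted to the inviscid case). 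A secondary point to check is the justification of $n^-(D^0_\mu)\ge1$ at every $\mu$, not only at small density.
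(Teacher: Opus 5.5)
\textbf{Gap: item 2 is the whole proof and is not justified.}
Your endgame at $\mu_*$ is the paper's: continuity, plus the fact that $\partial_\mu M_0$ and $\partial_\mu(M_0/R_0)$ cannot vanish together. Your reduction on $(0,\mu_*)$ is also logically sound, \emph{provided} $n^-(D^0_\mu)\ge1$. That item is not secondary. It carries the entire first claim, and neither justification you offer works.

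Item 1 by itself cannot exclude a zero of $\partial_\mu c_{\mu,0}$. The function $\psi=\partial_\mu q_{\mu,0}$ solves $D^0_\mu\psi=0$, with $\psi(0)=h'(\mu)>0$ and $\psi=\partial_\mu M_0/r-\partial_\mu c_{\mu,0}$ outside the star. By Sturm theory, $n^-(D^0_\mu)$ is the number of nodes of $\psi$ in $(0,\infty)$.

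Suppose $\partial_\mu c_{\mu,0}$ crossed from positive to negative at some $\mu_1<\mu_*$, where $\partial_\mu M_0>0$. Just below $\mu_1$, the single node of $\psi$ is the exterior one at $r=\partial_\mu M_0/\partial_\mu c_{\mu,0}$. That node escapes to infinity at $\mu_1$. So $n^-(D^0_\mu)$ would fall from $1$ to $0$ at exactly the moment $\mathfrak i_\mu$ falls from $1$ to $0$. The Euler--Poisson index $n^-(D^0_\mu)-\mathfrak i_\mu$ would stay $0$, so item 1 gives no contradiction.

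Consequently the Morse index of $D^0_\mu$ is not monotone through zeros of $\partial_\mu(M_0/R_0)$ in general. The cited lemmas give only constancy between such zeros. Asserting monotonicity here assumes precisely what must be excluded.

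Your suggested homology direction does not help either. Using $L_*g=3P'-h-c$ and $\int g=0$, the density Hessian (whose radial Morse index equals $n^-(D^0_\mu)$) gives $Q_0[3\rho_\mu+x\cdot\nabla\rho_\mu]=\int(3P'(\rho_\mu)-h(\rho_\mu))(3\rho_\mu+x\cdot\nabla\rho_\mu)\,dx$. For a polytrope this equals $3K(3\gamma-4)\int\rho_\mu^\gamma\,dx>0$ when $\gamma>4/3$, so it is a positive direction.

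\textbf{The fix: use the translation mode.} Put $u=\partial_rV_{\rho_\mu}=m_\mu(r)/r^2$, where $m_\mu(r)$ is the mass inside radius $r$.
\begin{itemize}
\item It is the $\ell=1$ zero mode: $D^1_\mu u=0$.
\item As a radial function it lies in $\dot H^1$, and $\int_0^\infty u^2\,dr<\infty$, since $u\sim r$ at $0$ and $u\sim r^{-2}$ at $\infty$.
\item Since $D^0_\mu=D^1_\mu-2/r^2$ in the form sense, $\langle D^0_\mu u,u\rangle=-8\pi\int_0^\infty u(r)^2\,dr<0$.
\end{itemize}
This holds for every $\mu$ and every pressure law.

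With this inserted, your argument becomes a valid alternative route. You recover the positivity of $\partial_\mu(M_0/R_0)$ on $(0,\mu_*)$ from the index formula of Corollary~\ref{cor:nonrotating-all-mu} together with Euler--Poisson stability below the first maximum. The paper instead simply quotes \cite[Lemma~3.14]{LinZeng2022} for that step and \cite[Lemma~3.10]{LinZeng2022} at $\mu_*$.

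The point you flagged as the main obstacle is not the real difficulty. The index formula holds for every $\mu$. The radial mass-constrained Hessian is nondegenerate wherever $\partial_\mu M_0\ne0$, including at zeros of $\partial_\mu c_{\mu,0}$, so the unstable count cannot change there.
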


\begin{proof}
Apply \cite[Lemma~3.14]{LinZeng2022} with $\mu_0=\mu_*$.  Since
$\partial_\mu M_0(\mu)>0$ for $0<\mu<\mu_*$, it gives
\[
 \partial_\mu c_{\mu,0}
 =\partial_\mu\!\left(M_0(\mu)/R_0(\mu)\right)>0
\]
on that interval.  Hence $\partial_\mu c_{\mu_*,0}\ge0$ by continuity.
Because $\partial_\mu M_0(\mu_*)=0$,
\cite[Lemma~3.10]{LinZeng2022} implies
$\partial_\mu c_{\mu_*,0}\ne0$, and therefore
$\partial_\mu c_{\mu_*,0}>0$.  Continuity gives $\varepsilon>0$ for which
$\partial_\mu c_{\mu,0}>0$ on $(0,\mu_*+\varepsilon)$.  The definition of
$\mu_e$ now yields $\mu_*<\mu_e$.
\end{proof}

\begin{proposition}
\label{prop:slow-branch-full-range}
Assume Hypothesis~\ref{hyp:EOS} and let $I_\mu\Subset(0,\infty)$.  There exist
$s_0>0$ and a fixed ambient ball $D$.  Among solutions in a fixed
$C^1$-neighborhood of the spherical branch, there is a unique
axisymmetric, equatorially symmetric family
$q_{\mu,s}\in C^1(\overline D)$,
$(\mu,s)\in I_\mu\times[0,s_0)$, such that
\begin{equation}
 q_{\mu,s}=h(\mu)+V_{\mathscr G(q_{\mu,s})}(0)-V_{\mathscr G(q_{\mu,s})}
                 +\tfrac12sr^2,\qquad
 \rho_{\mu,s}=\mathscr G(q_{\mu,s}).                              \label{eq:fixed-omega-branch}
\end{equation}
The parameter maps satisfy
\begin{equation}
 q_{\mu,s}\in C^2_{\mu,s}(C^1(\overline D)),\qquad
 \widetilde\rho_{\mu,s}\in
 C^2_{\mu,s}(L^1(D))\cap C^1_{\mu,s}(C(\overline D)),    \label{eq:branch-parameter-regularity}
\end{equation}
where $\widetilde\rho$ denotes zero extension.  In particular,
\begin{equation}
 M(\mu,s)=\int\rho_{\mu,s}\,dx,\quad
 I(\mu,s)=\int r^2\rho_{\mu,s}\,dx,\quad
 c(\mu,s)=-h(\mu)-V_{\rho_{\mu,s}}(0)                    \label{eq:branch-scalars}
\end{equation}
are $C^2$.
Each $(\rho_{\mu,s},\sqrt{s}\,r e_\theta)$ is a regular rigidly rotating
equilibrium.  These conclusions hold throughout $4/3<\gamma_0<2$,
including $\gamma_0=5/3$.
\end{proposition}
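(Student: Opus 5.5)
The plan is to treat \eqref{eq:fixed-omega-branch} as a fixed-point equation $\mathcal F(q;\mu,s)=0$ on the closed subspace $Y\subset C^1(\overline D)$ of axisymmetric, $z$-even functions. Here
\[
 \mathcal F(q;\mu,s)=q-h(\mu)-V_{\mathscr G(q)}(0)+V_{\mathscr G(q)}-\tfrac12 sr^2,
\]
and we apply the implicit function theorem at $s=0$, where $q_{\mu,0}$ is the spherical enthalpy. Writing $q=h(\mu)+v$ with $v(0)=0$, the derivative $\partial_q\mathcal F$ at $(q_{\mu,0},\mu,0)$ is exactly the operator $\mathcal L_\mu$ of \eqref{eq:normalized-enthalpy-linearization}. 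Lemma~\ref{lem:spherical-density-spectrum} says $\mathcal L_\mu$ is an isomorphism on the $z$-even, origin-vanishing subspace, with inverse bounds uniform on $I_\mu$. This isomorphism gives the center normalization, so $\rho_{\mu,s}(0)=\mu$ automatically. Uniform invertibility on the compact set $I_\mu$ then yields a single $s_0$ and a single $C^1$-neighborhood in which the solution is unique. The ball $D$ is chosen to contain a fixed neighborhood of all spherical supports. For $q$ $C^1$-close to $q_{\mu,0}$, the set $\{q>0\}$ stays inside $D$, and it is a small perturbation of a ball because $\partial_rq_{\mu,0}(R_0)=-M_0/R_0^2<0$. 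The same transversality gives the Lipschitz, connected support required of a regular equilibrium. Rescaling by $\omega=\sqrt s$ turns \eqref{eq:fixed-omega-branch} into \eqref{eq:equilibrium} with $c=-h(\mu)-V_{\rho}(0)$, which is \eqref{eq:branch-scalars}.

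The real work is the regularity of the nonlinear map $q\mapsto V_{\mathscr G(q)}$ from $C^1(\overline D)$ to $C^1$. Near vacuum, $\mathscr G(y)\sim C y_+^{\alpha}$ with $\alpha=1/(\gamma_0-1)\in(1,3)$. Hence $\mathscr G\in C^1$, with $\mathscr G'(y)\sim y_+^{\alpha-1}$ continuous, so $q\mapsto\mathscr G(q)\in C(\overline D)$ is $C^1$ by the mean value theorem. Composing with the Newtonian potential, which is bounded from $L^\infty$ into $C^{1,\eta}$, makes $\mathcal F$ of class $C^1$. This is enough for the implicit function theorem and already gives $q_{\mu,s}\in C^1_{\mu,s}(C^1)$ and $\widetilde\rho\in C^1_{\mu,s}(C(\overline D))$.

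For second derivatives I expect the main obstacle: $\mathscr G''(y)\sim y_+^{\alpha-2}$ is unbounded when $\alpha<2$, i.e.\ when $\gamma_0>3/2$, which includes $5/3$. So $\mathcal F$ is not $C^2$ into $C(\overline D)$. I would instead show that $q\mapsto\mathscr G(q)$ is $C^2$ as a map from $C^1$ into $L^1(D)$, restricted to a neighborhood where $|\nabla q|\ge c>0$ on $\{|q|<\epsilon\}$. The key estimate is that $\int_D|q|^{\alpha-2}\mathbf 1_{0<q<\epsilon}\,dx$ is finite and continuous in $q$. I would prove this with the coarea formula, using $|\nabla q|\ge c$ near the level set $\{q=0\}$; it reduces to $\int_0^\epsilon t^{\alpha-2}\,dt<\infty$, valid since $\alpha>1$. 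The Hypothesis~\ref{hyp:EOS} remainder bounds with $j\le3$ control the difference from the pure power law in $\mathscr G''$ and in its modulus of continuity. Since $V:L^1\to$ pointwise values is not bounded, I would use that $V$ maps $L^1$ compactly supported data into $W^{1,p}$ for $p<3/2$. Together with second differentiation of the fixed-point identity through the bounded inverse $\mathcal L_\mu^{-1}$, upgraded by elliptic regularity, this puts the second derivatives of $q$ back into $C^1$.

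Concretely, I expect $\partial^2 q=-\mathcal L^{-1}\bigl[V_{\mathscr G''(q)\partial q\,\partial q}-(\cdot)(0)\bigr]$. The source $\mathscr G''(q)\partial q\,\partial q$ lies in $L^p$ for some $p>3$, because $\alpha-2>-1$ and $|\nabla q|\ge c$. By the coarea estimate, $t^{(\alpha-2)p}$ remains integrable for some $p>3$ only if $\alpha>5/3$, which holds since $\gamma_0<2$ gives $\alpha>1$ \emph{but} may fail near $\alpha=1$. If that fails, I would instead take $p>3/2$ and obtain $C^{0,\eta}$ control, then bootstrap. Once $q\in C^2_{\mu,s}(C^1)$ is established, $\widetilde\rho\in C^2(L^1)$ follows from the same integrability, and $M$, $I$, $c$ are $C^2$ as continuous functionals of $\widetilde\rho\in L^1$ and of $q$. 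The most delicate point, which I would check carefully, is the continuity of the second derivative in $L^1$ as the free boundary moves; I would handle it by dominated convergence of the coarea integrals.
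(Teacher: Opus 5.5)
Your architecture matches the paper's: the center-normalized equation on the axisymmetric, $z$-even, origin-vanishing subspace $Y\subset C^1(\overline D)$, uniform invertibility of $\mathcal L_\mu$ from Lemma~\ref{lem:spherical-density-spectrum}, and an $L^1$ collar estimate for $\mathscr G''(q)$ via transversality. The first-order part is fine. The gap is at the step you flag yourself.

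To get $q_{\mu,s}\in C^2_{\mu,s}(C^1(\overline D))$, you need the bilinear term $V_{\mathbf 1_{\{q>0\}}\mathscr G''(q)v_1v_2}$ to lie in $C^1(\overline D)$, with norm $\lesssim\|v_1\|_\infty\|v_2\|_\infty$ and continuous dependence on $q$. Without it, $q\mapsto V_{\mathscr G(q)}$ is not known to be $C^2$ into $C^1$, so the $C^2$ implicit function theorem is unavailable in $Y$. Your formula $\partial^2q=-\mathcal L^{-1}[\cdots]$ then presupposes what is to be proved: that $\partial^2q$ exists and that the bracket already lies in the $C^1$ space on which $\mathcal L^{-1}$ is bounded.

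Your $L^p$ route supplies this only if the source, which behaves like $d^{\alpha-2}$ near the free surface, lies in some $L^p$ with $p>3$. Near a transverse surface, $d^{\alpha-2}\in L^p$ iff $p<1/(2-\alpha)$. So you need $\alpha>5/3$, i.e.\ $\gamma_0<8/5$. This does not follow from $\gamma_0<2$. It fails on all of $8/5\le\gamma_0<2$, including the case $\gamma_0=5/3$ that the statement explicitly covers: there $\alpha=3/2$ and the source lies in $L^p$ only for $p<2$.

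The fallback with $3/2<p<2$ gives $W^{2,p}\subset C^{0,\eta}$, not $C^1$. No bootstrap can improve it, because the singularity comes from $\mathscr G''$ at the interface, not from missing regularity of $q$ or $\partial q$. Relatedly, you cannot obtain $c(\mu,s)=-h(\mu)-V_{\rho_{\mu,s}}(0)\in C^2$ by treating $c$ as a continuous functional of $\widetilde\rho\in L^1$. As you note, point evaluation of $V$ is not $L^1$-continuous.

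The missing ingredient is an anisotropic estimate that uses the geometry of the singular set rather than isotropic $L^p$ integrability. This is the collar bound \eqref{eq:interface-C1} in Lemma~\ref{lem:transverse-positive-part}.
\begin{itemize}
\item Work in graph charts $(y',t)$ with $t=q(y)$, which are uniformly bi-Lipschitz by transversality.
\item Bound $|\nabla V_f(x)|$ by $\int|f(y)|\,|x-y|^{-2}\,dy$.
\item Integrate over the two tangential variables first. This costs only a logarithm: $\int_{|y'|<C}(|y'|^2+|t-s|^2)^{-1}dy'\lesssim\log(2+1/|t-s|)$.
\item What remains is $\sup_s\int_0^\varepsilon t^{\alpha-2}\log(2+1/|t-s|)\,dt\lesssim\varepsilon^{\alpha-1}(1+|\log\varepsilon|)$, which is finite and $o(1)$ for \emph{every} $\alpha>1$.
\end{itemize}
Next, truncate the source to $\{q\ge\varepsilon\}$. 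The truncated source is bounded, so its potential is $C^1$. Letting $\varepsilon\downarrow0$ shows that the second-derivative potentials are uniform $C^1$ limits. Applying the same collar bound to $q$ and to $q_n\to q$ gives continuity in $q$.

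Hence $q\mapsto V_{\mathscr G(q)}$ is $C^2$ from the transverse neighborhood into $C^1(\overline D)$ for all $4/3<\gamma_0<2$, and $\mathcal F$ is $C^2$ into $Y$. The implicit function theorem then gives $q_{\mu,s}\in C^2_{\mu,s}(C^1)$ directly. The $L^1$ formulas for $\partial_{pp'}\widetilde\rho$ and the $C^2$ regularity of $M$, $I$, $c$ follow.
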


\begin{proposition}
\label{prop:slow-vacuum-tangents}
For the family of Proposition~\ref{prop:slow-branch-full-range}, put
$\alpha=1/(\gamma_0-1)$ and $B=B_1(0)$.  After decreasing $s_0$ if necessary,
the supports form a uniformly strictly convex $C^{3,\eta}$ family for
every $0<\eta<\min\{1,\alpha-1\}$.  There are uniformly bi-Lipschitz
support maps $\Psi_{\mu,s}:B\to\Omega_{\mu,s}$, continuous in
$W^{1,\infty}$, and the defining function $\delta_B(a)=1-|a|$ such that,
in a fixed boundary collar,
\begin{equation}
 q_{\mu,s}(\Psi_{\mu,s}(a))=A_{\mu,s}(a)\delta_B(a),\quad
 0<A_*\le A_{\mu,s}\le A^*,\qquad
 \rho_{\mu,s}\asymp d_{\mu,s}^{\alpha},\quad
 \frac1{h'(\rho_{\mu,s})}\asymp d_{\mu,s}^{\alpha-1}.     \label{eq:uniform-vacuum-factor}
\end{equation}
Here \(d_{\mu,s}(x)=\operatorname{dist}
(x,\partial\Omega_{\mu,s})\).
For $p\in\{\mu,s\}$, the Eulerian tangent
$f^p_{\mu,s}=\partial_p\widetilde\rho_{\mu,s}$ satisfies
\begin{equation}
 f^p_{\mu,s}=\mathbf1_{\{q_{\mu,s}>0\}}\mathscr G'(q_{\mu,s})
                  \partial_pq_{\mu,s},\qquad
 |f^p_{\mu,s}|\le C d_{\mu,s}^{\alpha-1},                 \label{eq:Eulerian-branch-tangent}
\end{equation}
and depends continuously on the parameters in the density energy space
under the unitary pullback
\begin{equation}
 (U_{\mu,s}f)(a)=
 \left[h'(\rho_{\mu,s}(\Psi_{\mu,s}(a)))
       |\det D\Psi_{\mu,s}(a)|\right]^{1/2}
 f(\Psi_{\mu,s}(a)).                                      \label{eq:density-unitary-pullback}
\end{equation}
\end{proposition}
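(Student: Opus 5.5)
The plan is to derive everything from the fixed-ambient-domain regularity of Proposition~\ref{prop:slow-branch-full-range}, namely $q_{\mu,s}\in C^2_{\mu,s}(C^1(\overline D))$, together with a bootstrap in H\"older spaces and the transversality of the spherical free surface.

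\emph{Step 1: interior regularity.} Hypothesis~\ref{hyp:EOS} gives $\mathscr G(y)\asymp y^\alpha$ and $\mathscr G'(y)\asymp y^{\alpha-1}$ as $y\downarrow0$, with $\alpha=1/(\gamma_0-1)>1$. Hence $\mathscr G\in C^{1,\eta}$ near the relevant range for $0<\eta<\min\{1,\alpha-1\}$. A uniform $C^1$ bound on $q_{\mu,s}$ then gives $\rho_{\mu,s}=\mathscr G(q_{\mu,s})\in C^{1,\eta}_c$ uniformly. Schauder estimates for the Newtonian potential put $V_{\rho_{\mu,s}}$ in $C^{3,\eta}$. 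By \eqref{eq:fixed-omega-branch}, $q_{\mu,s}$ is therefore bounded in $C^{3,\eta}(\overline D)$ uniformly in $(\mu,s)$. Interpolating this bound with $C^1$-continuity in $s$ gives $C^{2}$ (indeed $C^{3,\eta'}$, $\eta'<\eta$) convergence $q_{\mu,s}\to q_{\mu,0}$ as $s\to0$, uniformly on $I_\mu$.

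\emph{Step 2: free-surface geometry.} For the spherical stars, $\partial_r q_{\mu,0}(R_0(\mu))=-M_0(\mu)/R_0(\mu)^2<0$, uniformly on $I_\mu$. The $C^1$ closeness from Step~1 makes $x\cdot\nabla q_{\mu,s}<0$ in a fixed annular collar, after shrinking $s_0$. The implicit function theorem then writes $\partial\Omega_{\mu,s}$ as a radial graph $\{R_{\mu,s}(\theta)\theta\}$ with $R_{\mu,s}\in C^{3,\eta}(S^2)$ uniformly. Strict convexity follows from $C^2$ closeness to the round sphere, so the second fundamental form stays uniformly positive.

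Define $\Psi_{\mu,s}(a)=\chi(|a|)R_{\mu,s}(a/|a|)a+(1-\chi(|a|))R_0(\mu)a$, where $\chi$ is a cutoff equal to $1$ near $|a|=1$ and $0$ near the origin. This map is uniformly bi-Lipschitz and continuous in $W^{1,\infty}$, because $R_{\mu,s}$ depends continuously on parameters in $C^1$ (Step~1 and the implicit function theorem). Since $q\circ\Psi$ vanishes on $|a|=1$, Hadamard's lemma gives
\[
 A_{\mu,s}(a)=\int_0^1-\partial_{|a|}(q\circ\Psi)\bigl(a+t(1-|a|)a/|a|\bigr)\,dt,
\]
which is pinched between positive constants by transversality and is continuous in the parameters. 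Because $\delta_B\circ\Psi^{-1}\asymp d_{\mu,s}$, the Lane--Emden asymptotics of $\mathscr G$ and $\mathscr G'=1/h'(\mathscr G)$ yield the remaining relations in \eqref{eq:uniform-vacuum-factor}.

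\emph{Step 3: tangents and continuity.} Since $\mathscr G\in C^1(\R)$ with $\mathscr G'(0)=0$, the chain rule applies to the zero-extended $\widetilde\rho_{\mu,s}=\mathscr G(q_{\mu,s})$ with no boundary term. This gives the indicator formula in \eqref{eq:Eulerian-branch-tangent}. The bound $|f^p|\le Cd^{\alpha-1}$ follows from $\sup|\partial_pq|\le C$ and $\mathscr G'(q)\asymp q^{\alpha-1}\asymp d^{\alpha-1}$.

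For continuity under \eqref{eq:density-unitary-pullback}, the key identity is $h'(\mathscr G(y))\mathscr G'(y)=1$. It gives
\[
 U_{\mu,s}f^p_{\mu,s}=\bigl[\mathscr G'(A_{\mu,s}\delta_B)\,|\det D\Psi_{\mu,s}|\bigr]^{1/2}(\partial_pq_{\mu,s})\circ\Psi_{\mu,s}.
\]
Every factor on the right is uniformly bounded and converges a.e.\ on $B$ as the parameters converge. Here we use $C^1$-continuity of $\partial_pq$ (Proposition~\ref{prop:slow-branch-full-range}), $W^{1,\infty}$ continuity of $\Psi$ (passing to subsequences for a.e.\ convergence of $D\Psi$), and continuity of $A$. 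Dominated convergence then gives $L^2(B)$ continuity.

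\emph{Main obstacle.} I expect the hardest part to be obtaining parameter continuity of $\Psi_{\mu,s}$ and $A_{\mu,s}$ at the level needed, namely $W^{1,\infty}$ and a.e.\ convergence of $D\Psi$, from only $C^1$ parameter regularity of $q$. If a.e.\ convergence of $\det D\Psi$ proves delicate, the fallback is to use the $C^{3,\eta}$ bounds to upgrade to $C^1$-in-parameter continuity of $R_{\mu,s}$ by compactness, via Arzel\`a--Ascoli plus uniqueness of limits.
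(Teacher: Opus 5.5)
Your proposal is correct and follows essentially the same route as the paper. Both arguments use a Schauder bootstrap to uniform $C^{3,\eta}$ bounds plus interpolation, radial-graph support maps, the fundamental theorem of calculus along rays for $A_{\mu,s}$, and the identity $h'(\mathscr G(y))\mathscr G'(y)=1$ to write $U_{\mu,s}f^p_{\mu,s}$ as a product of parameter-continuous factors.

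Two small corrections:
\begin{enumerate}
\item The bootstrap must start from boundedness of $\widetilde\rho_{\mu,s}$, which gives $q_{\mu,s}\in C^{1,\theta}$. A bare $C^1$ bound on $q_{\mu,s}$ does not make $\mathscr G(q_{\mu,s})$ of class $C^{1,\eta}$.
\item Your flagged obstacle is not one. $W^{1,\infty}$ continuity of $\Psi_{\mu,s}$ already gives uniform convergence of $\det D\Psi_{\mu,s}$. Hence every factor converges uniformly on $B$, and no subsequence or dominated-convergence argument is needed.
\end{enumerate}
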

Both propositions are proved in Appendix~\ref{app:slow-branch}.  The second
records the boundary estimates and identifications used in the spectral
and inviscid comparisons.

We next reparametrize this family by total angular momentum.

\begin{proposition}
\label{prop:fixed-J-branch}
For the branch of Proposition~\ref{prop:slow-branch-full-range}, there is
$\kappa_0>0$
and a unique $C^2$
function $\omega=\omega(\mu,\kappa)$ on
$I_\mu\times(-\kappa_0,\kappa_0)$ such that
\begin{equation}
 \omega(\mu,\kappa)I(\rho_{\mu,\omega(\mu,\kappa)^2})=\kappa.
 \label{eq:J-scalar}
\end{equation}
Consequently
\begin{equation}
 \rho_{\mu,\kappa}:=\rho_{\mu,\omega(\mu,\kappa)^2},
 \qquad c_{\mu,\kappa}:=c_{\mu,\omega(\mu,\kappa)^2},
 \qquad v_{\mu,\kappa}=\omega(\mu,\kappa)r e_\theta
 \label{eq:fixed-J-family}
\end{equation}
has total angular momentum exactly $J=\kappa$.  We write
\begin{equation}
 \Psi_{\mu,\kappa}:=\Psi_{\mu,\omega(\mu,\kappa)^2}.       \label{eq:fixed-J-support-map}
\end{equation}
Uniformly for
$\mu\in I_\mu$,
\begin{align}
 \omega(\mu,\kappa)&=\frac{\kappa}{I(\rho_\mu)}+O(\kappa^3),
 \label{eq:omega-expansion}\\
 \|\widetilde\rho_{\mu,\kappa}-\widetilde\rho_\mu\|_{C(\overline D)}
   &=O(\kappa^2),
 \qquad
 U_{\mu,\kappa}\partial_\mu\widetilde\rho_{\mu,\kappa}
 \longrightarrow U_{\mu,0}\partial_\mu\widetilde\rho_{\mu,0}
 \quad(\kappa\to0).                                    \label{eq:rho-expansion}
\end{align}
Moreover $\omega(\mu,-\kappa)=-\omega(\mu,\kappa)$.
\end{proposition}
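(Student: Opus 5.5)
The approach is the implicit function theorem applied to the scalar map
\[
 F(\mu,\omega,\kappa):=\omega\,I(\mu,\omega^2)-\kappa,
 \qquad I(\mu,s)=\int r^2\rho_{\mu,s}\,dx,
\]
where $I(\mu,s)$ is $C^2$ in $(\mu,s)$ by Proposition~\ref{prop:slow-branch-full-range}. Since $s=\omega^2$ is smooth in $\omega$, $F$ is $C^2$ on $I_\mu\times(-\sqrt{s_0},\sqrt{s_0})\times\R$. At $\omega=\kappa=0$ we have $F=0$ and $\partial_\omega F(\mu,0,0)=I(\mu,0)=I(\rho_\mu)>0$. Moreover $I(\rho_\mu)$ is continuous and bounded below on the compact interval $I_\mu$, because the spherical stars are nonzero with supports containing a fixed ball.

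To avoid patching local solutions, I would use the global form of the argument. For $|\omega|$ small we have
\[
 \partial_\omega F=I(\mu,\omega^2)+2\omega^2\partial_sI(\mu,\omega^2)\ge\tfrac12\min_{I_\mu}I(\rho_\mu)>0
\]
uniformly in $\mu$, so $\omega\mapsto\omega I(\mu,\omega^2)$ is strictly increasing on $[-\omega_0,\omega_0]$. Its range contains $(-\kappa_0,\kappa_0)$ for some uniform $\kappa_0$. This yields a unique solution $\omega(\mu,\kappa)$ with $|\omega|\le\omega_0$, and the implicit function theorem gives $C^2$ regularity jointly in $(\mu,\kappa)$. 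Uniqueness is meant within this neighborhood, matching the local uniqueness of the branch in Proposition~\ref{prop:slow-branch-full-range}.

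Oddness follows directly. The function $\omega\mapsto\omega I(\mu,\omega^2)$ is odd, so $-\omega(\mu,\kappa)$ solves the equation at $-\kappa$, and uniqueness gives $\omega(\mu,-\kappa)=-\omega(\mu,\kappa)$. Since $\rho_{\mu,\kappa}$ depends only on $\omega^2$, the fixed-$J$ family is well defined. Its total angular momentum is
\[
 \int\rho\, r\cdot\omega r\,dx=\omega I=\kappa .
\]
Regularity of the equilibria is inherited from Proposition~\ref{prop:slow-branch-full-range}.

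For the expansions, write $\omega=\kappa/I(\mu,\omega^2)$. Then $\omega=O(\kappa)$ uniformly, so $\omega^2=O(\kappa^2)$. The $C^1$ dependence of $I$ on $s$ gives $I(\mu,\omega^2)=I(\rho_\mu)+O(\kappa^2)$, and hence $\omega=\kappa/I(\rho_\mu)+O(\kappa^3)$. The $C^1_{\mu,s}(C(\overline D))$ regularity in \eqref{eq:branch-parameter-regularity} gives
\[
 \|\widetilde\rho_{\mu,s}-\widetilde\rho_{\mu,0}\|_{C}\le Cs ,
\]
and with $s=O(\kappa^2)$ this is the first part of \eqref{eq:rho-expansion}.

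The main point needing care is the convergence of the $\mu$-tangent in the weighted density space. By the chain rule, $\partial_\mu\widetilde\rho_{\mu,\kappa}=f^\mu_{\mu,s}+f^s_{\mu,s}\,\partial_\mu(\omega^2)$ at $s=\omega(\mu,\kappa)^2$. Differentiating the relation $\omega I(\mu,\omega^2)=\kappa$ in $\mu$ shows $\partial_\mu(\omega^2)=2\omega\,\partial_\mu\omega=O(\kappa^2)$. Proposition~\ref{prop:slow-vacuum-tangents} provides continuity in the parameters of $U_{\mu,s}f^p_{\mu,s}$ in the density energy space, and hence uniform boundedness on compact parameter sets. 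Therefore
\[
 U_{\mu,\kappa}\partial_\mu\widetilde\rho_{\mu,\kappa}=U_{\mu,s}f^\mu_{\mu,s}+O(\kappa^2)\longrightarrow U_{\mu,0}f^\mu_{\mu,0},
\]
which identifies the limit as $U_{\mu,0}\partial_\mu\widetilde\rho_{\mu,0}$. I expect the delicate point to be only confirming that the pullback $U_{\mu,\kappa}$, defined via $\Psi_{\mu,\omega^2}$, is the one for which Proposition~\ref{prop:slow-vacuum-tangents} asserts continuity. That proposition covers this directly.
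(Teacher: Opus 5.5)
Your proposal is correct and follows essentially the same route as the paper: the implicit function theorem for $F(\mu,\omega,\kappa)=\omega I(\mu,\omega^2)-\kappa$ with $\partial_\omega F(\mu,0,0)=I(\rho_\mu)\ge I_*>0$, oddness from uniqueness, the expansions from the $C^1$/$C^2$ parameter regularity, and the chain rule $\partial_\mu\widetilde\rho_{\mu,\kappa}=f^\mu_{\mu,s}+2\omega\,\partial_\mu\omega\,f^s_{\mu,s}$ with $\omega\,\partial_\mu\omega=O(\kappa^2)$, combined with the continuity statement of Proposition~\ref{prop:slow-vacuum-tangents}. Your monotonicity argument on $|\omega|\le\omega_0$ simply makes explicit the uniform uniqueness that the paper obtains from a parameter-dependent implicit function theorem.
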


\begin{proof}
Write $I(\mu,s)=I(\rho_{\mu,s})$ and
\begin{equation}
 F(\mu,\omega,\kappa)=\omega I(\mu,\omega^2)-\kappa.
\end{equation}
The moment of inertia is strictly positive and continuous on the compact
interval.  Therefore there is $I_*>0$ such that
\begin{equation}
 \partial_\omega F(\mu,0,0)=I(\mu,0)\ge I_*>0.
\end{equation}
The uniform parameter-dependent implicit-function theorem yields the unique
$C^2$ solution of \eqref{eq:J-scalar}.  Since
$I(\mu,\omega^2)=I(\mu,0)+O(\omega^2)$, first $\omega=O(\kappa)$ and then
\eqref{eq:omega-expansion} follow.  Oddness follows from uniqueness.  The
fundamental theorem of calculus in $s$ and the bound on $f^s_{\mu,s}$ give
the first estimate in \eqref{eq:rho-expansion}.  At fixed $\kappa$, the
Eulerian chain rule gives
\begin{equation}
 \partial_\mu\widetilde\rho_{\mu,\kappa}
 =f^\mu_{\mu,s}+2\omega\,\partial_\mu\omega\,f^s_{\mu,s},
 \qquad s=\omega^2.                                      \label{eq:fixed-J-tangent-chain-rule}
\end{equation}
The second term tends to zero because
$\omega\partial_\mu\omega=O(\kappa^2)$.  Indeed, differentiating
\eqref{eq:J-scalar} at fixed $\kappa$ gives
\begin{equation}
 \partial_\mu\omega
 =-\frac{\omega\,\partial_\mu I(\mu,\omega^2)}
         {I(\mu,\omega^2)+2\omega^2\partial_sI(\mu,\omega^2)}
 =O(\kappa).                                             \label{eq:omega-mu-estimate}
\end{equation}
The first term converges by Proposition~\ref{prop:slow-vacuum-tangents}.
This proves the second assertion in \eqref{eq:rho-expansion}.  The same
chain rule and the $C^1$ dependence of the zero-extended density with values
in $C(\overline D)$ give uniform convergence of these Eulerian tangents on
the fixed ball $D$.
\end{proof}

Set
\begin{equation}
 M_\kappa(\mu)=\int_{\mathbb{R}^3}\rho_{\mu,\kappa}\,dx.
\end{equation}

\begin{proposition}
\label{prop:mass-maximum}
Suppose \eqref{eq:nondegenerate-maximum} holds, and let
$I_\mu=[\mu_-,\mu_+]\Subset(0,\mu_e)$ contain $\mu_*$ in its
interior and no other zero of $\partial_\mu M_0$.  For all
sufficiently small $\abs\kappa$, there is a unique critical point
$\mu_*^\kappa$ of $M_\kappa$ in $I_\mu$; it is a strict local maximum and
is the continuation of the first spherical mass maximum.  Moreover
\begin{equation}
 \mu_*^\kappa=\mu_*+O(\kappa^2),\qquad
 f_\kappa:=\left.\partial_\mu\rho_{\mu,\kappa}
             \right|_{\mu=\mu_*^\kappa}
 \longrightarrow
 f_*:=\left.\partial_\mu\rho_\mu\right|_{\mu=\mu_*}
 \label{eq:tangent-convergence}
\end{equation}
in the pulled-back density space and uniformly on the fixed ambient ball
after zero extension.
\end{proposition}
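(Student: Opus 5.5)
The plan is a perturbative argument: show that $\partial_\mu M_\kappa$ converges in $C^1(I_\mu)$ to $\partial_\mu M_0$ as $\kappa\to0$, use the oddness $\omega(\mu,-\kappa)=-\omega(\mu,\kappa)$ to see that everything depends on $\kappa^2$, and then apply the implicit function theorem near the nondegenerate zero $\mu_*$ of $\partial_\mu M_0$.

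\emph{Step 1: regularity of $M_\kappa$.} Write $M_\kappa(\mu)=M(\mu,\omega(\mu,\kappa)^2)$ with $M(\mu,s)$ as in \eqref{eq:branch-scalars}. By Proposition~\ref{prop:slow-branch-full-range}, $M$ is $C^2$ in $(\mu,s)$. By Proposition~\ref{prop:fixed-J-branch}, $\omega$ is $C^2$, so $s=\omega(\mu,\kappa)^2$ is a $C^2$ function of $(\mu,\kappa)$. Hence $(\mu,\kappa)\mapsto M_\kappa(\mu)$ is $C^2$ on $I_\mu\times(-\kappa_0,\kappa_0)$. Moreover
\[
 \partial_\mu M_\kappa(\mu)=\partial_\mu M(\mu,s)+2\omega\,\partial_\mu\omega\,\partial_s M(\mu,s).
\]
The first term is $\partial_\mu M_0(\mu)+O(s)=\partial_\mu M_0(\mu)+O(\kappa^2)$. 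The second term is $O(\kappa^2)$ by \eqref{eq:omega-mu-estimate}. So
\[
 \partial_\mu M_\kappa=\partial_\mu M_0+O(\kappa^2)
\]
uniformly on $I_\mu$. For the second derivative, $\partial_\mu^2M_\kappa$ is continuous in $(\mu,\kappa)$ on the compact set $I_\mu\times[-\kappa_1,\kappa_1]$, hence uniformly continuous, so $\partial_\mu^2M_\kappa\to\partial_\mu^2M_0$ uniformly as $\kappa\to0$. This last step is where I expect the main difficulty: it needs honest $C^2$ control of $M(\mu,s)$ and of $\omega$, including the term $\partial_\mu(\omega\,\partial_\mu\omega)$. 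That control is supplied only by Proposition~\ref{prop:slow-branch-full-range} (through $\widetilde\rho\in C^2(L^1)$) together with Proposition~\ref{prop:fixed-J-branch}, and I would check it carefully there.

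\emph{Step 2: existence, uniqueness and location of the critical point.} By \eqref{eq:nondegenerate-maximum} we have $\partial_\mu^2M_0(\mu_*)<0$, so there is $\delta>0$ with $\partial_\mu^2M_0\le -c<0$ on $[\mu_*-\delta,\mu_*+\delta]$. Since $\mu_*$ is the only zero of $\partial_\mu M_0$ in $I_\mu$, we also have $|\partial_\mu M_0|\ge c'>0$ on $I_\mu\setminus(\mu_*-\delta,\mu_*+\delta)$. For $|\kappa|$ small, Step 1 gives three facts.
\begin{enumerate}
\item $\partial_\mu M_\kappa\ne0$ outside the $\delta$-window, and it has the same sign there as $\partial_\mu M_0$.
\item $\partial_\mu^2 M_\kappa\le -c/2$ inside the window, so $\partial_\mu M_\kappa$ is strictly decreasing there.
\item $\partial_\mu M_\kappa$ changes sign across the window.
\end{enumerate}
Hence $\partial_\mu M_\kappa$ has exactly one zero $\mu_*^\kappa$ in $I_\mu$, and it is a strict maximum because $\partial_\mu^2M_\kappa(\mu_*^\kappa)<0$. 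By the mean value theorem,
\[
 |\mu_*^\kappa-\mu_*|\le \frac{2}{c}\,\bigl|\partial_\mu M_\kappa(\mu_*)-\partial_\mu M_0(\mu_*)\bigr|=O(\kappa^2).
\]
Continuity of $\kappa\mapsto\mu_*^\kappa$, with $\mu_*^0=\mu_*$, identifies $\mu_*^\kappa$ as the continuation of the first spherical mass maximum.

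\emph{Step 3: convergence of the tangent.} Write
\[
 U_{\mu_*^\kappa,\kappa}f_\kappa-U_{\mu_*,0}f_*
 =\bigl(U_{\mu_*^\kappa,\kappa}\partial_\mu\widetilde\rho_{\mu,\kappa}|_{\mu_*^\kappa}-U_{\mu_*^\kappa,0}\partial_\mu\widetilde\rho_{\mu,0}|_{\mu_*^\kappa}\bigr)
 +\bigl(U_{\mu_*^\kappa,0}f^\mu_{\mu_*^\kappa,0}-U_{\mu_*,0}f^\mu_{\mu_*,0}\bigr).
\]
For the first bracket I use \eqref{eq:fixed-J-tangent-chain-rule}. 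The term $2\omega\,\partial_\mu\omega\,f^s$ is $O(\kappa^2)$ in the density space, by the bound $|f^s|\le Cd^{\alpha-1}$ and the integrability of $d^{2(\alpha-1)}/h'$. The remaining $f^\mu$ term converges by the joint parameter continuity in Proposition~\ref{prop:slow-vacuum-tangents}, which is uniform on compacts. The second bracket tends to zero by the same continuity, because $\mu_*^\kappa\to\mu_*$. Uniform convergence on $D$ after zero extension follows in the same way from $\widetilde\rho\in C^1_{\mu,s}(C(\overline D))$.
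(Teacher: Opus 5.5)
Your proposal is correct and follows essentially the same route as the paper: $C^2$ convergence $M_\kappa\to M_0$ by composing with $s=\omega^2$, sign preservation away from $\mu_*$ combined with strict concavity near it, the chain-rule bound $\partial_\mu M_\kappa=\partial_\mu M_0+O(\kappa^2)$ from \eqref{eq:omega-mu-estimate} together with the mean value theorem, and tangent convergence from \eqref{eq:fixed-J-tangent-chain-rule} and Proposition~\ref{prop:slow-vacuum-tangents}. There is one small slip in Step 3: the uniform $X$-norm bound on $f^s$ comes from $h'(\rho_{\mu,s})|f^s|^2\lesssim h'(\rho_{\mu,s})\,d^{2(\alpha-1)}\asymp d^{\alpha-1}$, because the weight is $h'$ and not $1/h'$, exactly as recorded in \eqref{eq:tangent-dominating-weight}.
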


\begin{proof}
Proposition~\ref{prop:fixed-J-branch} and the scalar $C^2$ dependence of
$M(\mu,s)$ give $M_\kappa\to M_0$ in $C^2(I_\mu)$.  Indeed,
$s(\mu,\kappa)=\omega(\mu,\kappa)^2\to0$ in $C^2(I_\mu)$, since
$\omega$ is $C^2$ and $\omega(\mu,0)\equiv0$.
Apply the implicit-function theorem to
$\partial_\mu M_\kappa(\mu)=0$ at $(\mu_*,0)$.  Uniform convergence
preserves the sign of $\partial_\mu M_\kappa$ on the two compact pieces of
$I_\mu$ outside a small neighborhood of $\mu_*$, while
$\partial_\mu^2M_\kappa<0$ throughout that neighborhood gives the unique
sign change there.  For the stated displacement rate, use
\eqref{eq:omega-mu-estimate} and the bounded mixed derivative
$\partial_s\partial_\mu M$ to obtain, uniformly on $I_\mu$,
\begin{align*}
 \partial_\mu M_\kappa(\mu)
 &=\partial_\mu M(\mu,s)
    +2\omega\partial_\mu\omega\,\partial_s M(\mu,s)\\
 &=\partial_\mu M_0(\mu)+O(\kappa^2),\qquad s=\omega^2.
\end{align*}
At $\mu=\mu_*^\kappa$, the mean value theorem and
$\partial_\mu^2M_0(\mu_*)<0$ imply
$|\mu_*^\kappa-\mu_*|\le C\kappa^2$.
The tangent convergence follows from
\eqref{eq:fixed-J-tangent-chain-rule}, the $C(\overline D)$ parameter
continuity, and Proposition~\ref{prop:slow-vacuum-tangents}.
\end{proof}

The condition $\partial_\mu^2M_0(\mu_*)<0$ is a scalar nondegeneracy
assumption. Low- and high-density polytropic asymptotics alone do not
exclude a flat first maximum.

\subsection{Asymptotically polytropic examples}

The scalar crossing condition \eqref{eq:nondegenerate-maximum} is the
remaining input for the turning-point and strict-comparison theorems.  The
following equations of state contain the low-density exponent $5/3$ and
illustrate two high-density regimes in which mass extrema occur.

\begin{example}
\label{ex:asymptotic-polytropes}
Let $K,\rho_c,a>0$ and
\begin{equation}
 P(\rho)=K\rho^{\gamma_0}
 \left(1+(\rho/\rho_c)^a\right)^{(\gamma_\infty-\gamma_0)/a},
 \quad \frac43<\gamma_0<2,\qquad
 1<\gamma_\infty<\frac43,\qquad \gamma_\infty\ne\frac65. \label{eq:example-EOS-family}
\end{equation}
Writing $x=(\rho/\rho_c)^a$, one has
\begin{equation}
 \frac{\rho P'(\rho)}{P(\rho)}
 =\gamma_0+(\gamma_\infty-\gamma_0)\frac{x}{1+x}
 =\frac{\gamma_0+\gamma_\infty x}{1+x}>1.
\end{equation}
Moreover,
\begin{align}
 P(\rho)&=K\rho^{\gamma_0}(1+O(\rho^a))
       &&(\rho\downarrow0),\notag\\
 P(\rho)&=K\rho_c^{\gamma_0-\gamma_\infty}
          \rho^{\gamma_\infty}(1+O(\rho^{-a}))
       &&(\rho\to\infty),
\end{align}
with the corresponding differentiated asymptotics.  Thus the pressure-law
part of Hypothesis~\ref{hyp:EOS} holds.  For instance,
\begin{align}
 P_1(\rho)&=K\rho^{5/3}(1+\rho/\rho_c)^{-5/12},
 &\gamma_\infty&=\frac54,                                  \label{eq:example-P1}\\
 P_2(\rho)&=K\rho^{5/3}(1+\rho/\rho_c)^{-1/2},
 &\gamma_\infty&=\frac76.                                  \label{eq:example-P2}
\end{align}
For $P_1$, $\gamma_\infty=5/4$ and the high-density polytropic index is
$n_\infty=4$; the asymptotics give
$M_0(\mu)=C\mu^{-1/8}(1+o(1))$, whereas
$\partial_\mu M_0(\mu)>0$ for small $\mu$.
Hence the mass curve has a finite sign-changing maximum.  For $P_2$,
$\gamma_\infty=7/6$ and $n_\infty=6>5$; the high-density mass--radius
curve spirals and consequently has infinitely many mass extrema.  See
\cite[Section~3.6, especially (3.87)--(3.89) and
Corollary~3.3]{LinZeng2022}; compare
\cite[Example~1, p.~1753]{LinWang2023}.

For either law, assume that the first critical point is a nondegenerate
maximum, namely \eqref{eq:nondegenerate-maximum} holds.  This is a
one-dimensional nondegeneracy condition on the Lane--Emden solution; it may
be established analytically or by validated numerical integration, but is
not implied by the two asymptotic exponents.  Under this condition,
\cite[Theorem~1.2 and Lemmas~3.10 and 3.14]{LinZeng2022} and the results
below verify the remaining inputs.  Hence, on $I_\mu$, the viscous unstable
count is zero on the increasing side of $M_\kappa$ and one on its decreasing
side.  For every sufficiently small $\kappa\ne0$, the corresponding
inviscid star remains spectrally stable on a nonempty interval beyond the
fixed-$J$ mass maximum.  The nonhomology condition is automatic because
$\gamma_0=5/3>4/3$.
\end{example}

Pure polytropes with $\gamma_0=5/3$ satisfy the analytic and slow-branch
parts of the theory, but they are not turning-point examples:
$M_0(\mu)=C\mu^{(3\gamma_0-4)/2}$ is monotone.  Likewise, the
differentially rotating pure-polytropic example in
\cite[Example~2, p.~1754]{LinWang2023} fixes a shellwise angular-momentum
distribution and belongs to the inviscid problem, not to the rigidly
rotating fixed-$J$ family considered here.

\subsection{Second variation at fixed \texorpdfstring{$M$ and $J$}{M and J}}

Fix any regular rigidly rotating equilibrium satisfying
Hypothesis~\ref{hyp:EOS}, and write \(I=I(\bar\rho)\).
The gravitational convention $V_q=-\abs{x}^{-1}*q$ gives
\begin{equation}
 Q_0[q]=\int h'(\bar\rho)|q|^2dx+\int V_q\overline q\,dx,
 \qquad Q_J[q]=Q_0[q]+\frac{\omega^2}{I}
                    \left|\int r^2qdx\right|^2.       \label{eq:Q0-QJ}
\end{equation}

\begin{lemma}
\label{lem:second-variation}
Let $\mathscr F_{c,\omega}=E+cM-\omega J$.  At
$(\bar\rho,\omega r e_\theta)$,
\begin{equation}
 n^-\!\left(D^2\mathscr F_{c,\omega}
      \big|_{\{\delta M=\delta J=0\}_{\rm ax}}\right)
 =n^-\!\left(Q_J\big|_{X_0}\right).                 \label{eq:hessian-index}
\end{equation}
\end{lemma}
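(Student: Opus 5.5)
We compute the second variation directly and then minimize over the velocity at fixed density. For a real perturbation $(q,w)$ with $q$ supported inside the star, expand $\mathscr F_{c,\omega}(\bar\rho+\epsilon q,\bar v+\epsilon w)$ to second order with $\bar v=\omega re_\theta$.

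The kinetic term $\tfrac12\rho|v|^2$ contributes $\tfrac12\bar\rho|w|^2+q\,\bar v\cdot w$. The internal energy contributes $\tfrac12h'(\bar\rho)q^2$, and gravity contributes $\tfrac12 V_q q$. The term $-\omega J$ contributes $-\omega q\, r w_\theta$. Since $\bar v\cdot w=\omega r w_\theta$, the mixed terms cancel, and
\[
 \tfrac12D^2\mathscr F[(q,w)]=\tfrac12\Bigl(\|w\|_H^2+Q_0[q]\Bigr).
\]
The constraints are $\delta M=\int q\,dx=0$ and $\delta J=\omega\int r^2q\,dx+(w,q_{\rm rot})_H=0$, which is the covector $\mathfrak j$ of \eqref{eq:J-covector}. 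The first-order terms vanish by \eqref{eq:equilibrium}. I would do this on smooth real perturbations with interior support and then extend by continuity and Hermitian complexification, as stipulated after \eqref{eq:QJ-intro}.

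Fix $q\in X_0$. The velocity-dependent part is the positive definite form $\|w\|_H^2$, restricted to the affine hyperplane $(w,q_{\rm rot})_H=-\omega\mathfrak r(q)$. Decompose $w=w_\perp+\beta q_{\rm rot}$ with $w_\perp\perp_H q_{\rm rot}$. The constraint forces $\beta=-\omega\mathfrak r(q)/I$, and then
\[
 \|w\|_H^2=\|w_\perp\|_H^2+\frac{\omega^2}{I}|\mathfrak r(q)|^2 .
\]
Hence the constrained Hessian is congruent to the orthogonal sum $Q_J[q]\oplus\|w_\perp\|_H^2$ under the linear bijection
\[
 (q,w)\mapsto\bigl(q,\;w_\perp\bigr),\qquad w=w_\perp-\frac{\omega}{I}\mathfrak r(q)\,q_{\rm rot},
\]
from the constraint space onto $X_0\times\{q_{\rm rot}\}^{\perp_H}$. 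This is the same Routh map as in \eqref{eq:Routh-transform}, and it is bounded because of the bound \eqref{eq:Q-bounded} on $\mathfrak r$.

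Morse indices are invariant under bounded linear bijections. Adding the positive definite summand $\|w_\perp\|^2$ does not change $n^-$: a negative subspace projects injectively onto the $q$-component, and conversely $q$-negative subspaces lift with $w_\perp=0$. This gives \eqref{eq:hessian-index}.

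The main obstacle is functional-analytic rather than algebraic. One has to specify the space on which the Hessian of $\mathscr F$ is taken, namely the axisymmetric energy space $X\times H$. One also has to check that the second variation, first computed for smooth interior-supported densities, extends continuously to it. For that I would use boundedness of $Q_0$ on $X$, from Lemma~\ref{lem:gravity-compact} and $1/h'\in L^\infty$, and density of smooth compactly supported functions in the weighted spaces. Negative-definite finite Gram matrices are stable under such approximation, so the index computed on smooth perturbations equals the index of the extended form.
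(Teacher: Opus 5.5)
Your proposal is correct and follows essentially the same route as the paper: the mixed kinetic and angular-momentum terms cancel to give $Q_0[q]+\|w\|_H^2$, the linearized $J$-constraint fixes the $q_{\rm rot}$-component of $w$ so that the form becomes $Q_J[q]+\|w_\perp\|_H^2$, and invariance of inertia under the bounded isomorphism $(q,w)\mapsto(q,w_\perp)$ onto $X_0\times\{q_{\rm rot}\}^{\perp_H}$ completes the proof. Your explicit check that the positive summand $\|w_\perp\|_H^2$ contributes nothing to $n^-$, and your remark on extending from smooth interior-supported perturbations, only spell out steps that the paper leaves implicit.
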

\begin{proof}
Recall that
\begin{equation}
 E(\rho,v)=\int_{\mathbb R^3}
 \left\{\frac12\rho|v|^2+\Phi(\rho)
                 +\frac12\rho V_\rho\right\}dx,
 \qquad \Phi''(\rho)=h'(\rho).                          \label{eq:energy-for-Hessian}
\end{equation}
At $(\bar\rho,\omega r e_\theta)$, the mixed terms in $D^2E$ and
$-\omega D^2J$ cancel.  Therefore, for a real perturbation $(q,w)$,
\begin{equation}
 D^2\mathscr F_{c,\omega}[(q,w)]
 =Q_0[q]+\|w\|_H^2.                                      \label{eq:unreduced-Hessian}
\end{equation}
Indeed, the mixed kinetic term is
$2\omega\int qrw_\theta\,dx$, while
$D^2J[(q,w)]=2\int qrw_\theta\,dx$.

The linearized constraints are
\begin{equation}
 \int q\,dx=0,
 \qquad \omega\int r^2q\,dx+(w,q_{\rm rot})_H=0.                         \label{eq:linearized-MJ-constraints}
\end{equation}
For $q\in X_0$, set
\begin{equation}
 \beta(q)=\omega\int r^2q\,dx,
 \qquad w_q=-\frac{\beta(q)}I q_{\rm rot}.
\end{equation}
Every velocity satisfying the second constraint in
\eqref{eq:linearized-MJ-constraints} has a unique decomposition
\begin{equation}
 w=w_q+w_\perp,
 \qquad (w_\perp,q_{\rm rot})_H=0.                       \label{eq:Hessian-square-completion}
\end{equation}
Since $I=\|q_{\rm rot}\|_H^2$ and $w_q\perp_Hw_\perp$,
\begin{align}
 D^2\mathscr F_{c,\omega}[(q,w)]
 &=Q_0[q]+\frac{\omega^2}{I}
       \left|\int r^2q\,dx\right|^2+\|w_\perp\|_H^2 \notag\\
 &=Q_J[q]+\|w_\perp\|_H^2.                              \label{eq:exact-Hessian-splitting}
\end{align}
The map $(q,w)\mapsto(q,w_\perp)$ is a bounded linear isomorphism from the
axisymmetric space $\{\delta M=\delta J=0\}_{\rm ax}$ onto
$X_0\times\{q_{\rm rot}\}^{\perp_H}$; boundedness follows from
Cauchy--Schwarz in $X$ for the functional $q\mapsto\int r^2q\,dx$.
Invariance of inertia under bounded linear isomorphisms and
\eqref{eq:exact-Hessian-splitting} prove \eqref{eq:hessian-index}.  The
complex statement is obtained by Hermitian complexification.
\end{proof}

\subsection{The branch tangent}
\begin{lemma}
\label{lem:branch-tangent}
Let
\begin{equation}
 f_{\mu,\kappa}:=\partial_\mu\widetilde\rho_{\mu,\kappa},
                                                               \label{eq:fixed-J-Eulerian-tangent}
\end{equation}
where the derivative of the Eulerian zero extension is taken at fixed
physical position.  Differentiation along the branch at fixed $J=\kappa$
gives
\begin{equation}
 L_Jf_{\mu,\kappa}
 =-(\partial_\mu c_{\mu,\kappa})\,\mathbf 1,
 \qquad
 Q_J[f_{\mu,\kappa}]
 =-(\partial_\mu c_{\mu,\kappa})
      \partial_\mu M_\kappa(\mu),                         \label{eq:tangent-identity}
\end{equation}
where $X^*$ is the anti-dual and $\mathbf1\in X^*$ is defined by
$\langle\mathbf1,q\rangle=\int\overline q\,dx$.
\end{lemma}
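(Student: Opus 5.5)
We differentiate the equilibrium equation along the fixed-$J$ branch and then pair the result with the tangent itself. On $\Omega_{\mu,\kappa}$ the branch satisfies
\[
 h(\rho_{\mu,\kappa})+V_{\rho_{\mu,\kappa}}-\tfrac12\omega(\mu,\kappa)^2r^2+c_{\mu,\kappa}=0 .
\]
Formally, differentiating in $\mu$ at fixed physical $x$ in the interior gives
\[
 h'(\bar\rho)f+V_f-\omega\,\partial_\mu\omega\,r^2+\partial_\mu c=0 ,
\]
with $f=f_{\mu,\kappa}$. The key algebraic step is to rewrite the $\partial_\mu\omega$ term through the rank-one part of $L_J$. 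Differentiating $\omega I(\rho_{\mu,\kappa})=\kappa$ at fixed $\kappa$ gives $\partial_\mu\omega\,I+\omega\int r^2f\,dx=0$. Hence
\[
 -\omega\,\partial_\mu\omega\,r^2=\frac{\omega^2}{I}\Bigl(\int r^2f\,dx\Bigr)r^2 ,
\]
and this is exactly the function representing $\frac{\omega^2}{I}\mathfrak r^*\mathfrak r f$. So the differentiated equation reads $L_0f+\frac{\omega^2}{I}\mathfrak r(f)r^2=-\partial_\mu c$ pointwise a.e.\ in $\Omega$.

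Next I will interpret this identity in $X^*$. For $q\in X$ supported in $\Omega$, the pairing $\langle L_Jf,q\rangle$ equals
\[
 \int\bigl(h'(\bar\rho)f+V_f+\tfrac{\omega^2}{I}\mathfrak r(f)r^2\bigr)\overline q\,dx ,
\]
and the computation above yields $-\partial_\mu c\int\overline q\,dx=\langle-(\partial_\mu c)\mathbf 1,q\rangle$. Taking $q=f$, the first-order identity gives $\langle L_Jf,f\rangle=Q_J[f]$, and $\int f\,dx=\partial_\mu M_\kappa(\mu)$ by differentiating the mass under the integral. This proves the second identity in \eqref{eq:tangent-identity}, because $\partial_\mu c$ is real.

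The main obstacle is justifying the pointwise differentiation and the membership $f\in X$ across the moving vacuum boundary. Here $f=\mathbf 1_{\{q>0\}}\mathscr G'(q)\partial_\mu q$ may blow up like $d^{\alpha-1}$ when $\gamma_0>3/2$ ($\alpha<2$). I will work in the enthalpy variable on the fixed ball $D$, using Propositions~\ref{prop:slow-branch-full-range}, \ref{prop:slow-vacuum-tangents} and \ref{prop:fixed-J-branch}. The map $q_{\mu,\kappa}$ is $C^1$ in $\mu$ with values in $C^1(\overline D)$, and $\widetilde\rho$ is $C^1$ into $C(\overline D)$. Therefore $V_{\widetilde\rho}$ is differentiable with derivative $V_f$, and the relation $h(\rho)=q$ inside $\Omega$ differentiates to $h'(\bar\rho)f=\partial_\mu q$. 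This gives the identity on the open set $\Omega$.

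Membership in $X$ follows from the bound $h'(\bar\rho)|f|^2\lesssim d^{1-\alpha}\cdot d^{2\alpha-2}=d^{\alpha-1}$, which is integrable. The same bound shows that $f$ is integrable, so $\int f=\partial_\mu M_\kappa$ follows by dominated convergence. Finally, the boundary layer contributes no extra term, because $\widetilde\rho$ is continuous and vanishes on $\partial\Omega$. Thus the Eulerian derivative of the zero extension has no singular part on $\partial\Omega$; it is precisely the interior $f$, as recorded in \eqref{eq:Eulerian-branch-tangent}.
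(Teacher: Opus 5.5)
Your proof is correct and follows essentially the same route as the paper: differentiate the equilibrium equation at fixed $x$, eliminate $\partial_\mu\omega$ using the differentiated constraint $\omega I=\kappa$ to produce the rank-one term of $L_J$, and pair with $f$ using $\int f\,dx=\partial_\mu M_\kappa$. Two minor slips do not affect the argument. First, $f$ does not blow up at the vacuum boundary: it is $O(d^{\alpha-1})$ with $\alpha>1$ and so tends to zero, and only second parameter derivatives can be singular. Second, the reality needed in the final step is that of $f$ (equivalently of $\partial_\mu M_\kappa$), not of $\partial_\mu c$.
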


\begin{proof}
At fixed $\kappa$, the equilibrium equation is
\begin{equation}
 h(\rho_{\mu,\kappa})+V_{\rho_{\mu,\kappa}}
 -\frac12\omega(\mu,\kappa)^2r^2+c_{\mu,\kappa}=0.
                                                               \label{eq:equilibrium-fixedJ}
\end{equation}
The zero-extension regularity in
Proposition~\ref{prop:slow-branch-full-range} permits
differentiation at fixed $x$; there is no boundary measure because the
density vanishes continuously on the moving boundary.  Thus, with
$f=f_{\mu,\kappa}$,
\begin{equation}
 h'(\bar\rho)f+V_f-\omega(\partial_\mu\omega)r^2
      +\partial_\mu c_{\mu,\kappa}=0.                    \label{eq:equilibrium-differentiated}
\end{equation}
Differentiating the fixed-angular-momentum identity
$\omega I(\rho_{\mu,\kappa})=\kappa$ gives
\begin{equation}
 I\,\partial_\mu\omega
 +\omega\int r^2f\,dx=0,
 \qquad
 \partial_\mu\omega=-\frac{\omega}{I}\int r^2f\,dx.      \label{eq:omega-mu}
\end{equation}
Substitution into \eqref{eq:equilibrium-differentiated} yields the first
identity in \eqref{eq:tangent-identity}.  Pairing it with $f$ gives the
second, since the zero-extension derivative and the boundary behavior imply
\begin{equation}
 \partial_\mu M_\kappa(\mu)=\int_{\mathbb R^3}f_{\mu,\kappa}\,dx.
\end{equation}
\end{proof}

\begin{lemma}
\label{lem:c-positive}
On every compact interval $I_\mu\Subset(0,\mu_e)$ there are
$c_I>0$ and $\kappa_I>0$ such that
\begin{equation}
  \partial_\mu c_{\mu,\kappa}\ge c_I
  \qquad(\mu\in I_\mu,\ |\kappa|<\kappa_I).
  \label{eq:c-positive}
\end{equation}
Combined with \eqref{eq:LJ-one-negative} and
Lemma~\ref{lem:one-constraint-inertia}, this positivity fixes the
orientation of the constrained-index crossing.
\end{lemma}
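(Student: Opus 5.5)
The plan is a perturbation argument from the spherical case $\kappa=0$, using the $C^2$ dependence of the branch scalars. First, at $\kappa=0$ we have $c_{\mu,0}=M_0(\mu)/R_0(\mu)$. On $(0,\mu_e)$ its derivative never vanishes, by the definition \eqref{eq:mutilde-definition} of $\mu_e$, and it is positive for small $\mu$ by the low-density scaling \eqref{eq:low-density-c-scaling}. By continuity and the intermediate value theorem, $\partial_\mu c_{\mu,0}>0$ on all of $(0,\mu_e)$. Since $I_\mu$ is compact and $\partial_\mu c_{\mu,0}$ is continuous, there is $2c_I:=\min_{I_\mu}\partial_\mu c_{\mu,0}>0$.

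Second, I transfer this bound to the fixed-$J$ branch. By \eqref{eq:branch-scalars}, $c(\mu,s)=-h(\mu)-V_{\rho_{\mu,s}}(0)$ is $C^2$ in $(\mu,s)$ on $I_\mu\times[0,s_0)$. With $s=\omega(\mu,\kappa)^2$, the chain rule gives
\[
 \partial_\mu c_{\mu,\kappa}
 =\partial_\mu c(\mu,s)+2\omega\,\partial_\mu\omega\,\partial_s c(\mu,s).
\]
By \eqref{eq:omega-mu-estimate} and \eqref{eq:omega-expansion}, $\omega\partial_\mu\omega=O(\kappa^2)$ uniformly on $I_\mu$. Also $\partial_s c$ is bounded on the compact set $I_\mu\times[0,s_0/2]$. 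Finally, $\partial_\mu c(\mu,s)-\partial_\mu c(\mu,0)$ is bounded by $\sup|\partial_s\partial_\mu c|\cdot s=O(\kappa^2)$, using the $C^2$ regularity. Together these give $\partial_\mu c_{\mu,\kappa}=\partial_\mu c_{\mu,0}+O(\kappa^2)$ uniformly on $I_\mu$. Choosing $\kappa_I$ so that the error is below $c_I$ yields \eqref{eq:c-positive}.

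The main obstacle is the first step: justifying that the spherical $\partial_\mu c_{\mu,0}$ is positive and continuous on the whole compact interval, not only near $0$. This needs the $C^1$ dependence of $M_0$ and $R_0$ on $\mu$, which comes from the Lane--Emden ODE dependence cited from \cite{LinZeng2022}. If the definition of $\mu_e$ alone seems too weak, for example because $\partial_\mu c_{\mu,0}$ might approach zero without vanishing, then compactness of $I_\mu\Subset(0,\mu_e)$ settles it: a continuous function that is nonvanishing on a compact set is bounded away from zero. A secondary point is that the mixed derivative $\partial_s\partial_\mu c$ must be bounded. This follows from the $C^2_{\mu,s}(L^1(D))$ regularity of $\widetilde\rho_{\mu,s}$ in \eqref{eq:branch-parameter-regularity}, since $V_\rho(0)=-\int\rho(y)/|y|\,dy$. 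I would handle the weight $1/|y|$ by checking that the $C^1_{\mu,s}(C(\overline D))$ regularity and the explicit tangent bounds \eqref{eq:Eulerian-branch-tangent} suffice, or alternatively by reading $c$ off the boundary relation instead.
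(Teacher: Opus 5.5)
Your proposal is correct and follows the paper's proof essentially line by line. The paper argues the same way: positivity of $\partial_\mu c_{\mu,0}$ on $(0,\mu_e)$ from the low-density scaling and the definition of $\mu_e$, then a uniform bound $2c_I$ by compactness, then the chain-rule expansion $\partial_\mu c_{\mu,\kappa}=\partial_\mu c(\mu,0)+O(\kappa^2)$ via \eqref{eq:omega-mu-estimate} and the $C^2$ regularity of the branch scalars. Your secondary worry about the $1/|y|$ weight is already settled: Proposition~\ref{prop:slow-branch-full-range} asserts that $c(\mu,s)$ is $C^2$, because the potential map is $C^2$ into $C^1(\overline D)$ by Lemma~\ref{lem:transverse-positive-part}, so the mixed derivative is bounded on the compact parameter set.
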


\begin{proof}
The low-density orientation \eqref{eq:low-density-c-scaling} and the
definition of $\mu_e$ give
$\partial_\mu c_{\mu,0}>0$ on $(0,\mu_e)$.  Compactness therefore
gives $\partial_\mu c_{\mu,0}\ge2c_I>0$ on $I_\mu$.
At fixed $\kappa$, the chain rule, the $C^2$ scalar dependence from
Proposition~\ref{prop:slow-branch-full-range}, and
\eqref{eq:omega-mu-estimate} imply
\begin{align*}
 \partial_\mu c_{\mu,\kappa}
 &=\partial_\mu c(\mu,s)
   +2\omega\partial_\mu\omega\,\partial_s c(\mu,s)\\
 &=\partial_\mu c(\mu,0)+O(\kappa^2),\qquad s=\omega^2,
\end{align*}
uniformly on $I_\mu$.  For sufficiently small $|\kappa|$, the error is
at most $c_I$, proving \eqref{eq:c-positive}.
\end{proof}

For the slowly rotating interval before the first radial Hessian degeneracy,
define
\[
 X_{\rm ev}=\{q\in X_{\bar\rho}:q(x,y,-z)=q(x,y,z)\},\qquad
 X_{\rm odd}=\{q\in X_{\bar\rho}:q(x,y,-z)=-q(x,y,z)\}.
\]
The spherical spectral theorem and form-norm perturbation give
\begin{equation}
 \begin{aligned}
 n^-(\widehat L_J|_{X_{\rm ev}})&=1,
 &\Ker(\widehat L_J|_{X_{\rm ev}})&=\{0\},\\
 \widehat L_J|_{X_{\rm odd}}&\ge0,
 &\Ker(\widehat L_J|_{X_{\rm odd}})&=\Span\{\tau\},
 \qquad \tau=-\partial_z\bar\rho.
 \end{aligned}                                             \label{eq:LJ-one-negative}
\end{equation}
and $\partial_\mu c_{\mu,\kappa}>0$ by Lemma~\ref{lem:c-positive}.  The even and odd
$z$-parity spaces reduce $L_J$;
$\tau$ is odd while $f_{\mu,\kappa}$ is even.  A proof of the perturbative
spectral assertion is included in Proposition~\ref{prop:density-spectrum}.

\begin{lemma}
\label{lem:one-constraint-inertia}
Under \eqref{eq:LJ-one-negative} and
$\partial_\mu c_{\mu,\kappa}>0$,
\begin{equation}
  n^-(Q_J|_{X_0})=
  \begin{cases}
    0,&\partial_\mu M_\kappa(\mu)\ge0,\\
    1,&\partial_\mu M_\kappa(\mu)<0.
  \end{cases}
  \label{eq:constraint-index}
\end{equation}
If $\partial_\mu M_\kappa(\mu)\ne0$, then
$\ker(Q_J|_{X_0})=\operatorname{Span}\{\tau\}$; at a turning point,
\begin{equation}
  \Ker(Q_J|_{X_0})
  =\operatorname{span}\{\tau,f_{\mu,\kappa}\}.
  \label{eq:turning-kernel}
\end{equation}
\end{lemma}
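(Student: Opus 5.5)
The plan is to use the $z$-parity splitting. $\widehat L_J$ commutes with reflection $z\mapsto-z$ because $\bar\rho$ is equatorially symmetric and $r^2$ is even. The constraint $\int q\,dx=0$ involves only the even part, since odd functions have zero integral. Hence
\[
 Q_J|_{X_0}=Q_J|_{X_0\cap X_{\rm ev}}\oplus Q_J|_{X_{\rm odd}}.
\]
By \eqref{eq:LJ-one-negative}, the odd block is nonnegative with kernel $\Span\{\tau\}$. It therefore contributes nothing to $n^-$ and exactly $\tau$ to the kernel. Everything reduces to the even block $X_{\rm ev}\cap X_0$. On that block $\widehat L_J$ has exactly one negative direction and trivial kernel, and it is boundedly invertible there. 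This uses the uniform gap in \eqref{eq:LJ-one-negative}: the positive spectrum on the complement is separated from zero.

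On the even block I will apply the standard one-constraint inertia formula. The constraint functional is $q\mapsto\int q\,dx=\langle\mathbf 1,q\rangle$. Its representer under $\widehat L_J$ is supplied by Lemma~\ref{lem:branch-tangent}. Write $f=f_{\mu,\kappa}$, which is even, and $c'=\partial_\mu c_{\mu,\kappa}>0$. Then
\[
 \widehat L_J\bigl(-f/c'\bigr)=R_X^{-1}\mathbf 1,
\]
so the constraint reads $(\widehat L_J\psi,q)_X=0$ with $\psi=-f/c'$. Decompose $X_{\rm ev}=\Span\{\psi\}\oplus\{q:(\widehat L_J\psi,q)_X=0\}$. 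This is a direct sum whenever $\langle L_J\psi,\psi\rangle\neq0$, and it is $Q_J$-orthogonal by construction. Additivity of inertia then gives
\[
 1=n^-(\widehat L_J|_{X_{\rm ev}})=n^-(Q_J|_{X_0\cap X_{\rm ev}})+n^-(\langle L_J\psi,\psi\rangle).
\]
Since $\langle L_J\psi,\psi\rangle=c'^{-2}Q_J[f]=-\partial_\mu M_\kappa/c'$, we get the following. If $\partial_\mu M_\kappa>0$, the scalar is negative, so the constrained index is $0$. If $\partial_\mu M_\kappa<0$, the scalar is positive, so the constrained index is $1$. Nondegeneracy of the even constrained block follows from the same decomposition. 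A kernel element $q$ of the constrained block satisfies $L_Jq=a\mathbf 1$ for some scalar $a$, so $q=a\psi$ by invertibility. Then $\int q\,dx=-a\,\partial_\mu M_\kappa/c'$, which is nonzero unless $a=0$. This gives $\ker=\Span\{\tau\}$ in total.

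The degenerate case $\partial_\mu M_\kappa=0$ is the step needing care, because the splitting above fails: $\psi$ itself lies in the constraint hyperplane. Here I argue directly. For $q$ in $X_0\cap X_{\rm ev}$,
\[
 Q_J[q+t\psi]=Q_J[q]+2t\Ree\langle L_J\psi,q\rangle+t^2\langle L_J\psi,\psi\rangle=Q_J[q],
\]
because $\langle L_J\psi,q\rangle$ is proportional to $\int\bar q=0$ and $\langle L_J\psi,\psi\rangle=0$. So $\psi$ lies in the kernel of the constrained form.

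For nonnegativity, take the negative eigenvector $e$ of $\widehat L_J|_{X_{\rm ev}}$. Then $(\widehat L_J\psi,e)_X=\lambda_-(\psi,e)_X$. This is nonzero, since otherwise $\psi$ would lie in the positive spectral subspace, contradicting $\langle L_J\psi,\psi\rangle=0$ with $\psi\ne0$ (that subspace is positive definite by the uniform gap). Hence $e\notin X_0$, and $X_0\cap X_{\rm ev}$ is a closed hyperplane transverse to $e$. I would then prove $Q_J\ge0$ on it by the standard min-max argument for a form with one negative direction. Write $q=\alpha e+p$ with $p$ in the positive spectral subspace, and use the constraint $\alpha\lambda_-(e,\psi)+(\widehat L_Jp,\psi)=0$. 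Cauchy--Schwarz in the positive form gives
\[
 |\alpha\lambda_-(e,\psi)|^2\le\langle L_Jp,p\rangle\,\langle L_J\psi_+,\psi_+\rangle,
\]
where $\psi_+$ is the positive-subspace component of $\psi$. Combining this with $0=\langle L_J\psi,\psi\rangle=\lambda_-|(\psi,e)|^2+\langle L_J\psi_+,\psi_+\rangle$ yields $Q_J[q]\ge0$. Equality in Cauchy--Schwarz forces $p\parallel\psi_+$, hence $q\parallel\psi$. So the even kernel is exactly $\Span\{f\}$, giving \eqref{eq:turning-kernel} together with the odd contribution $\tau$.
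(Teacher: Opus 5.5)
Your proof is correct and follows the paper's argument. The shared steps are the parity splitting, the use of the branch tangent $f$ (your $\psi=-f/\partial_\mu c_{\mu,\kappa}$) as the $L_J$-representer of the mass constraint, Sylvester additivity on $X_{\rm ev}=\Span\{f\}\oplus_a X_{0,\mathrm{ev}}$ off the turning point, and the relation $L_Jq=\alpha\mathbf 1$ plus invertibility to identify the kernel. The one difference is at the turning point: you get nonnegativity on $X_{0,\mathrm{ev}}$ directly from $q=\alpha e+p$ and Cauchy--Schwarz in the positive spectral part, reading off the kernel from the equality case, while the paper argues by contradiction, combining a hypothetical negative $q$ with the hyperbolic plane $\Span\{f,h\}$ to produce two negative directions in $X_{\rm ev}$.
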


\begin{proof}
Let $a(u,v)=\langle L_Ju,v\rangle$, linear in $u$ and conjugate-linear
in $v$. The even and odd subspaces are invariant because $\bar\rho$ is
even in $z$. By \eqref{eq:LJ-one-negative}, the odd restriction is nonnegative
with kernel $\Span\{\tau\}$, while the even restriction is invertible
and has one negative direction.

Let
\[
  f=f_{\mu,\kappa}=\partial_\mu\rho_{\mu,\kappa}.
\]
Since the equilibrium is real and even in $z$, $f$ is real and belongs to
$X_{\mathrm{ev}}$; center normalization gives $f(0)=1$, so $f\ne0$.
By Lemma~\ref{lem:branch-tangent},
\[
  L_Jf=-(\partial_\mu c_{\mu,\kappa})\,\mathbf1.
\]
Pairing with any $q\in X_{\mathrm{ev}}$ gives
\[
  a(f,q)
  =-\partial_\mu c_{\mu,\kappa}\int\overline q\,dx.
\]
In particular, for $q\in X_{0,\mathrm{ev}}:=X_0\cap X_{\mathrm{ev}}$,
we have $\int q\,dx=0$, so
\begin{equation}
  a(f,q)=0\qquad\text{for all }q\in X_{0,\mathrm{ev}}.
  \label{eq:f-orthogonal-to-X0}
\end{equation}
Also,
\begin{equation}
  a(f,f)
  =-\partial_\mu c_{\mu,\kappa}
   \int f\,dx
  =-\partial_\mu c_{\mu,\kappa}
   \partial_\mu M_\kappa(\mu).
  \label{eq:a-ff}
\end{equation}

Assume first that $\partial_\mu M_\kappa(\mu)\ne0$.  Then $\int f\,dx\ne0$, so
$f\notin X_{0,\mathrm{ev}}$.  By \eqref{eq:f-orthogonal-to-X0},
$a(f,f)\ne0$, and we have the exact $a$-orthogonal decomposition
\begin{equation}
 X_{\mathrm{ev}}=\Span\{f\}\oplus_a X_{0,\mathrm{ev}}.     \label{eq:even-a-decomposition}
\end{equation}
Indeed, for $q\in X_{\mathrm{ev}}$ subtract
$(\int q/\int f)f$ to obtain its unique mass-zero component.
Sylvester additivity in \eqref{eq:even-a-decomposition} gives
\begin{equation}
 1=n^-(a|_{X_{\mathrm{ev}}})
  =n^-(a|_{\Span\{f\}})+n^-(a|_{X_{0,\mathrm{ev}}}).       \label{eq:Sylvester-one-constraint}
\end{equation}
By \eqref{eq:a-ff}, $a(f,f)>0$ when
$\partial_\mu M_\kappa<0$ and $a(f,f)<0$ when
$\partial_\mu M_\kappa>0$.  Equation
\eqref{eq:Sylvester-one-constraint}, together with nonnegativity in the odd
sector, proves \eqref{eq:constraint-index} in the noncritical case.

For the kernel, let $q\in X_{0,\mathrm{ev}}$ satisfy
$a(q,y)=0$ for all $y\in X_{0,\mathrm{ev}}$.  The mass functional has
codimension one, so there is an $\alpha\in\C$ with
$L_Jq=\alpha\mathbf1$.  Since $L_J$ is invertible on $X_{\mathrm{ev}}$ and
$L_Jf=-(\partial_\mu c_{\mu,\kappa})\mathbf1$,
\begin{equation}
 q=-\frac{\alpha}{\partial_\mu c_{\mu,\kappa}}f.
\end{equation}
Integrating and using $\int q=0$ and
$\int f=\partial_\mu M_\kappa\ne0$ gives $\alpha=0$, hence $q=0$.
The odd kernel is $\Span\{\tau\}$, so
$\Ker(Q_J|_{X_0})=\Span\{\tau\}$.

Now suppose $\partial_\mu M_\kappa(\mu)=0$.  Then $\int f\,dx=0$, so
$f\in X_{0,\mathrm{ev}}$.  By \eqref{eq:f-orthogonal-to-X0}, we have
\[
  a(f,q)=0\qquad\text{for all }q\in X_{0,\mathrm{ev}}.
\]
In particular $a(f,f)=0$ by \eqref{eq:a-ff}.  We claim that
$X_{0,\mathrm{ev}}$ has no negative direction.  Suppose, to the contrary,
that there exists $q\in X_{0,\mathrm{ev}}$ with $a(q,q)<0$.  Since the
form $a$ is nondegenerate on $X_{\mathrm{ev}}$ and $f\neq0$, there exists
$g\in X_{\mathrm{ev}}$ such that $a(f,g)\ne0$.  Define
\[
  h=g-\frac{a(g,q)}{a(q,q)}\,q.
\]
Then
\[
  a(f,h)=a(f,g)-\frac{\overline{a(g,q)}}{a(q,q)}\,a(f,q)
  =a(f,g)\ne0,
\]
and
\[
  a(h,q)=a(g,q)-\frac{a(g,q)}{a(q,q)}\,a(q,q)=0.
\]
On the two-dimensional subspace $\operatorname{span}\{f,h\}$, the matrix
of $a$ is
\[
  \begin{pmatrix}
    a(f,f) & a(f,h)\\
    a(h,f) & a(h,h)
  \end{pmatrix}
  =
  \begin{pmatrix}
    0 & a(f,g)\\
    a(g,f) & a(h,h)
  \end{pmatrix}.
\]
Since $a(f,h)=a(f,g)\ne0$, this matrix has determinant
$-|a(f,g)|^2<0$.  Hence, $\operatorname{span}\{f,h\}$ contains exactly one
negative direction.  Moreover, $q$ is $a$-orthogonal to both $f$ and $h$,
and $a(q,q)<0$, so $q$ is a second, independent negative direction in
$X_{\mathrm{ev}}$.  This contradicts the assumption that $X_{\mathrm{ev}}$
has only one negative direction.  Therefore $X_{0,\mathrm{ev}}$ is
nonnegative.

It remains to describe the kernel at the turning point.  Let
$q\in X_{0,\mathrm{ev}}$ belong to the kernel of $Q_J|_{X_0}$.  Since the
form is nonnegative on $X_{0,\mathrm{ev}}$, the Cauchy--Schwarz inequality
implies that $q$ is in the kernel if and only if
\[
  a(q,y)=0\qquad\text{for all }y\in X_{0,\mathrm{ev}}.
\]
The anti-dual annihilator of the mass-zero subspace is the span of
$\mathbf1$.  Hence there is $\alpha\in\C$ such that
\[
  L_J q=\alpha\,\mathbf1.
\]
But Lemma~\ref{lem:branch-tangent} gives
\[
  L_J f=-\partial_\mu c_{\mu,\kappa}\,\mathbf1.
\]
Since $\partial_\mu c_{\mu,\kappa}>0$ and $L_J$ is
invertible on $X_{\mathrm{ev}}$, we deduce
\[
  q=-\frac{\alpha}{\partial_\mu c_{\mu,\kappa}}\,f.
\]
Thus the even part of the kernel is one-dimensional and spanned by
$f=f_{\mu,\kappa}$.  Adding the odd axial translation $\tau$ gives
\[
  \Ker(Q_J|_{X_0})
  =\operatorname{span}\{\tau,f_{\mu,\kappa}\}.
\]
This proves \eqref{eq:turning-kernel} and completes the lemma.
\end{proof}

\section{Turning point on a fixed-\texorpdfstring{$J$}{J} branch}
\label{sec:turning-proof}

We now verify the density spectral input and prove
Theorem~\ref{thm:turning-point}.  The interval is chosen before the first
radial zero mode of the spherical density Hessian; on this interval its
negative index is one.

\begin{proposition}
\label{prop:density-spectrum}
Assume Hypothesis~\ref{hyp:EOS}, let
$I_\mu\Subset(0,\mu_e)$, and let $\rho_{\mu,\kappa}$ be the
fixed-$J$ branch of Proposition~\ref{prop:fixed-J-branch}.  There are
$\kappa_I>0$ and $\delta_I>0$ such that, for
$\mu\in I_\mu$ and $|\kappa|<\kappa_I$,
\begin{equation}
 n^-(\widehat L_J(\mu,\kappa))=1,\qquad
 \Ker\widehat L_J(\mu,\kappa)
   =\Span\{\partial_z\rho_{\mu,\kappa}\}.                  \label{eq:slow-density-spectrum}
\end{equation}
The negative spectral space is one-dimensional and even in $z$, while the
kernel is odd.  If $E^-_{\mu,\kappa}$ is the negative spectral space, then
\begin{equation}
 \langle L_J(\mu,\kappa)q,q\rangle
 \ge\delta_I\|q\|_{X_{\mu,\kappa}}^2                       \label{eq:slow-density-gap}
\end{equation}
whenever
$q\perp_{X_{\mu,\kappa}}
 (E^-_{\mu,\kappa}\oplus
  \Span\{\partial_z\rho_{\mu,\kappa}\})$.
All constants are uniform on $I_\mu$.
\end{proposition}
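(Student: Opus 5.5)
We treat Proposition~\ref{prop:density-spectrum} as a norm-resolvent perturbation of Lemma~\ref{lem:spherical-density-spectrum}, carried out on a fixed reference space. The first step is to pull back by the unitary maps $U_{\mu,s}$ of \eqref{eq:density-unitary-pullback}, or more simply use the zero-extension isometry from the proof of Lemma~\ref{lem:spherical-density-spectrum}. Under the map $\sigma\mapsto h'(\rho_{\mu,\kappa})^{1/2}\sigma$ followed by zero extension to the fixed ball $D$, $\widehat L_J(\mu,\kappa)$ becomes a bounded self-adjoint operator on a fixed subspace of $L^2(D)$ of the form
\[
 I+\mathbb G_{\mu,\kappa}+\frac{\omega^2}{I(\rho_{\mu,\kappa})}\,\ell_{\mu,\kappa}\otimes\ell_{\mu,\kappa},
 \qquad
 \mathbb G_{\mu,\kappa}(x,y)=-\frac{b_{\mu,\kappa}(x)b_{\mu,\kappa}(y)}{|x-y|},
\]
where $b_{\mu,\kappa}=\mathscr G'(q_{\mu,\omega^2})^{1/2}$ and $\ell_{\mu,\kappa}=r^2 b_{\mu,\kappa}$. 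The support of $b$ varies with the parameters, but the operator always acts as the identity off the support, so no domain problem arises.

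The second step is operator-norm continuity in $(\mu,\kappa)$ on $I_\mu\times(-\kappa_0,\kappa_0)$. The density $\mathscr G'(q)=1/h'(\mathscr G(q))$ is continuous in $q$ and vanishes at $q=0$, because $\gamma_0<2$ gives $\mathscr G'(y)\sim Cy^{\alpha-1}$ with $\alpha>1$. Since $q_{\mu,s}$ is continuous in $C^1(\overline D)$ by Proposition~\ref{prop:slow-branch-full-range}, $b_{\mu,\kappa}$ is continuous in $C(\overline D)$. The Hilbert--Schmidt norm of $\mathbb G$ therefore varies continuously, and the rank-one term is $O(\kappa^2)$ by \eqref{eq:omega-expansion}. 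At $\kappa=0$, Lemma~\ref{lem:spherical-density-spectrum} gives exactly one negative eigenvalue, a kernel spanned by $\partial_z\rho_\mu$, and a uniform gap $2\delta$ to the rest of the spectrum, all uniformly on $I_\mu$. Uniform continuity on the compact set $I_\mu\times\{0\}$ then gives, for $|\kappa|<\kappa_I$, that the Riesz projections onto the spectrum in $(-\infty,-\delta)$ and in $(-\delta,\delta)$ each have rank one. The rest of the spectrum lies in $[\delta,\infty)$. This yields \eqref{eq:slow-density-gap} with $\delta_I=\delta$ once the small cluster is identified.

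The third step identifies the small eigenvalue with exactly zero and settles the parity. Because $\bar\rho$ is even in $z$, the operator commutes with $z$-reflection, so each spectral projection splits into even and odd parts. Proposition~\ref{prop:automatic-admissibility}(1) shows that $\tau=-\partial_z\rho_{\mu,\kappa}$ is a nonzero odd element of the kernel. The rank-one cluster near zero must therefore be exactly $\Span\{\tau\}$, which gives zero as a simple eigenvalue. For the negative eigenvector, parity is preserved under continuous deformation of a rank-one projection (an even and an odd rank-one projection cannot be norm-close), and at $\kappa=0$ it is radial, hence even. The remaining claims of \eqref{eq:slow-density-spectrum} follow.

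The main obstacle is the operator-norm continuity in the second step. It requires uniform control of $b_{\mu,\kappa}$ up to the moving vacuum boundary, where $\mathscr G'$ vanishes but is not Lipschitz. Sup-norm convergence of $q$ together with uniform continuity of $\mathscr G'$ on a compact enthalpy range should be sufficient. I would check that Hilbert--Schmidt continuity needs only $C(\overline D)$ convergence of $b$, since the kernel $1/|x-y|$ is in $L^2(D\times D)$; if so, the finer estimates of Proposition~\ref{prop:slow-vacuum-tangents} are not needed here.
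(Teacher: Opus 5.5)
Your proposal is correct and follows essentially the same route as the paper's proof. Both arguments use the weighted zero-extension isometry into $L^2_{\rm ax}(D)$, extended by the identity on the exterior complement. Both obtain Hilbert--Schmidt continuity of $\mathbb G_{\mu,\kappa}$ from uniform convergence of $b_{\mu,\kappa}=[\mathscr G'(q)]^{1/2}$, and both use $\omega^2/I=O(\kappa^2)$ for the rank-one term. The spectral conclusions then come from stability of the two rank-one Riesz clusters supplied by Lemma~\ref{lem:spherical-density-spectrum}, from parity to keep the negative cluster even, and from the exact odd translation mode to identify the zero cluster. The only cosmetic corrections are that the image subspace becomes fixed only after extension by the identity, and that what you need (and have) is continuity of $\mathbb G_{\mu,\kappa}$ in Hilbert--Schmidt norm, not continuity of its norm.
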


\begin{proof}
Choose a ball $D$ containing all supports and set
$\mathcal H=L^2_{\rm ax}(D)$.  Put
\[
 b_{\mu,\kappa}
 =\bigl[\mathscr G'(q_{\mu,\omega(\mu,\kappa)^2})\bigr]^{1/2}
 =\mathbf1_{\Omega_{\mu,\kappa}}
      h'(\rho_{\mu,\kappa})^{-1/2},
 \qquad j_{\mu,\kappa}=r^2b_{\mu,\kappa}.
\]
The isometry $\sigma\mapsto h'(\rho_{\mu,\kappa})^{1/2}\sigma$, followed
by zero extension, identifies $X_{\mu,\kappa}$ with a closed subspace of
$\mathcal H$.  Extending by the identity on its orthogonal complement, the
Riesz Hessian becomes
\[
 \mathbb L_{J,\mu,\kappa}
 =I+\mathbb G_{\mu,\kappa}
  +\frac{\omega(\mu,\kappa)^2}{I(\rho_{\mu,\kappa})}
       (j_{\mu,\kappa}\otimes j_{\mu,\kappa}),
\]
where $\mathbb G_{\mu,\kappa}$ has kernel
$-b_{\mu,\kappa}(x)b_{\mu,\kappa}(y)/|x-y|$.
The fixed-domain branch convergence and $\mathscr G'(0)=0$ give
$b_{\mu,\kappa}\to b_{\mu,0}$ uniformly in $D$, uniformly for
$\mu\in I_\mu$.  Since $|x-y|^{-2}\in L^1(D\times D)$, the gravitational
kernels converge in Hilbert--Schmidt norm.  Also
$j_{\mu,\kappa}\to j_{\mu,0}$ in $\mathcal H$, while
$\omega^2/I=O(\kappa^2)$.  Consequently
\begin{equation}
 \sup_{\mu\in I_\mu}
 \big\|\mathbb L_{J,\mu,\kappa}-\mathbb L_{0,\mu,0}\big\|
 \longrightarrow0.                                        \label{eq:LJ-slow-norm-convergence}
\end{equation}
Choose uniform contours around the spherical negative eigenvalue and zero
using Lemma~\ref{lem:spherical-density-spectrum}.  Norm stability of their
Riesz projections and \eqref{eq:LJ-slow-norm-convergence} preserve the
rank one of both clusters and the uniform complementary gap.  Parity
commutes with the branch operators, so the negative cluster remains even.
Axial translation is an exact zero mode: differentiating the equilibrium
equation in $z$ gives
\[
 L_0(\mu,\kappa)\partial_z\rho_{\mu,\kappa}=0,\qquad
 \int r^2\partial_z\rho_{\mu,\kappa}\,dx=0,
\]
and hence
$L_J(\mu,\kappa)\partial_z\rho_{\mu,\kappa}=0$.
This nonzero odd vector belongs to the one-dimensional zero cluster and
therefore spans it.  The projection estimates yield
\eqref{eq:slow-density-gap}.
\end{proof}

\begin{proof}[Proof of Theorem~\ref{thm:turning-point}]
Proposition~\ref{prop:mass-maximum} constructs the fixed-$J$ mass maximum
$\mu_*^\kappa$ in $I_\mu$.  Proposition~\ref{prop:density-spectrum} supplies
\eqref{eq:LJ-one-negative}, while Lemma~\ref{lem:branch-tangent} supplies
the exact crossing identity
\begin{equation}
 Q_J[\partial_\mu\rho_{\mu,\kappa}]
 =-(\partial_\mu c_{\mu,\kappa})\,\partial_\mu M_\kappa(\mu).
\end{equation}
The one-constraint calculation in
Lemma~\ref{lem:one-constraint-inertia} therefore gives
\eqref{eq:constraint-index}.  Applying
Theorem~\ref{thm:main-index} proves \eqref{eq:turning-count} for the full
axisymmetric generator, with no restriction placed on its phase space.

At $\mu=\mu_*^\kappa$, the constrained density form is nonnegative and has
kernel \eqref{eq:turning-kernel}.  The abstract KTC theorem together with
Propositions~\ref{prop:root-chain-equivalence} and
\ref{prop:full-spectral-transfer} excludes all open-right-half-plane
spectrum and gives $N_+=0$ at the turning point.  The
branch tangent is the neutral direction that crosses there; axial
translation remains the exact symmetry kernel.  Since
$\partial_\mu^2 M_\kappa(\mu_*^\kappa)<0$,
$\partial_\mu M_\kappa$ changes from positive to negative,
so the unstable count changes from zero to one and no earlier transition is
possible on the stated interval.
\end{proof}

\begin{remark}
At the turning point the full generator need not have bounded center
dynamics: translation already produces the chain
\eqref{eq:translation-chain}, and the branch tangent may create an
additional generalized zero structure.  The theorem asserts absence of right-half-plane spectrum at the critical
parameter and gives its algebraic count on either side.
\end{remark}

\section{Viscosity versus inviscid rotation}
\label{sec:comparison}

We first record the support geometry needed for the inviscid Rayleigh form.

\begin{proposition}
\label{prop:slow-support-geometry}
Let $I_\mu\Subset(0,\mu_e)$ and consider the branch of
Proposition~\ref{prop:slow-branch-full-range}; put
$\alpha=1/(\gamma_0-1)$.  Uniformly for
$\mu\in I_\mu$ and sufficiently small $s\ge0$, the support
$\Omega_{\mu,s}$ is axisymmetric, $z$-even, strictly convex, and of class
$C^{3,\eta}$ for every $0<\eta<\min\{1,\alpha-1\}$ (only $C^2$ is used
below).  Its meridional section intersects the rotation axis in an
interval and can be written
\[
 \Omega_{\mu,s}
 =\{(r,\theta,z):0\le r<R_{\mu,s},\ |z|<Z_{\mu,s}(r)\}.
\]
There are $z_0,c_0,C_0>0$, uniform on the compact branch, such that near
the equatorial endpoint the boundary can be written
$r=\varphi_{\mu,s}(z)$, where
\[
 \varphi_{\mu,s}(0)=R_{\mu,s},\qquad
 \varphi'_{\mu,s}(0)=0,\qquad
 c_0\le-\varphi''_{\mu,s}(z)\le C_0\quad(|z|<z_0).
\]
If $Z_{\mu,s}(r)>0$ is determined by
$\varphi_{\mu,s}(Z_{\mu,s}(r))=r$, then, uniformly on the branch,
\begin{equation}
 Z_{\mu,s}(r)\asymp(R_{\mu,s}-r)^{1/2},\qquad
 d_{\mu,s}(r,z)\asymp
 Z_{\mu,s}(r)^2-z^2,\quad |z|<Z_{\mu,s}(r),\quad
 r\uparrow R_{\mu,s}.                                    \label{eq:slow-equator-geometry}
\end{equation}
Consequently all geometric assumptions in the rigid-rotation specialization
of the inviscid index theorem \cite{LinWang2023} are satisfied.
\end{proposition}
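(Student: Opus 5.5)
The plan is to read off all the geometry from the boundary description in Proposition~\ref{prop:slow-vacuum-tangents}, and to argue perturbatively from the spherical star.

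\emph{Symmetry and convexity.} Axisymmetry and $z$-evenness are inherited from $q_{\mu,s}$ (Proposition~\ref{prop:slow-branch-full-range}). Uniform strict convexity and $C^{3,\eta}$ regularity are already asserted in Proposition~\ref{prop:slow-vacuum-tangents}. Take the support as $\{q_{\mu,s}>0\}$. By \eqref{eq:uniform-vacuum-factor}, $|\nabla q_{\mu,s}|$ is bounded below on the boundary collar, so the boundary is a regular level set of $q_{\mu,s}$.

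For the spherical star, $q_{\mu,0}$ is radial with $\partial_rq_{\mu,0}(R_0)=-M_0/R_0^2<0$. The boundary is therefore a round sphere of radius $R_0(\mu)$, whose principal curvatures equal $1/R_0$.

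For small $s$, I would upgrade $C^1$-closeness of $q_{\mu,s}$ to $q_{\mu,0}$ into $C^2$-closeness of the boundary. To do this, I would use the $C^{2,\eta}$ bounds on the potential term $V_{\mathscr G(q)}$ (the Hölder smoothing used in Lemma~\ref{lem:spherical-density-spectrum}) together with the explicit quadratic term $\tfrac12 sr^2$. The implicit function theorem applied to $q_{\mu,s}=0$ then represents the boundary as a $C^2$-small radial graph over the sphere, so its curvatures stay within $[c_0,C_0]$.

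\emph{Meridional description.} Convexity and $z$-evenness give the form $\{0\le r<R_{\mu,s},\ |z|<Z_{\mu,s}(r)\}$, with the axis section equal to an interval.

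Near the equator $(R_{\mu,s},0)$, the outward normal is close to $e_r$, so $\partial_rq_{\mu,s}\ne0$ there. The implicit function theorem then gives $r=\varphi_{\mu,s}(z)$. Evenness yields $\varphi'(0)=0$. Strict convexity, inherited from the sphere where $-\varphi''\approx 1/R_0$, gives $c_0\le-\varphi''\le C_0$ on $|z|<z_0$, uniformly in the parameters.

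\emph{Equatorial asymptotics.} Taylor expansion gives
\[
R-\varphi(z)\asymp z^2,
\]
which inverts to $Z(r)\asymp(R-r)^{1/2}$.

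For the distance estimate at a point $(r,z)$ with $|z|<Z(r)$, I would compare with the horizontal distance $\varphi(z)-r$. This quantity is $\asymp Z(r)^2-z^2$, using
\[
\varphi(z)-\varphi(Z)\asymp (Z-|z|)(Z+|z|).
\]
Because the normal is nearly horizontal near the equator, and by a bounded-curvature comparison, the Euclidean distance $d$ is comparable to the horizontal distance.

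\emph{Main obstacle.} The hard step is the uniform comparison $d\asymp Z^2-z^2$. Near the tips $|z|\approx Z$, the closest boundary point need not lie on the same horizontal line. I would handle this with interior and exterior tangent balls of uniform radius, which exist by the $C^2$ bounds. With these balls, the distance equals the normal distance to the nearest boundary point, and it is bounded above and below by the horizontal distance times the cosine of a normal angle that stays uniformly bounded away from $\pi/2$.

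Finally, I would check the geometric hypotheses of \cite{LinWang2023} item by item against these properties: a convex axisymmetric support, a $C^2$ boundary, and a square-root equatorial profile.
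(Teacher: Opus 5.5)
Your proposal is correct and follows the paper's route. You take symmetry, $C^{3,\eta}$ regularity and uniform strict convexity from Proposition~\ref{prop:slow-vacuum-tangents}, write the equatorial boundary as $r=\varphi_{\mu,s}(z)$, and integrate the curvature bounds to get $R_{\mu,s}-\varphi_{\mu,s}(z)\asymp z^2$ and $\varphi_{\mu,s}(z)-r\asymp Z^2-z^2$. The paper then passes from this horizontal gap to Euclidean distance simply by noting that $r-\varphi_{\mu,s}(z)$ is a uniformly regular defining function. So your tangent-ball argument is more than you need: near the equator, a Lipschitz-graph bound already gives $(1+L)^{-1}(\varphi_{\mu,s}(z)-r)\le d_{\mu,s}\le\varphi_{\mu,s}(z)-r$, where $L$ bounds $|\varphi'_{\mu,s}|$ on the chart.
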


\begin{proof}
Proposition~\ref{prop:slow-vacuum-tangents} gives $C^{3,\eta}$ convergence
of the enthalpy and a uniformly transverse zero level.  Hence the free
surfaces form a $C^{3,\eta}$-small deformation of the spherical boundaries.
Their second fundamental forms remain uniformly positive, proving strict
convexity.  Axisymmetry and reflection symmetry are inherited from the
construction.  In a uniform equatorial chart write the meridional boundary
as $r=\varphi_{\mu,s}(z)$.  Integrating the uniform curvature bounds once
and twice gives
\[
 c_1t\le-\varphi'_{\mu,s}(t)\le c_2t,\qquad
 \frac{c_1}{2}t^2\le R_{\mu,s}-\varphi_{\mu,s}(t)
 \le\frac{c_2}{2}t^2
\]
for $0\le t\le z_0$.  Thus
$Z_{\mu,s}(r)^2\asymp R_{\mu,s}-r$.  Moreover, for
$0\le z\le Z:=Z_{\mu,s}(r)$,
\[
 \varphi_{\mu,s}(z)-r
 =\int_z^Z-\varphi'_{\mu,s}(t)\,dt\asymp Z^2-z^2,
\]
and the same holds for negative $z$ by reflection symmetry.  Since
$r-\varphi_{\mu,s}(z)$ is a uniformly regular defining function, its
absolute value is comparable with Euclidean distance to the boundary.
This proves \eqref{eq:slow-equator-geometry}.
\end{proof}

\subsection{Comparison of the constrained forms}

For an axisymmetric density perturbation $\sigma$, extended by zero outside
the star, define
\begin{equation}
 \Sigma_\sigma(r)=\int_{-\infty}^{\infty}\sigma(r,z)dz,\qquad
 m_\sigma(r)=\int_0^r s\Sigma_\sigma(s)ds.                              \label{eq:column-mass}
\end{equation}
Then $m_{\bar\rho}'(r)=r\Sigma_{\bar\rho}(r)$ and, for $\sigma\in X_0$,
$m_\sigma(R)=0$, where $R$ is the equatorial support radius.

For rigid rotation the reduced inviscid quadratic form \cite{LinWang2023} is
\begin{equation}
 \widetilde Q_{\rm EP}[\sigma]
 =Q_0[\sigma]+8\pi\omega^2
       \int_0^R\frac{|m_\sigma(r)|^2}{m_{\bar\rho}'(r)}dr.                  \label{eq:EP-reduced-form}
\end{equation}
This is exactly the rigid-rotation specialization of
\cite[Theorem~1.1 and Corollary~1.1]{LinWang2023}: their Rayleigh
discriminant is $4\omega^2$, which produces the coefficient $8\pi\omega^2$
in \eqref{eq:EP-reduced-form}.  Propositions~\ref{prop:slow-vacuum-tangents}
and \ref{prop:slow-support-geometry} supply their support and
equatorial-curvature
conditions; for
$\kappa\ne0$ the strict Rayleigh condition is automatic.  Their reduced
density space and mass constraint identify with the present $X_0$ after the
Riesz normalization \eqref{eq:LJ-Riesz}.  Here and in
Theorem~\ref{thm:strict-separation}, inviscid spectral stability means
absence of spectrum in $\{\Ree\lambda>0\}$; no boundedness of center dynamics
is asserted.
The extra term represents preservation of the material distribution of
specific angular momentum.  Viscosity removes those infinitely many constraints
and leaves only the total-$J$ rank-one term in $Q_J$.

\begin{lemma}
\label{lem:column-endpoints}
Let $\alpha=1/(\gamma_0-1)>1$ and set
\begin{equation}
 H_{\bar\rho}(r)=\int_{\{z:(r,z)\in\Omega_{\bar\rho}\}}
                  \frac{dz}{h'(\bar\rho(r,z))}.             \label{eq:column-H}
\end{equation}
Uniformly on a compact branch interval,
\begin{align}
 m_{\bar\rho}'(r)&\asymp r &&(r\downarrow0),               \label{eq:axis-column}\\
 m_{\bar\rho}'(r)&\asymp(R-r)^{\alpha+1/2},
 &H_{\bar\rho}(r)&\asymp(R-r)^{\alpha-1/2}
 &&(r\uparrow R).                                           \label{eq:equator-columns}
\end{align}
Consequently the Rayleigh form
\begin{equation}
 \mathcal R_{\bar\rho}[\sigma]
 :=\int_0^R\frac{|m_\sigma(r)|^2}{m_{\bar\rho}'(r)}\,dr       \label{eq:Rayleigh-form}
\end{equation}
is bounded on $X_0$, and its Riesz operator on $X_0$ is compact.
\end{lemma}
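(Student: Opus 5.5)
The estimates in \eqref{eq:axis-column}--\eqref{eq:equator-columns} come from Fubini, the vacuum asymptotics \eqref{eq:uniform-vacuum-factor} and the equatorial geometry \eqref{eq:slow-equator-geometry}. Boundedness and compactness of $\mathcal R_{\bar\rho}$ then follow from Cauchy--Schwarz along columns and a Hardy-type estimate near the two endpoints $r=0$ and $r=R$.

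\textbf{Axis.} The column integral $\Sigma_{\bar\rho}(r)=\int\bar\rho(r,z)\,dz$ is continuous and positive at $r=0$. Indeed, $\bar\rho$ is continuous, $\bar\rho(0)=\mu>0$, and the axis meets the support in an interval by Proposition~\ref{prop:slow-support-geometry}. Hence $m_{\bar\rho}'(r)=r\Sigma_{\bar\rho}(r)\asymp r$, uniformly by $C(\overline D)$ continuity of the branch.

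\textbf{Equator.} For $r$ near $R$, write $Z=Z(r)\asymp(R-r)^{1/2}$ and use $d\asymp Z^2-z^2$. By \eqref{eq:uniform-vacuum-factor},
\[
 \Sigma_{\bar\rho}(r)\asymp\int_{-Z}^{Z}(Z^2-z^2)^{\alpha}\,dz\asymp Z^{2\alpha+1}\asymp(R-r)^{\alpha+1/2}.
\]
Since $r\asymp R$ there, the same order holds for $m'_{\bar\rho}$. In the same way, $1/h'\asymp d^{\alpha-1}$ gives $H_{\bar\rho}\asymp Z^{2\alpha-1}=(R-r)^{\alpha-1/2}$.

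\textbf{Boundedness.} Cauchy--Schwarz in $z$ gives
\[
 |\Sigma_\sigma(r)|^2\le H_{\bar\rho}(r)\int h'(\bar\rho)|\sigma|^2\,dz .
\]
Near the axis, $|m_\sigma(r)|\le\int_0^r s|\Sigma_\sigma|\,ds\le(\int_0^r sH\,ds)^{1/2}(\int_0^r s\int h'|\sigma|^2\,dz\,ds)^{1/2}\lesssim r\|\sigma\|_X$, because $H$ is bounded near $r=0$ (the density is positive on the axis). So $|m_\sigma|^2/m'_{\bar\rho}\lesssim r\|\sigma\|_X^2$ is integrable.

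Near the equator, use $\sigma\in X_0$ to write $m_\sigma(r)=-\int_r^R s\Sigma_\sigma\,ds$. Then
\[
 |m_\sigma(r)|^2\lesssim\Big(\int_r^R H\,ds\Big)\,\|\sigma\|_{X(\{s>r\})}^2\lesssim(R-r)^{\alpha+1/2}\,\|\sigma\|_{X(\{s>r\})}^2 .
\]
Dividing by $m'_{\bar\rho}\asymp(R-r)^{\alpha+1/2}$ leaves $\|\sigma\|^2_{X(\{s>r\})}$. This is bounded, but $\int_{R-\epsilon}^R\|\sigma\|_{X(\{s>r\})}^2\,dr\le\epsilon\|\sigma\|_X^2$, so the equatorial piece is bounded. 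On compact subintervals of $(0,R)$, $m'_{\bar\rho}$ is bounded below, and the same estimate applies.

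\textbf{Compactness.} The form is $\|T\sigma\|^2_{L^2(dr)}$ with $T\sigma=m_\sigma/\sqrt{m'_{\bar\rho}}$. The endpoint estimates show that the restriction of $T$ to $(0,\epsilon)\cup(R-\epsilon,R)$ has operator norm $O(\epsilon^{1/2})$.

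On $[\epsilon,R-\epsilon]$, the map $\sigma\mapsto m_\sigma$ lands in functions with $|m_\sigma'(r)|=r|\Sigma_\sigma(r)|$, which is bounded in $L^2(\epsilon,R-\epsilon)$ by the Cauchy--Schwarz bound, since $H$ is bounded there. So $m_\sigma$ is bounded in $H^1(\epsilon,R-\epsilon)$, and Rellich compactness makes this truncated operator compact.

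$T$ is therefore a norm limit of compact operators, hence compact, and so $T^*T$, the Riesz operator of $\mathcal R_{\bar\rho}$, is compact.

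The main obstacle is the equatorial endpoint. There the mass constraint is used essentially: without it, $m_\sigma(R)\ne0$ and $\mathcal R_{\bar\rho}$ would diverge because $\int(R-r)^{-\alpha-1/2}\,dr=\infty$. The exponents also have to match exactly, which relies on the precise geometry \eqref{eq:slow-equator-geometry}.
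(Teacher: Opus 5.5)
Your proposal is correct and follows the paper's proof essentially step for step. It uses the same column integrals $\int_{-Z}^{Z}(Z^2-z^2)^{\alpha}\,dz$ and $\int_{-Z}^{Z}(Z^2-z^2)^{\alpha-1}\,dz$ for the equatorial asymptotics, and the same column Cauchy--Schwarz bound $|\Sigma_\sigma|^2\le H_{\bar\rho}(r)\int h'(\bar\rho)|\sigma|^2\,dz$. That bound gives the $O(\delta^2)$ axial tail and the $O(\delta)$ equatorial tail, the latter via $m_\sigma(r)=-\int_r^R s\Sigma_\sigma\,ds$ on $X_0$. The compactness step is also the same: the $H^1$--Rellich argument on $[\delta,R-\delta]$, then a norm limit of compact truncations.

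One small correction: $H_{\bar\rho}$ is bounded on all of $[0,R]$ because $1/h'(\bar\rho)$ is bounded and the support is bounded. Positivity of the density on the axis is not what makes $H_{\bar\rho}$ bounded near $r=0$.
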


\begin{proof}
By Proposition~\ref{prop:slow-support-geometry}, in a fixed equatorial
chart $d(r,z)\asymp Z(r)^2-z^2$ and
$Z(r)\asymp(R-r)^{1/2}$.  The factorization
\eqref{eq:uniform-vacuum-factor} and
$1/h'(\bar\rho)\asymp d^{\alpha-1}$ give
\begin{align*}
 \Sigma_{\bar\rho}(r)&\asymp
 \int_{-Z(r)}^{Z(r)}(Z(r)^2-z^2)^\alpha dz
 =C_\alpha Z(r)^{2\alpha+1}
 \asymp(R-r)^{\alpha+1/2},\\
 H_{\bar\rho}(r)&\asymp
 \int_{-Z(r)}^{Z(r)}(Z(r)^2-z^2)^{\alpha-1}dz
 =C_{\alpha-1}Z(r)^{2\alpha-1}
 \asymp(R-r)^{\alpha-1/2}.
\end{align*}
Since $r\asymp R$ there, this proves \eqref{eq:equator-columns};
\eqref{eq:axis-column} follows from smoothness and positivity of the column
density through the axis.

Put $E_\sigma(r)=\int h'(\bar\rho)|\sigma|^2dz$.  Cauchy--Schwarz gives
\begin{equation}
 |\Sigma_\sigma(r)|^2\le E_\sigma(r)H_{\bar\rho}(r),\qquad
 2\pi\int_0^R rE_\sigma(r)dr=\|\sigma\|_X^2.              \label{eq:column-CS}
\end{equation}
For $r<\delta$, Cauchy--Schwarz in the radial variable gives
\begin{align}
 |m_\sigma(r)|^2
 &\le \left(\int_0^r sE_\sigma(s)\,ds\right)
       \left(\int_0^r sH_{\bar\rho}(s)\,ds\right) \notag\\
 &\le Cr^2\|\sigma\|_X^2.                                \label{eq:axis-column-bound}
\end{align}
Together with $m_{\bar\rho}'(r)\asymp r$, this gives
\begin{equation}
 \int_0^\delta\frac{|m_\sigma|^2}{m_{\bar\rho}'}dr
 \le C\delta^2\|\sigma\|_X^2.                             \label{eq:Rayleigh-axis-tail}
\end{equation}
For $r>R-\delta$, mass zero gives
$m_\sigma(r)=-\int_r^R s\Sigma_\sigma(s)ds$.  Equations
\eqref{eq:column-CS} and \eqref{eq:equator-columns} give
\begin{equation}
 |m_\sigma(r)|^2\le
 C(R-r)^{\alpha+1/2}\|\sigma\|_X^2,\qquad
 \int_{R-\delta}^R\frac{|m_\sigma|^2}{m_{\bar\rho}'}dr
 \le C\delta\|\sigma\|_X^2.                              \label{eq:Rayleigh-boundary-tail}
\end{equation}
On $[\delta,R-\delta]$ define
$T_\delta\sigma=m_\sigma|_{[\delta,R-\delta]}$.  Since
$m_\sigma'=r\Sigma_\sigma$, estimate \eqref{eq:column-CS} shows that
$T_\delta:X_0\to H^1([\delta,R-\delta])$ is bounded.  Rellich compactness
makes it compact into $L^2$.  Multiplication by
$(m_{\bar\rho}')^{-1/2}$ is bounded there, so the truncated Rayleigh
operator is $K_\delta^*K_\delta$, with $K_\delta$ compact.  The two tail
estimates are operator-norm bounds for the positive endpoint forms.  Hence
the full Riesz operator is the norm limit of compact interior truncations.
\end{proof}

\begin{lemma}
\label{lem:comparison}
For $\sigma\in X_0$,
\begin{align}
 \Delta Q[\sigma]
 &:={\widetilde Q}_{\rm EP}[\sigma]-Q_J[\sigma] \notag\\
 &=8\pi\omega^2\left[
 \int_0^R\frac{|m_\sigma(r)|^2}{m_{\bar\rho}'(r)}dr
 -\frac{\left|\int_0^R r m_\sigma(r)dr\right|^2}
        {\int_0^R r^2m_{\bar\rho}'(r)dr}\right]\ge0.                     \label{eq:DeltaQ}
\end{align}
If $\omega\ne0$, equality holds precisely when
\begin{equation}
 m_\sigma(r)=C r m_{\bar\rho}'(r)\quad\text{for a.e. }r\in(0,R).          \label{eq:Delta-equality}
\end{equation}
Equivalently,
\begin{equation}
 \Sigma_\sigma=C\bigl(2\Sigma_{\bar\rho}
                         +r\partial_r\Sigma_{\bar\rho}\bigr)
 \quad\hbox{in }\mathcal D'(0,R).                         \label{eq:column-equality}
\end{equation}
For complex perturbations $C\in\C$; on the real space $C\in\mathbb R$.
\end{lemma}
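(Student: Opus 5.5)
The plan is to rewrite the rank-one term of $Q_J$ in column-mass variables, so that $\Delta Q$ becomes the defect in a weighted Cauchy--Schwarz inequality. The first step expresses the moment $\int r^2\sigma\,dx$ through $m_\sigma$. In cylindrical coordinates, $\int r^2\sigma\,dx=2\pi\int_0^R r^3\Sigma_\sigma(r)\,dr=2\pi\int_0^R r^2 m_\sigma'(r)\,dr$. Integrating by parts, with $m_\sigma(0)=0$ and $m_\sigma(R)=0$ for $\sigma\in X_0$, gives
\[
 \int r^2\sigma\,dx=-4\pi\int_0^R r\,m_\sigma(r)\,dr .
\]
Likewise $I=\int r^2\bar\rho\,dx=2\pi\int_0^R r^2m_{\bar\rho}'(r)\,dr$. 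Substituting these, the rank-one term becomes
\[
 \frac{\omega^2}{I}\Bigl|\int r^2\sigma\,dx\Bigr|^2=8\pi\omega^2\,\frac{\bigl|\int_0^R r m_\sigma\,dr\bigr|^2}{\int_0^R r^2m_{\bar\rho}'\,dr}.
\]
Subtracting this from \eqref{eq:EP-reduced-form} gives the formula \eqref{eq:DeltaQ}.

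The boundary terms need justification. Near the axis, \eqref{eq:axis-column-bound} gives $|m_\sigma(r)|\le Cr\|\sigma\|_X$. Near the equator, \eqref{eq:Rayleigh-boundary-tail} gives decay of $m_\sigma$. Hence $r^2m_\sigma\to0$ at both ends, and the integration by parts is legitimate. I would first verify it for smooth compactly supported $\sigma$ and then pass to $X_0$ by density, using the boundedness from Lemma~\ref{lem:column-endpoints}.

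For the nonnegativity, apply Cauchy--Schwarz with the weight $m_{\bar\rho}'>0$ on $(0,R)$:
\[
 \Bigl|\int_0^R r m_\sigma\,dr\Bigr|^2=\Bigl|\int_0^R\frac{m_\sigma}{(m_{\bar\rho}')^{1/2}}\,r(m_{\bar\rho}')^{1/2}\,dr\Bigr|^2\le\int_0^R\frac{|m_\sigma|^2}{m_{\bar\rho}'}\,dr\int_0^R r^2m_{\bar\rho}'\,dr .
\]
This shows $\Delta Q\ge0$. When $\omega\ne0$, equality holds exactly when the two factors are proportional in $L^2(0,R)$, that is $m_\sigma/(m_{\bar\rho}')^{1/2}=C\,r(m_{\bar\rho}')^{1/2}$ a.e. This is \eqref{eq:Delta-equality}. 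The constant $C$ is real whenever $\sigma$ is real, since $m_\sigma$ and $m_{\bar\rho}'$ are then real.

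For the equivalent form, differentiate $m_\sigma=Crm_{\bar\rho}'=Cr^2\Sigma_{\bar\rho}$ in $\mathcal D'(0,R)$. Using $m_\sigma'=r\Sigma_\sigma$ and dividing by $r>0$ gives $\Sigma_\sigma=C(2\Sigma_{\bar\rho}+r\partial_r\Sigma_{\bar\rho})$. Conversely, integrate \eqref{eq:column-equality} from $0$: $m_\sigma=Cr^2\Sigma_{\bar\rho}+\text{const}$, and the constant vanishes because $m_\sigma(0)=0$ and $r^2\Sigma_{\bar\rho}\to0$ as $r\to0$. The main obstacle is the endpoint bookkeeping, namely justifying the integration by parts and the finiteness of both integrals. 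Both are handled by the column estimates of Lemma~\ref{lem:column-endpoints}.
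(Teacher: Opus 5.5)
Your proposal is correct and follows the paper's proof essentially step for step. You integrate by parts to get $\int r^2\sigma\,dx=-4\pi\int_0^R r\,m_\sigma\,dr$ using $m_\sigma(R)=0$, write $I=2\pi\int_0^R r^2m_{\bar\rho}'\,dr$, and substitute into the rank-one term of $Q_J$. You then apply weighted Cauchy--Schwarz to $m_\sigma/\sqrt{m_{\bar\rho}'}$ and $r\sqrt{m_{\bar\rho}'}$, and differentiate the equality case in $\mathcal D'(0,R)$.

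The endpoint bookkeeping and the converse integration from $r=0$, which the paper leaves implicit, are fine. The density argument is not needed: $\sigma\in X\subset L^1$ already makes $m_\sigma$ absolutely continuous on $[0,R]$.
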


\begin{proof}
Integration by parts, using $m_\sigma(R)=0$, gives
\begin{equation}
 \int r^2\sigma dx
 =2\pi\int_0^Rr^2m_\sigma'(r)dr
 =-4\pi\int_0^Rr m_\sigma(r)dr,                                         \label{eq:Q-column}
\end{equation}
whereas
\begin{equation}
 I=2\pi\int_0^Rr^2m_{\bar\rho}'(r)dr.                                   \label{eq:I-column}
\end{equation}
Substitution into the rank-one term of $Q_J$ yields
\eqref{eq:DeltaQ}.  Weighted Cauchy--Schwarz applied to
$m_\sigma/\sqrt{m_{\bar\rho}'}$ and
$r\sqrt{m_{\bar\rho}'}$ proves nonnegativity.  Equality in
Cauchy--Schwarz is exactly \eqref{eq:Delta-equality}; distributional
differentiation gives \eqref{eq:column-equality}.
\end{proof}

Thus
\begin{equation}
 \widetilde Q_{\rm EP}\ge Q_J.                                          \label{eq:EP-ge-viscous}
\end{equation}
In particular,
\[
 n^-\!\left(\widetilde Q_{\rm EP}|_{X_0}\right)
 \le n^-\!\left(Q_J|_{X_0}\right).
\]
By Theorem~\ref{thm:main-index} and the Euler--Poisson index theorem of
\cite{LinWang2023}, the viscous unstable index is therefore no smaller than
the inviscid one for the slowly rotating branch considered here.
Variationally, this reflects the additional angular-momentum constraints of
the inviscid dynamics.  The strict inequality near the viscous turning point
is proved next.

\begin{lemma}
\label{lem:Abel-injectivity}
Let $w=w(|x|)$ be supported in $[0,R]$ and suppose
\begin{equation}
 \int_0^R|w(s)|s\,ds<\infty.                              \label{eq:Abel-integrability}
\end{equation}
If
\begin{equation}
 \Sigma_w(r)=2\int_r^R\frac{w(s)s}{\sqrt{s^2-r^2}}\,ds=0
 \quad\hbox{for a.e. }r\in(0,R),                          \label{eq:Abel-transform}
\end{equation}
then $w=0$ almost everywhere.
\end{lemma}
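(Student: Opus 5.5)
The natural route is the classical Abel inversion, carried out at the level of integrated quantities so that only the integrability \eqref{eq:Abel-integrability} is needed. Define the weighted kernel integral
\[
 F(t)=\int_t^R \frac{r\,\Sigma_w(r)}{\sqrt{r^2-t^2}}\,dr,\qquad 0<t<R .
\]
Inserting \eqref{eq:Abel-transform} and exchanging the order of integration (Tonelli applied to $|w|$) gives
\[
 F(t)=2\int_t^R w(s)s\left(\int_t^s\frac{r\,dr}{\sqrt{(s^2-r^2)(r^2-t^2)}}\right)ds .
\]
With the substitution $u=r^2$, the inner integral equals $\tfrac12\int_{t^2}^{s^2}du/\sqrt{(s^2-u)(u-t^2)}=\pi/2$, independent of $s,t$. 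Hence $F(t)=\pi\int_t^R w(s)s\,ds$.

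The only real obstacle is justifying Fubini. Its absolute version, with $|w|$ in place of $w$, yields $\pi\int_t^R|w(s)|s\,ds\le\pi\int_0^R|w|s\,ds<\infty$ for every $t$. Consequently the double integral converges absolutely, $\Sigma_w$ is well defined almost everywhere, and $r\Sigma_w(r)/\sqrt{r^2-t^2}$ is integrable on $(t,R)$ for every $t>0$. No further decay of $w$ near $0$ or $R$ is required.

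Now suppose $\Sigma_w=0$ almost everywhere. Then $F(t)=0$ for every $t\in(0,R)$, so $\int_t^R w(s)s\,ds=0$ for all $t$. The function $t\mapsto\int_t^R w(s)s\,ds$ is absolutely continuous, because $ws\in L^1(0,R)$. By Lebesgue differentiation its derivative is $-w(t)t$ almost everywhere. Hence $w(t)t=0$ almost everywhere, and therefore $w=0$ almost everywhere on $(0,R)$. Since $w$ is supported in $[0,R]$, the conclusion holds everywhere up to a null set. Complex $w$ is handled identically, or by applying the argument to the real and imaginary parts separately.
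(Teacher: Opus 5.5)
Your proof is correct and is essentially the paper's argument. The paper first substitutes $u=r^2$ and applies $\int_u^{R^2}A(t)(t-u)^{-1/2}\,dt$ with $A(t)=\Sigma_w(\sqrt t)$; this equals $2F(\sqrt u)$ for your $F$. It then justifies Fubini by the same absolute computation yielding $\pi\int|w(s)|s\,ds$, and concludes by differentiating $\int_t^R w(s)s\,ds$, so your version differs only in skipping the change of variables (and in getting vanishing for every $t$ rather than almost every $t$).
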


\begin{proof}
Set $F(u)=w(\sqrt u)$ and $A(t)=\Sigma_w(\sqrt t)$.  Then
\begin{equation}
 A(t)=\int_t^{R^2}\frac{F(v)}{\sqrt{v-t}}\,dv.             \label{eq:Abel-u}
\end{equation}
The integrability in \eqref{eq:Abel-integrability} permits absolute use of
Fubini's theorem, since for every $u<R^2$,
\begin{equation}
 \int_u^{R^2}\int_t^{R^2}
 \frac{|F(v)|}{\sqrt{(v-t)(t-u)}}\,dv\,dt
 =\pi\int_u^{R^2}|F(v)|\,dv<\infty.                    \label{eq:Abel-absolute-Fubini}
\end{equation}
For almost every $u\in(0,R^2)$,
\begin{align}
 \int_u^{R^2}\frac{A(t)}{\sqrt{t-u}}\,dt
 &=\int_u^{R^2}F(v)
   \left[\int_u^v\frac{dt}{\sqrt{(v-t)(t-u)}}\right]dv \notag\\
 &=\pi\int_u^{R^2}F(v)\,dv.                               \label{eq:Abel-double-transform}
\end{align}
If $A=0$ almost everywhere, the last integral vanishes for almost every
$u$.  Distributional differentiation gives $F=0$, and hence $w=0$ almost
everywhere.  The branch tangent and homology tangent used below satisfy
\eqref{eq:Abel-integrability}, because they are
$O(d^{\alpha-1})$ and $\alpha>1$.
\end{proof}

\subsection{Strictness on the turning tangent}

By Hypothesis~\ref{hyp:EOS},
\begin{equation}
 3P'(s)-h(s)\quad\hbox{is not constant on }(0,\mu_*].       \label{eq:nonhomology}
\end{equation}
Indeed, its leading term at the vacuum is a nonzero multiple of
$s^{\gamma_0-1}$ because $\gamma_0>4/3$.

\begin{proposition}
\label{prop:strict-comparison}
Assume Hypothesis~\ref{hyp:EOS} and the nondegenerate first-mass-maximum
condition \eqref{eq:nondegenerate-maximum}, and let
$\rho_{\mu,\kappa}$ be the fixed-$J$ branch on $I_\mu$ constructed in
Proposition~\ref{prop:fixed-J-branch}.  For every
sufficiently small nonzero $\kappa$,
\begin{equation}
 \Delta Q_{\mu_*^\kappa,\kappa}[f_\kappa]>0.                             \label{eq:strict-on-tangent}
\end{equation}
\end{proposition}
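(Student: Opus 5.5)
We argue by contradiction, following the outline in the introduction. Since $\Delta Q\ge0$ by Lemma~\ref{lem:comparison} and $\omega(\mu_*^\kappa,\kappa)\ne0$ for $\kappa\ne0$ by \eqref{eq:omega-expansion}, failure of \eqref{eq:strict-on-tangent} along a sequence $\kappa_n\to0$, $\kappa_n\ne0$, means that equality holds in Lemma~\ref{lem:comparison} for $\sigma=f_{\kappa_n}$. Hence there are real constants $C_n$ with
\[
 m_{f_{\kappa_n}}(r)=C_n\,r\,m'_{\rho_{\mu_*^{\kappa_n},\kappa_n}}(r)\qquad\text{a.e. on }(0,R_n).
\]
Here $f_\kappa$ is real and lies in $X_0$, since $\partial_\mu M_\kappa(\mu_*^\kappa)=0$. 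The plan is to pass to the spherical limit, identify $f_*$ with the homology tangent, and then derive $P=K\rho^{4/3}$, contradicting \eqref{eq:nonhomology}.

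First we bound $C_n$ and pass to the limit. Proposition~\ref{prop:mass-maximum} gives $f_{\kappa_n}\to f_*$ uniformly on the fixed ball $D$ after zero extension, and $\rho_{\mu_*^{\kappa_n},\kappa_n}\to\rho_{\mu_*}$ in $C(\overline D)$ by \eqref{eq:rho-expansion}. Consequently the column masses $m_{f_{\kappa_n}}$ and $m_{\rho_n}$ converge uniformly in $r$, because the columns are integrals of uniformly convergent compactly supported functions. To bound $C_n$, we evaluate at a fixed interior radius $r_0$ where $m'_{\rho_{\mu_*}}(r_0)>0$. Extracting a subsequence, $C_n\to C$, and in the limit
\[
 m_{f_*}(r)=C\,r\,m'_{\rho_{\mu_*}}(r).
\]
Equivalently, \eqref{eq:column-equality} gives $\Sigma_{f_*}=C(2\Sigma_{\rho_{\mu_*}}+r\partial_r\Sigma_{\rho_{\mu_*}})$ in $\mathcal D'(0,R_0)$.

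Next we identify $f_*$. For a radial density $w$, the homology tangent $h_w:=3w+x\cdot\nabla w$ has column $\Sigma_{h_w}=2\Sigma_w+r\partial_r\Sigma_w$. This follows from a direct computation: the $z$-part of $x\cdot\nabla$ integrates by parts to $-\Sigma_w$, and the remaining $2\Sigma_w+r\partial_r\Sigma_w$ comes from the $r$-part. Applying this with $w=\rho_{\mu_*}$, the radial function $g:=f_*-C(3\rho_{\mu_*}+x\cdot\nabla\rho_{\mu_*})$ has vanishing Abel transform. It satisfies \eqref{eq:Abel-integrability} because both pieces are $O(d^{\alpha-1})$ with $\alpha>1$. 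Lemma~\ref{lem:Abel-injectivity} then gives $f_*=C(3\rho_{\mu_*}+r\rho_{\mu_*}')$. The center normalization gives $f_*(0)=1=3C\mu_*$, so $C\ne0$.

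Finally we derive the contradiction. We differentiate the spherical equilibrium equation in $\mu$ at $\mu_*$:
\[
 h'(\rho)f_*+V_{f_*}+\partial_\mu c=0 .
\]
We then compute $V_{f_*}$ for the homology tangent using the scaling $\rho_\lambda(x)=\lambda^3\rho(\lambda x)$, whose derivative at $\lambda=1$ is $3\rho+x\cdot\nabla\rho$. This scaling gives $V_{\rho_\lambda}(x)=\lambda V_\rho(\lambda x)$, so $V_{f_*}=C(V_\rho+x\cdot\nabla V_\rho)$. From $h(\rho)+V_\rho+c=0$ we also have $x\cdot\nabla V_\rho=-h'(\rho)x\cdot\nabla\rho$. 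Substituting, the $x\cdot\nabla\rho$ terms cancel, leaving
\[
 C\bigl(3\rho h'(\rho)-h(\rho)-c\bigr)+\partial_\mu c=0 .
\]
Since $\rho h'(\rho)=P'(\rho)$ and $C\ne0$, this says $3P'(s)-h(s)$ is constant for $s\in(0,\mu_*]$, contradicting \eqref{eq:nonhomology}.

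The main obstacle is the limit passage: keeping the constants $C_n$ bounded, and ensuring that the equality relation survives near the equator, where $m'_{\bar\rho}$ degenerates and the supports move. I would handle this by working only on compact subintervals of $(0,R_0)$, where $m'_{\rho_{\mu_*}}$ is bounded below, using uniform convergence there, and using the distributional form \eqref{eq:column-equality} so that the Abel argument needs the identity only almost everywhere. A secondary point is justifying the identity for $V_{f_*}$ near vacuum, which follows from the $W^{1,\infty}$ regularity in Proposition~\ref{prop:automatic-admissibility}.
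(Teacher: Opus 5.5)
Your proposal is correct and follows the paper's proof essentially step for step. It argues along a contradicting sequence $\kappa_n\to0$, gets uniform convergence of the column masses with $C_n$ bounded by evaluation at an interior radius, and uses Abel injectivity to identify $f_*$ with a multiple of the homology tangent $g=3\rho_*+x\cdot\nabla\rho_*$. The scaling computation $h'(\rho_*)g+V_g=3P'(\rho_*)-h(\rho_*)-c_*$ then forces $3P'-h$ to be constant on $(0,\mu_*]$. The only cosmetic difference is how $C\ne0$ is obtained: you evaluate $f_*=Cg$ at the center ($1=3C\mu_*$), while the paper excludes $C_*=0$ separately via Abel injectivity and $f_*(0)=1$; both are valid.
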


\begin{proof}
Suppose not.  There are $\kappa_n\to0$, $\kappa_n\ne0$, for which equality
holds.  Set
\begin{equation}
 \rho_*:=\rho_{\mu_*},\qquad R_*:=R_0(\mu_*),\qquad
 c_*:=c_{\mu_*,0}.
\end{equation}
Write
\begin{equation}
 \rho_n=\rho_{\mu_*^{\kappa_n},\kappa_n},\qquad
 f_n=\left.\partial_\mu\widetilde\rho_{\mu,\kappa_n}
                  \right|_{\mu=\mu_*^{\kappa_n}}.
\end{equation}
Let $R_n$ be the equatorial radius of $\{\rho_n>0\}$.
Since $\omega(\mu_*^{\kappa_n},\kappa_n)\ne0$, the equality case gives
\begin{equation}
 m_{f_n}(r)=C_n r m_{\rho_n}'(r).                                        \label{eq:equality-sequence}
\end{equation}
By \eqref{eq:branch-parameter-regularity},
\eqref{eq:fixed-J-tangent-chain-rule}, and
\eqref{eq:tangent-convergence}, the zero extensions satisfy
\[
 \|\rho_n-\rho_*\|_{C(\overline D)}
 +\|f_n-f_*\|_{C(\overline D)}\longrightarrow0
\]
on one fixed ball $D$.  Choose $L$ so that $D\subset[-L,L]^3$.
Integrating every column over the same interval gives
\[
 \sup_r|\Sigma_{f_n}(r)-\Sigma_{f_*}(r)|
 \le 2L\|f_n-f_*\|_{C(\overline D)},
\]
and the same estimate holds for the densities.  Multiplication by $r$
and integration from $0$ to $r$ therefore give
\begin{equation}
 m_{f_n}\longrightarrow m_{f_*},\qquad
 m_{\rho_n}'\longrightarrow m_{\rho_*}'
 \quad\hbox{uniformly on }[0,L].                         \label{eq:column-convergence}
\end{equation}
The columns have continuous representatives, so
\eqref{eq:equality-sequence} holds pointwise in the interior.
Choose $r_0\in(0,R_*)$ with $m_{\rho_*}'(r_0)>0$.  Then
\[
 C_n=\frac{m_{f_n}(r_0)}{r_0m_{\rho_n}'(r_0)}
\]
is bounded.  Passing to a subsequence, write $C_n\to C_*$.
Using $C_n\to C_*$ and exhausting $(0,R_*)$ therefore gives
\begin{equation}
 m_{f_*}(r)=C_*r m_{\rho_*}'(r).                                         \label{eq:limit-column-equality}
\end{equation}
If $C_*=0$, then $\Sigma_{f_*}=0$; Lemma~\ref{lem:Abel-injectivity} gives
$f_*=0$, contradicting the normalization
$f_*(0)=1$.

Consider the mass-preserving homology
\begin{equation}
 \rho_a(x)=a^3\rho_*(ax),\qquad
 g=\left.\partial_a\rho_a\right|_{a=1}
   =3\rho_*+x\cdot\nabla\rho_*.                                         \label{eq:homology-tangent}
\end{equation}
For a radial function $w$, its column density is the Abel transform in
\eqref{eq:Abel-transform}, so Lemma~\ref{lem:Abel-injectivity} applies.
Direct scaling gives
\begin{equation}
 m_g(r)=r m_{\rho_*}'(r),\qquad
 \Sigma_g=2\Sigma_{\rho_*}+r\partial_r\Sigma_{\rho_*}.                 \label{eq:homology-column}
\end{equation}
Therefore \eqref{eq:limit-column-equality}, differentiated in
distributions and combined with Lemma~\ref{lem:Abel-injectivity}, implies
\begin{equation}
 f_*=C_*g,\qquad C_*\ne0.                                                \label{eq:f-homology}
\end{equation}

Let $L_*=h'(\rho_*)+V_{(\cdot)}$ be the nonrotating density Hessian.
The branch tangent identity gives
\begin{equation}
 L_*f_*=-\left.\partial_\mu c_{\mu,0}\right|_{\mu=\mu_*}\,\mathbf1,
                                                               \label{eq:limiting-tangent-equation}
\end{equation}
a spatial constant.
Newtonian potential scales by
\begin{equation}
 V_{\rho_a}(x)=aV_{\rho_*}(ax),\qquad
 V_g=V_{\rho_*}+x\cdot\nabla V_{\rho_*}.                                 \label{eq:potential-scaling}
\end{equation}
Using the equilibrium equation and its spatial gradient,
\begin{align}
 L_*g
 &=h'(\rho_*)\bigl(3\rho_*+x\cdot\nabla\rho_*\bigr)
   +V_{\rho_*}+x\cdot\nabla V_{\rho_*}\notag\\
 &=3P'(\rho_*)-h(\rho_*)-c_*.                                           \label{eq:L-homology}
\end{align}
Equations \eqref{eq:f-homology} and \eqref{eq:L-homology} force
$3P'(\rho_*)-h(\rho_*)$ to be spatially constant.  For the spherical star,
\begin{equation}
 \frac{d}{dr}h(\rho_*(r))=-\frac{m_*(r)}{r^2}<0,
 \qquad 0<r<R_*,                                         \label{eq:spherical-enthalpy-monotonicity}
\end{equation}
where $m_*(r)=4\pi\int_0^r\rho_*(s)s^2\,ds$.  Thus $\rho_*$ decreases
continuously from $\mu_*$ to zero and assumes every value in
$(0,\mu_*]$.  It follows that $3P'(s)-h(s)$ is constant on that whole
interval, contradicting \eqref{eq:nonhomology}.

The vacuum limit of that constant is zero.  Hence the equality case would
give $3P'=h$ and, because $h'=P'/\rho$,
\begin{equation}
 3(P')'=\frac{P'}{\rho},\qquad
 P'(\rho)=A\rho^{1/3},\qquad P(\rho)=K\rho^{4/3}.                         \label{eq:four-thirds}
\end{equation}
Thus the $4/3$ homology law is the exact exceptional case.
\end{proof}

\subsection{A delayed inviscid transition}

\begin{proof}[Proof of Theorem~\ref{thm:strict-separation}]
At $\mu=\mu_*^\kappa$, Lemma~\ref{lem:one-constraint-inertia} gives
\begin{equation}
 Q_J\ge0\quad\text{on }X_0,\qquad
 \Ker(Q_J|_{X_0})=\Span\{f_\kappa,\tau_\kappa\},\qquad
 \tau_\kappa=-\partial_z\rho_{\mu_*^\kappa,\kappa}.                      \label{eq:viscous-kernel-at-turn}
\end{equation}
The column density of $\tau_\kappa$ is zero because, on each vertical
section,
\begin{equation}
 \Sigma_{\tau_\kappa}(r)
 =-\int\partial_z\rho_{\mu_*^\kappa,\kappa}(r,z)\,dz=0,     \label{eq:translation-column-zero}
\end{equation}
the density vanishing at both endpoints.  Hence
$m_{\tau_\kappa}=0$.  Denote the Hermitian sesquilinear polarization of
$\Delta Q$ by $\Delta Q[\cdot,\cdot]$; then
$\Delta Q[\tau_\kappa,\sigma]=0$ for every
$\sigma\in X_0$.  If
$\sigma\in\Ker(\widetilde Q_{\rm EP}|_{X_0})$, the nonnegativity of both
$Q_J$ and $\Delta Q$ first gives
$\sigma=af_\kappa+b\tau_\kappa$, and then
\begin{equation}
 0=\Delta Q[\sigma]=|a|^2\Delta Q[f_\kappa].
\end{equation}
Proposition~\ref{prop:strict-comparison} implies $a=0$.  Therefore
\begin{equation}
 \Ker(\widetilde Q_{\rm EP}|_{X_0})=\Span\{\tau_\kappa\}.                 \label{eq:EP-kernel-at-turn}
\end{equation}

Under the fixed-ambient realization in Lemma~\ref{lem:form-continuity}, the inviscid
Riesz operator is $I+K_{\mu,\kappa}$ on a fixed Hilbert space, where
$K_{\mu,\kappa}$ is compact and norm-continuous.  Write $p=(\mu,\kappa)$ and use $W_p,t_p$ from that lemma.  The
enlarged ambient spaces
\[
 Z_p=\{g\in L^2_{\rm ax}(D):(g,W_p)=(g,t_p)=0\}
\]
have norm-continuous orthogonal projections
$\Pi_p-t_p\otimes t_p$, since $t_p\perp W_p$ and $\|t_p\|=1$.
The added exterior directions have eigenvalue $1$.  Kato's near-identity
unitary identifies these enlarged spaces with $Z_{(\mu_*^\kappa,\kappa)}$;
no norm continuity of the moving support projection is used.  On this fixed space,
\eqref{eq:EP-kernel-at-turn} supplies a positive gap.  Operator-norm
continuity preserves half of this gap on
$[\mu_*^\kappa,\mu_*^\kappa+\delta_\kappa]$ for some
$\delta_\kappa>0$.  Translation remains an exact zero mode.  The sharp
Euler--Poisson index theorem of \cite{LinWang2023} then gives zero inviscid
unstable index throughout this interval, proving
Theorem~\ref{thm:strict-separation}.
\end{proof}

\begin{remark}
The inviscid form is coercive modulo axial translation, which remains
a neutral symmetry on the mass-zero space.  The rigid-rotation velocity
$q_{\rm rot}$ lies in the kernel of the viscous form, but for $\omega\ne0$
it is not by itself a zero mode of the full generator.  Its coefficient is
fixed by the total-$J$ constraint and removed by the Routh map; it therefore
does not add a density-kernel vector to \eqref{eq:EP-kernel-at-turn}.
\end{remark}

\begin{remark}
When $\kappa=0$, one has $\omega=0$ and $\Delta Q\equiv0$; the viscous and
inviscid thresholds coincide, as in the nonrotating theory.  For each fixed
small $\kappa\ne0$, the number $\delta_\kappa$ is obtained from the positive
inviscid spectral gap at $\mu_*^\kappa$ and the form-norm modulus in
Lemma~\ref{lem:form-continuity}.  No lower bound uniform as
$\kappa\to0$ is asserted; in particular $\delta_\kappa$ may tend to zero.
\end{remark}

\appendix

\section{Rotating-frame Lagrangian derivation}
\label{app:lagrangian}

\subsection{The rotating-frame equations}

Let $R(t)$ be rotation through angle $\omega t$ about the $z$ axis.  From an
inertial solution $(\rho^{\rm in},v^{\rm in},V^{\rm in})$, define
\begin{align}
 \rho(t,y)&=\rho^{\rm in}(t,R(t)y),\notag\\
 w(t,y)&=R(t)^{-1}v^{\rm in}(t,R(t)y)-\omega e_z\times y,\notag\\
 V(t,y)&=V^{\rm in}(t,R(t)y).                                           \label{eq:rotating-change}
\end{align}
A direct differentiation gives
\begin{align}
 \rho_t+\nabla\cdot(\rho w)&=0,                                         \label{eq:rot-Euler-mass}\\
 \rho\left(w_t+w\cdot\nabla w+2\omega e_z\times w
 +\omega e_z\times(\omega e_z\times y)\right)+\nabla P(\rho)
 &=\nabla\cdot\mathbb T(w)-\rho\nabla V,                                \label{eq:rot-Euler-momentum}\\
 \Delta V&=4\pi\rho.                                                     \label{eq:rot-Euler-Poisson}
\end{align}
Because a rigid inertial rotation has zero relative velocity, the equilibrium
is $(\bar\rho,w)=(\bar\rho,0)$ and
\begin{equation}
 h(\bar\rho)+V_{\bar\rho}-\half\omega^2r^2+c=0.
\end{equation}
The base Lagrangian map is therefore the identity in rotating coordinates.
In inertial coordinates it would be $a\mapsto R(t)a$, not the identity.

\subsection{Exact pullback to the reference star}
\label{pullback}
For each nearby motion choose an initial label map $Y_0$ that pushes the
fixed reference mass $\bar\rho(a)da$ to the perturbed initial density.  Let
$Y(t,a)$ be the ensuing flow of $w$ on the fixed reference domain
$\Omega=\Omega_{\bar\rho}$:
\begin{equation}
 Y_t(t,a)=w(t,Y(t,a)),\qquad Y(0,a)=Y_0(a).                             \label{eq:flow-map}
\end{equation}
The equilibrium member has $Y_0(a)=a$.  Allowing $Y_0$ to vary is essential:
if one imposed $Y(0)=\mathrm{Id}$ on every perturbed motion, the exact mass
on the right side below would be the perturbed initial density, not
$\bar\rho$.
Set
\begin{equation}
 F=\nabla_aY,\qquad J=\det F,\qquad \Psi(t,a)=V(t,Y(t,a)).
\end{equation}
Mass conservation is the exact identity
\begin{equation}
 \rho(t,Y(t,a))J(t,a)=\bar\rho(a).                                      \label{eq:exact-mass-pullback}
\end{equation}
This fixed-reference identity parametrizes the component with the same total
mass as the equilibrium.  If one instead allows a perturbed reference
density $\rho_0^\varepsilon(a)$, the right side is
$\rho_0^\varepsilon$ and the Eulerian first variation below acquires the
time-independent term $\delta\rho_0$.  On every nonzero exponential mode or
root chain it vanishes because
$\sigma+\nabla\cdot(\bar\rho\xi)$ is time independent, whereas a function
in $e^{\lambda t}\C[t]$ with $\lambda\ne0$ has no nonzero time-independent
component.  Conservation of mass alone would show only that the integral of
this defect is zero.  Thus the fixed-reference formulation loses no
nonzero spectral information used in this paper.
The pulled-back rate of strain is
\begin{equation}
 e_Y(Y_t)=\half\left(\nabla_aY_tF^{-1}
                  +F^{-T}(\nabla_aY_t)^T\right),
\end{equation}
and
\begin{equation}
 \mathbb T_Y
 =2\nu_s\dev e_Y(Y_t)
  +\nu_b\operatorname{tr}(\nabla_aY_tF^{-1})I.                          \label{eq:Lagrangian-viscosity}
\end{equation}
Using the Piola identity, the exact Lagrangian momentum and Poisson
equations are
\begin{align}
 &\bar\rho\left(Y_{tt}+2\omega e_z\times Y_t
       +\omega e_z\times(\omega e_z\times Y)\right)\notag\\
 &\qquad
 +\operatorname{Div}_a\left[
     JP(\bar\rho/J)F^{-T}\right]
 =\operatorname{Div}_a\left(J\mathbb T_YF^{-T}\right)
   -\bar\rho F^{-T}\nabla_a\Psi,                                        \label{eq:exact-Lagrange-momentum}\\
 &\operatorname{Div}_a\left(JF^{-1}F^{-T}\nabla_a\Psi\right)
   =4\pi\bar\rho.                                                        \label{eq:exact-Lagrange-Poisson}
\end{align}
On the reference boundary, the exact traction condition is
\begin{equation}
 J\left(P(\bar\rho/J)I-\mathbb T_Y\right)F^{-T}N=0,                      \label{eq:exact-Lagrange-traction}
\end{equation}
and $\Psi$ is coupled to the exterior harmonic potential by continuity of
the potential and normal flux.  Equations
\eqref{eq:exact-mass-pullback}--\eqref{eq:exact-Lagrange-traction} are posed
on a fixed domain and are the appropriate starting point for a rigorous
physical-vacuum linearization.

\subsection{First variation}

Write
\begin{equation}
 Y(t,a)=a+\varepsilon\xi(t,a)+O(\varepsilon^2).
\end{equation}
The standard determinant and inverse expansions are
\begin{equation}
 J=1+\varepsilon\nabla\cdot\xi+O(\varepsilon^2),\qquad
 F^{-1}=I-\varepsilon\nabla\xi+O(\varepsilon^2).                         \label{eq:FJ-expansions}
\end{equation}
The Lagrangian density variation and the Eulerian density variation are,
respectively,
\begin{equation}
 \delta\rho_L=-\bar\rho\nabla\cdot\xi,\qquad
 \sigma=\delta\rho_E
 =\delta\rho_L-\xi\cdot\nabla\bar\rho
 =-\nabla\cdot(\bar\rho\xi).                                             \label{eq:Lagrange-Euler-density}
\end{equation}
The relative velocity perturbation is
\begin{equation}
 u=\xi_t.                                                               \label{eq:u-xit}
\end{equation}
We give the Eulerian conversion of the Poisson variation explicitly.  Put
\begin{equation}
 B_\xi=(\nabla\cdot\xi)I-\nabla\xi-(\nabla\xi)^T,
 \qquad \psi_L=\delta\Psi,
 \qquad \varphi=\psi_L-\xi\cdot\nabla V_{\bar\rho}.       \label{eq:Poisson-variation-fields}
\end{equation}
Since $\delta(JF^{-1}F^{-T})=B_\xi$, the first variation of
\eqref{eq:exact-Lagrange-Poisson} is
\begin{equation}
 \nabla\cdot\bigl(\nabla\psi_L+B_\xi\nabla V_{\bar\rho}\bigr)=0.
                                                               \label{eq:Lagrangian-Poisson-first-variation}
\end{equation}
For every smooth $V$,
\begin{equation}
 \nabla\cdot(B_\xi\nabla V)+\Delta(\xi\cdot\nabla V)
 =\nabla\cdot(\xi\Delta V).                                \label{eq:Poisson-conversion-identity}
\end{equation}
Using $\Delta V_{\bar\rho}=4\pi\bar\rho$ and
$\sigma=-\nabla\cdot(\bar\rho\xi)$, equations
\eqref{eq:Lagrangian-Poisson-first-variation}--
\eqref{eq:Poisson-conversion-identity} give
\begin{equation}
 \Delta\varphi=-\nabla\cdot(4\pi\bar\rho\xi)=4\pi\sigma.
\end{equation}
The decay condition identifies $\varphi$ with
\begin{equation}
 V_\sigma(x)=-\int_{\R^3}\frac{\sigma(y)}{\abs{x-y}}dy.                  \label{eq:linear-Poisson-appendix}
\end{equation}

We record the cancellation in the momentum equation.  The material
variation of the centrifugal acceleration contains
$\omega e_z\times(\omega e_z\times\xi)$.  The Lagrangian variations of
$\nabla h(\bar\rho)$ and $\nabla V_{\bar\rho}$ contain the Hessian of the
equilibrium potentials applied to $\xi$.  Differentiating
\begin{equation}
 \nabla h(\bar\rho)+\nabla V_{\bar\rho}
 +\omega e_z\times(\omega e_z\times a)=0
\end{equation}
gives the explicit identity
\[
 D^2\bigl(h(\bar\rho)+V_{\bar\rho}\bigr)\xi
 +\omega e_z\times(\omega e_z\times\xi)=0.
\]
The pressure and gravitational acceleration contribute
$\nabla(h'(\bar\rho)\sigma+V_\sigma)
+D^2(h(\bar\rho)+V_{\bar\rho})\xi$.
Thus the material terms cancel.  The base viscous stress vanishes, so its
variation has no density-prefactor correction.  The remaining equation is
\begin{equation}
 u_t+2\omega e_z\times u
 +\nabla\bigl(h'(\bar\rho)\sigma+V_\sigma\bigr)
 =\bar\rho^{-1}\nabla\cdot\mathbb T(u),                                 \label{eq:linear-momentum-appendix}
\end{equation}
whereas differentiating \eqref{eq:Lagrange-Euler-density} in time gives
\begin{equation}
 \sigma_t=-\nabla\cdot(\bar\rho u).                                     \label{eq:linear-mass-appendix}
\end{equation}
These are exactly \eqref{eq:linear-mass}--\eqref{eq:linear-momentum}.

At the reference boundary $\bar\rho=0$, so
$P(\bar\rho/J)=P(0)=0$ identically for every admissible deformation.  The
base viscous stress is also zero.  Hence the first variation of
$J\mathbb T_YF^{-T}$ is exactly $\mathbb T(\xi_t)$, while the pressure
traction has zero first variation.  The remaining natural condition is
\begin{equation}
 \mathbb T(u)\mathbf n=0                                      \label{eq:linear-traction}
\end{equation}
for sufficiently regular perturbations.  For general energy states, the
equations and boundary condition are realized together by the form
identity; no separate trace of an individual momentum residual is
required.  No boundary condition on the Eulerian density perturbation
$\sigma$ is added.

\subsection{Equivalence of Lagrangian and Eulerian modes}

For a mode with $\lambda\ne0$, \eqref{eq:u-xit} gives
\begin{equation}
 \xi=\lambda^{-1}u,\qquad
 \sigma=-\lambda^{-1}\nabla\cdot(\bar\rho u).                            \label{eq:mode-conversion}
\end{equation}
For a root chain, the time derivative is invertible on
$e^{\lambda t}\C[t]$, giving the same triangular formulas at every chain
level.  Hence the Lagrangian and Eulerian formulations have identical
nonzero characteristic values and algebraic multiplicities.  Static
weighted-divergence-free displacements include relabeling directions, while
time-dependent fields in $\Ker\cC$ may represent physical solenoidal
velocities.

\section{Regularity of the slow-rotation branch}
\label{app:slow-branch}

The standard slow-rotation construction is most naturally carried out for
the enthalpy rather than for the zero-extended density.  We prove here the
parameter regularity and vacuum geometry in
Propositions~\ref{prop:slow-branch-full-range} and
\ref{prop:slow-vacuum-tangents}.  Second density derivatives may be
singular at the moving vacuum interface, but they remain integrable
throughout the range $\gamma_0<2$.

\begin{lemma}
\label{lem:transverse-positive-part}
Let $D\Subset\mathbb R^3$ be a bounded $C^2$ domain and let
$\mathscr U\subset C^1(\overline D;\mathbb R)$ be an open neighborhood for which
there are $c_0,c_b,\delta_0>0$ such that, uniformly for
$q\in\mathscr U$,
\[
 q\le-c_b<0\quad\hbox{on }\partial D,\qquad
 |\nabla q|\ge c_0\quad\hbox{on }\{|q|<2\delta_0\}.
\]
Suppose $\mathscr G(t)=0$ for $t\le0$, $\mathscr G\in C^2((0,\infty))\cap C^1(\mathbb R)$,
and, for some $\alpha>1$,
\[
 |\mathscr G^{(j)}(t)|\le Ct^{\alpha-j},\qquad
 0<t<2\delta_0,\quad j=0,1,2.
\]
Then
\[
 q\longmapsto \mathscr G(q):\mathscr U\to L^1(D),\qquad
 q\longmapsto V_{\mathscr G(q)}:\mathscr U\to C^1(\overline D)
\]
are $C^2$.  Their derivatives are
\begin{align}
 D(\mathscr G(q))v&=\mathscr G'(q)v,&
 D^2(\mathscr G(q))[v_1,v_2]
   &=\mathbf1_{\{q>0\}}\mathscr G''(q)v_1v_2,                       \label{eq:positive-part-derivatives}\\
 D(V_{\mathscr G(q)})v&=V_{\mathscr G'(q)v},&
 D^2(V_{\mathscr G(q)})[v_1,v_2]
   &=V_{\mathbf1_{\{q>0\}}\mathscr G''(q)v_1v_2}.                  \label{eq:potential-derivatives}
\end{align}
\end{lemma}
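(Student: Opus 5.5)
The plan is to handle the superposition map $q\mapsto\mathscr G(q)$ into $L^1(D)$ first, and then obtain the potential statement by composing with the Newtonian potential. The map $L^1(D)\ni f\mapsto V_f\in C^1(\overline D)$ is not bounded. However, all the densities we produce are dominated by powers of $(q)_+$, and these are uniformly bounded. So I will instead use the bound $\|V_f\|_{C^1(\overline D)}\le C\|f\|_{L^p(D)}$ for any $p>3$, which follows from $|\nabla_x|x-y|^{-1}|\le|x-y|^{-2}\in L^{p'}$. This means the first statement should actually be proved with values in $L^p$, $p>3$, wherever the integrability allows. For first derivatives this is easy. For second derivatives, $\mathscr G''(q)\sim q^{\alpha-2}$ is only integrable to a limited power, and this is where the transversality hypothesis enters.

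\emph{Step 1 (first derivative).} Fix $q\in\mathscr U$ and $v\in C^1$. Write
\[
\mathscr G(q+v)-\mathscr G(q)-\mathscr G'(q)v=\int_0^1\bigl(\mathscr G'(q+tv)-\mathscr G'(q)\bigr)v\,dt .
\]
Since $\mathscr G\in C^1(\mathbb R)$ is uniformly continuous on the bounded range, this remainder is $o(\|v\|_{C^0})$ in $L^\infty$. Hence the first derivative exists in every $L^p$, and $q\mapsto\mathscr G'(q)$ is continuous into $L^\infty$. Composing with $V$ gives the first-order formula in \eqref{eq:potential-derivatives}.

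\emph{Step 2 (second derivative and the coarea estimate).} The candidate is $\mathbf 1_{\{q>0\}}\mathscr G''(q)v_1v_2$. The key estimate, which I expect to be the main obstacle, is a uniform integrability bound near the vacuum. Write $u_\pm=\mathbf1_{\{q_\pm>0\}}|q_\pm|^{\alpha-2}$ for $q_\pm\in\mathscr U$. The claim is that
\[
\int_{\{0<q<2\delta_0\}}q^{(\alpha-2)p}\,dx\le C\int_0^{2\delta_0}t^{(\alpha-2)p}\,dt ,
\]
which follows from the coarea formula together with $|\nabla q|\ge c_0$ and the uniform bound on the areas of the level sets $\mathcal H^2(\{q=t\})$. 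I would obtain the area bound from $\int_{\{q=t\}}|\nabla q|\,d\mathcal H^2=\frac{d}{dt}\int_{\{q<t\}}|\nabla q|^2$ type arguments, or more simply by the implicit-function description of the level sets as $C^1$ graphs in a collar. This gives $\mathbf1_{\{q>0\}}\mathscr G''(q)\in L^p$ for $p<1/(2-\alpha)_+$, with any $p$ allowed when $\alpha\ge2$.

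\emph{Step 3 (Fréchet remainder and continuity).} To show that the first-derivative remainder
\[
\bigl(\mathscr G'(q+v)-\mathscr G'(q)\bigr)v_1-\mathbf1_{\{q>0\}}\mathscr G''(q)v\,v_1
\]
is $o(\|v\|_{C^1})\|v_1\|$ in $L^1$, I split $D$ into three regions.
\begin{itemize}
\item On $\{q>2\|v\|_{C^0}\}$, Taylor's theorem applies with $\mathscr G''$ evaluated between $q$ and $q+v$, and the continuity of $\mathscr G''$ on $(0,\infty)$ gives dominated convergence via the majorant $Cq^{\alpha-2}$.
\item On $\{q<-2\|v\|_{C^0}\}$, everything vanishes.
\item On the thin layer $\{|q|\le2\|v\|_{C^0}\}$, the pointwise bounds give integrands at most $C\|v\|^{\alpha-1}\|v_1\|$. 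By transversality the layer has volume $O(\|v\|)$, so its contribution is $O(\|v\|^{\alpha})=o(\|v\|)$ because $\alpha>1$.
\end{itemize}
Continuity of $q\mapsto\mathbf1_{\{q>0\}}\mathscr G''(q)$ as a bilinear map into $L^1$ follows the same way: take $L^1$ differences, handle the layer with $|\{|q|<\varepsilon\}|\le C\varepsilon$, and handle the complement with uniform continuity of $\mathscr G''$ on compact subsets of $(0,\infty)$.

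\emph{Step 4 (potential statement at second order).} For the potential, I would redo Steps 2--3 in $L^p$ with some $3<p<1/(2-\alpha)$ when $\alpha>5/3$. If instead $\alpha\le5/3$, so that this $p$ is unavailable, I would exploit the one-dimensional structure of the singularity: an $L^1$ density concentrated in a transversal layer still produces a $C^1$ potential, since $\int_{\{0<q<\varepsilon\}}|x-y|^{-2}q^{\alpha-2}\,dy$ is bounded uniformly in $x$. This follows by integrating first along the normal direction, where $t^{\alpha-2}$ is integrable, and then over the two-dimensional level set, where $|x-y|^{-2}$ is integrable. The same layer splitting then gives the $o(\|v\|)$ remainder in $C^1$, which completes the proof of \eqref{eq:potential-derivatives}.
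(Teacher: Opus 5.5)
Your architecture matches the paper's proof. You use a transversal collar in which $q$ serves as the normal coordinate (the paper builds it locally in $q$ from finitely many inverse-function graph charts), the collar bound $\int_{\{0<q<\varepsilon\}}q^{\alpha-2}\,dy\le C\varepsilon^{\alpha-1}$, and a split into $\{|q|\ge 2\varepsilon\}$, where signs agree and $\mathscr G''$ is uniformly continuous, plus a thin layer. Your $L^\infty$ treatment of the first derivative and your $L^p$ route with $3<p<1/(2-\alpha)$ for $\alpha>5/3$ are correct. One small fix in Step 3: on the layer, the term $\mathbf 1_{\{q>0\}}\mathscr G''(q)vv_1$ is not pointwise $O(\|v\|^{\alpha-1})$. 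You must use the collar integral instead, which still gives $O(\|v\|^{\alpha})$.

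The genuine gap is in Step 4. This is the case that matters, since $\gamma_0=5/3$ gives $\alpha=3/2<5/3$. Your argument for
\[
\sup_{x\in D}\int_{\{0<q<\varepsilon\}}\frac{q(y)^{\alpha-2}}{|x-y|^2}\,dy<\infty
\]
fails. The kernel $|x-y|^{-2}$ is not integrable over a two-dimensional surface passing through or near $x$: in chart coordinates, $\int_{|y'|<1}|y'|^{-2}dy'$ diverges logarithmically. So the normal and tangential integrations cannot be decoupled as you describe. The paper uses the reverse order. Write $x=(x',s)$ and $y=(y',t)$ with $t=q(y)$, and integrate tangentially first:
\[
\int_{|y'|<C}\frac{dy'}{|y'|^2+|t-s|^2}\le C\log\bigl(2+1/|t-s|\bigr).
\]
Then bound $\int_0^\varepsilon t^{\alpha-2}\log(2+1/|t-s|)\,dt$ uniformly in $s$. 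This step needs more than integrability of $t^{\alpha-2}$: an $L^1$ weight such as $1/(t\log^2(1/t))$ paired with a moving logarithm is unbounded. Here it works because $\alpha>1$ puts $t^{\alpha-2}$ in some $L^p$ with $p>1$, so H\"older applies. Alternatively, use the paper's rescaling $t=\varepsilon u$, which gives $C\varepsilon^{\alpha-1}(1+|\log\varepsilon|)$.

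Even a correct uniform bound does not deliver what you claim. A bound on $\sup_x\int|x-y|^{-2}|f(y)|\,dy$ only makes $V_f$ Lipschitz, not $C^1$. It also does not make the layer contribution to the remainder $o(\|v\|)$, nor does it give continuity of the second derivative as a bilinear map into $C^1$. For all three you need the decay $\sup_x\int_{\{0<q<\varepsilon\}}q^{\alpha-2}|x-y|^{-2}\,dy\to0$ as $\varepsilon\to0$. The paper uses this decay as follows. It truncates the source to $\{q\ge\varepsilon\}$, where the density is bounded and the potential is $C^1$. It then shows the truncated potentials are Cauchy in $C^1$, and controls the layer in the remainder and continuity arguments. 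With the corrected estimate, Step 4 covers every $\alpha>1$, and your $L^p$ case split becomes unnecessary.
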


\begin{proof}
The assertion is local in $q$.  Fix $q_0\in\mathscr U$ and restrict to a
sufficiently small $C^1$-neighborhood of $q_0$.  Its zero set is compactly
contained in $D$.  Choose finitely many graph charts covering that set,
with each chart compactly contained in a larger chart on which a fixed
directional derivative of $q_0$ is bounded away from zero.  The same
derivative bounds hold for every nearby $q$.  The inverse-function theorem
then gives coordinates $(y',t)$ with $t=q(y)$, uniform bi-Lipschitz
constants, and Jacobians bounded above and below.  No normal exponential
coordinates, or second spatial derivatives of $q$, are needed.
Only this collar requires an estimate.  For $0<\varepsilon<\delta_0$,
\begin{equation}
 \int_{\{0<q<\varepsilon\}}q^{\alpha-2}\,dy
 \le C\int_0^\varepsilon t^{\alpha-2}\,dt
 \le C\varepsilon^{\alpha-1}.                             \label{eq:interface-L1}
\end{equation}
After one spatial derivative of the Newtonian kernel, the required estimate
is
\begin{equation}
 \sup_{x\in D}\int_{\{0<q<\varepsilon\}}
 \frac{q(y)^{\alpha-2}}{|x-y|^2}\,dy
 \le C\varepsilon^{\alpha-1}(1+|\log\varepsilon|).        \label{eq:interface-C1}
\end{equation}
To verify it, use one of these graph charts.  If $x$ belongs to the larger
chart, write its coordinates as $(x',s)$.  Bi-Lipschitz equivalence bounds
$|x-y|$ below by a constant times
$(|x'-y'|^2+|s-t|^2)^{1/2}$.  After translating $y'$, the local contribution
is bounded by
\[
 C\sup_{s\in\mathbb R}\int_0^\varepsilon t^{\alpha-2}
 \left(\int_{|y'|<C}\frac{dy'}{|y'|^2+|t-s|^2}\right)dt
 \le C\sup_s\int_0^\varepsilon t^{\alpha-2}
       \log\!\left(2+\frac1{|t-s|}\right)dt,
\]
For $|s|\le2\varepsilon$, put $t=\varepsilon u$ and
$a=s/\varepsilon$.  The last expression is bounded by
\[
 C\varepsilon^{\alpha-1}
 \left[(1+|\log\varepsilon|)\int_0^1u^{\alpha-2}\,du
 +\sup_{|a|\le2}\int_0^1u^{\alpha-2}|\log|u-a||\,du\right].
\]
To bound the last supremum, put $\beta=\alpha-1>0$.  For $0<a<1/2$,
split the integral at $2a$.  Substitution $u=av$ on $(0,2a)$ bounds that
part by
\[
 C a^\beta\left(1+|\log a|
       +\int_0^2v^{\beta-1}|\log|v-1||\,dv\right),
\]
which is uniformly bounded.  On $(2a,1)$ one has
$u/2\le|u-a|\le u$, so the contribution is bounded by
$C\int_0^1u^{\beta-1}(1+|\log u|)\,du<\infty$.
The same latter bound applies for $a\le0$.  For $1/2\le a\le2$, split
off $(0,1/4)$; on the rest the weight is bounded and the logarithmic
singularity has a uniformly bounded integral.  Thus the supremum is
finite for every $\alpha>1$.  If
$|s|>2\varepsilon$, then $|t-s|$ stays separated from zero relative to
$|s|$, and the same bound is simpler.  This proves
\eqref{eq:interface-C1}.  If $x$ lies outside the larger chart, it is
separated from the smaller chart and \eqref{eq:interface-L1} suffices.
Summing the finite cover proves the estimate uniformly in $x$.
The undifferentiated potential has the same bound, since $D$ is bounded.
Both right-hand sides tend to zero because $\alpha>1$.

In particular the second potential in \eqref{eq:potential-derivatives}
belongs to $C^1$.  Indeed, truncate its source to $\{q\ge\varepsilon\}$.
The truncated source is bounded and its Newtonian potential is $C^1$.
Estimate \eqref{eq:interface-C1}, multiplied by
$\|v_1\|_\infty\|v_2\|_\infty$, makes these potentials Cauchy in $C^1$
as $\varepsilon\downarrow0$ and identifies their limit with the stated
potential and gradient.

For continuity of the bilinear second derivatives, let $q_n\to q$ in
$C^1$.  On $\{|q|\ge2\varepsilon\}$ the signs agree for large $n$, and
uniform continuity of $\mathscr G''$ away from zero gives uniform
convergence of the coefficients.  On $\{|q|<2\varepsilon\}$, any positive
$q_n$ is smaller than $3\varepsilon$ once
$\|q_n-q\|_\infty<\varepsilon$.  Apply the uniform collar estimates
separately to $q$ and $q_n$.  They show that the second derivatives converge
in operator norm from $C^1\times C^1$ to $L^1$, and, after applying the
Newtonian potential, to $C^1$.

Finally, $\mathscr G'$ is locally absolutely continuous across zero because
$\alpha-2>-1$.  For a sufficiently small increment $v$, the segment
$q+tv$ stays in the chosen neighborhood.  The fundamental theorem of
calculus gives, in the indicated target spaces,
\[
 [D\mathscr G(q+v)-D\mathscr G(q)]w
 =\int_0^1\mathbf1_{\{q+tv>0\}}\mathscr G''(q+tv)vw\,dt.
\]
The continuity just proved identifies its Fr\'echet derivative.  The same
argument after the Newtonian potential, and the first-order identity for
$\mathscr G(q+v)-\mathscr G(q)$, prove both $C^2$ assertions and all
formulas in \eqref{eq:positive-part-derivatives}--\eqref{eq:potential-derivatives}.
\end{proof}

\begin{proof}[Proof of Proposition~\ref{prop:slow-branch-full-range}]
The equation-of-state asymptotics give, as $t\downarrow0$,
\begin{align}
 h(t)&=A_0t^{\gamma_0-1}\bigl(1+O(t^{a_0})\bigr),\notag\\
 \mathscr G(y)&=A_0^{-\alpha}y^\alpha
       \bigl(1+O(y^{a_0\alpha})\bigr),\qquad
 A_0=\frac{K_0\gamma_0}{\gamma_0-1}.                       \label{eq:enthalpy-inverse-asymptotics}
\end{align}
Moreover, the differentiated remainder assumptions in
\eqref{eq:EOS-asymptotic} and the inverse identities
\begin{equation}
 \mathscr G'(y)=\frac1{h'(\mathscr G(y))},\qquad
 \mathscr G''(y)=-\frac{h''(\mathscr G(y))}{h'(\mathscr G(y))^3}
\end{equation}
give, for $0<y<y_0$,
\begin{equation}
 c y^{\alpha-1}\le \mathscr G'(y)\le C y^{\alpha-1},\qquad
 |\mathscr G''(y)|\le C y^{\alpha-2}.                            \label{eq:G-derivative-bounds}
\end{equation}
In particular the zero extension satisfies $\mathscr G\in C^1(\mathbb R)$ and
$\mathscr G'(0)=0$.
Choose a ball $D$ that contains all spherical supports and on whose boundary
$q_{\mu,0}\le-c_b<0$, uniformly for $\mu\in I_\mu$.  Take
$\mathscr U$ to be an open $C^1$-neighborhood of this compact spherical
family on which the same boundary gap and transversality bounds hold.  Put
\[
 Y=\{w\in C^1(\overline D):w\hbox{ is axisymmetric and even in }z,\
                              w(0)=0\}.
\]
For $q=h(\mu)+w$, define
\[
 \mathcal F(\mu,s,w)(x)
 =w(x)-V_{\mathscr G(h(\mu)+w)}(0)+V_{\mathscr G(h(\mu)+w)}(x)-\frac12sr^2.
\]
On every spherical free surface,
\[
 \partial_nq_{\mu,0}=-\frac{M_0(\mu)}{R_0(\mu)^2}<0.
\]
This is uniform on $I_\mu$, so the functions $q_{\mu,0}$ have uniformly
transverse zero sets.  Lemma~\ref{lem:transverse-positive-part} shows that
$\mathcal F$ is $C^2$ from a neighborhood in
$I_\mu\times\mathbb R\times Y$ to $Y$.  Its $w$-derivative is
\eqref{eq:normalized-enthalpy-linearization}, whose inverses are uniformly
bounded by Lemma~\ref{lem:spherical-density-spectrum}.  The uniform
implicit-function theorem therefore yields
\[
 (\mu,s)\longmapsto q_{\mu,s}
       \in C^2(I_\mu\times(-s_0,s_0);C^1(\overline D)).
\]
Uniqueness in the even subspace gives equatorial symmetry.  This is the
center-normalized Lichtenstein construction.  It is compatible with the
local branches of Heilig and Jang--Makino (the 2017 paper in its stated
exponent range), and with the general-equation-of-state constructions in
\cite{JangMakino2019,LinWang2023}; see also
\cite{Heilig1994,JangMakino2017}.

Equations \eqref{eq:positive-part-derivatives} and
\eqref{eq:potential-derivatives} give
\begin{align}
 \partial_p\widetilde\rho
 &=\mathbf1_{\{q>0\}}\mathscr G'(q)q_p,\notag\\
 \partial_{pp'}\widetilde\rho
 &=\mathbf1_{\{q>0\}}
   \{\mathscr G''(q)q_pq_{p'}+\mathscr G'(q)q_{pp'}\}
       \quad\hbox{in }L^1(D),                              \label{eq:branch-second-density}
\end{align}
for $p,p'\in\{\mu,s\}$.  In particular, there is no surface measure.
Since the zero extension of $\mathscr G$ belongs to $C^1(\mathbb R)$ and
$\mathscr G'(0)=0$, the standard $C^0$ Nemytskii theorem also gives
$\widetilde\rho_{\mu,s}\in C^1_{\mu,s}(C(\overline D))$.
The $C^2$ regularity of $M$, $I$, and $c$ follows from
\eqref{eq:branch-second-density}, differentiation under the integral, and
the $C^1$-valued potential estimate.  The spatial and support properties
proved next show that these physical solutions are regular equilibria,
and complete the proof of Proposition~\ref{prop:slow-branch-full-range}.
\end{proof}

\begin{proof}[Proof of Proposition~\ref{prop:slow-vacuum-tangents}]
All estimates are uniform for $\mu$ in the stated compact interval and
$s$ sufficiently small.  Boundedness of the zero-extended density first
gives $q\in C^{1,\theta}$ for every $\theta<1$.  Since
$\mathscr G'(0)=0$ and \eqref{eq:G-derivative-bounds} makes $\mathscr G'$ H\"older with
exponent $\min\{1,\alpha-1\}$, the zero-extended density belongs to
$C^{1,\eta}$ for every $0<\eta<\min\{1,\alpha-1\}$.  Global Schauder
estimates for $\Delta V=4\pi\rho$ then give uniform bounds
\[
 q_{\mu,s}\in C^{3,\eta}(\overline D),
 \qquad 0<\eta<\min\{1,\alpha-1\}.
\]
For any prescribed $\eta$ in this range, first take uniform bounds at a
slightly larger exponent.  Parameter convergence in $C^1$ and interpolation
then give convergence in the spatial $C^{3,\eta}$ norm, in particular in
$C^2$.
The normal derivative remains bounded away from zero at $q_{\mu,s}=0$.
The level-set theorem therefore gives a $C^{3,\eta}$ free surface.  Its
second fundamental form is close to that of the corresponding sphere and
is uniformly positive for small $s$.

Write the surface as the radial graph
$|x|=\mathcal R_{\mu,s}(x/|x|)$, where $\mathcal R_{\mu,s}$ is positive, axisymmetric,
even in $z$, and continuous in $C^1(\mathbb S^2)$ as a function of the
parameters.  Explicit support maps are
\begin{equation}
 \Psi_{\mu,s}(a)=\mathcal R_{\mu,s}(a/|a|)a\quad(a\ne0),\qquad
 \Psi_{\mu,s}(0)=0.                                      \label{eq:radial-support-map}
\end{equation}
Their inverses have the same radial form.  Upper and lower bounds on
$\mathcal R_{\mu,s}$ and its $C^1$ norm give uniform bi-Lipschitz constants, and
parameter continuity holds in $W^{1,\infty}$.  Moreover,
\[
 \det D\Psi_{\mu,s}(a)=\mathcal R_{\mu,s}(a/|a|)^3
 \quad\hbox{for almost every }a\in B.
\]
The angular derivatives contribute a tangential shear, so this determinant
is positive and bounded above and below.  Differentiability at the single
point $a=0$ is not required.

On each radial segment, the fundamental theorem of calculus and the
uniformly negative radial derivative at the surface give, in a fixed
boundary collar,
\[
 q_{\mu,s}\circ\Psi_{\mu,s}=A_{\mu,s}\delta_B,\qquad
 0<A_*\le A_{\mu,s}\le A^*.
\]
Together with \eqref{eq:enthalpy-inverse-asymptotics}, this yields
\[
 \rho_{\mu,s}\asymp d_{\mu,s}^{\alpha},\qquad
 \mathscr G'(q_{\mu,s})=\frac1{h'(\rho_{\mu,s})}
       \asymp d_{\mu,s}^{\alpha-1}.
\]
Finally, \eqref{eq:branch-second-density} gives the tangent formula, and
\begin{equation}
 \|f^p_{\mu,s}\|_{X_{\mu,s}}^2
 =\int_{\Omega_{\mu,s}}\mathscr G'(q_{\mu,s})|q_p|^2\,dx,\qquad
 h'(\rho_{\mu,s})|f^p_{\mu,s}|^2
 \le Cd_{\mu,s}^{\alpha-1}.                               \label{eq:tangent-dominating-weight}
\end{equation}
The right-hand side is integrable in a boundary collar.  Under the unitary pullback,
\begin{equation}
 U_{\mu,s}f^p_{\mu,s}
 =|\det D\Psi_{\mu,s}|^{1/2}
   [\mathscr G'(q_{\mu,s}\circ\Psi_{\mu,s})]^{1/2}
   (\partial_pq_{\mu,s})\circ\Psi_{\mu,s}.                \label{eq:weighted-tangent-explicit}
\end{equation}
Every factor is continuous in the parameters in the uniform norm (the
Jacobian in the essential-supremum norm).  Since $\mathscr G'(0)=0$, the right-hand side is continuous in
$L^2(B)$.  The already proved $C^1$ parameter dependence of the
zero-extended density in $C(\overline D)$ also gives uniform convergence
of its first Eulerian tangents on the fixed ball.  No parameter derivative
of the support map or of the singular weight $h'(\rho)$ is used.
\end{proof}

\begin{remark}
The exponent $\gamma_0=5/3$ is of particular physical interest.  It is
the nonrelativistic-degenerate low-density asymptotic of the standard
Chandrasekhar equation of state for white-dwarf matter (and is also the
adiabatic exponent of a monatomic ideal gas).  For $\gamma_0=5/3$, one
has $\alpha=3/2$ and
$\mathscr G''(q)=O(q^{-1/2})$ at the vacuum interface.  Although second
parameter derivatives of the zero-extended density can be unbounded
there, the proof uses the integrable estimate
\[
 \|\mathscr G''(q)\|_{L^1(\{0<q<\varepsilon\})}
 =O(\varepsilon^{1/2})
\]
and the $C^1$ potential estimate
\[
 O\!\left(\varepsilon^{1/2}(1+|\log\varepsilon|)\right).
\]
By \eqref{eq:interface-L1}--\eqref{eq:interface-C1}, these give the
required $C^2$ dependence in $L^1$ for the density and in $C^1$ for its
potential.
\end{remark}

\begin{remark}
A factorization
$\rho_{\mu,s}\circ\Psi_{\mu,s}=\beta_{\mu,s}\delta_B^\alpha$ with
$\beta_{\mu,s}\in C^1$ need not follow from
\eqref{eq:EOS-asymptotic} when $a_0\alpha<1$, and is not used.  The
enthalpy factorization in \eqref{eq:uniform-vacuum-factor}, together with
the two displayed weight comparisons, is the precise regularity required.
\end{remark}

\section{Analytic Fredholm pencils and algebraic multiplicity}
\label{app:Gohberg-Sigal}

We recall only the facts used in Section~\ref{sec:KTC}; see
\cite{GohbergSigal1971}.  Let
$F:\mathcal U\to\mathcal L(X)$ be analytic on a plane domain and suppose
that every $F(\lambda)$ is Fredholm of index zero.  A point $\lambda_0$ is
a \emph{characteristic value} if $F(\lambda_0)$ is not invertible.  If
$F$ is invertible at one point of the connected domain $\mathcal U$, the
analytic Fredholm theorem states that $F^{-1}$ is finitely meromorphic:
the characteristic values are isolated and the principal part of
$F^{-1}$ at each of them has finite rank.

At an isolated characteristic value, a root chain
$u_0,\ldots,u_{m-1}$, with $u_0\ne0$, satisfies
\[
 \sum_{j=0}^{\ell}\frac1{j!}
 F^{(j)}(\lambda_0)u_{\ell-j}=0,
 \qquad 0\le\ell<m.
\]
A canonical system of root functions has partial multiplicities
$\kappa_1,\ldots,\kappa_N$.  The Gohberg--Sigal algebraic multiplicity is
\[
 \operatorname{algmult}(\lambda_0;F)
 =\sum_{j=1}^N\kappa_j
 =\operatorname{tr}\frac1{2\pi i}
   \int_{|\lambda-\lambda_0|=\delta}
   F'(\lambda)F(\lambda)^{-1}\,d\lambda.
\]
The trace is well defined because the contour integral is finite rank.
The algebraic multiplicity is the sum of the partial multiplicities, not
the maximal root-chain length (the latter is the pole order of
$F^{-1}$).  Multiplication on either side by analytic
invertible operator families preserves all partial multiplicities.

We also use the following homotopy form of the Gohberg--Sigal argument
principle.  Let $\mathscr O$ be a bounded domain with piecewise smooth
boundary.  Suppose $F_t(\lambda)$ is jointly norm-continuous in
$(t,\lambda)$ and analytic Fredholm of index zero in $\lambda$ on one
common neighborhood of $\overline{\mathscr O}$, for $t\in[0,1]$.  If $F_t(\lambda)$ is invertible for every
$(t,\lambda)\in[0,1]\times\partial\mathscr O$, then
\[
 \sum_{\lambda\in\mathscr O}
   \operatorname{algmult}(\lambda;F_t)
\]
is independent of $t$.  Thus characteristic values may split or collide
during the deformation, but the total algebraic multiplicity inside the
fixed contour cannot change unless a characteristic value crosses the
boundary.

\section{Functional-analytic lemmas}
\label{sec:analytic-lemmas}

\subsection{Weights and conserved covectors}

\begin{lemma}
\label{lem:bounded-covectors}
For every regular rigidly rotating equilibrium whose pressure law satisfies
Hypothesis~\ref{hyp:EOS},
\begin{equation}
 \frac1{h'(\bar\rho)},\quad
 \frac{z^2}{h'(\bar\rho)},\quad
 \frac{r^4}{h'(\bar\rho)}\in L^1(\Omega_{\bar\rho}).                    \label{eq:covector-integrability}
\end{equation}
Moreover $\tau=-\partial_z\bar\rho\in X$ and all four functionals
\eqref{eq:mass-covector}--\eqref{eq:J-covector} are bounded on the energy
phase space.
\end{lemma}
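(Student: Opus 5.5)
The plan is to reduce every assertion to two pointwise facts: $1/h'(\bar\rho)$ is bounded on $\Omega_{\bar\rho}$, and $\Omega_{\bar\rho}$ is bounded. Together these give the integrability in \eqref{eq:covector-integrability}; the remaining claims then follow from Cauchy--Schwarz in the weighted spaces.

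First, the uniform lower bound $m_*=\inf_{0<s\le\|\bar\rho\|_\infty}h'(s)>0$ comes from Hypothesis~\ref{hyp:EOS}. Since $h'(s)=P'(s)/s\sim K_0\gamma_0 s^{\gamma_0-2}$ with $\gamma_0<2$, we have $h'(s)\to+\infty$ as $s\downarrow0$, while $h'$ is continuous and positive on compact subsets of $(0,\infty)$. This is the same observation already used in the proof of Proposition~\ref{prop:automatic-admissibility}. Hence $1/h'(\bar\rho)\le m_*^{-1}$ on $\Omega_{\bar\rho}$. The star is compactly supported, so $\Omega_{\bar\rho}$ is bounded and $z^2$, $r^4$ are bounded there. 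Therefore all three weights lie in $L^\infty(\Omega_{\bar\rho})\subset L^1(\Omega_{\bar\rho})$.

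Second, for $\tau=-\partial_z\bar\rho$, I use the chain-rule identity \eqref{eq:automatic-gradient}:
\[
h'(\bar\rho)|\nabla\bar\rho|^2=\frac{|\nabla W|^2}{h'(\bar\rho)},
\]
where $W$ is the Lipschitz zero extension of $h(\bar\rho)$. Its Lipschitz property follows from the equilibrium equation \eqref{eq:equilibrium}, because $\nabla V_{\bar\rho}$ and $\omega^2 r e_r$ are bounded on a bounded set. The right-hand side is therefore bounded, and integrating over the bounded domain gives $\|\tau\|_X^2<\infty$. To avoid circularity I will derive this directly from the equilibrium equation rather than cite Proposition~\ref{prop:automatic-admissibility}, since that proposition refers to the present lemma.

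Third, boundedness of the covectors. For any weight $\phi\in\{1,z,r^2\}$, Cauchy--Schwarz gives
\[
\Big|\int\phi\sigma\,dx\Big|\le\Big(\int\frac{\phi^2}{h'(\bar\rho)}\,dx\Big)^{1/2}\|\sigma\|_X .
\]
This bounds $\mathfrak m$, $\mathfrak z$, and the density part of $\mathfrak j$. The velocity parts satisfy $|(u,e_z)_H|\le M^{1/2}\|u\|_H$ and $|(u,q_{\rm rot})_H|\le I^{1/2}\|u\|_H$, which handles $\mathfrak p$ and the remaining part of $\mathfrak j$.

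Each step is elementary. The only point needing care is the lower bound on $h'$ near vacuum: it uses $\gamma_0<2$ in an essential way and must hold uniformly up to the boundary, where $\bar\rho\to0$.
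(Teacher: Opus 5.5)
Your proposal is correct and follows essentially the same route as the paper. You use the uniform lower bound on $h'$ near vacuum (from $\gamma_0<2$) together with bounded support for the integrability, the chain-rule identity $h'(\bar\rho)|\nabla\bar\rho|^2=|\nabla W|^2/h'(\bar\rho)$ with $W=h(\bar\rho)$ Lipschitz by the equilibrium equation for $\tau\in X$, and weighted Cauchy--Schwarz for the four covectors; your choice to derive the gradient bound directly rather than cite Proposition~\ref{prop:automatic-admissibility} is also what the paper does.
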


\begin{proof}
The pressure asymptotics and $\gamma_0<2$ imply
$\inf_{0<s\le\|\bar\rho\|_\infty}h'(s)>0$.  Since the support is bounded,
$1/h'(\bar\rho)$ and its products with $z^2$ and $r^4$ are integrable.
This proves \eqref{eq:covector-integrability}.  Put
$W=h(\bar\rho)$.  The equilibrium equation gives
$\nabla W=-\nabla V_{\bar\rho}+\omega^2r e_r\in L^\infty$, and continuity
of $\bar\rho$ gives zero trace at the free boundary.  Thus the zero
extension of $W$ lies in $W^{1,\infty}$, and the chain rule gives
\begin{equation}
 h'(\bar\rho)|\nabla\bar\rho|^2
 =\frac{|\nabla W|^2}{h'(\bar\rho)}\in L^1.               \label{eq:translation-X-estimate}
\end{equation}
Thus $\tau=-\partial_z\bar\rho\in X$.  Cauchy--Schwarz in $X$ proves
boundedness of the density moments, and the velocity moments are bounded
directly in $H$.
\end{proof}

\subsection{Newtonian compactness}

\begin{lemma}
\label{lem:gravity-compact}
The form
\begin{equation}
 (\sigma,\eta)\longmapsto\int_{\R^3}\sigma V_{\bar\eta}\,dx
\end{equation}
is bounded on $X\times X$ and its Riesz operator is compact and
self-adjoint on $X$.  Hence the Riesz realizations of $L_0$ and $L_J$ are the identity plus
compact self-adjoint operators; in particular, they are Fredholm and
have finite Morse index.
\end{lemma}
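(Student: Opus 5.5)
The plan is to move to the fixed ambient space used in Proposition~\ref{prop:density-spectrum}. There the Newtonian form becomes a Hilbert--Schmidt integral operator, and the whole lemma reduces to a kernel estimate. Boundedness, compactness and self-adjointness of the Riesz operator then follow at once.

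First, put $b=h'(\bar\rho)^{-1/2}\mathbf 1_{\Omega_{\bar\rho}}$. By Lemma~\ref{lem:bounded-covectors} (or the bound $\inf h'>0$ used in Proposition~\ref{prop:automatic-admissibility}), $b\in L^\infty$ with compact support. The map $U:\sigma\mapsto h'(\bar\rho)^{1/2}\sigma$ is an isometry of $X$ onto $L^2_{\rm ax}(\Omega_{\bar\rho})$, extended by zero to a fixed ball $D$. Under $U$ the form reads
\[
 \int\sigma V_{\bar\eta}\,dx
 =-\iint\frac{b(x)(U\sigma)(x)\,b(y)\overline{(U\eta)(y)}}{|x-y|}\,dx\,dy.
\]
The Riesz operator is therefore $U^{-1}\mathbb G U$, where $\mathbb G$ has kernel $-b(x)b(y)/|x-y|$, restricted to the closed subspace $U X$ and compressed by the orthogonal projection onto it.

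Second, I estimate the kernel:
\[
 \iint_{D\times D}\frac{b(x)^2b(y)^2}{|x-y|^2}\,dx\,dy\le\|b\|_\infty^4\iint_{D\times D}|x-y|^{-2}\,dx\,dy<\infty,
\]
since $|x|^{-2}$ is locally integrable in $\mathbb R^3$. So $\mathbb G$ is Hilbert--Schmidt, hence bounded and compact. The kernel is real and symmetric, so $\mathbb G$ is self-adjoint. Axisymmetry is preserved because the kernel is rotation invariant. Compressing to the invariant subspace $UX$ and conjugating by the unitary $U$ keeps all three properties.

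Finally, the multiplication part of $L_0$ is exactly the $X$ inner product, so $\widehat L_0=I+K$ with $K$ compact and self-adjoint. The rank-one term $\frac{\omega^2}{I}\mathfrak r^*\mathfrak r$ is bounded by \eqref{eq:Q-bounded}, so $\widehat L_J=I+K+F$ with $F$ of rank one and nonnegative. An operator of the form identity plus compact self-adjoint is Fredholm of index zero. Its spectrum below $1/2$ consists of finitely many eigenvalues of finite multiplicity, so the negative spectral subspace is finite-dimensional and equals the Morse index.

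The only point needing care is the first step: one must check that the form really factors with the bounded weight $b$. This requires $1/h'(\bar\rho)\in L^\infty$, which holds because $\gamma_0<2$ forces $h'(s)\to\infty$ at vacuum. It also requires that no boundary contributions arise, which holds because densities are zero-extended. No regularity of the free surface is used.
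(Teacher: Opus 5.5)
Your proof is correct and takes essentially the same route as the paper: you conjugate by the unitary weight $U\sigma=h'(\bar\rho)^{1/2}\sigma$, use $\inf h'>0$ to bound the weight, deduce that the Newtonian kernel is Hilbert--Schmidt and symmetric, and bound the rank-one angular-momentum term by Cauchy--Schwarz. Your closing remark about boundary contributions is unnecessary, since the form is defined directly as an integral and no integration by parts occurs.
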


\begin{proof}
If $\rho_*=\|\bar\rho\|_\infty$, continuity and positivity of $h'$ on
$(0,\rho_*]$, together with $h'(s)\to+\infty$ as $s\downarrow0$, give
$\inf_{0<s\le\rho_*}h'(s)>0$.  The unitary map
\begin{equation}
 U:X\to L^2_{\rm ax}(\Omega),\qquad U\sigma=h'(\bar\rho)^{1/2}\sigma
\end{equation}
conjugates the Riesz operator of the gravitational form to the integral
operator with kernel
\begin{equation}
 K(x,y)=-\frac1{\sqrt{h'(\bar\rho(x))h'(\bar\rho(y))}}\frac1{|x-y|}.
                                                               \label{eq:gravity-HS-kernel}
\end{equation}
The weight factor is bounded, and
$|x-y|^{-2}\in L^1(\Omega\times\Omega)$ in three dimensions.  Hence
$K\in L^2(\Omega\times\Omega)$: the conjugated operator is
Hilbert--Schmidt and therefore compact; symmetry of the kernel gives
self-adjointness.  Its Hilbert--Schmidt norm also gives boundedness of the
form.  Finally,
$\xi_r=r^2/h'(\bar\rho)$ belongs to $X$ and Cauchy--Schwarz gives
$|\int r^2\sigma|\le\|\xi_r\|_X\|\sigma\|_X$.  Thus
\[
 \widehat L_J=I+K_g+\frac{\omega^2}{I}\,\xi_r\otimes\xi_r
\]
is bounded, self-adjoint, and Fredholm, with essential spectrum $\{1\}$,
finite-dimensional kernel, and finite Morse index.
\end{proof}

\subsection{Closed form realization}

\begin{lemma}
\label{lem:full-generation}
The form \eqref{eq:full-sectorial-form} is densely defined and sectorial
after addition of a sufficiently large multiple of the
$\mathscr H_{\rm full}$ inner product.  The unshifted form represents
$-\cA_{\rm NSP}$ with domain \eqref{eq:full-generator-domain}.  Hence
$\cA_{\rm NSP}$ is closed, has nonempty resolvent, and generates a strongly
continuous analytic semigroup on $\mathscr H_{\rm full}$.  The same
statements hold for \eqref{eq:reduced-generator} on
$\mathscr H_{\rm red}$.
\end{lemma}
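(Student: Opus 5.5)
The plan is to use Kato's representation theorem for sectorial forms, as in the first half of the proof of Lemma~\ref{lem:KTC-wellposed}. I will show that $\mathfrak a_{\rm full}+\Lambda(\cdot,\cdot)_{\mathscr H_{\rm full}}$ is a densely defined, closed, sectorial form on $\mathscr V_{\rm full}=X\times H^1_{\rm ax}$. By the first representation theorem, the associated m-sectorial operator $T$ then gives $-T+\Lambda$ as the generator of an analytic semigroup. Finally I identify $T-\Lambda$ with $-\cA_{\rm NSP}$ on the domain \eqref{eq:full-generator-domain}.

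Density of $\mathscr V_{\rm full}$ in $X\times H$ is clear, since smooth axisymmetric fields are dense in $H$. For closedness I use the Hermitian real part. Its diagonal is
\[
 \Ree\mathfrak a_{\rm full}[(\sigma,u)]
 =d[u]+\Ree\bigl(\ip{L_0\sigma}{\cC u}-(\cC u,\sigma)_X\bigr),
\]
because $\Ree(G_0u,u)_H=0$. Writing $\widehat L_0=I+K_g$ as in Lemma~\ref{lem:gravity-compact}, the identity parts cancel in the real part. What remains is $\Ree(K_g\sigma,\cC u)_X$, and this is bounded by $C\|\sigma\|_X\|\cC u\|_X$. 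By Lemma~\ref{lem:C-form-bound}, $\|\cC u\|_X\le C\|u\|_{H^1}$. The shifted Korn estimate \eqref{eq:Korn-shifted} then gives $d[u]+\|u\|_H^2\ge c\|u\|_{H^1}^2$. Applying Young's inequality, for $\Lambda$ large,
\[
 \Ree\mathfrak a_{\rm full}[(\sigma,u)]+\Lambda\|(\sigma,u)\|^2_{\mathscr H_{\rm full}}
 \ge c\bigl(\|u\|_{H^1}^2+\|\sigma\|_X^2\bigr).
\]
This is the form norm of $\mathscr V_{\rm full}$, so the form is closed; the $\sigma$-component needs no extra regularity because $X$ is already the energy space.

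Sectoriality is the delicate point. The imaginary part contains $\Imm(\ip{\sigma}{\cC u}+(\cC u,\sigma))$, which involves $\|\sigma\|_X\|u\|_{H^1}$. This is not controlled by $\|u\|_{H^1}^2+\Lambda\|\sigma\|_X^2$ with a bounded angle unless one weights the components. I would first try the standard fix for such hyperbolic--parabolic couplings: an equivalent inner product $\|\sigma\|_X^2/\epsilon+\|u\|_H^2$, or equivalently rescaling $\sigma$. The cross term then becomes $\epsilon^{1/2}\|\sigma\|\|u\|_{H^1}$ relative to the new norm, which is absorbed into $\Lambda\|\sigma\|^2+d[u]$. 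If this fails to yield a uniform sector, I will fall back on the fact that the operator is a bounded perturbation of the block operator $\mathrm{diag}(0,-D)$. Only the off-diagonal $\cC$ terms are unbounded, and they are relatively bounded with respect to $D^{1/2}$ with arbitrarily small constant after the shift. From this I would verify the resolvent estimate $\|(\lambda-\cA_{\rm NSP})^{-1}\|\le C/|\lambda|$ in a sector directly. I expect this sector estimate to be the main obstacle; the $\sigma$-block has no smoothing, so analyticity really rests on the weighted coupling estimate and on solving the resolvent equation with $\sigma=\lambda^{-1}(\cC u+f)$, exactly as in Proposition~\ref{prop:root-chain-equivalence}.

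To identify the operator, note that $(\sigma,u)\in\Dom(T)$ means that $w\mapsto\mathfrak a_{\rm full}[(\sigma,u),(\eta,w)]$ is represented by an element of $X\times H$. Testing with $w=0$ shows that the density component is $-\cC u\in X$, which always holds. Testing with $\eta=0$ shows that $\mathsf Du+G_0u+\cC^*L_0\sigma$ lies in $\iota_H(H)$. This is precisely \eqref{eq:full-generator-domain}, and the action agrees with \eqref{eq:full-generator}. The reduced case is identical, with $L_J$, $\cC_\diamond$, $G$ and $V_\diamond$ in place of $L_0$, $\cC$, $G_0$ and $H^1_{\rm ax}$. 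There Korn \eqref{eq:Korn-coercive} even makes $d$ coercive, and the rank-one $\mathfrak r^*\mathfrak r$ term is bounded.
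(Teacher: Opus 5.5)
Your overall route is the paper's: a closed sectorial form on $X\times H^1_{\rm ax}$, the first representation theorem, and identification of the domain by testing separately with $(\eta,0)$ and $(0,w)$. Your coercivity bound for the shifted real part and your domain identification are both correct.

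The gap is the sectoriality step. You call it the main obstacle and leave it unresolved, but the obstruction you describe does not exist. You do not need to reweight the components. Every term of $\mathfrak a_{\rm full}$ is bounded on $\mathscr V_{\rm full}$, so
\[
 |\Imm\mathfrak a_{\rm full}[U]|
 \le 2\|\sigma\|_X\|\cC u\|_X+|(G_0u,u)_H|+|(K_g\sigma,\cC u)_X|
 \le C\bigl(\|\sigma\|_X^2+\|u\|_{H^1}^2\bigr),
\]
using $2\|\sigma\|_X\|u\|_{H^1}\le\|\sigma\|_X^2+\|u\|_{H^1}^2$. Your own estimate $\Ree\mathfrak a_{\rm full}[U]+\Lambda\|U\|^2\ge c(\|\sigma\|_X^2+\|u\|_{H^1}^2)$ then gives
\[
 |\Imm\mathfrak a_{\rm full}[U]|\le (C/c)\bigl(\Ree\mathfrak a_{\rm full}[U]+\Lambda\|U\|^2\bigr).
\]
Here the $\Lambda\|\sigma\|_X^2$ part of the shift supplies the $\sigma$-norm. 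This is a fixed sector with vertex $-\Lambda$, and it is exactly the paper's estimate \eqref{eq:full-form-sector}. Your claim that the cross term is not controlled by $\|u\|_{H^1}^2+\Lambda\|\sigma\|_X^2$ is false for every $\Lambda\ge1$. The lack of smoothing in the $\sigma$-block is harmless because the coupling is zeroth order in $\sigma$ and first order in $u$. There is also no transport term, since the relative base velocity vanishes.

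The fallbacks you propose would not work as stated.
\begin{itemize}
\item $\cA_{\rm NSP}$ is not a perturbation of $\mathrm{diag}(0,-D)$ on a common operator domain. The term $\cC^*L_0\sigma$ lies only in $(H^1_{\rm ax})^*$, and \eqref{eq:full-generator-domain} requires only the sum $\mathsf Du+G_0u+\cC^*L_0\sigma$ to lie in $H$, not $\mathsf Du$ and $\cC^*L_0\sigma$ separately.
\item $\cC$ is not relatively bounded with respect to $D^{1/2}$ with small constant, since $\|\cC u\|_X$ contains $\|(\bar\rho P'(\bar\rho))^{1/2}\nabla\cdot u\|_{L^2}$ with an $O(1)$ coefficient.
\item Appealing to Proposition~\ref{prop:root-chain-equivalence} would be circular. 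That proposition takes its operator realization and resolvent identification from this lemma, and it only treats $\lambda\in\C_+$, not a sector reaching into the left half-plane.
\item Changing the $\mathscr H_{\rm full}$ inner product changes the form that represents $-\cA_{\rm NSP}$: the skew cancellation of the identity parts in the real part is lost. That remark is therefore unnecessary rather than a fix.
\end{itemize}
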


\begin{proof}
Smooth compactly supported axisymmetric fields are dense in $H$: first
approximate in the weighted $L^2$ norm and then average the approximants
under rotations about the $z$-axis.  Hence $H^1_{\rm ax}$ is dense in $H$.
Moreover, $P_\diamond$ preserves $H^1_{\rm ax}$, so applying it to such
approximants shows that $V_\diamond$ is dense in $H_\diamond$.

Lemma~\ref{lem:C-form-bound} makes $\cC:H^1_{\rm ax}\to X$ bounded;
$L_0:X\to X^*$ and $G_0:H\to H$ are bounded.  Thus all off-diagonal terms in
\eqref{eq:full-sectorial-form} are bounded on
$\mathscr V_{\rm full}$.  Young's inequality and the generalized Korn
estimate \eqref{eq:Korn-shifted} give, for a sufficiently large shift
$\beta_0$,
\begin{equation}
 \Ree\{\mathfrak a_{\rm full}[U,U]+\beta_0\|U\|_{\mathscr H_{\rm full}}^2\}
 \ge c\bigl(\|\sigma\|_X^2+\|u\|_{H^1}^2\bigr),
 \qquad U=(\sigma,u).                                      \label{eq:full-form-coercive}
\end{equation}
The same boundedness estimates give
\begin{equation}
 |\Imm\mathfrak a_{\rm full}[U,U]|
 \le C\bigl(\|\sigma\|_X^2+\|u\|_{H^1}^2\bigr)
 \le C'\Ree\{\mathfrak a_{\rm full}[U,U]
              +\beta_0\|U\|_{\mathscr H_{\rm full}}^2\}.   \label{eq:full-form-sector}
\end{equation}
Thus the numerical range lies in a fixed sector.  Therefore
the shifted form is closed and sectorial.  The first representation theorem
\cite[Chapter~1]{Ouhabaz2005} represents the unshifted form by an
operator $B$ for which $B+\beta_0 I$ is $m$-sectorial.
Testing separately against $(\eta,0)$ and $(0,w)$ shows
\begin{equation}
 B(\sigma,u)=(-\cC u,\mathsf Du+G_0u+\cC^*L_0\sigma),
\end{equation}
and that its operator domain is exactly
\eqref{eq:full-generator-domain}.  Hence $B=-\cA_{\rm NSP}$.
The standard sectorial-form theorem yields the claimed analytic semigroup;
the traction condition \eqref{eq:linear-traction} is the natural boundary
condition encoded by $d$.

On $X_{00}\times V_\diamond$ replace $\cC,L_0,G_0$ by
$\cC_\diamond,L_J,G$.  Coercivity follows directly from
\eqref{eq:Korn-coercive}, and the identical argument gives
\eqref{eq:reduced-generator-domain}.  This is a direct construction of the
density--velocity generator; it does not identify $X_{00}\times H_\diamond$
with the second-order companion space.
\end{proof}

\subsection{Fixed-ambient form continuity}

\begin{lemma}
\label{lem:form-continuity}
For the branch of Proposition~\ref{prop:slow-branch-full-range}, the Riesz
realizations of $Q_J|_{X_0}$ and
$\widetilde Q_{\rm EP}|_{X_0}$, as well as the translation projection,
admit extensions to a fixed ambient Hilbert space that depend continuously
in operator norm on $(\mu,\kappa)$.  The added directions have eigenvalue $1$.
The Newtonian and Rayleigh contributions are compact.  Consequently their
isolated finite-dimensional spectral projections vary continuously.
\end{lemma}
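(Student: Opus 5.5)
The plan is to realize every operator on one fixed ambient space $\mathcal H=L^2_{\rm ax}(D)$, with $D$ the fixed ball of Proposition~\ref{prop:slow-branch-full-range}. The transfer map is the weighted isometry $\sigma\mapsto h'(\rho_p)^{1/2}\sigma$, followed by zero extension, exactly as in the proofs of Lemma~\ref{lem:spherical-density-spectrum} and Proposition~\ref{prop:density-spectrum}; here $p=(\mu,\kappa)$. The image of $X_p$ is $\mathbf 1_{\Omega_p}\mathcal H$. On its complement I extend each Riesz operator by the identity, which produces the stated eigenvalue $1$.

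In these coordinates the unconstrained density Hessian is $I+\mathbb G_p+\frac{\omega^2}{I}j_p\otimes j_p$. The mass constraint becomes orthogonality to $W_p:=b_p/\|b_p\|$, where $b_p=\mathbf 1_{\Omega_p}h'(\rho_p)^{-1/2}$. The translation direction becomes the normalized vector $t_p\propto h'(\rho_p)^{1/2}\tau_p$. Norm continuity of $\mathbb G_p$ in Hilbert--Schmidt norm and of $j_p$ in $\mathcal H$ was already shown in Proposition~\ref{prop:density-spectrum}, using uniform convergence of $b_p$ and $\mathscr G'(0)=0$. Continuity of $W_p$ follows the same way. For $t_p$, the fact that $h'(\rho)^{1/2}\partial_z\rho=\mathscr G'(q)^{1/2}\partial_zq$ lets me use the $C^1$ continuity of $q_p$ from Proposition~\ref{prop:slow-branch-full-range} together with continuity of $\mathscr G'^{1/2}$.

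Next I compress the forms to $X_0$. Let $\Pi_p=I-W_p\otimes W_p$. Then $Q_J|_{X_0}$ is realized by $\Pi_p(I+\mathbb G_p+\cdots)\Pi_p+(I-\Pi_p)$, which is norm continuous because each factor is. Compactness of the Newtonian part is the Hilbert--Schmidt bound from Lemma~\ref{lem:gravity-compact}. Orthogonality $t_p\perp W_p$ holds because $\int\tau=0$.

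The main obstacle is the Rayleigh term $\mathcal R_p$ of Lemma~\ref{lem:column-endpoints}, because its weight $1/m'_{\rho_p}$ degenerates at the moving equator $R_p$ and at the axis. My plan is to write it as $K_p^*K_p$, where
\[
 K_p g(r)=m_{\sigma}(r)\,m'_{\rho_p}(r)^{-1/2},\qquad \sigma=h'(\rho_p)^{-1/2}g .
\]
I then split $K_p$ at $\delta$ and $R_p-\delta$. The tail estimates \eqref{eq:Rayleigh-axis-tail} and \eqref{eq:Rayleigh-boundary-tail} are uniform on the branch by Proposition~\ref{prop:slow-support-geometry}, so they give an $O(\delta^{1/2})$ bound in operator norm uniformly in $p$. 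On the interior piece, the column map $g\mapsto\int b_p g\,dz$ is norm continuous as a map into $L^2$, because $b_p$ is uniformly continuous in $p$ and $\int b_p^2\,dz$ is uniformly bounded. Also $m'_{\rho_p}$ converges uniformly and stays bounded below on $[\delta,R_p-\delta]$. Together these make the interior piece norm continuous and compact, by the Rellich argument of Lemma~\ref{lem:column-endpoints}. Sending $\delta\to0$ then gives norm continuity and compactness of the Rayleigh part.

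Finally, the translation projection $t_p\otimes t_p$ is continuous. The isolated finite-rank spectral projections vary continuously by the Riesz-projection contour integral, since every operator involved is norm continuous.
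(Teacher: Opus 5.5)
Your proposal is correct and follows essentially the same route as the paper. Both use the fixed space $L^2_{\rm ax}(D)$ with the weighted isometry and identity extension, the mass projection $\Pi_p$, Hilbert--Schmidt continuity of the Newtonian kernel, the translation vector $\mathscr G'(q_p)^{1/2}\partial_z q_p$, and the factorization $K_p^*K_p$ of the Rayleigh term with uniform endpoint tails and compact, norm-continuous interior truncations. Two points should be stated explicitly, as the paper does: $K_p$ must be precomposed with $\Pi_p$, since the equatorial tail bound needs $m_\sigma(R_p)=0$, and the interior comparison must be made on a common interval such as $[\delta,R_{p_0}-2\delta]$, because $R_p$ moves.
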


\begin{proof}
Choose a ball $D$ containing all nearby supports and set
$\mathcal H=L^2_{\rm ax}(D,dx)$.  Write $p=(\mu,\kappa)$,
$q_p=q_{\mu,\omega(\mu,\kappa)^2}$, $\rho_p=\mathscr G(q_p)$,
$\omega_p=\omega(\mu,\kappa)$, and $I_p=I(\rho_p)$.  Put
\begin{equation}
 W_p(x)=\bigl[\mathscr G'(q_p(x))\bigr]^{1/2}
 =\mathbf1_{\Omega_p}(x)h'(\rho_p(x))^{-1/2},\qquad
 j_p=r^2W_p.                                             \label{eq:ambient-weight}
\end{equation}
Here $\mathscr G'$ is extended by zero on $(-\infty,0]$.  The physical-vacuum
estimate and fixed-domain convergence of $q_p$ imply
$W_p\to W_{p_0}$ uniformly on $D$.  Under the isometry
$g=h'(\rho_p)^{1/2}\sigma$, followed by zero extension, the mass constraint
is $(g,W_p)_{\mathcal H}=0$.  Introduce the norm-continuous orthogonal
projection
\begin{equation}
 \Pi_pg=g-\frac{(g,W_p)_{\mathcal H}}{\|W_p\|_{\mathcal H}^2}W_p.          \label{eq:ambient-mass-projection}
\end{equation}
The norms of $W_p$ are bounded away from zero on the compact parameter set.

Let $\mathsf G_p$ be the integral operator on $\mathcal H$ with kernel
\begin{equation}
 -\frac{W_p(x)W_p(y)}{|x-y|}.
\end{equation}
It is Hilbert--Schmidt, and it depends continuously on $p$ in
Hilbert--Schmidt norm because $|x-y|^{-2}\in L^1(D\times D)$ and $W_p$
converges uniformly.  The constrained viscous Riesz operator, extended to
$\mathcal H$, is
\begin{equation}
 \mathbb L_{J,p}=I+\Pi_p\mathsf G_p\Pi_p
  +\frac{\omega_p^2}{I_p}\,
       \Pi_p(j_p\otimes j_p)\Pi_p.                       \label{eq:ambient-viscous-Riesz}
\end{equation}
On the physical mass-zero subspace this is the Riesz realization of $Q_J$;
on the added mass direction and functions supported outside $\Omega_p$ it
is the identity.  The added directions therefore have eigenvalue $1$.
All nonidentity terms in \eqref{eq:ambient-viscous-Riesz} are compact and
operator-norm continuous.

For the Rayleigh term define, with values in $L^2(0,R^*;dr)$,
\begin{equation}
 (\mathcal K_pg)(r)=
 \frac{\mathbf1_{(0,R_p)}(r)}{\sqrt{m_{\rho_p}'(r)}}
 \int_0^r\xi\int_{\mathbb R}
       W_p(\xi,z)(\Pi_pg)(\xi,z)\,dz\,d\xi .              \label{eq:Rayleigh-fixed-ambient}
\end{equation}
Here $R_p$ is the equatorial radius and $R^*$ is a common upper bound.
For $g\in\Ran\Pi_p$, this is precisely the square root of the Rayleigh
form.  Precomposition by $\Pi_p$ gives $m_{W_p\Pi_pg}(R_p)=0$, which is needed
for the endpoint estimate below.

The endpoint estimates \eqref{eq:Rayleigh-axis-tail} and
\eqref{eq:Rayleigh-boundary-tail}, applied to
$\sigma=W_p\Pi_pg$, give uniformly in $p$
\begin{equation}
 \|\mathbf1_{(0,\delta)}\mathcal K_p\|^2\le C\delta^2,
 \qquad
 \|\mathbf1_{(R_p-\delta,R_p)}\mathcal K_p\|^2\le C\delta.               \label{eq:ambient-Rayleigh-tails}
\end{equation}
Fix $p_0$.  On every common interior interval
$[\delta,R_{p_0}-2\delta]$, for $p$ close to $p_0$, uniform convergence of
$W_p$ and $m_{\rho_p}'$, the positive lower bound for $m_{\rho_p}'$, and
$\Pi_p\to\Pi_{p_0}$ in norm imply operator-norm convergence of the
truncated column--Volterra operators.  These truncated operators are
compact, either directly from their integral kernels or from the bounded
map into $H^1$ followed by Rellich compactness.  Since
$R_p\to R_{p_0}$, the part outside the common interval is contained in
endpoint strips of width at most $3\delta$ for both $p$ and $p_0$.
Equation \eqref{eq:ambient-Rayleigh-tails} therefore gives
\[
 \limsup_{p\to p_0}\|\mathcal K_p-\mathcal K_{p_0}\|
 \le C\delta^{1/2}.
\]
Letting $\delta\downarrow0$ proves norm continuity.  It also proves
compactness, since $\mathcal K_p$ is a norm limit of compact interior
truncations.

The constrained inviscid Riesz operator is represented on $\mathcal H$ by
\begin{equation}
 \mathbb L_{{\rm EP},p}
 =I+\Pi_p\mathsf G_p\Pi_p
   +8\pi\omega_p^2\mathcal K_p^*\mathcal K_p.             \label{eq:ambient-inviscid-Riesz}
\end{equation}
All nonidentity terms are compact and norm-continuous.  As for
\eqref{eq:ambient-viscous-Riesz}, the artificial mass direction and the
exterior complement contribute only the eigenvalue \(1\).  The normalized translation vector
$t_p=-W_p\partial_zq_p/\|W_p\partial_zq_p\|_{\mathcal H}$ is well defined:
the translation mode is nonzero, and on each compact branch neighborhood its
denominator is bounded away from zero by continuity.  Hence $t_p$ varies
continuously, so its orthogonal projection does as well.  Kato's theorem on
Riesz projections \cite[Chapter~IV]{Kato1995} proves the last assertion.
\end{proof}

\noindent \textbf{Funding}
Ming Cheng is supported by the Natural Science Foundation of Jilin Province
(Grant No.~20260102253JC) and the National Natural Science Foundation of China
(Grant No.~12371191).  Zhiwu Lin was supported in part by the National Natural
Science Foundation of China (No.~12494544).  Yucong Wang was supported in part
by the National Natural Science Foundation of China (No.~12501290), the Furong
Young Talents Program for Science and Technology Innovation of Hunan Province
(No.~2026RC3168), the Natural Science Foundation of Hunan Province
(No.~2025JJ60068), the Scientific Research Fund of the Hunan Provincial
Education Department (No.~25B0150), the 111 Project (No.~D23017), and the
Program for Science and Technology Innovative Research Teams in Higher
Educational Institutions of Hunan Province, China.

\noindent \textbf{Generative AI statement.}
OpenAI's ChatGPT was used during revision to assist with language editing,
LATEX reorganization, literature checks, and mathematical consistency checks. The
authors assume responsibility for all content.

\noindent \textbf{Data Availability Statement:} This manuscript has no associated data.
\bigskip

\noindent \textbf{Conflict-of-Interest Statement:}
The authors declare no conflict of interest.

\end{document}